\newtheorem{theorem}{Theorem}
\newtheorem{corollary}[theorem]{Corollary}
\newtheorem{lemma}[theorem]{Lemma}
\newtheorem{proposition}[theorem]{Proposition}
\newtheorem{remark}[theorem]{Remark}
\newenvironment{proof}[1][Proof]{\noindent\textbf{#1.} }{\ \rule{0.5em}{0.5em}}
\begin{document}

\title{ Boundary controllability of two coupled wave equations with
space-time first-order coupling in $1-D$}
\author{$^{1}$Farid Ammar Khodja and $^{2}$Yacine Mokhtari}
\maketitle

\begin{abstract}
This paper is devoted to study exact controllability of two one-dimensional
coupled wave equations with first-order coupling terms with coefficients
depending on space and time. We give a necessary and sufficient condition
for both exact controllability in high frequency in the general case and the
unique continuation in the cascade case.
\end{abstract}

\tableofcontents

\footnote{%
Laboratoire de Math\'{e}matiques UMR 6623, Universit\'{e} de Franche-Comt%
\'{e}, 16, route de Gray, 25030 Besan\c{c}on cedex, France. \hyperlink{Email}%
{farid.ammar-khodja@univ-fcomte.fr}} \footnote{%
Laboratoire de Math\'{e}matiques UMR 6623, Universit\'{e} de Franche-Comt%
\'{e}, 16, route de Gray, 25030 Besan\c{c}on cedex, France - University of
Sciences and Technology Houari Boumedienne P.O.Box 32 El-Alia 16111, Bab
Ezzouar, Algiers, Algeria. \hyperlink{Email}{yacine.mokhtari@univ-fcomte.fr}}

\section{Introduction}

We are interested in the boundary controllability of the following system of
two strongly coupled $1-D$ wave equations 
\begin{equation}
	\left\{ 
	\begin{array}{lll}
		y_{tt}=y_{xx}+M(\left( ay\right) _{t}+\left( by\right) _{x}), & \mathrm{in}
		& Q_{T}:=(0,T)\times (0,1), \\ 
		y(t,0)=Bu(t),\text{ }y(t,1)=0,\text{ \ \ \ \ \ \ \ \ \ } & \mathrm{in} & 
		(0,T),\text{ \ \ \ \ \ \ \ \ \ \ \ \ \ \ \ \ \ \ \ \ } \\ 
		y(0,x)=y_{0}(x),\text{ }y_{t}(0,x)=y_{1}(x),\text{ \ \ } & \mathrm{in} & 
		(0,1),\text{ \ \ \ \ \ \ \ \ \ \ \ \ \ \ \ \ \ \ \ \ \ }%
	\end{array}%
	\right.  \label{Wave 1}
\end{equation}%
where $y=(y_{1},y_{2})^{t}$ is a vector function and 
\begin{equation}
	M=(m_{ij})_{1\leq i,j\leq 2}\in \mathcal{L}\left( 
	\mathbb{R}
	^{2}\right) ,\text{ }B=(b_{1},b_{2})^{t}\in 
	\mathbb{R}
	^{2},\text{ }a,b\in C^{1}(\overline{Q_{T}};%
	\mathbb{R}
	),  \label{M,a,b}
\end{equation}%
and $u$ a is scalar control function acting at $x=0$.

This work is motivated by some previous papers. One of them is the result of
Zhang \cite{Zhang}: a single wave equation in any space dimension with lower
order terms is proved to be exactly controllable by a control acting on part
of the boundary under a suitable geometric condition and independently from
the lower terms. The author extended earlier Carleman inequalities proved by
Fursikov-Imanuvilov \cite{Furikov} for the wave equation without these lower
order terms.

The same issue arises for systems of $n$ $(\geq 2)$ coupled wave equations
with boundary or distributed controls. In \cite{Alabau 1}, Alabau-Boussouira
studied the controllability of $2$-coupled wave equations with zero order
coupling operator with constant coefficients. Later, this result has been
generalized by Alabau-Boussouira and L\'{e}autaud in \cite{Alabau 2}, \cite%
{Alabau 3}, for coupling coefficients depending on the space variable under
the geometric control condition introduced in \cite{Bardos-Lebeau-Rauch}. In
these works, one of the coupling coefficients is supposed to be small.

Dehman, Le Rousseau and L\'{e}autaud \cite{Dehman} studied distributed
controllability of $2$-coupled wave equations on a Riemannian manifold
without boundary (periodic boundary conditions in the $1-D$ case) with a
particular zero order coupling operator of cascade type. They proved that
exact controllability holds provided that the Geometric Control Condition is
satisfied. Further, they gave a characterization of the minimal time of
control. An abstract result on the exact controllability of cascade systems
is due to Alabau-Boussouira \cite{Alabau 4} (abstract setting with
application to various coupled second order PDEs). In all these cited works,
it has been assumed that the coupling functions are of constant sign.

A boundary controllability result has been established without the sign or
the smallness conditions by Bennour et al. \cite{Bennour} for $2$-coupled
wave equations in $1-D$ with cascade type coupling through velocity.

Concerning the constant case, Avdonin and De Tereza gave a complete answer
for the exact boundary controllability issue for $2$-coupled wave equations
by zero order operator in $1-D$. The same authors came back in \cite{Avdonin
	2} and generalized their result to $n$ $(\geq 2)$ coupled wave equations in $%
1-D$ but always with constant coupling coefficients under a Kalman rank
condition. The same condition appears for distributed controllability of $n$ 
$(\geq 2)$-coupled multidimensional wave equations with zero order coupling
matrix with constant coefficients. It has been proved by Liard and Lissy in 
\cite{Liard} that it is necessary and sufficient for the exact
controllability in more regular energy space. An extension of this result
can be found in Duprez and Olive \cite{Duprez} for cascade systems with zero
order coupling operator whose coefficients depend on the space variable.
However, boundary controllability has not been treated yet.

Recently, in \cite{Cui}, Cui et al. studied distributed controllability of $n
$ $(\geq 2)$-coupled wave equations with zero and first-order coupling
operator whose coefficients depend on both space and time variables on a
compact Riemannian manifold without boundary (periodic boundary conditions
in the 1-D case). It has been shown that the exact controllability issue can
be reduced to the controllability of a finite dimensional system along the
associated Hamiltonian flow. The authors also gave a unique continuation
results in the \emph{autonomous} case under classical support and sign
assumptions. The same idea appears in \cite{Alabau 5} by Alabau-Boussouira
et al. where distributed controllability of $1-D$ first-order system with
periodic boundary conditions is considered. The authors proved that exact
controllability is reduced to the controllability of parameterized
non-autonomous finite dimensional system. We would also like to mention the recent paper by Coron and Nguyen \cite{Coron} where they proved exact boundary controllability result for a hyperbolic system with space-time zero order term in $1-D.$ We emphasize that in this work the control matrix is invertible (the control acts on all the components with negative speeds). 

In light of all of the cited works, we can see that the main issue that has
to be solved is to figure out the optimal assumptions the coupling
coefficients (or operators) of such systems must satisfy so that exact or
approximate controllability hold with less number of controls.

In this article, and by using the characteristics method and a perturbation argument introduced in the
pioneer work of Russell \cite{Russell}, we give a necessary and
sufficient condition for the boundary exact controllability of System (\ref%
{Wave 1}) in high frequency for a general matrix $M$. We shall also propose
a criterion for the unique continuation property in the cascade case.
Actually, we will prove that the unique continuation property is equivalent
to solving a system a $2$-coupled Fredholm integral equations of the third
kind. We apply this criterion to nontrivial examples.

This paper is organized as follows: after some preliminaries and fixing some
notations, well-posedness and equivalence with a first-order symmetric
hyperbolic system gathered in Section \ref{Section preliminaries}, we
present the main results of exact controllability of System (\ref{Wave 1})
in high frequency (weak observability) in Section \ref{Section weak
	observability}. Section \ref{Section unique continuation} is devoted to the
unique continuation issue for System (\ref{Wave 1}). \ Appendix \ref%
{Appendix} contains the proof of some technical lemmas used in the previous
sections.

\section{Preliminaries\label{Section preliminaries}}

In this section, we recall some results about well-posedness which can be
proved exactly as in the scalar case. For the proof of these results in the
scalar case, we refer for instance to \cite{Zhang} and the references
therein.

\begin{proposition}
	\label{Exist_wave}Let $T>0.$ Under the assumption (\ref{M,a,b}), suppose
	that:%
	\begin{equation*}
		\left( y_{0},y_{1},u\right) \in L^{2}\left( 0,1\right) ^{2}\times
		H^{-1}\left( 0,1\right) ^{2}\times L^{2}\left( 0,T\right) .
	\end{equation*}%
	Then there exists a unique weak solution $y$ to System (\ref{Wave 1}) such
	that 
	\begin{equation*}
		\left( y,y_{t}\right) \in C\left( \left[ 0,T\right] ,L^{2}\left( 0,1\right)
		^{2}\times H^{-1}\left( 0,1\right) ^{2}\right) .
	\end{equation*}%
	Moreover, there exists a constant $C=C\left( T,a,b\right) >0$ such that: 
	\begin{equation*}
		\left\Vert y\right\Vert _{C\left( \left[ 0,T\right] ,L^{2}\left( 0,1\right)
			^{2}\times H^{-1}\left( 0,1\right) ^{2}\right) }\leq C\left( \left\Vert
		Bu\right\Vert _{L^{2}\left( 0,T\right) ^{2}}+\left\Vert \left(
		y_{0},y_{1}\right) \right\Vert _{L^{2}\left( 0,1\right) ^{2}\times
			H^{-1}\left( 0,1\right) ^{2}}\right) .
	\end{equation*}
\end{proposition}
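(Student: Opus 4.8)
The plan is to establish well-posedness by the classical transposition (duality) method combined with the energy method for the adjoint problem, treating the first-order coupling terms $M((ay)_t+(by)_x)$ as a lower-order perturbation. First I would reformulate System~(\ref{Wave 1}) in the transposition sense: for a solution $y$ with merely $L^2\times H^{-1}$ regularity, the natural definition is to test against solutions $\varphi=(\varphi_1,\varphi_2)^t$ of the adjoint system
\begin{equation*}
\left\{
\begin{array}{l}
\varphi_{tt}=\varphi_{xx}-(a M^t\varphi)_t-(bM^t\varphi)_x, \\
\varphi(t,0)=\varphi(t,1)=0, \\
\varphi(T,\cdot)=\varphi_0,\ \varphi_t(T,\cdot)=\varphi_1,
\end{array}
\right.
\end{equation*}
and to read off $y$ from the identity obtained by multiplying the equation for $y$ by $\varphi$ and integrating by parts on $Q_T$; the boundary term $\int_0^T \varphi_x(t,0)\,Bu(t)\,dt$ is what forces the control to enter the formulation, and the hidden regularity $\varphi_x(t,0)\in L^2(0,T)^2$ must be part of the adjoint estimate.

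The key steps, in order, are: (i) solve the adjoint system and prove the two-sided energy estimate $\|(\varphi,\varphi_t)\|_{C([0,T];H_0^1\times L^2)}+\|\varphi_x(\cdot,0)\|_{L^2(0,T)}\le C(T,a,b)\,\|(\varphi_0,\varphi_1)\|_{H_0^1\times L^2}$; (ii) deduce from this that the linear functional $L_T:(\varphi_0,\varphi_1)\mapsto \langle\text{data terms}\rangle + \int_0^T\varphi_x(t,0)^t Bu(t)\,dt$ is bounded on $H_0^1(0,1)^2\times L^2(0,1)^2$, hence by Riesz representation defines $(y(T),y_t(T))\in L^2\times H^{-1}$, and more generally, by running the argument on $(0,s)$ for each $s\in[0,T]$, defines the full trajectory with the asserted continuity; (iii) combine the resulting bounds to get the stated estimate. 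For step~(i), the standard route is: first prove existence and the energy estimate for regular data $(\varphi_0,\varphi_1)\in (H^2\cap H_0^1)\times H_0^1$ by a Galerkin or semigroup argument (the operator $\begin{pmatrix}0&I\\ \partial_{xx}&0\end{pmatrix}$ generates a $C_0$-group on $H_0^1\times L^2$, and the coupling term $\begin{pmatrix}0&0\\ -\partial_x(b M^t\cdot)-\partial_t(aM^t\cdot)&0\end{pmatrix}$ is a bounded — once the $\varphi_t$-dependence is handled — or at worst a relatively bounded perturbation handled by Duhamel and Gronwall), then prove the hidden boundary regularity by the multiplier $x\varphi_x$ (or $q(x)\varphi_x$ with $q(0)=0$, $q(1)=1$), and finally pass to the limit for general finite-energy data by density, the perturbation constant $C(T,a,b)$ coming out of the Gronwall step and depending on $\|a\|_{C^1}$, $\|b\|_{C^1}$, $\|M\|$ through the coupling.

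I expect the main obstacle to be the presence of the \emph{time} derivative $(ay)_t$ in the coupling: after transposition this produces the term $(aM^t\varphi)_t$ in the adjoint equation, which is genuinely first order in $t$ and is therefore \emph{not} lower order with respect to the energy space in the naive sense — it must be absorbed by writing it as $a_t M^t\varphi + aM^t\varphi_t$ and noting that $\varphi_t$ is controlled by the energy, so that a Gronwall argument on $E(t)=\tfrac12\int_0^1(|\varphi_t|^2+|\varphi_x|^2)\,dx$ still closes, at the cost of the constant depending on $\|a\|_{C^1(\overline{Q_T})}$ (hence $C=C(T,a,b)$ in the statement); one must be careful that this works on all of $[0,T]$ without a smallness assumption, which it does because the group is an isometry on the principal part. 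A secondary technical point is that the hidden-regularity multiplier estimate interacts with the coupling term, so the multiplier identity picks up extra volume terms involving $\varphi_x$ and $\varphi_t$ that again are absorbed by the already-established energy bound. Since the paper explicitly says these results "can be proved exactly as in the scalar case," I would present the above as the outline and refer to \cite{Zhang} for the routine parts, flagging only the modification needed to handle the $2\times 2$ vector structure and the $C^1$ time dependence of $a,b$.
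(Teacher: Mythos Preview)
Your outline is correct and aligns with what the paper does: the paper gives no proof of this proposition, stating only that it ``can be proved exactly as in the scalar case'' and referring to \cite{Zhang}; your transposition/energy/hidden-regularity scheme is precisely that scalar-case argument carried over to the $2\times 2$ setting. One small correction: the adjoint system as written in the paper (equation~(\ref{Wave_Adjoint})) is $\varphi_{tt}=\varphi_{xx}-M^{\ast}(a\varphi_t+b\varphi_x)$, not $\varphi_{tt}=\varphi_{xx}-(aM^{t}\varphi)_t-(bM^{t}\varphi)_x$ as you wrote --- the two differ by the zero-order terms $a_tM^{\ast}\varphi+b_xM^{\ast}\varphi$, which is harmless for your Gronwall argument but worth getting right in the transposition identity.
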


The adjoint problem associated with (\ref{Wave 1}) writes:%
\begin{equation}
	\left\{ 
	\begin{array}{lll}
		\varphi _{tt}=\varphi _{xx}-M^{\ast }(a\varphi _{t}+b\varphi _{x}), & 
		\mathrm{in} & (0,T)\times (0,1), \\ 
		\varphi _{\mid x=0,1}=0,\text{ \ \ \ \ \ \ \ \ \ } & \mathrm{in} & (0,T),%
		\text{ \ \ \ \ \ \ \ \ \ \ \ \ \ \ \ \ \ \ \ \ } \\ 
		\left( \varphi ,\varphi _{t}\right) _{\mid t=T}=\left( \varphi
		_{0}^{T},\varphi _{1}^{T}\right) ,\text{ \ \ } & \mathrm{in} & (0,1).\text{
			\ \ \ \ \ \ \ \ \ \ \ \ \ \ \ \ \ \ \ \ \ }%
	\end{array}%
	\right.  \label{Wave_Adjoint}
\end{equation}

\begin{proposition}
	\label{Exist_Adjoint_wave}Let $T>0$ and assume (\ref{M,a,b}). Then:
	
	\begin{enumerate}
		\item For any 
		\begin{equation*}
			\left( \varphi _{0}^{T},\varphi _{1}^{T}\right) \in H_{0}^{1}\left(
			0,1\right) ^{2}\times L^{2}\left( 0,1\right) ^{2},
		\end{equation*}%
		there exists a unique weak solution $\varphi $ to System (\ref{Wave_Adjoint}%
		) such that 
		\begin{equation}
			\varphi \in C\left( \left[ 0,T\right] ,H_{0}^{1}\left( 0,1\right)
			^{2}\right) \cap C^{1}\left( \left[ 0,T\right] ,L^{2}\left( 0,1\right)
			^{2}\right) .  \label{weak_sol}
		\end{equation}%
		Moreover, $\varphi _{x\mid x=0,1}\in L^{2}\left( 0,T\right) ^{2}$ \ and
		there exists a constant $C=C\left( T,a,b\right) >0$ such that: 
		\begin{equation*}
			\left\Vert \left( \varphi ,\varphi _{t}\right) \right\Vert _{C\left( \left[
				0,T\right] ,H_{0}^{1}\left( 0,1\right) ^{2}\times L^{2}\left( 0,1\right)
				^{2}\right) }+\left\Vert \varphi _{x\mid x=0,1}\right\Vert _{L^{2}\left(
				0,T\right) ^{2}}\leq C\left\Vert \left( \varphi _{0}^{T},\varphi
			_{1}^{T}\right) \right\Vert _{H_{0}^{1}\left( 0,1\right) ^{2}\times
				L^{2}\left( 0,1\right) ^{2}}.
		\end{equation*}
		
		\item For any 
		\begin{equation}
			\left( \varphi _{0}^{T},\varphi _{1}^{T}\right) \in \left( H^{2}\cap
			H_{0}^{1}\left( 0,1\right) \right) ^{2}\times H_{0}^{1}\left( 0,1\right) ^{2}
			\label{strong_Sol}
		\end{equation}%
		there exists a unique strong solution $\varphi $ to System (\ref%
		{Wave_Adjoint}) such that 
		\begin{equation*}
			\varphi \in C\left( \left[ 0,T\right] ,\left( H^{2}\cap H_{0}^{1}\left(
			0,1\right) \right) ^{2}\right) \cap C^{1}\left( \left[ 0,T\right]
			,H_{0}^{1}\left( 0,1\right) ^{2}\right) \cap C^{2}\left( \left[ 0,T\right]
			,L^{2}\left( 0,1\right) ^{2}\right) ,
		\end{equation*}
	\end{enumerate}
\end{proposition}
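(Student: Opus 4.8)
The plan is to prove Proposition~\ref{Exist_Adjoint_wave} by reducing the coupled system \eqref{Wave_Adjoint} to two (essentially decoupled) scalar wave equations with $L^2$-in-time source terms and then invoking the classical well-posedness and hidden-regularity theory for the one-dimensional wave equation, exactly as referenced in \cite{Zhang}. Rewrite \eqref{Wave_Adjoint} as $\varphi_{tt}-\varphi_{xx}=F$ with $F:=-M^{\ast}(a\varphi_t+b\varphi_x)$, $\varphi_{\mid x=0,1}=0$, and Cauchy data at $t=T$; by the time-reversibility of the wave operator it is equivalent to work forward from $t=0$. The point is that the coupling enters only through a zeroth- and first-order term, so it will be treated as a perturbation via a fixed-point / energy argument.

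First I would set up the functional framework: for item (1), let $X:=C([0,T],H_0^1(0,1)^2)\cap C^1([0,T],L^2(0,1)^2)$, and define the map $\Phi$ sending $\psi\in X$ to the solution $\varphi$ of the uncoupled problem $\varphi_{tt}-\varphi_{xx}=-M^{\ast}(a\psi_t+b\psi_x)$ with the prescribed boundary and terminal data. Since $a,b\in C^1(\overline{Q_T})$ and $\psi\in X$, the right-hand side lies in $C([0,T],L^2(0,1)^2)\subset L^1(0,T;L^2(0,1)^2)$, so by the standard variation-of-constants (Duhamel) representation for the scalar wave semigroup on $H_0^1\times L^2$, $\Phi(\psi)$ is well defined in $X$ and the energy estimate gives, for $t$ running from $T$ backwards,
\begin{equation*}
	\left\Vert (\varphi,\varphi_t)(t)\right\Vert_{H_0^1\times L^2}\le C\left\Vert(\varphi_0^T,\varphi_1^T)\right\Vert_{H_0^1\times L^2}+C\int_t^T\left\Vert(\psi,\psi_t)(s)\right\Vert_{H_0^1\times L^2}\,ds.
\end{equation*}
A Gronwall argument on this inequality shows $\Phi$ maps a suitable ball into itself, and running the same estimate on the difference of two iterates (here one uses that $\psi\mapsto a\psi_t+b\psi_x$ is linear, so the difference obeys the homogeneous-data version of the estimate) yields a contraction on $C([t_0,T],\dots)$ for $T-t_0$ small; iterating over subintervals covers $[0,T]$. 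The fixed point is the desired weak solution, uniqueness following from the same difference estimate plus Gronwall, and continuous dependence on the data being read off from the a priori bound.

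For the hidden regularity $\varphi_{x\mid x=0,1}\in L^2(0,T)^2$ with the stated estimate, I would use the standard multiplier identity (multiply the equation by $x\varphi_x$ and integrate over $Q_T$, i.e.\ the Pohozaev/Rellich multiplier), which for the scalar wave equation gives
\begin{equation*}
	\tfrac12\int_0^T\left(|\varphi_x(t,0)|^2+|\varphi_x(t,1)|^2\right)dt\le C\left\Vert(\varphi,\varphi_t)\right\Vert_{C([0,T],H_0^1\times L^2)}^2+C\left\Vert F\right\Vert_{L^1(0,T;L^2)}\left\Vert(\varphi,\varphi_t)\right\Vert_{C([0,T],H_0^1\times L^2)},
\end{equation*}
and then absorb $\|F\|_{L^1(0,T;L^2)}=\|M^{\ast}(a\varphi_t+b\varphi_x)\|_{L^1(0,T;L^2)}\le C\|(\varphi,\varphi_t)\|_{C([0,T],H_0^1\times L^2)}$ using the already-established energy bound. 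Item (2) is handled identically one notch up in regularity: differentiate the equation, or equivalently apply the semigroup on $(H^2\cap H_0^1)\times H_0^1$, and observe that when $(\varphi_0^T,\varphi_1^T)$ satisfies \eqref{strong_Sol} the source $-M^{\ast}(a\varphi_t+b\varphi_x)$ lies in $C([0,T],H_0^1)$ (here $C^1$-regularity of $a,b$ is exactly what is needed to keep the spatial derivative of the source in $H_0^1$), so the same fixed-point scheme runs in the higher-regularity space and produces the asserted $C^2([0,T],L^2)$ regularity via the equation itself.

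The main obstacle is bookkeeping rather than anything deep: one must be careful that the perturbation term involves $\varphi_x$, not merely $\varphi$ and $\varphi_t$, so the fixed-point space must control the full $H_0^1\times L^2$ norm (which it does), and that the hidden-regularity estimate for $\varphi_x$ at the boundary does not feed back into the interior energy estimate in a way that breaks the closure — it does not, because the boundary term is estimated \emph{after} the energy bound is in hand, not simultaneously. Everything else is the classical scalar theory applied componentwise, with the matrix $M^{\ast}$ only contributing a harmless constant factor.
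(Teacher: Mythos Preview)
Your proposal is correct and is precisely the standard argument the paper has in mind: the paper gives no proof of Proposition~\ref{Exist_Adjoint_wave} at all, stating only that ``these results\ldots can be proved exactly as in the scalar case'' with a reference to \cite{Zhang}, and your fixed-point/Duhamel scheme plus the multiplier identity for hidden regularity is exactly that scalar-case argument carried out componentwise.

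One small technical caveat on item~(2): the claim that the source $-M^{\ast}(a\varphi_t+b\varphi_x)$ lies in $C([0,T],H_0^1)$ is not quite right, since $b\varphi_x$ has no reason to vanish at $x=0,1$. The clean fix is the route you also mention---differentiate the equation in time: setting $w=\varphi_t$, one gets a system of the same form for $w$ with an extra source $-M^{\ast}(a_t\varphi_t+b_t\varphi_x)\in C([0,T],L^2)$ (this is where $a,b\in C^1$ is used), initial data $(\varphi_1^T,\;\varphi_{0,xx}^T-M^{\ast}(a\varphi_1^T+b\varphi_{0,x}^T)(T,\cdot))\in H_0^1\times L^2$, and then item~(1) applied to $w$ yields $\varphi_t\in C([0,T],H_0^1)\cap C^1([0,T],L^2)$; the $H^2$-regularity of $\varphi$ then follows from the equation itself.
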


We are interested in the controllability issue for System (\ref{Wave 1}).
Recall that System (\ref{Wave 1}) is said to be

\begin{enumerate}
	\item \emph{exactly controllable at time }$T>0$ if for any 
	\begin{equation*}
		\left( y_{0},y_{1}\right) ,\left( \tau _{0},\tau _{1}\right) \in L^{2}\left(
		0,1\right) ^{2}\times H^{-1}\left( 0,1\right) ^{2},
	\end{equation*}%
	there exists $u\in L^{2}\left( 0,T\right) $ such that the associated
	solution $y$ to (\ref{Wave 1}) satisfies 
	\begin{equation*}
		\left( y,y_{t}\right) _{\mid t=T}=\left( \tau _{0},\tau _{1}\right) ,~%
		\mathrm{in}~\left( 0,1\right) .
	\end{equation*}
	
	\item \emph{approximately controllable at time }$T>0$ if for any 
	\begin{equation*}
		\left( y_{0},y_{1}\right) ,\left( \tau _{0},\tau _{1}\right) \in L^{2}\left(
		0,1\right) ^{2}\times H^{-1}\left( 0,1\right) ^{2},
	\end{equation*}%
	and any $\varepsilon >0,$ there exists $u\in L^{2}\left( 0,T\right) $ such
	that the associated solution $y$ to (\ref{Wave 1}) satisfies:%
	\begin{equation*}
		\left\Vert \left( y,y_{t}\right) _{\mid t=T}-\left( \tau _{0},\tau
		_{1}\right) \right\Vert _{L^{2}\left( 0,1\right) ^{2}\times H^{-1}\left(
			0,1\right) ^{2}}<\varepsilon .
	\end{equation*}
\end{enumerate}

These controllability concepts are known to be connected with the
observability properties of the adjoint system (\ref{Wave_Adjoint}) (see 
\cite[Part 4, Chapter 2 ]{Zabczyk}). Namely:

\begin{itemize}
	\item System (\ref{Wave 1}) is exactly controllable at time $T>0$ if, and
	only if, there exists $C=C_{T}>0$ such that for any $\left( \varphi
	_{0},\varphi _{1}\right) \in H_{0}^{1}\left( 0,1\right) ^{2}\times
	L^{2}\left( 0,1\right) ^{2},$ the associated solution $\varphi $ to (\ref%
	{Wave_Adjoint}) satisfies the \emph{observability inequality}: 
	\begin{equation}
		\left\Vert \left( \varphi _{0}^{T},\varphi _{1}^{T}\right) \right\Vert
		_{H_{0}^{1}\left( 0,1\right) ^{2}\times L^{2}\left( 0,1\right) ^{2}}^{2}\leq
		C\int_{0}^{T}\left\vert B^{\ast }\varphi _{x}\left( t,0\right) \right\vert
		^{2}dt.  \label{Exact_Obs_1}
	\end{equation}%
	In this case, the adjoint system is said exactly observable.
	
	\item System (\ref{Wave 1}) is approximately controllable at time $T>0$ if,
	and only if, for any $\left( \varphi _{0},\varphi _{1}\right) \in
	H_{0}^{1}\left( 0,1\right) ^{2}\times L^{2}\left( 0,1\right) ^{2}$, the
	associated solution $\varphi $ to (\ref{Wave_Adjoint}) satisfies the \emph{%
		unique} \emph{continuation property}:%
	\begin{equation}
		\left( B^{\ast }\varphi _{x}\left( t,0\right) =0,~t\in \left( 0,T\right)
		\right) \Rightarrow \varphi \equiv 0~\mathrm{in}~Q_{T}.  \label{Approx_Obs_1}
	\end{equation}
\end{itemize}

To study the observability inequality (\ref{Exact_Obs_1}) for solutions to (%
\ref{Wave_Adjoint}), we transform this system into a hyperbolic system of
order one. Introduce the Riemann invariants:%
\begin{equation}
	p=\varphi _{t}-\varphi _{x},\text{ \ \ }q=\varphi _{t}+\varphi _{x},~\mathrm{%
		in~}Q_{T}.  \label{Riemann}
\end{equation}%
We will have, under assumption (\ref{weak_sol}) of Proposition \ref%
{Exist_Adjoint_wave}: 
\begin{equation}
	p_{\mid t=T}=\varphi _{1}^{T}-\frac{d\varphi _{0}^{T}}{dx}=p_{T}\in
	L^{2}\left( 0,1\right) ^{2},~q_{\mid t=T}=\varphi _{1}^{T}+\frac{d\varphi
		_{0}^{T}}{dx}=q_{T}\in L^{2}\left( 0,1\right) ^{2}.  \label{weak_sol_hyp}
\end{equation}%
Thus, $\left( p_{T},q_{T}\right) \in L^{2}\left( 0,1\right) ^{2}\times
L^{2}\left( 0,1\right) ^{2}.$ Moreover, since $\varphi _{0}^{T}\in
H_{0}^{1}\left( 0,1\right) ^{2}$ and $q_{T}-p_{T}=2\dfrac{d\varphi _{0}^{T}}{%
	dx},$ we must have:%
\begin{equation}
	\int_{0}^{1}\left( q_{T}-p_{T}\right) =0.  \label{moyenne}
\end{equation}%
Under assumption (\ref{strong_Sol}) of the same proposition, we get:%
\begin{equation}
	p_{\mid t=T}=p_{T}\in H^{1}\left( 0,1\right) ^{2},~q_{\mid t=T}=q_{T}\in
	H^{1}\left( 0,1\right) ^{2}.  \label{strong_sol_hyp}
\end{equation}%
It is readily seen that System (\ref{Wave_Adjoint}) writes:%
\begin{equation}
	\left\{ 
	\begin{array}{lll}
		p_{t}+p_{x}+M^{\ast }\left( \alpha _{1}p+\alpha _{2}q\right) =0, & \mathrm{in%
		} & Q_{T},\text{ \ \ } \\ 
		q_{t}-q_{x}+M^{\ast }\left( \alpha _{1}p+\alpha _{2}q\right) =0,\text{ \ } & 
		\mathrm{in} & Q_{T},\text{ \ \ } \\ 
		(p+q)_{\mid x=0,1}=0_{%
			\mathbb{R}
			^{2}}, & \mathrm{in} & (0,T),\text{ } \\ 
		\left( p,q\right) _{|t=T}=(p_{T},q_{T}),\text{\ \ \ \ \ \ \ \ \ \ \ \ \ \ \ }
		& \mathrm{in} & (0,1),\text{ \ }%
	\end{array}%
	\right.   \label{Z system}
\end{equation}%
where:%
\begin{equation}
	\alpha _{1}=\frac{a-b}{2},\text{ }\alpha _{2}=\frac{a+b}{2},  \label{A_1-A_2}
\end{equation}%
($a,b$ and $M$ being defined in (\ref{M,a,b})). From which it appears in
particular that 
\begin{equation}
	\alpha _{1},\alpha _{2}\in C^{1}(\overline{Q_{T}};%
	\mathbb{R}
	).  \label{alpha1_alpha 2}
\end{equation}%
Thanks to Proposition \ref{Exist_Adjoint_wave}, the weak solution $\left(
p,q\right) $ to System \ref{Z system} associated with $\left(
p_{T},q_{T}\right) \in L^{2}\left( 0,1\right) ^{2}\times L^{2}\left(
0,1\right) ^{2}$ will satisfy 
\begin{equation*}
	\left. 
	\begin{array}{l}
		\left( p,q\right) \in C\left( \left[ 0,T\right] ,L^{2}\left( 0,1\right)
		^{2}\times L^{2}\left( 0,1\right) ^{2}\right) , \\ 
		\\ 
		\left( q-p\right) _{\mid x=0,1}\in L^{2}\left( 0,T\right) ^{2}, \\ 
		\\ 
		\left\Vert \left( p,q\right) \right\Vert _{C\left( \left[ 0,T\right]
			,L^{2}\left( 0,1\right) ^{2}\times L^{2}\left( 0,1\right) ^{2}\right)
		}+\left\Vert \left( q-p\right) _{\mid x=0,1}\right\Vert _{L^{2}\left(
			0,T\right) ^{2}} \\ 
		\leq C\left\Vert (p_{T},q_{T})\right\Vert _{L^{2}\left( 0,1\right)
			^{2}\times L^{2}\left( 0,1\right) ^{2}},%
	\end{array}%
	\right. 
\end{equation*}%
and when $\left( p_{T},q_{T}\right) \in H^{1}\left( 0,1\right) ^{2}\times
H^{1}\left( 0,1\right) ^{2}$%
\begin{equation*}
	\left. 
	\begin{array}{l}
		\left( p,q\right) \in C\left( \left[ 0,T\right] ,H^{1}\left( 0,1\right)
		^{2}\times H^{1}\left( 0,1\right) ^{2}\right) \cap C^{1}\left( \left[ 0,T%
		\right] ,L^{2}\left( 0,1\right) ^{2}\times L^{2}\left( 0,1\right)
		^{2}\right) , \\ 
		\\ 
		\left( q-p\right) _{\mid x=0,1}\in L^{2}\left( 0,T\right) ^{2}, \\ 
		\\ 
		\left\Vert \left( p,q\right) \right\Vert _{C\left( \left[ 0,T\right]
			,L^{2}\left( 0,1\right) ^{2}\times L^{2}\left( 0,1\right) ^{2}\right)
		}+\left\Vert \left( q-p\right) _{\mid x=0,1}\right\Vert _{L^{2}\left(
			0,T\right) ^{2}} \\ 
		\leq C\left\Vert (p_{T},q_{T})\right\Vert _{L^{2}\left( 0,1\right)
			^{2}\times L^{2}\left( 0,1\right) ^{2}},%
	\end{array}%
	\right. 
\end{equation*}

Conversely, if $\left( p,q\right) $ is a solution to (\ref{Z system})
associated with $(p_{T},q_{T})\in H^{1}\left( 0,1\right) ^{2}\times
H^{1}\left( 0,1\right) ^{2}$ satisfying (\ref{moyenne}), then there exists $%
\varphi \in H^{1}\left( Q_{T}\right) ^{2}$ such that:%
\begin{equation*}
	\left( 
	\begin{array}{c}
		\varphi _{t} \\ 
		\varphi _{x}%
	\end{array}%
	\right) =\left( 
	\begin{array}{c}
		\dfrac{q+p}{2} \\ 
		\dfrac{q-p}{2}%
	\end{array}%
	\right) ,
\end{equation*}%
since$~$in$~Q_{T},$ from System (\ref{Z system}), the scalar curl of $\left(
q+p,q-p\right) ^{t}$ is: 
\begin{equation*}
	\left( q+p\right) _{x}-\left( q-p\right) _{t}=\left( p_{x}+p_{t}\right)
	-\left( q_{t}-q_{x}\right) \equiv 0.
\end{equation*}%
Moreover, taking into account the definition of $\alpha _{1}~$and $\alpha
_{2}$ in (\ref{A_1-A_2}), it is straightforward that:%
\begin{equation*}
	\varphi _{tt}-\varphi _{xx}=-M^{\ast }\left( a\varphi _{t}+b\varphi
	_{x}\right) ,~\mathrm{in}~Q_{T}.
\end{equation*}%
We note moreover that%
\begin{equation*}
	\varphi _{x}=\dfrac{q-p}{2}\Rightarrow \varphi \left( t,x\right)
	=\int_{0}^{x}\dfrac{q-p}{2}\left( t,\xi \right) d\xi +C.
\end{equation*}%
From (\ref{Z system}), it appears that 
\begin{equation*}
	\left( q-p\right) _{t}=\left( q+p\right) _{x}\Rightarrow \left(
	\int_{0}^{1}\left( q-p\right) \right) _{t}=0\Rightarrow \int_{0}^{1}\left(
	q-p\right) =0,
\end{equation*}%
the last equality coming from (\ref{moyenne}) and the continuity in time of $%
\left( p,q\right) $. It follows that: 
\begin{equation*}
	\varphi _{\mid x=0,1}=0,~\mathrm{in}~\left( 0,T\right) .
\end{equation*}

To summarize, let us introduce the space:%
\begin{equation}
	H=\left\{ \left( f,g\right) \in L^{2}\left( 0,1\right) ^{2}\times
	L^{2}\left( 0,1\right) ^{2},~\int_{0}^{1}\left( f-g\right) =0\right\} .
	\label{H}
\end{equation}%
This is clearly a closed subspace of $L^{2}\left( 0,1\right) ^{2}\times
L^{2}\left( 0,1\right) ^{2}$and thus a Hilbert space with the usual norm
(and scalar product) of $L^{2}\left( 0,1\right) ^{2}\times L^{2}\left(
0,1\right) ^{2}.$ In view of the previous considerations, we have:

\begin{proposition}
	Let $T>0$ and $H$ defined in (\ref{H}).
	
	\begin{enumerate}
		\item For any $\left( p_{T},q_{T}\right) \in H,$ there exists a unique weak
		solution $\left( p,q\right) $ to (\ref{Z system}) such that $\left(
		p,q\right) \in C\left( \left[ 0,T\right] ,H\right) .$ Moreover $\left(
		p-q\right) _{\mid x=0,1}\in L^{2}\left( 0,T\right) ^{2}$ and there exists a
		constant $C=C\left( T,\alpha _{1},\alpha _{2}\right) >0$ such that: 
		\begin{equation*}
			\left\Vert \left( p,q\right) \right\Vert _{C\left( \left[ 0,T\right]
				,H\right) }+\left\Vert \left( p-q\right) _{\mid x=0,1}\right\Vert
			_{L^{2}\left( 0,T\right) ^{2}}\leq C\left\Vert (p_{T},q_{T})\right\Vert _{H}.
		\end{equation*}
		
		\item The observability inequality (\ref{Exact_Obs_1}) is equivalent to 
		\begin{equation}
			\left\Vert (p_{T},q_{T})\right\Vert _{H}^{2}\leq C_{T}\int_{0}^{T}\left\vert
			B^{\ast }p\left( t,0\right) \right\vert ^{2}dt.  \label{Exact_Obs_2}
		\end{equation}
	\end{enumerate}
\end{proposition}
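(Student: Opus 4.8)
The plan is to establish the equivalence of the two observability inequalities by transferring norms and boundary traces between the second-order adjoint system \eqref{Wave_Adjoint} and the first-order system \eqref{Z system} through the change of variables \eqref{Riemann}. First I would prove the first assertion: given $(p_T,q_T)\in H$, one approximates it by a sequence $(p_T^n,q_T^n)\in H^1(0,1)^2\times H^1(0,1)^2$ still satisfying the mean condition \eqref{moyenne} (this is possible because $H$ is closed and smooth functions with the right mean are dense in it), invokes the reconstruction of a strong solution $\varphi^n$ to \eqref{Wave_Adjoint} via Proposition \ref{Exist_Adjoint_wave}(2), and passes to the limit using the a priori estimates already recorded after \eqref{Z system}. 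The hidden-regularity bound on $(q-p)_{\mid x=0,1}$ follows from the corresponding bound $\varphi_{x\mid x=0,1}\in L^2(0,T)^2$ in Proposition \ref{Exist_Adjoint_wave}, since on the boundary $(p+q)_{\mid x=0,1}=0$ forces $(q-p)_{\mid x=0,1}=2q_{\mid x=0,1}=2\varphi_{x\mid x=0,1}$; continuity in $t$ with values in $H$ is inherited from the continuity of $(\varphi,\varphi_t)$ together with $\varphi_x=(q-p)/2$.

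For the second assertion I would argue that, given $(\varphi_0^T,\varphi_1^T)\in H_0^1(0,1)^2\times L^2(0,1)^2$ and the associated $(p_T,q_T)$ defined by \eqref{weak_sol_hyp}, the two norms are equivalent: on one hand $\|(p_T,q_T)\|_H^2 = \|\varphi_1^T - \tfrac{d\varphi_0^T}{dx}\|_{L^2}^2 + \|\varphi_1^T + \tfrac{d\varphi_0^T}{dx}\|_{L^2}^2 = 2\|\varphi_1^T\|_{L^2}^2 + 2\|\tfrac{d\varphi_0^T}{dx}\|_{L^2}^2$, and since $\varphi_0^T\in H_0^1(0,1)^2$ the Poincaré inequality makes $\|\tfrac{d\varphi_0^T}{dx}\|_{L^2}$ an equivalent norm on $H_0^1(0,1)^2$; thus $\|(p_T,q_T)\|_H^2$ is equivalent to $\|(\varphi_0^T,\varphi_1^T)\|_{H_0^1(0,1)^2\times L^2(0,1)^2}^2$. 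On the other hand, the boundary observation terms coincide exactly: at $x=0$ we have $p(t,0)=\varphi_t(t,0)-\varphi_x(t,0)$, and since $\varphi_{\mid x=0}=0$ for all $t$ we get $\varphi_t(t,0)=0$, hence $p(t,0)=-\varphi_x(t,0)$ and therefore $|B^\ast p(t,0)|^2 = |B^\ast \varphi_x(t,0)|^2$ pointwise in $t$. Plugging these identifications into \eqref{Exact_Obs_1} yields \eqref{Exact_Obs_2} and conversely, with the constants related only by the fixed Poincaré constant.

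The correspondence between \eqref{Wave_Adjoint} and \eqref{Z system} in both directions — forward via \eqref{Riemann}, backward via the curl-free reconstruction $\varphi(t,x)=\int_0^x \tfrac{q-p}{2}(t,\xi)\,d\xi$ — was already carried out in the running text preceding the statement, so the only genuinely new points are the density approximation needed to legitimize the limiting argument in part (1) and the care required to check that $(q-p)_{\mid x=0,1}$ is well-defined as an $L^2(0,T)^2$ trace for merely weak solutions (rather than only for strong ones). The main obstacle I anticipate is precisely this hidden-regularity/trace point: one must make sure the map $(p_T,q_T)\mapsto (q-p)_{\mid x=0,1}$ extends continuously from $H^1$-data to all of $H$, which is where the estimates inherited from Proposition \ref{Exist_Adjoint_wave} are essential and must be applied to the approximating sequence before passing to the limit. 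Everything else is a routine matter of bookkeeping with the linear algebra of the Riemann invariants and the Poincaré inequality.
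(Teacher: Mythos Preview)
Your proposal is correct and follows the same route the paper takes: the proposition is stated there without a separate proof, simply as a summary (``In view of the previous considerations, we have:'') of the correspondence between \eqref{Wave_Adjoint} and \eqref{Z system} via the Riemann invariants \eqref{Riemann}, combined with the estimates of Proposition~\ref{Exist_Adjoint_wave}. Your write-up just makes explicit the density/limiting step for part (1) and the Poincar\'e/norm-equivalence and trace identification $p(t,0)=-\varphi_x(t,0)$ for part (2), which are exactly the ingredients the paper's preceding discussion leaves implicit.
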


We will need in an essential way the block diagonal system associated with
System (\ref{Z system}):

\begin{equation}
	\left\{ 
	\begin{array}{lll}
		p_{t}+p_{x}+M^{\ast }\alpha _{1}p=0,\text{\ } & \mathrm{in} & Q_{T},\text{ \
			\ } \\ 
		q_{t}-q_{x}+M^{\ast }\alpha _{2}q=0,\text{ } & \mathrm{in} & Q_{T},\text{ \
			\ } \\ 
		(p+q)_{\mid x=0,1}=0_{%
			\mathbb{R}
			^{2}}, & \mathrm{in} & (0,T),\text{ } \\ 
		\left( p,q\right) _{|t=T}=(p_{T},q_{T}),\text{ \ \ } & \mathrm{in} & (0,1).%
		\text{ \ }%
	\end{array}%
	\right.  \label{Z diag system}
\end{equation}

System (\ref{Z system}) is a perturbation of System (\ref{Z diag system}) by
the multiplication operator defined on $H$ by:

\begin{equation}
	\mathcal{P}\left( 
	\begin{array}{c}
		p \\ 
		q%
	\end{array}%
	\right) =\left( 
	\begin{array}{cc}
		0_{2\times 2} & \alpha _{2}I_{2\times 2} \\ 
		\alpha _{1}I_{2\times 2} & 0_{2\times 2}%
	\end{array}%
	\right) \left( 
	\begin{array}{c}
		p \\ 
		q%
	\end{array}%
	\right) .  \label{P}
\end{equation}%
The plan now is the following:

\begin{enumerate}
	\item In a first step (Section \ref{Section weak observability}), we will
	give necessary and sufficient condition for the solution to the diagonal
	system (\ref{Z diag system}) to satisfy the observability inequality (\ref%
	{Exact_Obs_2}).
	
	\item In a second step and in the same section (Subsection \ref%
	{Subsectioncompactness}), and in the same spirit of \cite{Neves}, we will
	prove that if the solutions of (\ref{Z diag system}) satisfy (\ref%
	{Exact_Obs_2}), then up to a finite dimensional subspace of initial data in $%
	H,$ the same is true for solutions to System (\ref{Z system}). More
	precisely, we will prove that there exists a compact operator $%
	N:H\rightarrow L^{2}(0,T),$ such that the following \emph{weak observability}
	inequality 
	\begin{equation}
		\left\Vert (p_{T},q_{T})\right\Vert _{H}^{2}\leq C_{T}\int_{0}^{T}\left\vert
		B^{\ast }p\left( t,0\right) \right\vert ^{2}dt+\left\Vert
		N(p_{T},q_{T})\right\Vert _{L^{2}(0,T)}^{2},  \label{weak obsevation}
	\end{equation}
	
	holds. Actually, we will see that $N:=p(t,0)-p_{d}(t,0)$ where $p$ and $%
	p_{d} $ are the solutions of Systems (\ref{Z system}) and (\ref{Z diag
		system}) respectively. Note that by the \emph{weak observability} inequality
	entails exact controllability up to finite dimensional space of target
	states (which are known as the invisible states). More precisely, the \emph{%
		observability }inequality (\ref{Exact_Obs_2}) holds in the orthogonal in $H$
	of the operator $(p_{T},q_{T})\mapsto p(t,0)$ which is finite co-dimensional
	space. (See \cite[Lemma 3]{Peetre}).
	
	\item The last step (Section \ref{Section unique continuation}) will provide
	sufficient (and necessary in some cases) for the unique continuation
	property to be satisfied (or the Fattorini criterion) for some particular
	matrix $M$ and functions $\alpha _{1}$ and $\alpha _{2}.$ Nontrivial
	examples will be developed at the end the section.
\end{enumerate}

\section{Weak observability\label{Section weak observability}}

In the block diagonal system (\ref{Z diag system}), the change of variables $%
t\rightleftharpoons T-t$ (we keep the same notations) leads to a system of
the from%
\begin{equation}
	\left\{ 
	\begin{array}{lll}
		p_{t}+p_{x}-M^{\ast }\eta _{1}p=0, & \mathrm{in} & Q_{T},\text{ \ \ } \\ 
		q_{t}-q_{x}-M^{\ast }\eta _{2}q=0,\text{\ } & \mathrm{in} & Q_{T},\text{ \ \ 
		} \\ 
		(p+q)_{\mid x=0,1}=0_{%
			\mathbb{R}
			^{2}}, & \mathrm{in} & (0,T),\text{ } \\ 
		\left( p,q\right) _{|t=0}=(p_{0},q_{0}),\text{\ \ \ \ } & \mathrm{in} & 
		(0,1),\text{ \ }%
	\end{array}%
	\right.  \label{Z diag system_bis}
\end{equation}%
where%
\begin{equation}
	\eta _{1}\left( t,x\right) =\alpha _{2}\left( T-t,x\right) ,~\eta _{2}\left(
	t,x\right) =\alpha _{1}\left( T-t,x\right) ,~\left( t,x\right) \in Q_{T}.
	\label{eta_1-eta_2}
\end{equation}%
Note that the observed component does not change since $(p+q)_{\mid x=0}=0_{%
	\mathbb{R}
	^{2}}.$

In this subsection, for any numbers $s,T$ such that $0<s<T$, we compute the
explicit solution $Z=(p,q)$ to the system%
\begin{equation}
	\left\{ 
	\begin{array}{lll}
		p_{t}+p_{x}-M^{\ast }\eta _{1}p=0, & \mathrm{in} & Q_{T},\text{ \ \ } \\ 
		q_{t}-q_{x}-M^{\ast }\eta _{2}q=0, & \mathrm{in} & Q_{T},\text{ \ \ } \\ 
		(p+q)_{\mid x=0,1}=0_{%
			\mathbb{R}
			^{2}}, & \mathrm{in} & (0,T),\text{ } \\ 
		\left( p,q\right) _{|t=s}=(p_{s},q_{s}),\text{ \ \ } & \mathrm{in} & (0,1).%
	\end{array}%
	\right.  \label{Z diag s}
\end{equation}%
For given real numbers $s,t$ such that $0\leq s<t$, the function $%
Z(t,x)=Z(t,x;s,Z_{s})=(p,q)(t,x;s,Z_{s})$ for $t\in (s,T)$ and $x\in (0,1)$
will denote the solution to (\ref{Z diag s}) with its dependence on the
starting time $s\geq 0$ and the initial data $Z_{s}=\left(
p_{s},q_{s}\right) \in H.$

When $s=0$, we simply write $Z(t,x)=Z(t,x;Z_{0})$ but unless necessary, 
\emph{all along this section, }$Z=\left( p,q\right) $\emph{\ will denote the
	solution to (\ref{Z diag system_bis}).}

The following assumption is fixed and is assumed in all the results of this
section:%
\begin{equation*}
	\eta _{i}\in C^{1}\left( \overline{Q_{T}},\mathbb{R}\right) ,~i=1,2.
\end{equation*}%
It is simply derived from the assumption on $a,b$ in (\ref{M,a,b}).

Notice that the exact observability property of System (\ref{Z diag
	system_bis}) amounts to the observability inequality:%
\begin{equation}
	\exists C_{T}>0,~\left\Vert \left( p_{0},q_{0}\right) \right\Vert
	_{H}^{2}\leq C_{T}\int_{0}^{T}\left\vert B^{\ast }p\left( t,0\right)
	\right\vert ^{2}dt,~\forall \left( p_{0},q_{0}\right) \in H.  \label{OBS}
\end{equation}

To express more compactly the formulas for the solutions to (\ref{Z diag s}%
), we introduce the function $\phi :\mathbb{R}_{+}\mathbb{\times }\mathbb{R}%
_{+}\rightarrow \mathbb{R}$ defined by%
\begin{equation}
	\phi \left( t,s\right) =\left\{ 
	\begin{array}{ll}
		0, & \mathrm{if}~~t\leq s, \\ 
		&  \\ 
		\int_{\max \{s,t-2\}}^{\max \{s,t-1\}}\eta _{1}(\tau ,\tau -\left(
		t-2\right) )d\tau +\int_{\max \{s,t-1\}}^{t}\eta _{2}(\tau ,t-\tau )d\tau ,
		& \mathrm{if}~~t>s,%
	\end{array}%
	\right.  \label{phi}
\end{equation}%
and the sequence of functions:%
\begin{equation}
	f_{n}(t,s)=\sum_{k=0}^{n}\phi (t-2k,s),\text{ }n\geq 0.  \label{f_n}
\end{equation}%
When $s=0$, we simply write:%
\begin{equation*}
	\phi (t,0)=\phi (t),\text{ \ }f_{n}(t,0)=f_{n}(t),\text{ }t\in \mathbb{R},%
	\text{ }n\geq 0.
\end{equation*}

At this level, it is useful to clarify the geometric meaning of the function 
$\phi .$ Actually the characteristic curves associated with the hyperbolic
systems (\ref{Z diag system}), (\ref{Z diag system_bis}) are the lines 
\begin{equation*}
	x+t=c_{1}\text{ },~x-t=c_{2},\text{ }c_{1},c_{2}\in 
	\mathbb{R}
	.
\end{equation*}%
Introduce the vector field $\boldsymbol{F}=\left( \frac{\eta _{1}+\eta _{2}}{%
	2},\frac{\eta _{1}-\eta _{2}}{2}\right) $ and let $\gamma _{j}$ $\left(
j=1,2\right) $ the two directions $\gamma _{1}=\left( 1,1\right) ~$and $%
\gamma _{2}=\left( 1,-1\right) $ of the characteristic lines. For the
canonical scalar product in $\mathbb{R}^{2},$ one has $\boldsymbol{F}\cdot
\gamma _{j}=\eta _{j}$, $\left( j=1,2\right) $ and for $t>0,$ $\phi \left(
t\right) $ is then the line integral of the vector field $\boldsymbol{F}$
along the line $\Gamma _{t}$ defined by the function:%
\begin{equation}
	\gamma _{t}\left( \tau \right) =\left\{ 
	\begin{array}{lll}
		\left( \tau ,\tau -\left( t-2\right) \right) , & \text{\textrm{if}} & \max
		\{0,t-2\}\leq \tau \leq \max \{0,t-1\}, \\ 
		&  &  \\ 
		\left( \tau ,t-\tau \right) , & \text{\textrm{if}} & \max \{0,t-1\}\leq \tau
		\leq t.%
	\end{array}%
	\right.  \label{gamma}
\end{equation}%
As a consequence, for $t>0$ and $n\geq 0,$ the function $f_{n}\left(
t\right) $ is the line integral of the vector field $\boldsymbol{F}$ along
the lines $\cup _{1\leq k\leq n+1}\Gamma _{t-2k}$ with the convention that
if $t-2k<0,$ $\Gamma _{t-2k}=\emptyset $ (see the figure below for the
representation of these lines). 
\begin{figure}[H]
	\centering
	\label{figure1} 
	\begin{tikzpicture}[scale=1.2]
		\draw[line width=1.2pt] (0,0) -- (0,2);
		\draw[line width=1.2pt] (0,0) -- (8,0);
		\draw[line width=1.2pt] (0,2) -- (8,2);
		\draw [line width=1.2pt](0,0) -- (8,0);
		\draw [line width=1pt,color=red](5,2) -- (7,0);
		\draw [line width=1pt,color=red](3,0) -- (5,2);
		\draw [line width=1pt,color=red](3,0) -- (1,2);
		\draw [line width=1pt,color=red](1,2) -- (0,1);
		\draw (6.9,0) node[anchor=north west] {$t$};
		\draw (4.7,2.4) node[anchor=north west] {$t-1$};
		\draw (0.7,2.4) node[anchor=north west] {$t-3$};
		\draw (2.7,0) node[anchor=north west] {$t-2$};
		\draw (-0.9,2.2) node[anchor=north west] {$x=1$};
	\end{tikzpicture}
	\caption{$\cup _{1\leq k\leq 2}\Gamma _{t-2k}$ is represented by the union
		of the reflected red lines on the boundary.}
\end{figure}
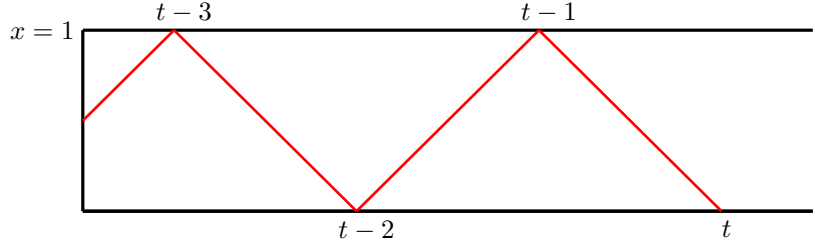

\subsection{Main results}

As pointed out above, observability inequality for System (\ref{Z system})
will hold modulo compact operator. A classical functional analysis result
shows that the space of invisible target is finite codimension and it might
be reduced to zero if approximate controllability (or the unique
continuation property) holds (See Section \ref{Section unique continuation}).

We start by a negative controllability result:

\begin{theorem}
	If $T<4,$ the weak observability inequality (\ref{weak obsevation}) doesn't
	hold. More precisely, there is an infinite dimensional space of unreachable
	target states.
\end{theorem}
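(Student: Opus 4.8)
The idea is the classical one for $1$-D wave equations: when the time horizon is too short, a whole class of initial data for the adjoint system produces a vanishing observation, and moreover this phenomenon is stable under the compact perturbation encoded by $N$, so that the \emph{weak} observability inequality (\ref{weak obsevation}) must also fail. Concretely, I would exploit the fact that the control acts only at $x=0$ and only through $B^{*}$: the observed quantity is $B^{*}p(t,0)$, and by finite speed of propagation the value $p(t,0)$ on $(0,T)$ depends only on the data $(p_0,q_0)$ restricted to a region near $x=0$ determined by the characteristics reflected at $x=1$. When $T<4$, a signal emitted from the far part of the interval simply does not have time to travel to $x=1$, reflect, come back to $x=0$ (that round trip costs time $2$), and then — because $B^{*}$ kills one direction of $\mathbb{R}^{2}$ in general — one needs a \emph{second} reflection to see the full state, costing another $2$, hence the threshold $4$.

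\textbf{Step 1: reduce to the diagonal system.} By the perturbation structure made explicit in (\ref{P}), we have $N(p_T,q_T)=p(t,0)-p_d(t,0)$ where $p$ solves (\ref{Z system}) and $p_d$ solves (\ref{Z diag system}). Thus the right-hand side of (\ref{weak obsevation}) is, up to constants, $\int_0^T |B^{*}p_d(t,0)|^2\,dt$ (after the time reversal $t\rightleftharpoons T-t$, this is governed by (\ref{Z diag system_bis})). So it suffices to produce an infinite-dimensional space of data $(p_0,q_0)\in H$ for which $B^{*}p_d(t,0)=0$ on $(0,T)$, where $p_d=p$ is now the first component of the solution of (\ref{Z diag system_bis}); such data then violate (\ref{weak obsevation}) because the left-hand side $\|(p_0,q_0)\|_H^2$ can be made nonzero while the entire right-hand side vanishes.

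\textbf{Step 2: propagate along characteristics and locate the dependence region.} Solve (\ref{Z diag system_bis}) explicitly by the characteristic method as set up in the excerpt. Along the line $x+t=\mathrm{const}$ the component $p$ is transported (with an exponential weight coming from $M^{*}\eta_1$, which is irrelevant here), and along $x-t=\mathrm{const}$ the component $q$ is transported, with the coupling through the boundary condition $(p+q)|_{x=0,1}=0$. Tracing back from the point $(t,0)$: the value $p(t,0)$ is a (matrix-weighted) combination of $q_0$ and $p_0$ evaluated along the broken characteristic $\Gamma_t\cup\Gamma_{t-2}\cup\cdots$. For $t\in(0,T)$ with $T<4$, only at most two legs of this broken line ever touch the far end, and in fact $p(t,0)$ for $t<T<4$ depends on $(p_0,q_0)$ only through their values on a subinterval $[0,\delta]$ with $\delta<1$ (one needs to be a bit careful and split $T\le 2$, $2<T<4$; for $T\le2$ there is no reflection at $x=1$ at all). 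Consequently, any data supported in the complementary subinterval $[\delta,1]$ — more precisely, any $(p_0,q_0)\in H$ with $p_0=q_0\equiv 0$ on $[0,\delta]$ (the constraint $\int_0^1(p_0-q_0)=0$ is easy to satisfy within this class, e.g. by taking $p_0\equiv 0$ and $q_0$ of mean zero supported in $[\delta,1]$) — produces $p(t,0)=0$ for all $t\in(0,T)$, hence $B^{*}p(t,0)=0$.

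\textbf{Step 3: conclude infinite dimensionality.} The space of such data is infinite-dimensional (it contains all mean-zero $L^2$ functions supported in $[\delta,1]$ in the $q_0$-slot), it sits inside $H$, and for each nonzero such datum the left side of (\ref{weak obsevation}) is strictly positive while the right side is identically zero. By the duality recalled before (\ref{Exact_Obs_1})--(\ref{Approx_Obs_2}) — or directly — this infinite-dimensional set corresponds to an infinite-dimensional space of unreachable target states, which is the stated conclusion. The main obstacle is the bookkeeping in Step 2: one must correctly describe the broken characteristic through $(t,0)$ and verify that for every $t<T<4$ it stays within $x\le \delta$ for some fixed $\delta<1$; the two sub-ranges $T\le 2$ and $2<T<4$ are handled slightly differently, and one should double-check that the boundary condition at $x=1$ does not smuggle in dependence on the far data through a reflection that fits inside time $T$. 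Once the geometry is pinned down, the rest is immediate from the explicit solution formula and the identification $N(p_T,q_T)=p(t,0)-p_d(t,0)$.
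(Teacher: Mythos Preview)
Your Step 2 contains a genuine error that breaks the proof for $2\le T<4$. You claim that for $T<4$, the value $p_d(t,0)$ depends only on $(p_0,q_0)$ restricted to some $[0,\delta]$ with $\delta<1$. This is false once $T\ge 2$. Tracing the broken characteristics for the diagonal system (\ref{Z diag system_bis}) (see Lemma \ref{Lemma p,q diag}): for $t\in(0,1)$ one sees $q_0(t)$; for $t\in(1,2)$ one sees $p_0(2-t)$; hence already for $T=2$ the observation $p_d(\cdot,0)$ on $(0,T)$ involves \emph{all} of $p_0$ and $q_0$ on $(0,1)$. A support argument of the type you describe works only for $T<2$ and cannot reach the threshold $4$.

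The mechanism the paper uses for $2\le T<4$ is different and relies crucially on $B^{*}$ being a rank-one functional on $\mathbb{R}^{2}$. From (\ref{B*p(t,0)}) one has, for instance for $2\le T<3$,
\[
\int_0^T |B^{*}p_d(t,0)|^2\,dt=\int_0^1\!|B^{*}e^{f_0(2-x)M^{*}}p_0(x)|^2\,dx
+\int_0^1\!|B^{*}e^{f_0(x)M^{*}}q_0(x)|^2\,dx
+\int_0^{T-2}\!|B^{*}e^{f_1(x+2)M^{*}}q_0(x)|^2\,dx.
\]
Each $p_0(x)\in\mathbb{R}^{2}$ is observed only through the single linear form $B^{*}e^{f_0(2-x)M^{*}}$, so choosing $p_0(x)$ pointwise orthogonal to $e^{f_0(2-x)M}B$ (an infinite-dimensional subspace of $L^2(0,1)^2$) kills that term. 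This is Proposition \ref{control_T<4}: for $T<4$ there is an infinite-dimensional space of data with $B^{*}p_d(\cdot,0)\equiv 0$. The minimal time $4$ comes from the need to see each $p_0(x)$ (and each $q_0(x)$) through \emph{two} linearly independent forms $B^{*}e^{f_k M^{*}}$, which requires two full round trips.

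A smaller issue in Step 1: from $B^{*}p_d(\cdot,0)=0$ it does \emph{not} follow that the right-hand side of (\ref{weak obsevation}) vanishes; it becomes $(C_T+1)\|N(p_0,q_0)\|^2$. You still need to invoke the compactness of $N$ (Theorem \ref{theorem compactness}) to conclude: a compact operator cannot be bounded below on an infinite-dimensional subspace, so (\ref{weak obsevation}) must fail there. This is exactly the content of Remark \ref{Remark not control}.
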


For the proof see Proposition \ref{control_T<4} and Remark \ref{Remark not
	control}.

Now, we present a positive controllability results. Denote by $\lambda
_{1},\lambda _{2}$ the eigenvalues of $M^{\ast }$ if it is diagonalizable
and by $\mu $ the multiple eigenvalue of $M^{\ast }$ if it is not. We have
the following controllability result:

\begin{theorem}
	\label{Theorem control}Let $n\geq 2$ be an integer.
	
	\begin{itemize}
		\item If $2n\leq T<2n+1$. Then System (\ref{Z system}) is weakly observable
		(see (\ref{weak obsevation})) if, and only if \ the following three
		conditions are satisfied:
		
		\begin{enumerate}
			\item $\mathrm{rank}\left[ B\mid MB\right] =2.$
			
			\item For any $x\in \left[ 0,1\right] ,$ there exists $1\leq k\leq n-1$ such
			that:%
			\begin{equation*}
				\left\{ 
				\begin{array}{lll}
					\phi \left( 2k+2-x\right) \neq 0, & \mathrm{if} & \lambda _{1},\lambda
					_{2}\in 
					\mathbb{R}
					\text{ \textrm{or} }\sigma (M)=\{\mu \}, \\ 
					&  &  \\ 
					\phi \left( 2k+2-x\right) \notin \text{ }\frac{\pi }{\mathfrak{\Im }(\lambda
						_{1})}%
					\mathbb{Z}
					, & \mathrm{if} & \lambda _{1},\lambda _{2}\in 
					\mathbb{C}
					\backslash 
					\mathbb{R}
					.%
				\end{array}%
				\right.
			\end{equation*}
			
			\item For any $x\in \lbrack 0,T-2n)$ and $x^{\ast }\in \lbrack T-2n,1)$,
			there exist $1\leq k\leq n$ and $1\leq k^{\ast }\leq n-1$ respectively such
			that:%
			\begin{equation*}
				\left\{ 
				\begin{array}{lll}
					\phi \left( x+2k\right) \neq 0,\phi \left( x^{\ast }+2k^{\ast }\right) \neq
					0, & \mathrm{if} & \lambda _{1},\lambda _{2}\in 
					\mathbb{R}
					\text{ \textrm{or} }\sigma (M)=\{\mu \}, \\ 
					&  &  \\ 
					\phi \left( x+2k\right) ,\phi \left( x^{\ast }+2k^{\ast }\right) \notin 
					\text{ }\frac{\pi }{\mathfrak{\Im }(\lambda _{1})}%
					\mathbb{Z}
					, & \mathrm{if} & \lambda _{1},\lambda _{2}\in 
					\mathbb{C}
					\backslash 
					\mathbb{R}
					.%
				\end{array}%
				\right.
			\end{equation*}
		\end{enumerate}
		
		\item If $2n+1\leq T<2n+2$. Then System (\ref{Z system}) is weakly
		observable (see (\ref{weak obsevation})) if, and only if \ the following
		three conditions are satisfied:
		
		\begin{enumerate}
			\item $\mathrm{rank}\left[ B\mid MB\right] =2.$
			
			\item For any $x\in \lbrack 2n+2-T,1)$ and $x^{\ast }\in \lbrack 0,2n+2-T)$,
			there exist $1\leq k\leq n$ and\ $1\leq k^{\ast }\leq n-1$ respectively such
			that:%
			\begin{equation*}
				\left\{ 
				\begin{array}{lll}
					\phi \left( 2k+2-x\right) \neq 0,\phi \left( 2k^{\ast }+2-x^{\ast }\right)
					\neq 0, & \mathrm{if} & \lambda _{1},\lambda _{2}\in 
					\mathbb{R}
					\text{ \textrm{or} }\sigma (M)=\{\mu \}, \\ 
					&  &  \\ 
					\phi \left( 2k+2-x\right) ,\phi \left( 2k^{\ast }+2-x^{\ast }\right) \notin 
					\text{ }\frac{\pi }{\mathfrak{\Im }(\lambda _{1})}%
					\mathbb{Z}
					, & \mathrm{if} & {\small \lambda }_{1}{\small ,\lambda }_{2}{\small \in 
						\mathbb{C}
						\backslash 
						\mathbb{R}
						.}%
				\end{array}%
				\right.
			\end{equation*}
			
			\item For any $x\in \lbrack 0,1],$ there exists $1\leq k\leq n$ such that:%
			\begin{equation*}
				\left\{ 
				\begin{array}{lll}
					\phi \left( x+2k\right) \neq 0, & \mathrm{if} & \lambda _{1},\lambda _{2}\in 
					\mathbb{R}
					\text{ \textrm{or} }\sigma (M)=\{\mu \}, \\ 
					&  &  \\ 
					\phi \left( x+2k\right) \notin \text{ }\frac{\pi }{\mathfrak{\Im }(\lambda
						_{1})}%
					\mathbb{Z}
					, & \mathrm{if} & \lambda _{1},\lambda _{2}\in 
					\mathbb{C}
					\backslash 
					\mathbb{R}
					.%
				\end{array}%
				\right.
			\end{equation*}
		\end{enumerate}
	\end{itemize}
\end{theorem}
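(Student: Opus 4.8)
The plan is to reduce the weak observability of System~(\ref{Z system}) to that of the block diagonal System~(\ref{Z diag system_bis}) via the compactness/perturbation step announced in the introduction, and then to establish a necessary and sufficient condition directly on the diagonal system through the explicit solution formula. Concretely, the first step is to write out, using the method of characteristics, the closed-form solution $Z=(p,q)$ of~(\ref{Z diag system_bis}): each component is transported along the characteristic lines $x\pm t=\text{const}$, and at every reflection on $x=0$ or $x=1$ the boundary condition $(p+q)_{\mid x=0,1}=0$ swaps $p\leftrightarrow -q$. Following these broken characteristics and integrating the zeroth-order terms $M^{\ast}\eta_i$ along the way produces, for the observed trace $p(t,0)$, an expression of the form $p(t,0)=\sum_{k}\Theta_k(t)\,\bigl(\text{a shifted value of }p_0\text{ or }q_0\bigr)$, where each $\Theta_k(t)$ is the matrix exponential $\exp\bigl(M^{\ast}\phi(t-2k)\bigr)$ (up to sign), the function $\phi$ in~(\ref{phi}) being precisely the line integral of $\boldsymbol F$ along the reflected segment $\Gamma_{t-2k}$ as explained after~(\ref{gamma}). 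This is where Figure~\ref{figure1} and the bookkeeping of $f_n$ enter.

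The second step is to turn $\int_0^T|B^{\ast}p(t,0)|^2\,dt$ into an $L^2$-estimate on $(p_0,q_0)$. Since on $(0,T)$ with $2n\le T<2n+2$ the trace $p(t,0)$ only ``sees'' the initial data through finitely many reflections, one partitions $(0,1)$ according to which bundle of characteristics reaches $x=0$ within time $T$ and lands at which boundary segment; the constraint $T\ge 4$ (i.e.\ $n\ge2$) guarantees that every point of $(0,1)$ is reached at least once by a descending and once by an ascending characteristic, so that both $p_0$ and $q_0$ (equivalently, via $\int_0^1(p_0-q_0)=0$, the full element of $H$) appear. Collecting the terms, one finds that the observation operator restricted to $(p_0,q_0)$ is, up to an $L^2$-bounded (indeed compact, by the smoothing of the time-integration against $C^1$ kernels) remainder, a direct sum over $x$ of the finite-dimensional maps
\begin{equation*}
	(p_0(x),q_0(x))\ \longmapsto\ \Bigl(B^{\ast}\exp\bigl(M^{\ast}\phi(x+2k)\bigr)\,\bigl(\cdot\bigr),\ B^{\ast}\exp\bigl(M^{\ast}\phi(2k+2-x)\bigr)\,\bigl(\cdot\bigr)\Bigr)_{k}.
\end{equation*}
The observability inequality~(\ref{Exact_Obs_2}) for the diagonal system then holds if and only if, for a.e.\ $x$, the family of vectors $\{B^{\ast}\exp(M^{\ast}\phi(\cdot))\}$ relevant to that $x$ spans $\mathbb{R}^2$ (for each of $p_0$ and $q_0$). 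The rank condition $\mathrm{rank}[B\mid MB]=2$ is exactly the condition ensuring that $\{B^{\ast},B^{\ast}M^{\ast}\}$ is a basis, hence that $B^{\ast}\exp(M^{\ast}s)$ is nonzero and ``rotates'' as $s$ varies; the spanning of $\mathbb{R}^2$ by two such vectors $B^{\ast}\exp(M^{\ast}s_1)$, $B^{\ast}\exp(M^{\ast}s_2)$ then fails precisely when $s_1=s_2$ (real spectrum or Jordan block) or when $s_1-s_2\in\frac{\pi}{\Im\lambda_1}\mathbb{Z}$ (complex spectrum, where $\exp(M^{\ast}s)$ acts as a genuine rotation with that period). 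Writing out for each of the four ranges of $x$ determined by $T-2n$ (resp.\ $2n+2-T$) which shifts $x+2k$ or $2k+2-x$ are available gives exactly conditions (2) and (3) (resp.\ (2) and (3)) of the theorem; the endpoint point $x=1$ requires separate care because one of the two reflected families degenerates there.

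The third step is the passage from the diagonal system to the full System~(\ref{Z system}): by the announced result (in the spirit of~\cite{Neves}), since $\mathcal P$ in~(\ref{P}) is a bounded multiplication operator and the map $(p_T,q_T)\mapsto p(t,0)-p_d(t,0)$ is compact, the weak observability~(\ref{weak obsevation}) for~(\ref{Z system}) is equivalent to the genuine observability~(\ref{Exact_Obs_2}) for~(\ref{Z diag system_bis}) modulo a compact perturbation, and the compact remainder can be absorbed into $N$. Hence the necessary and sufficient conditions for~(\ref{Z diag system_bis}) are exactly those for weak observability of~(\ref{Z system}). I expect the main obstacle to be the combinatorial case analysis in the second step: correctly tracking which reflected characteristic segments $\Gamma_{t-2k}$ reach $x=0$ before time $T$, with the right index ranges $1\le k\le n$ versus $1\le k\le n-1$, and handling the boundary stratum $x\in[0,T-2n)$ versus $x\in[T-2n,1)$ (resp.\ $[2n+2-T,1)$ versus $[0,2n+2-T)$) and the degenerate endpoint $x=1$ — getting these ranges exactly right, rather than off by one, is the delicate part; the linear-algebra criterion for two vectors $B^{\ast}e^{M^{\ast}s_j}$ to span $\mathbb{R}^2$ is comparatively routine once the normal form of $M^{\ast}$ is invoked.
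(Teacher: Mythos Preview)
Your proposal is correct and follows essentially the same route as the paper: explicit trace formula for $p(t,0)$ via characteristics (the paper's Lemma~\ref{Lemma p,q diag}), rewriting $\int_0^T|B^\ast p(t,0)|^2\,dt$ as a sum of $L^2$-integrals over $(0,1)$ that reduce to boundedness-below of multiplication operators (Lemma~\ref{diag-Control-1} and Remark~\ref{NCS_Obs_New}), the $2\times 2$ determinant criterion for $\det[B^\ast\mid B^\ast e^{rM^\ast}]$ via the normal form of $M$ (Lemma~\ref{Lem_det}), the index-range case analysis (Propositions~\ref{CNS_2n<T<2n+1}--\ref{CNS_2n+1<T<2n+2}), and finally the compactness of $(p_0,q_0)\mapsto B^\ast(p-p_d)_{|x=0}$ to transfer to the full system (Theorem~\ref{theorem compactness}, Remark~\ref{Remark explanation}). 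One small clarification: for the diagonal system the trace formula is \emph{exact}, not ``up to a compact remainder'' --- the compact remainder appears only in Step~3 when comparing the full and diagonal systems; and the key algebraic identity the paper uses to pass from the $f_k$'s to $\phi$ is $f_k(2(k{+}1)-x)-f_{k-1}(2k-x)=\phi(2(k{+}1)-x)$, which is what lets you factor out an invertible $e^{f_0(\cdot)M^\ast}$ and reduce to your difference $s_1-s_2$.
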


The proof is rather long, it relies on studying the exact controllability of
the diagonal block system (\ref{Z diag system_bis}) combined with a
compactness argument. See Propositions \ref{CNS_2n<T<2n+1} and \ref%
{CNS_2n+1<T<2n+2} and Remark \ref{Remark explanation}.

Let us make several observations:

\begin{remark}
	To illustrate geometrically the assertions of the above Theorem, we recall
	that from each point $\left( 0,x\right) $ (with $x\in \left[ 0,1\right] $)
	come two characteristics which stop at some point of the line $t=T.$ If for
	example $2n\leq T<2n+1$ with $n\geq 2,$ these characteristics touch the
	observability boundary $\left[ 0,T\right] \times \left\{ 0\right\} $ at
	least two times at points of the form $\left( 2k-x,0\right) $ for one of
	them and of the form $\left( 2l+x,0\right) $ for the other. The conditions
	on $\phi $ means that there exist at least two consecutive points of the
	form $\left( 2k-x,0\right) $ and two consecutive points of the form $\left(
	2l+x,0\right) $ such that the line integrals of the vector field $%
	\boldsymbol{F}=\left( \frac{\eta _{1}+\eta _{2}}{2},\frac{\eta _{1}-\eta _{2}%
	}{2}\right) ,$ namely $\int_{\Gamma _{\left( 2\ell +x,0\right) }^{\left(
			2\left( \ell +1\right) +x,0\right) }}F\cdot \gamma $ and $\int_{\Gamma
		_{\left( 2k-x,0\right) }^{\left( 2\left( k+1\right) -x,0\right) }}F\cdot
	\gamma ,$ are not, depending on the coupling matrix nature, zero or are not
	in some discrete set.
\end{remark}

\begin{remark}
	\label{Remark b independent of time}Let us first recall that for any $t\geq
	2,$ the function $\phi $ defined in (\ref{phi}) is given by%
	\begin{equation*}
		\phi \left( t\right) =\int_{t-2}^{t-1}\eta _{1}(\tau ,\tau -\left(
		t-2\right) )d\tau +\int_{t-1}^{t}\eta _{2}(\tau ,t-\tau )d\tau .
	\end{equation*}%
	Then, by (\ref{eta_1-eta_2}) and (\ref{alpha1_alpha 2}) we get%
	\begin{equation*}
		\phi \left( t\right) =\frac{1}{2}\int_{0}^{1}\left( a+b\right) \left( T-\tau
		+t-2,\tau \right) d\tau +\frac{1}{2}\int_{0}^{1}\left( a-b\right) (T-t+\tau
		,\tau )d\tau .
	\end{equation*}%
	Observe that in the autonomous case ($a$ and $b$ are time independent) the
	above formulas becomes 
	\begin{equation}
		\phi \left( t\right) =\int_{0}^{1}a(s)ds.  \label{average}
	\end{equation}%
	Hence, the coupling with first-order derivative in space doesn't have any
	influence on the controllability of System (\ref{Z diag system_bis}) in high
	frequency unless $b$ depends on time. More precisely, if we let $a=0$ and $%
	b=b(x),$ then the weak observability inequality (\ref{weak obsevation})
	doesn't hold in any time and for any $b$ since $\phi $ will be zero. The
	situation is not the same for parabolic systems. In \cite{Duprez 2},
	boundary controllability of a cascade system of two parabolic equations in $%
	1-D$ has been studied with coupling acting on first-order component. It has
	been shown that the underlying system is exactly controllable if the
	coupling function satisfies a moment assumption for the low frequency part
	and an average assumption like (\ref{average}) for the high frequency. This
	shows that differences between hyperbolic and parabolic systems are not
	limited to the geometric control condition introduced in \cite%
	{Bardos-Lebeau-Rauch} or the minimal time of control.
\end{remark}

\subsection{Construction of the solution to the diagonal system}

Given $(t,x)\in Q_{T}$, the value of $p(t,x)$ and $q(t,x)$ is determined
either by $\left( p_{s},q_{s}\right) $ or by their values at $x=0$ or $x=1$.
More precisely, we have by the characteristics method:%
\begin{equation}
	p(t,x)=\left\{ 
	\begin{array}{lll}
		\exp \left( M^{\ast }\int_{t-x}^{t}\eta _{1}(\tau ,\tau -(t-x))d\tau \right)
		p(t-x,0), & \mathrm{if} & t-x>s, \\ 
		&  &  \\ 
		\exp \left( M^{\ast }\int_{s}^{t}\eta _{1}(\tau ,\tau -(t-x))d\tau \right)
		p_{s}(x-t+s), & \mathrm{if} & t-x<s,%
	\end{array}%
	\right.  \label{p(t,x)}
\end{equation}%
and%
\begin{equation}
	q(t,x)=\left\{ 
	\begin{array}{lll}
		\exp \left( M^{\ast }\int_{x+t-1}^{t}\eta _{2}(\tau ,t+x-\tau )d\tau \right)
		q(t+x-1,1), & \mathrm{if} & t+x-1>s, \\ 
		&  &  \\ 
		\exp \left( M^{\ast }\int_{s}^{t}\eta _{2}(\tau ,t+x-\tau )d\tau \right)
		q_{s}(x+t-s), & \mathrm{if} & t+x-1<s.%
	\end{array}%
	\right.  \label{q(t,x)}
\end{equation}%
Thus, computing $p(t,x)$ and $q(t,x)$ amounts to evaluate $p(t,0)$ and $%
q(t,1)$ (respectively) as functions of the initial data $(p_{0},q_{0})$,
keeping in mind the boundary conditions. The following lemma can be proved
by induction:

\begin{lemma}
	\label{Lemma p,q diag}Let $n\geq 0$ be an integer and $Z_{s}=(p_{s},q_{s})%
	\in H.$ Then if $Z=(p,q)$ is the solution to System (\ref{Z diag s}), one
	has:%
	\begin{equation}
		p(t,0)=-e^{f_{n}(t,s)M^{\ast }}q_{s}(t-s-2n),\text{ }\mathrm{if}\text{ }%
		2n\leq t-s<2n+1,  \label{p(t,0;s) 2n,2n+1}
	\end{equation}%
	\begin{equation}
		p(t,0)=e^{f_{n}(t,s)M^{\ast }}p_{s}(2n+2-t+s),\text{ }\mathrm{if}\text{ }%
		2n+1\leq t-s<2n+2.  \label{p(t,0;s) 2n+1,2n+2}
	\end{equation}%
	As a consequence:%
	\begin{equation}
		q(t,1)=e^{M^{\ast }\left( \int_{s}^{t}\eta _{1}(\tau ,\tau -(t-1))d\tau
			\right) }p_{s}(t-s-2n),\text{ }\mathrm{if}\text{ }0\leq t-s<1,
		\label{q(t,1) 1}
	\end{equation}%
	\begin{equation}
		q(t,1)=-e^{M^{\ast }\left( \int_{s}^{t}\eta _{1}(\tau ,\tau -(t-1))d\tau
			\right) }p(t-1,0),\text{ }\mathrm{if}\text{ }t-s>1.  \label{q(t,1)2}
	\end{equation}
\end{lemma}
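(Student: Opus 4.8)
The plan is to prove the four formulas by induction on $n$, using the representation formulas \eqref{p(t,x)} and \eqref{q(t,x)} obtained from the characteristics method, together with the boundary conditions $(p+q)_{\mid x=0,1}=0_{\mathbb{R}^{2}}$. The key observation is that the two families of characteristics (the lines $x+t=c_1$ and $x-t=c_2$) bounce back and forth between $x=0$ and $x=1$, and each reflection picks up a sign $-1$ from the boundary condition and an exponential factor from integrating the source term along the characteristic. Tracking these factors over $n$ reflections produces exactly $(-1)^{?}e^{f_n(t,s)M^{\ast}}$ with the cumulative phase $f_n$ defined in \eqref{f_n}.

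First I would set up the base case $n=0$. For $0\le t-s<1$, the characteristic through $(t,0)$ going backward in the $q$-direction reaches $x=1$ only after we would need $t+x-1>s$, which fails near $x=0$; more precisely, at $(t,0)$ we use the second branch of \eqref{q(t,x)} only if $t-1<s$, i.e. $t-s<1$, giving $q(t,0)=\exp(M^{\ast}\int_s^t \eta_2(\tau,t-\tau)d\tau)q_s(t-s)$, and then $p(t,0)=-q(t,0)$ from the boundary condition, which upon checking the definition \eqref{phi} of $\phi$ (the relevant integral being over $[\max\{s,t-1\},t]=[s,t]$, the $\eta_2$ term only) yields \eqref{p(t,0;s) 2n,2n+1} with $f_0=\phi$. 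For $1\le t-s<2$, the $q$-characteristic from $(t,0)$ hits $x=1$ at $(t-1,1)$, and there $q(t-1,1)=-p(t-1,1)$; the $p$-characteristic through $(t-1,1)$ reaches $x=0$ only if $t-2>s$, which fails, so $p(t-1,1)=\exp(M^{\ast}\int_s^{t-1}\eta_1(\tau,\tau-(t-2))d\tau)p_s(t-s-2+?)$, and composing the exponentials gives exactly the $\phi$ of \eqref{phi} restricted to this range, i.e. \eqref{p(t,0;s) 2n+1,2n+2} for $n=0$. The formulas \eqref{q(t,1) 1} and \eqref{q(t,1)2} for $t-s<1$ and $t-s>1$ follow by the same one-step characteristic analysis: \eqref{q(t,1) 1} is the direct second branch of \eqref{q(t,x)} at $x=1$, and \eqref{q(t,1)2} uses $q(t,1)=-p(t,1)$ plus the first branch of \eqref{p(t,x)} at $x=1$, which traces the $p$-characteristic back to $(t-1,0)$.

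For the inductive step, suppose the formulas hold for $n-1$; I would express $p(t,0)$ with $2n\le t-s<2n+1$ (resp.\ $2n+1\le t-s<2n+2$) by first using $p(t,0)=-q(t,0)$, then the $q$-representation to reach $q(t-1,1)$, then $q(t-1,1)=-p(t-1,1)$, then the $p$-representation to reach $p(t-2,0)$. This expresses the value at $(t,0)$ in terms of the value at $(t-2,0)$, with a $(-1)^2=1$ from the two reflections and an extra exponential factor $\exp(M^{\ast}[\int_{\cdot}\eta_2 + \int_{\cdot}\eta_1])$; comparing the arguments of these two integrals with \eqref{phi} shows the added factor is exactly $\phi(t-2k,s)$ for the appropriate shift, and since $t-2-s$ lies in $[2(n-1),2n-1)$ (resp.\ $[2n-1,2n)$) the induction hypothesis applies to $p(t-2,0)=\pm e^{f_{n-1}(t-2,s)M^{\ast}}(\cdot)$. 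Adding the new $\phi$ term to $f_{n-1}(t-2,s)=\sum_{k=0}^{n-1}\phi(t-2-2k,s)=\sum_{k=1}^{n}\phi(t-2k,s)$ yields $\sum_{k=0}^{n}\phi(t-2k,s)=f_n(t,s)$, as required; the spatial argument of $q_s$ or $p_s$ shifts by $2$ under the two reflections, giving $t-s-2n$ (resp.\ $2n+2-t+s$). The consequences \eqref{q(t,1) 1}--\eqref{q(t,1)2} for general $t-s$ then follow from \eqref{p(t,x)}--\eqref{q(t,x)} at $x=1$ exactly as in the base case, now invoking the just-proved formulas for $p(t-1,0)$.

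The main obstacle will be bookkeeping: keeping careful track of (i) the alternating reflections $x=0\leftrightarrow x=1$ and the associated sign, (ii) the precise sub-intervals $[\max\{s,t-2\},\max\{s,t-1\}]$ versus $[\max\{s,t-1\},t]$ in the definition \eqref{phi} of $\phi$ and verifying that the exponential factors accumulated along the zig-zag characteristic path match these integrals term by term, and (iii) the spatial arguments, making sure that at each reflection point the value lands at a genuine interior or boundary point (so that the correct branch of \eqref{p(t,x)}--\eqref{q(t,x)} is used) — this is where the hypotheses $2n\le t-s<2n+1$ etc.\ are consumed. Drawing the reflected characteristic polyline as in Figure \ref{figure1} and matching each segment to one term of $\phi$ is the cleanest way to organize this, and once the $n=1$ case is written out explicitly the general induction is a routine (if lengthy) transcription.
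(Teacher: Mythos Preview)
Your approach is essentially the same as the paper's: both prove \eqref{p(t,0;s) 2n,2n+1}--\eqref{p(t,0;s) 2n+1,2n+2} by induction on $n$, establishing the two-step recursion $p(t,0)=e^{\phi(t,s)M^{\ast}}p(t-2,0)$ via the reflections at $x=0$ and $x=1$, and then using the telescoping identity $f_{n}(t-2,s)+\phi(t,s)=f_{n+1}(t,s)$. The paper works at $s=0$ and remarks that a change of variable handles general $s$, while you carry $s$ throughout; otherwise the arguments coincide.

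One small slip: your claim that \eqref{q(t,1) 1} ``is the direct second branch of \eqref{q(t,x)} at $x=1$'' cannot be right, since at $x=1$ the condition $t+x-1<s$ becomes $t<s$, which never holds. Instead, \eqref{q(t,1) 1} (like \eqref{q(t,1)2}) comes from $q(t,1)=-p(t,1)$ followed by the appropriate branch of \eqref{p(t,x)} at $x=1$; for $0\le t-s<1$ this is the second branch ($t-1<s$), giving $p(t,1)=e^{M^{\ast}\int_{s}^{t}\eta_{1}(\tau,\tau-(t-1))d\tau}p_{s}(1-(t-s))$. This is exactly the mechanism you describe for \eqref{q(t,1)2}, so the fix is trivial.
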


\begin{proof}
	We give the proof for $s=0$, and a simple change of variable $%
	t\rightleftharpoons t-s$ leads to the formulas of the lemma.
	
	Assume $n=0$ in (\ref{p(t,0;s) 2n,2n+1})-(\ref{p(t,0;s) 2n+1,2n+2}). For $%
	0<t<1$, the characteristics method, the boundary conditions and (\ref{phi})-(%
	\ref{f_n}) give:%
	\begin{eqnarray*}
		q(t,0) &=&e^{M^{\ast }\int_{0}^{t}\eta _{2}(\tau ,t-\tau )d\tau }q_{0}(t) \\
		&=&e^{f_{0}(t)M^{\ast }}q_{0}(t).
	\end{eqnarray*}%
	For $1\leq t<2,$ as previously:%
	\begin{eqnarray*}
		q(t,0) &=&e^{M^{\ast }\int_{t-1}^{t}\eta _{2}(\tau ,t-\tau )d\tau }q(t-1,1)
		\\
		&=&-e^{M^{\ast }\int_{t-1}^{t}\eta _{2}(\tau ,t-\tau )d\tau }p(t-1,1) \\
		&=&-e^{M^{\ast }\phi \left( t\right) }p_{0}(2-t) \\
		&=&-e^{f_{0}(t)M}p_{0}(2-t).
	\end{eqnarray*}%
	Thus (\ref{p(t,0;s) 2n,2n+1})-(\ref{p(t,0;s) 2n+1,2n+2}) are verified for $%
	n=0.$
	
	Given $n\geq 0$, let us assume (\ref{p(t,0;s) 2n,2n+1}) and (\ref{p(t,0;s)
		2n+1,2n+2}). Let $2n+2\leq t<2n+3.$ Then, by the same computations using the
	characteristics method:%
	\begin{equation}
		q(t,0)=-e^{\phi \left( t\right) M^{\ast }}p(t-2,0).  \label{eq_2}
	\end{equation}%
	Since $2n\leq t-2<2n+1,$ formula (\ref{p(t,0;s) 2n,2n+1}) applies and gives%
	\begin{equation}
		p(t-2,0)=e^{f_{n}(t-2)M^{\ast }}q_{0}(t-2(n+1)).  \label{eq_3}
	\end{equation}%
	Now, from (\ref{f_n})%
	\begin{eqnarray}
		f_{n}(t-2) &=&\sum_{k=0}^{n}\phi \left( t-2-2k\right)  \notag \\
		&=&\sum_{k=1}^{n+1}\phi \left( t-2k\right)  \notag \\
		&=&f_{n+1}\left( t\right) -\phi \left( t\right) .  \label{eq_4}
	\end{eqnarray}%
	Thus, inserting (\ref{eq_3})-(\ref{eq_4}) in (\ref{eq_2}) leads to:%
	\begin{equation*}
		2n+2\leq t<2n+3\Rightarrow q\left( t,0\right) =e^{f_{n+1}\left( t\right)
			M^{\ast }}q_{0}(t-2(n+1)),
	\end{equation*}%
	and (\ref{p(t,0;s) 2n,2n+1}) is proved with $n$ replaced by $n+1.$
	
	The proof by induction of (\ref{p(t,0;s) 2n+1,2n+2}) can be performed in the
	same way. \hfill
\end{proof}

\begin{remark}
	\label{remark evolution family} \ 
	
	\begin{enumerate}
		\item System (\ref{Z diag s}) defines an evolution family $\left( U_{\mathrm{%
				diag}}(t,s)\right) _{0\leq s\leq t}$ on $H$ (see \cite{Pazy} for instance)
		which is explicitly computed by mean of formulas (\ref{p(t,x)})-(\ref{q(t,x)}%
		)-(\ref{p(t,0;s) 2n,2n+1})-(\ref{p(t,0;s) 2n+1,2n+2}): 
		\begin{equation}
			U_{\mathrm{diag}}(t,s)Z_{0}=(p,q)(t,\cdot ;s,Z_{0}),~0\leq s\leq t,~Z_{0}\in
			H.  \label{Evolution family}
		\end{equation}
		
		\item From\ (\ref{p(t,0;s) 2n,2n+1})-(\ref{p(t,0;s) 2n+1,2n+2}), the
		following formula follows: if $Z=(p,q)$ is a solution to System (\ref{Z diag
			system_bis}) associated with an initial data $Z_{s}=\left(
		p_{s},q_{s}\right) ,$ then%
		\begin{equation}
			B^{\ast }p\left( t,0;s,Z_{s}\right) =\left\{ 
			\begin{array}{ccc}
				-B^{\ast }e^{f_{n}(t,s)M^{\ast }}q_{s}(t-s-2n),\text{ \ \ \ \ \ \ } & 
				\mathrm{if} & 2n\leq t-s<2n+1,\text{ \ \ \ \ } \\ 
				&  &  \\ 
				B^{\ast }e^{f_{n}(t,s)M^{\ast }}p_{s}(2n+2-t+s), & \mathrm{if} & 2n+1\leq
				t-s<2n+2.%
			\end{array}%
			\right.  \label{B*p(t,0)}
		\end{equation}%
		for $n\geq 0.$ Formula (\ref{B*p(t,0)}) will be used in the next subsection
		in the study of the observability issue for System (\ref{Z diag system_bis}).
	\end{enumerate}
\end{remark}

\subsection{Some technical results on multiplication operators}

In order to make clear the proof of our exact observability results, we will
need some preliminary results on multiplications operators defined from $%
L^{2}\left( 0,1\right) ^{k}$ in $L^{2}\left( 0,1\right) ^{n}$ for some
positive integers $k,n.$

Let $M=\left( m_{ij}\right) _{1\leq i,j\leq n}$ a $n\times n$ matrix whose
entries satisfies $m_{ij}\in C\left( \left[ 0,1\right] ,\mathbb{R}\right) .$
The multiplication operator $\mathbb{M}:L^{2}\left( 0,1\right) ^{k}$ $%
\rightarrow L^{2}\left( 0,1\right) ^{n}$ associated with $M$ is defined by:%
\begin{equation*}
	\left( \mathbb{M}h\right) \left( x\right) =M\left( x\right) h(x),~x\in
	\left( 0,1\right) ,~h\in L^{2}\left( 0,1\right) ^{n}.
\end{equation*}%
Clearly $\mathbb{M}$ is a bounded operator. When $s=n,$ the following
characterization of the invertibility of $\mathbb{M}$ is derived from \cite[%
Proposition 2.2]{Hardt}:

\begin{proposition}
	\label{MO_sxs}The operator $\mathbb{M}:L^{2}\left( 0,1\right) ^{k}$ $%
	\rightarrow L^{2}\left( 0,1\right) ^{k}$ is invertible if, and only if:%
	\begin{equation*}
		\inf_{x\in \left[ 0,1\right] }\left\vert \det M\left( x\right) \right\vert
		>0.
	\end{equation*}
\end{proposition}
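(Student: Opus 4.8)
The plan is to establish the two implications separately: sufficiency by writing down the inverse operator explicitly, and necessity by a concentration argument producing an approximate null sequence.

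For sufficiency, suppose $\delta := \inf_{x \in [0,1]} |\det M(x)| > 0$. Then each $M(x)$ is invertible, and by Cramer's rule the entries of $M(x)^{-1}$ are (signed) minors of $M(x)$ divided by $\det M(x)$; since the $m_{ij}$ are continuous on $[0,1]$ and $|\det M(x)| \ge \delta$, the matrix-valued map $x \mapsto M(x)^{-1}$ is continuous on the compact interval $[0,1]$, hence $C := \sup_{x\in[0,1]} \|M(x)^{-1}\| < \infty$. The multiplication operator $\mathbb{N}$ associated with $x \mapsto M(x)^{-1}$ is then bounded on $L^{2}(0,1)^{k}$ (with norm $\le C$), and the pointwise identities $M(x)M(x)^{-1} = M(x)^{-1}M(x) = I_{k}$ immediately give $\mathbb{M}\mathbb{N} = \mathbb{N}\mathbb{M} = \mathrm{Id}$ on $L^{2}(0,1)^{k}$. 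So $\mathbb{M}$ is invertible.

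For necessity I would argue by contraposition. Assume $\inf_{x\in[0,1]} |\det M(x)| = 0$. Since $x \mapsto \det M(x)$ is continuous on the compact set $[0,1]$, the infimum is attained, so there is $x_{0} \in [0,1]$ with $\det M(x_{0}) = 0$; pick a unit vector $v \in \mathbb{R}^{k}$ with $M(x_{0})v = 0$. Choose a sequence of intervals $I_{n} \subset (0,1)$ with $x_{0} \in \overline{I_{n}}$ and $|I_{n}| = \varepsilon_{n} \to 0$, and set $h_{n} := \varepsilon_{n}^{-1/2}\,\mathbbm{1}_{I_{n}}\, v \in L^{2}(0,1)^{k}$, so that $\|h_{n}\|_{L^{2}(0,1)^{k}} = |v| = 1$ for all $n$. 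On the other hand,
\[
\|\mathbb{M} h_{n}\|_{L^{2}(0,1)^{k}}^{2} = \frac{1}{\varepsilon_{n}}\int_{I_{n}} |M(x)v|^{2}\,dx \le \sup_{x\in\overline{I_{n}}} |M(x)v - M(x_{0})v|^{2} \longrightarrow 0
\]
as $n \to \infty$, by continuity of $x \mapsto M(x)v$ at $x_{0}$ together with $M(x_{0})v = 0$. Hence $\mathbb{M}$ is not bounded below, and since every boundedly invertible operator is bounded below, $\mathbb{M}$ is not invertible.

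The argument is essentially routine; the only points requiring a little care are, on the necessity side, choosing the $I_{n}$ to lie inside $(0,1)$ when $x_{0}$ is an endpoint (one takes $I_{n} = (x_{0}, x_{0}+1/n)$, or $(x_{0}-1/n, x_{0})$ when $x_{0}=1$) and the exact $L^{2}$-normalization of $h_{n}$. Alternatively, since this proposition is the special case $k = n$ of \cite[Proposition 2.2]{Hardt}, one may simply cite that reference; the short proof above is recorded for the reader's convenience.
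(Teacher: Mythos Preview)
Your proof is correct. The paper does not actually prove this proposition: it simply derives it from \cite[Proposition 2.2]{Hardt} and moves on. You supply a self-contained argument---explicit inverse via Cramer's rule for sufficiency, a concentration sequence $h_{n}=\varepsilon_{n}^{-1/2}\mathbbm{1}_{I_{n}}v$ for necessity---and then note the same citation; so you are doing strictly more than the paper, and your final remark already aligns with the paper's treatment.
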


We are now interested by the case $k<n$ . \ 

\begin{proposition}
	\label{MO_sxn}Let $k<n$ and $\mathbb{M}:L^{2}\left( 0,1\right) ^{k}$ $%
	\rightarrow L^{2}\left( 0,1\right) ^{n}.$ The following properties are
	equivalent:
	
	\begin{enumerate}
		\item There exists a constant $C>0$ such that%
		\begin{equation*}
			\left\Vert h\right\Vert _{L^{2}\left( 0,1\right) ^{k}}\leq C\left\Vert 
			\mathbb{M}h\right\Vert _{L^{2}\left( 0,1\right) ^{n}},~\forall h\in
			L^{2}\left( 0,1\right) ^{k}.
		\end{equation*}
		
		\item For all $x\in \left[ 0,1\right] ,$ there exists a $s\times s$ matrix $%
		M_{\mathrm{ext}}$, extracted from $M,~$such that 
		\begin{equation*}
			\det M_{\mathrm{ext}}\left( x\right) \neq 0.
		\end{equation*}
	\end{enumerate}
\end{proposition}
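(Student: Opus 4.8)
The statement is a pointwise, local-to-global criterion for a left-invertibility ("lower bound") inequality for the multiplication operator $\mathbb{M}$ with continuous matrix symbol $M(x)$ of size $n\times k$, $k<n$. I would prove the equivalence in two directions, reducing everything to the linear-algebra fact that for a fixed matrix $M(x)$ the map $h\mapsto M(x)h$ on $\mathbb{R}^k$ is injective (equivalently, bounded below) if and only if $M(x)$ has full column rank $k$, i.e.\ some $k\times k$ minor of $M(x)$ is nonzero; here I use $s=k$ for the size of $M_{\mathrm{ext}}$ (the statement's "$s\times s$" should read "$k\times k$").

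For $(2)\Rightarrow(1)$: fix $x_0\in[0,1]$ and choose, by hypothesis, a $k\times k$ submatrix $M_{\mathrm{ext}}$ of $M$ with $\det M_{\mathrm{ext}}(x_0)\neq 0$. Since the entries of $M$ are continuous, $\det M_{\mathrm{ext}}$ is continuous, so there is an open neighbourhood $V_{x_0}$ of $x_0$ in $[0,1]$ on which $|\det M_{\mathrm{ext}}(x)|\geq \delta_{x_0}>0$, and hence a constant $c_{x_0}>0$ with $|M(x)\xi|\geq |M_{\mathrm{ext}}(x)\xi'|\geq c_{x_0}|\xi'|$ for the corresponding $k$ coordinates $\xi'$ of $\xi\in\mathbb{R}^k$ — but since $\xi'=\xi$ (the submatrix uses all $k$ columns), this gives $|M(x)\xi|\geq c_{x_0}|\xi|$ for all $x\in V_{x_0}$, $\xi\in\mathbb{R}^k$. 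Cover the compact set $[0,1]$ by finitely many such $V_{x_j}$, take $c=\min_j c_{x_j}>0$, obtaining $|M(x)\xi|\geq c|\xi|$ uniformly in $x\in[0,1]$. Squaring and integrating, $\|\mathbb{M}h\|_{L^2(0,1)^n}^2=\int_0^1|M(x)h(x)|^2\,dx\geq c^2\int_0^1|h(x)|^2\,dx=c^2\|h\|_{L^2(0,1)^k}^2$, which is $(1)$ with $C=1/c$.

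For $(1)\Rightarrow(2)$, I argue by contraposition: suppose there exists $x_0\in[0,1]$ such that \emph{every} $k\times k$ minor of $M(x_0)$ vanishes, i.e.\ $\operatorname{rank}M(x_0)<k$. Then there is a unit vector $v\in\mathbb{R}^k$ with $M(x_0)v=0$. By continuity of $M$, for every $\varepsilon>0$ there is a neighbourhood $I_\varepsilon\subset[0,1]$ of $x_0$ on which $|M(x)v|<\varepsilon$; shrinking, we may take $I_\varepsilon$ to be an interval of positive length $|I_\varepsilon|$. Test $(1)$ against $h_\varepsilon(x)=\mathbf{1}_{I_\varepsilon}(x)\,v$: then $\|h_\varepsilon\|_{L^2(0,1)^k}^2=|I_\varepsilon|$ while $\|\mathbb{M}h_\varepsilon\|_{L^2(0,1)^n}^2=\int_{I_\varepsilon}|M(x)v|^2\,dx\leq \varepsilon^2|I_\varepsilon|$, so $(1)$ would force $|I_\varepsilon|\leq C^2\varepsilon^2|I_\varepsilon|$, i.e.\ $C^2\varepsilon^2\geq 1$, which fails for $\varepsilon$ small. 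Hence $(1)$ cannot hold, completing the contrapositive.

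The only mildly delicate point is the uniform lower bound in the first implication, but this is a routine compactness argument once the continuity of the relevant determinant is invoked; the construction of the destabilizing test function in the converse is immediate from a kernel vector of $M(x_0)$. I expect no real obstacle — the content is entirely the reduction to pointwise full-column-rank plus a covering argument, exactly in the spirit of Proposition \ref{MO_sxs}, of which this is the non-square analogue (one could in fact deduce it from that proposition applied to $M^{t}M$, whose determinant is the Gram determinant $\sum (\text{minors})^2$, but the direct argument above is cleaner).
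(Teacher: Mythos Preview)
Your proof is correct and follows essentially the same strategy as the paper's: a compactness/covering argument for $(2)\Rightarrow(1)$ and a contrapositive via a kernel vector of $M(x_0)$ for $(1)\Rightarrow(2)$. Your implementation is slightly cleaner --- you use an open cover and a finite subcover where the paper uses a uniform grid $[j/m,(j+1)/m]$ and uniform continuity, and you test against the indicator $\mathbf{1}_{I_\varepsilon}v$ where the paper builds a smooth scaled bump sequence $h_j(x)=\sqrt{j}\,u(j(x-x_0))V$ --- but the underlying ideas are identical.
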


\begin{proof}
	For the proof, see Appendix \ref{Appendix}.
\end{proof}

\subsection{Observability results}

Let us start with the following Lemma:

\begin{lemma}
	\label{diag-Control-1}Let $T>0$ and $\left( p,q\right) $ be the solution to (%
	\ref{Z diag system_bis}) associated with $Z_{0}=\left( p_{0},q_{0}\right)
	\in H$. Then:
	
	\begin{itemize}
		\item If $\ 2n\leq T<2n+1$ for some $n\geq 0$, one has:%
		\begin{equation}
			\int_{0}^{T}\left\vert B^{\ast }p\left( t,0\right) \right\vert
			^{2}dt=\int_{0}^{T}\left\vert B^{\ast }e^{f_{0}(x)M^{\ast
			}}q_{0}(x)\right\vert ^{2}dx,~\mathrm{for}~n=0,  \label{B*p(t,0)_norme_(0,1)}
		\end{equation}%
		and for any$~n\geq 1$%
		\begin{eqnarray}
			\int_{0}^{T}\left\vert B^{\ast }p\left( t,0\right) \right\vert ^{2}dt
			&=&\sum\limits_{k=0}^{n-1}\int_{0}^{1}\left\vert B^{\ast
			}e^{f_{k}(2k+2-x)M^{\ast }}p_{0}(x)\right\vert ^{2}dx
			\label{B*p(t,0)_norme_2n-} \\
			&&+\sum\limits_{k=0}^{n-1}\int_{0}^{1}\left\vert B^{\ast
			}e^{f_{k}(x+2k)M^{\ast }}q_{0}(x)\right\vert ^{2}dx  \notag \\
			&&+\int_{0}^{T-2n}\left\vert B^{\ast }e^{f_{n}(x+2n)M^{\ast
			}}q_{0}(x)\right\vert ^{2}dx.  \notag
		\end{eqnarray}
		
		\item If $\ 2n+1\leq T<2n+2$ for some $n\geq 0$, then 
		\begin{equation}
			\int_{0}^{T}\left\vert B^{\ast }p\left( t,0\right) \right\vert
			^{2}dt=\int_{0}^{T-1}\left\vert B^{\ast }e^{f_{0}(2-x)M^{\ast
			}}p_{0}(x)\right\vert ^{2}dx+\int_{0}^{1}\left\vert B^{\ast
			}e^{f_{0}(x)M^{\ast }}q_{0}(x)\right\vert ^{2}dx,\text{ }\mathrm{for}\text{%
				\textrm{\ }}n=0,  \label{B*p(t,0)_norme_(1,2)}
		\end{equation}%
		and for any $n\geq 1$%
		\begin{eqnarray}
			\int_{0}^{T}\left\vert B^{\ast }p\left( t,0\right) \right\vert ^{2}dt
			&=&\sum\limits_{k=0}^{n}\int_{0}^{1}\left\vert e^{f_{k}(x+2k)M^{\ast
			}}q_{0}(x)\right\vert ^{2}dx  \label{B*p(t,0)_norme_2n+1} \\
			&&+\sum\limits_{k=0}^{n-1}\int_{0}^{1}\left\vert e^{f_{k}(2k+2-x)M^{\ast
			}}p_{0}(x)\right\vert ^{2}dx  \notag \\
			&&+\int_{2n+2-T}^{1}\left\vert e^{f_{n}(2n+2-x)M^{\ast }}p_{0}(x)\right\vert
			^{2}dx.  \notag
		\end{eqnarray}
	\end{itemize}
\end{lemma}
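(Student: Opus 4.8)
The plan is to compute the boundary observation $\int_{0}^{T}|B^{\ast }p(t,0)|^{2}\,dt$ by splitting the time interval $[0,T]$ according to the dyadic windows $[2k,2k+1)$ and $[2k+1,2k+2)$ on which the explicit formulas of Lemma \ref{Lemma p,q diag} (with $s=0$) apply, and then performing in each window a change of variables that turns the $t$-integral into an $x$-integral over $(0,1)$ (or a subinterval of it). Concretely, I would write
\begin{equation*}
	\int_{0}^{T}\left\vert B^{\ast }p(t,0)\right\vert ^{2}dt=\sum_{k=0}^{\infty }\int_{[2k,2k+1)\cap (0,T)}\left\vert B^{\ast }p(t,0)\right\vert ^{2}dt+\sum_{k=0}^{\infty }\int_{[2k+1,2k+2)\cap (0,T)}\left\vert B^{\ast }p(t,0)\right\vert ^{2}dt,
\end{equation*}
where only finitely many terms are nonzero. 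On $[2k,2k+1)$, formula (\ref{p(t,0;s) 2n,2n+1}) with $s=0$ gives $p(t,0)=-e^{f_{k}(t)M^{\ast }}q_{0}(t-2k)$, so the substitution $x=t-2k$ (hence $t=x+2k$, $dt=dx$, $x$ ranging over $(0,1)$ when the window is full and over $(0,T-2n)$ when $k=n$ is the last, incomplete window in the case $2n\le T<2n+1$) yields $\int |B^{\ast }e^{f_{k}(x+2k)M^{\ast }}q_{0}(x)|^{2}dx$. On $[2k+1,2k+2)$, formula (\ref{p(t,0;s) 2n+1,2n+2}) gives $p(t,0)=e^{f_{k}(t)M^{\ast }}p_{0}(2k+2-t)$, so the substitution $x=2k+2-t$ (hence $t=2k+2-x$, $dt=-dx$, with $x$ running over $(0,1)$ for a full window and over $(2n+2-T,1)$ for the last incomplete one in the case $2n+1\le T<2n+2$) yields $\int |B^{\ast }e^{f_{k}(2k+2-x)M^{\ast }}p_{0}(x)|^{2}dx$.

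After this, it remains to bookkeep the ranges of $k$ and the endpoints of the last subinterval in each of the four declared cases. For $2n\le T<2n+1$ ($n\ge 1$): the full windows are $[1,2),[2,3),\dots ,[2n-1,2n)$ — i.e.\ $[2k+1,2k+2)$ for $k=0,\dots ,n-1$ and $[2k,2k+1)$ for $k=1,\dots ,n-1$ — plus the initial window $[0,1)$ (which is $[2k,2k+1)$ with $k=0$, contributing the $f_{0}(x)$ term) and the final incomplete window $[2n,T)$ (which is $[2k,2k+1)$ with $k=n$, contributing $\int_{0}^{T-2n}|B^{\ast }e^{f_{n}(x+2n)M^{\ast }}q_{0}(x)|^{2}dx$); collecting the $[2k+1,2k+2)$ contributions into the first sum of (\ref{B*p(t,0)_norme_2n-}) and the $[2k,2k+1)$ contributions (for $k=0,\dots ,n-1$) into the second, with the leftover giving the last line, reproduces (\ref{B*p(t,0)_norme_2n-}); the case $n=0$ is just the single window $[0,T)\subset [0,1)$ and gives (\ref{B*p(t,0)_norme_(0,1)}). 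For $2n+1\le T<2n+2$: the analysis is the mirror image — the full $[2k,2k+1)$ windows run $k=0,\dots ,n$ (giving the sum $\sum_{k=0}^{n}$ of $q_{0}$-terms), the full $[2k+1,2k+2)$ windows run $k=0,\dots ,n-1$ (giving $\sum_{k=0}^{n-1}$ of $p_{0}$-terms), and the final incomplete window $[2n+1,T)$, which is $[2k+1,2k+2)$ with $k=n$, contributes $\int_{2n+2-T}^{1}|e^{f_{n}(2n+2-x)M^{\ast }}p_{0}(x)|^{2}dx$, matching (\ref{B*p(t,0)_norme_2n+1}); the case $n=0$ gives (\ref{B*p(t,0)_norme_(1,2)}).

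The computation is entirely elementary once Lemma \ref{Lemma p,q diag} is in hand; the only genuine care needed — and the one place an off-by-one error would creep in — is the correct identification of which window is incomplete and its precise endpoints, together with checking that $f_{k}(t)$ evaluated at $t=x+2k$ (resp.\ $t=2k+2-x$) is exactly the $f_{k}$-argument appearing in the stated formulas, which it is by definition (\ref{f_n}). I would also note at the outset that since $(p_{0},q_{0})\in H$ and the solution is continuous in time, the values $p(t,0)$ are well defined in $L^{2}(0,T)$, so all the integrals above make sense and the manipulations (Fubini-free, just changes of variable on each subinterval) are legitimate. One small consistency check worth recording: in the fully-observed-window contributions the arguments $f_{k}(x+2k)$ and $f_{k}(2k+2-x)$ with $x\in (0,1)$ sweep $f_{k}$ over $(2k,2k+1)$ and $(2k+1,2k+2)$ respectively, consistent with the window they came from.
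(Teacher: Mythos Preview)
Your proposal is correct and follows essentially the same approach as the paper: split $[0,T]$ into the windows $[2k,2k+1)$ and $[2k+1,2k+2)$, apply the explicit formulas (\ref{p(t,0;s) 2n,2n+1})--(\ref{p(t,0;s) 2n+1,2n+2}) from Lemma~\ref{Lemma p,q diag} on each, and change variables $x=t-2k$ or $x=2k+2-t$. The paper's proof is organized identically, with the same bookkeeping of full versus incomplete windows.
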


\begin{proof}
	Let $n\geq 0$ be an integer and suppose that $2n\leq T<2n+1.$ Then, if $n=0,$%
	we have from (\ref{p(t,0;s) 2n,2n+1}):%
	\begin{equation*}
		\int_{0}^{T}\left\vert B^{\ast }p\left( t,0\right) \right\vert
		^{2}dt=\int_{0}^{T}\left\vert B^{\ast }e^{f_{0}(t)M^{\ast
		}}q_{0}(t)\right\vert ^{2}dt.
	\end{equation*}%
	If $n\geq 1:$%
	\begin{eqnarray}
		\int_{0}^{T}\left\vert B^{\ast }p\left( t,0\right) \right\vert ^{2}dt
		&=&\sum\limits_{k=0}^{n-1}\left( \int_{2k}^{2k+1}+\int_{2k+1}^{2k+2}\right)
		\left\vert B^{\ast }p\left( t,0\right) \right\vert
		^{2}dt+\int_{2n}^{T}\left\vert B^{\ast }p\left( t,0\right) \right\vert ^{2}dt
		\notag \\
		&:&=\sum\limits_{k=1}^{n}\left( I_{k}+J_{k}\right) +\int_{2n}^{T}\left\vert
		B^{\ast }p\left( t,0\right) \right\vert ^{2}dt.  \label{eq_1}
	\end{eqnarray}%
	Using (\ref{B*p(t,0)}) leads to:%
	\begin{eqnarray*}
		I_{k} &=&\int_{2k}^{2k+1}\left\vert B^{\ast }p\left( t,0\right) \right\vert
		^{2}dt \\
		&=&\int_{T-2k}^{T-2k+1}\left\vert B^{\ast }e^{f_{k}(t)M^{\ast
		}}q_{0}(t-2k)\right\vert ^{2}dt \\
		&=&\int_{0}^{1}\left\vert B^{\ast }e^{f_{k}(x+2k)M^{\ast
		}}q_{0}(x)\right\vert ^{2}dx.
	\end{eqnarray*}%
	For the second integral, in the same way:%
	\begin{eqnarray*}
		J_{k} &=&\int_{2k+1}^{2k+2}\left\vert B^{\ast }p\left( t,0\right)
		\right\vert ^{2}dt \\
		&=&\int_{2k+1}^{2k+2}\left\vert B^{\ast }e^{f_{k}(t)M^{\ast
		}}p_{0}(2k+2-t)\right\vert ^{2}dt \\
		&=&\int_{0}^{1}\left\vert B^{\ast }e^{f_{k}(2k+2-x)M^{\ast
		}}p_{0}(x)\right\vert ^{2}dt.
	\end{eqnarray*}%
	And last:%
	\begin{eqnarray*}
		\int_{2n}^{T}\left\vert B^{\ast }p\left( t,0\right) \right\vert ^{2}dt
		&=&\int_{2n}^{T}\left\vert B^{\ast }e^{f_{n}(t)M^{\ast
		}}q_{0}(t-2n)\right\vert ^{2}dt \\
		&=&\int_{0}^{T-2n}\left\vert B^{\ast }e^{f_{n}(x+2n)M^{\ast
		}}q_{0}(x)\right\vert ^{2}dx.
	\end{eqnarray*}%
	Inserting the last formula in (\ref{eq_1}), we get (\ref{B*p(t,0)_norme_2n-}%
	).
	
	If $2n+1\leq T<2n+2,$ exactly as in the previous computations, if $n=0,$ we
	get: 
	\begin{eqnarray*}
		\int_{0}^{T}\left\vert B^{\ast }p\left( t,0\right) \right\vert ^{2}dt
		&=&\int_{0}^{1}\left\vert B^{\ast }p\left( t,0\right) \right\vert
		^{2}dt+\int_{1}^{T}\left\vert B^{\ast }p\left( t,0\right) \right\vert ^{2}dt
		\\
		&=&\int_{0}^{1}\left\vert B^{\ast }e^{f_{0}(x)M^{\ast }}q_{0}(x)\right\vert
		^{2}dx+\int_{0}^{T-1}\left\vert B^{\ast }e^{f_{0}(2-x)M^{\ast
		}}p_{0}(x)\right\vert ^{2}dx,
	\end{eqnarray*}%
	and if $n\geq 1:$%
	\begin{eqnarray*}
		\int_{0}^{T}\left\vert B^{\ast }p\left( t,0\right) \right\vert ^{2}dt
		&=&\left(
		\sum\limits_{k=0}^{n}\int_{2k}^{2k+1}+\sum\limits_{k=0}^{n-1}%
		\int_{2k+1}^{2k+2}\right) \left\vert B^{\ast }p\left( t,0\right) \right\vert
		^{2}dt+\int_{2n+1}^{T}\left\vert B^{\ast }p\left( t,0\right) \right\vert
		^{2}dt \\
		&=&\sum\limits_{k=0}^{n}\int_{0}^{1}\left\vert e^{f_{k}(x+2k)M^{\ast
		}}q_{0}(x)\right\vert ^{2}dx+\sum\limits_{k=0}^{n-1}\int_{0}^{1}\left\vert
		e^{f_{k}(2k+2-x)M^{\ast }}p_{0}(x)\right\vert ^{2}dx \\
		&&+\int_{2n+2-T}^{1}\left\vert e^{f_{n}(2n+2-x)M^{\ast }}p_{0}(x)\right\vert
		^{2}dx,
	\end{eqnarray*}%
	which is exactly (\ref{B*p(t,0)_norme_2n+1}). This ends the proof of the
	lemma.
\end{proof}

As an immediate consequence, we have:

\begin{corollary}
	Let $n\geq 1.$ \ 
	
	\begin{itemize}
		\item If $\ 2n\leq T<2n+1,$ a necessary and sufficient condition for exact
		observability of System (\ref{Z diag system_bis}) is \ that: 
		\begin{equation}
			\exists C_{T}>0:\int_{0}^{1}\left\vert p_{0}(x)\right\vert ^{2}dx\leq
			C_{T}\sum\limits_{k=0}^{n-1}\int_{0}^{1}\left\vert B^{\ast
			}e^{f_{k}(2k+2-x)M^{\ast }}p_{0}(x)\right\vert ^{2}dx,~\forall p_{0}\in
			L^{2}\left( 0,1\right) ^{2},  \label{NCS_Observ_2n_p0}
		\end{equation}%
		and%
		\begin{eqnarray}
			&&\sum\limits_{k=0}^{n-1}\int_{0}^{1}\left\vert B^{\ast
			}e^{f_{k}(x+2k)M^{\ast }}q_{0}(x)\right\vert
			^{2}dx+C_{T}\int_{0}^{T-2n}\left\vert B^{\ast }e^{f_{n}(x+2n)M^{\ast
			}}q_{0}(x)\right\vert ^{2}dx  \label{NCS_Observ_2n_q0} \\
			&\geq &C_{T}\int_{0}^{1}\left\vert q_{0}(x)\right\vert ^{2},\text{ }\forall
			q_{0}\in L^{2}\left( 0,1\right) ^{2}.  \notag
		\end{eqnarray}
		
		\item If $\ 2n+1\leq T<2n+2,$ a necessary and sufficient condition for exact
		observability of System (\ref{Z diag system_bis}) is:%
		\begin{eqnarray}
			&&\sum\limits_{k=0}^{n-1}\int_{0}^{1}\left\vert B^{\ast
			}e^{f_{k}(2k+2-x)M^{\ast }}p_{0}(x)\right\vert
			^{2}dx+\int_{2n+2-T}^{1}\left\vert B^{\ast }e^{f_{n}(2n+2-x)M^{\ast
			}}p_{0}(x)\right\vert ^{2}dx  \label{NCS_Observ_2n+1_p0} \\
			&\geq &C_{T}\int_{0}^{1}\left\vert p_{0}(x)\right\vert ^{2}dx,\text{ }%
			\forall p_{0}\in L^{2}\left( 0,1\right) ^{2},  \notag
		\end{eqnarray}%
		\ and 
		\begin{equation}
			\int_{0}^{1}\left\vert q_{0}(x)\right\vert ^{2}dx\leq
			C_{T}\sum\limits_{k=0}^{n}\int_{0}^{1}\left\vert B^{\ast
			}e^{f_{k}(x+2k)M^{\ast }}q_{0}(x)\right\vert ^{2}dx,~\forall q_{0}\in
			L^{2}\left( 0,1\right) ^{2}.  \label{NCS_Observ_2n+1_q0}
		\end{equation}
	\end{itemize}
\end{corollary}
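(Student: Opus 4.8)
The plan is to derive this corollary as a direct consequence of Lemma \ref{diag-Control-1} by algebraic rearrangement of the observability inequality (\ref{OBS}). Recall that exact observability of System (\ref{Z diag system_bis}) is, by definition, the existence of $C_T>0$ such that $\left\Vert (p_0,q_0)\right\Vert_H^2\le C_T\int_0^T\left\vert B^{\ast}p(t,0)\right\vert^2\,dt$ for all $(p_0,q_0)\in H$. Since $H$ carries the $L^2(0,1)^2\times L^2(0,1)^2$ norm, the left-hand side is $\int_0^1\left\vert p_0(x)\right\vert^2\,dx+\int_0^1\left\vert q_0(x)\right\vert^2\,dx$. The idea is to substitute the explicit expression for $\int_0^T\left\vert B^{\ast}p(t,0)\right\vert^2\,dt$ furnished by Lemma \ref{diag-Control-1} and then separate the terms involving $p_0$ only from those involving $q_0$ only, exploiting the fact that $p_0$ and $q_0$ are independent (modulo the mean constraint defining $H$, which I address below).

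First I would treat the case $2n\le T<2n+1$ with $n\ge 1$. Plugging (\ref{B*p(t,0)_norme_2n-}) into (\ref{OBS}) gives that exact observability is equivalent to
\begin{equation*}
\int_0^1\left\vert p_0\right\vert^2+\int_0^1\left\vert q_0\right\vert^2\le C_T\Bigl(\sum_{k=0}^{n-1}\int_0^1\left\vert B^{\ast}e^{f_k(2k+2-x)M^{\ast}}p_0\right\vert^2\,dx+\sum_{k=0}^{n-1}\int_0^1\left\vert B^{\ast}e^{f_k(x+2k)M^{\ast}}q_0\right\vert^2\,dx+\int_0^{T-2n}\left\vert B^{\ast}e^{f_n(x+2n)M^{\ast}}q_0\right\vert^2\,dx\Bigr)
\end{equation*}
for all $(p_0,q_0)\in H$. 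Now observe that the right-hand side splits as a sum of a functional of $p_0$ alone and a functional of $q_0$ alone. A standard decoupling argument then shows this single inequality is equivalent to the conjunction of (\ref{NCS_Observ_2n_p0}) and (\ref{NCS_Observ_2n_q0}): take $q_0=0$ (with $p_0$ of mean zero — but since we only need the inequality on a dense set and the mean-zero condition is harmless for the $p_0$-estimate, one argues that the $p_0$-inequality must hold for all $p_0\in L^2(0,1)^2$ by splitting an arbitrary $p_0$ into its components, or simply because a vector-valued $p_0$ with $q_0=\text{const}$ still lies in $H$ and the constant contributes only a bounded term that can be absorbed); symmetrically set $p_0=0$ to get the $q_0$-inequality. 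Conversely, adding the two inequalities (\ref{NCS_Observ_2n_p0}) and (\ref{NCS_Observ_2n_q0}) and using the triangle/parallelogram-type estimate for the cross terms recovers (\ref{OBS}). The case $2n+1\le T<2n+2$ is handled identically, substituting (\ref{B*p(t,0)_norme_2n+1}) instead, which yields (\ref{NCS_Observ_2n+1_p0}) and (\ref{NCS_Observ_2n+1_q0}).

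The main subtlety — and the only point requiring care — is the mean-zero constraint $\int_0^1(p_0-q_0)=0$ built into $H$. When I set $q_0=0$ I must check $\int_0^1 p_0=0$, which is not true for general $p_0\in L^2(0,1)^2$; similarly for $p_0=0$. The clean way around this is to note that the map $(p_0,q_0)\mapsto\int_0^T\left\vert B^{\ast}p(t,0)\right\vert^2\,dt$ and the norm $\left\Vert(p_0,q_0)\right\Vert_H^2$ are both continuous and the inequality (\ref{OBS}) on $H$ is equivalent to the two separated inequalities on all of $L^2(0,1)^2$: the forward direction follows because given arbitrary $p_0\in L^2(0,1)^2$ one can find $q_0\in L^2(0,1)^2$ (e.g. $q_0$ a suitable constant vector) with $(p_0,q_0)\in H$, apply (\ref{OBS}), drop the nonnegative $\int_0^1\left\vert q_0\right\vert^2$ and the $q_0$-terms on the right only increase the bound after absorbing them — more precisely one uses that the $q_0$ chosen is bounded in terms of $p_0$ so its contribution to both sides is controlled, and then a standard closed-range/open-mapping argument upgrades the estimate to hold on the full space. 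Since the paper states this corollary as "an immediate consequence," I would keep the write-up short: substitute the Lemma, separate variables, and invoke the elementary fact that a sum-of-squares inequality on a finite-codimension subspace with a right-hand side that decouples is equivalent to the pair of decoupled inequalities. No further obstacle arises.
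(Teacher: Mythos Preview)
Your approach is exactly the paper's: the paper's entire proof reads ``The proof is a straightforward consequence of formulas (\ref{B*p(t,0)_norme_2n-}) and (\ref{B*p(t,0)_norme_2n+1}),'' i.e.\ substitute Lemma \ref{diag-Control-1} into (\ref{OBS}) and observe that the right-hand side splits into a $p_0$-only part and a $q_0$-only part.

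You go further than the paper by flagging the mean-zero constraint $\int_0^1(p_0-q_0)=0$ defining $H$, which the paper ignores entirely. This is a legitimate point: setting $q_0=0$ forces $\int_0^1 p_0=0$, so the naive decoupling only yields the separate inequalities on the zero-mean subspace, not on all of $L^2(0,1)^2$ as the corollary asserts. Your proposed workaround (choose $q_0$ a constant, absorb its contribution) is not quite right as written---the $q_0$-terms land on the \emph{right} side of the inequality, so they cannot simply be dropped---and the ``elementary fact'' you invoke at the end is not literally true without some hypothesis (one can cook up quadratic forms $P,Q$ where $P$ vanishes on a constant while OBS still holds on $H$ thanks to $Q$). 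That said, the discrepancy lives on a finite-dimensional (codimension-two) subspace, and in the paper's subsequent use of the corollary---reducing to pointwise rank conditions on the matrices $P_{2n},Q_{2n}$ via Proposition \ref{MO_sxn}---the singular sequences used in the necessity direction can always be taken zero-mean, while the sufficiency direction gives the estimate on all of $L^2(0,1)^2$ outright. So the gap is real but harmless for everything that follows; if you want a clean write-up, either restrict the corollary's quantifiers to zero-mean data, or note explicitly that the extension to general $L^2$ data holds modulo a two-dimensional correction which is irrelevant for the multiplication-operator criteria of Remark \ref{NCS_Obs_New}.
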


\begin{proof}
	The proof is a straightforward consequence of formulas (\ref%
	{B*p(t,0)_norme_2n-}) and (\ref{B*p(t,0)_norme_2n+1}).
\end{proof}

\begin{remark}
	\label{NCS_Obs_New}Let $n\geq 2.$ For $2n\leq T<2n+1$, introduce the
	matrices:%
	\begin{equation}
		P_{2n}\left( x,T\right) =\left[ 
		\begin{array}{c}
			B^{\ast }e^{f_{0}(2-x)M^{\ast }} \\ 
			B^{\ast }e^{f_{1}(4-x)M^{\ast }} \\ 
			\vdots \\ 
			B^{\ast }e^{f_{n-1}(2n-x)M^{\ast }}%
		\end{array}%
		\right] ;\text{ }Q_{2n}\left( x,T\right) =\left[ 
		\begin{array}{c}
			B^{\ast }e^{f_{0}(x)M^{\ast }} \\ 
			\vdots \\ 
			B^{\ast }e^{f_{n-1}(x+2\left( n-1\right) )M^{\ast }} \\ 
			\mathbbm{1} _{\left( 0,T-2n\right) }(x)B^{\ast }e^{f_{n}(x+2n)M^{\ast }}%
		\end{array}%
		\right] ,  \label{P_2n-Q_2n}
	\end{equation}%
	and their associated multiplication operators $\mathbb{P}_{2n}:L^{2}\left(
	0,1\right) ^{2}\rightarrow L^{2}\left( 0,1\right) ^{n}$ and $\mathbb{Q}%
	_{2n}:L^{2}\left( 0,1\right) ^{2}\rightarrow L^{2}\left( 0,1\right) ^{n+1}.$
	With these notations, (\ref{NCS_Observ_2n_p0}) and (\ref{NCS_Observ_2n_q0})
	respectively write: 
	\begin{eqnarray}
		\exists C_{T} &>&0,\text{ }\int_{0}^{1}\left\vert p_{0}(x)\right\vert
		^{2}dx\leq C_{T}\left\Vert \mathbb{P}_{2n}p_{0}\right\Vert _{L^{2}\left(
			0,1\right) ^{n}}^{2},~\forall p_{0}\in L^{2}\left( 0,1\right) ^{2},
		\label{OMP_2n} \\
		\exists C_{T} &>&0,\text{ }\int_{0}^{1}\left\vert q_{0}(x)\right\vert
		^{2}dx\leq C_{T}\left\Vert \mathbb{Q}_{2n}q_{0}\right\Vert _{L^{2}\left(
			0,1\right) ^{n+1}}^{2},~\forall q_{0}\in L^{2}\left( 0,1\right) ^{2}.
		\label{OMQ_2n}
	\end{eqnarray}
	
	For $2n+1\leq T<2n+2$, introduce the matrices:%
	\begin{equation}
		P_{2n+1}\left( x,T\right) =\left[ 
		\begin{array}{c}
			B^{\ast }e^{f_{0}(2-x)M^{\ast }} \\ 
			\vdots \\ 
			B^{\ast }e^{f_{n-1}(2n-x)M^{\ast }} \\ 
			\mathbbm{1} _{\left( 1,2n+2-T\right) }(x)B^{\ast }e^{f_{n}(2n+2-x)M^{\ast }}%
		\end{array}%
		\right] ;~Q_{2n+1}\left( x,T\right) =\left[ 
		\begin{array}{c}
			B^{\ast }e^{f_{0}(x)M^{\ast }} \\ 
			\vdots \\ 
			B^{\ast }e^{f_{n-1}(x+2\left( n-1\right) )M^{\ast }} \\ 
			B^{\ast }e^{f_{n}(x+2n)M^{\ast }}%
		\end{array}%
		\right] ,  \label{P_2n+1-Q_2n+1}
	\end{equation}%
	and their associated multiplication operators $\mathbb{P}_{2n+1}:L^{2}\left(
	0,1\right) ^{2}\rightarrow L^{2}\left( 0,1\right) ^{n+1}$ and $\mathbb{Q}%
	_{2n}:L^{2}\left( 0,1\right) ^{2}\rightarrow L^{2}\left( 0,1\right) ^{n+1}.$
	With these notations, (\ref{NCS_Observ_2n+1_p0}) and (\ref%
	{NCS_Observ_2n+1_q0}) respectively write: 
	\begin{eqnarray}
		\exists C_{T} &>&0,\text{ }\int_{0}^{1}\left\vert p_{0}(x)\right\vert
		^{2}dx\leq C_{T}\left\Vert \mathbb{P}_{2n+1}p_{0}\right\Vert _{L^{2}\left(
			0,1\right) ^{n+1}}^{2},~\forall p_{0}\in L^{2}\left( 0,1\right) ^{2},
		\label{OMP_2n+1} \\
		\exists C_{T} &>&0,\text{ }\int_{0}^{1}\left\vert q_{0}(x)\right\vert
		^{2}dx\leq C_{T}\left\Vert \mathbb{Q}_{2n+1}p_{0}\right\Vert _{L^{2}\left(
			0,1\right) ^{n+1}}^{2},~\forall q_{0}\in L^{2}\left( 0,1\right) ^{2}.
		\label{OMQ_2n+1}
	\end{eqnarray}
\end{remark}

We are ready to state our first (negative) results on the controllability of
System (\ref{Z diag system_bis}).

\begin{proposition}
	\label{control_T<4} For $T<4$, there exists an infinite dimensional subspace
	of initial data $\left( p_{0},q_{0}\right) \in H$ for which the exact
	observability inequalities (\ref{NCS_Observ_2n_p0})-(\ref{NCS_Observ_2n+1_q0}%
	) are not satisfied by the associated solution $\left( p,q\right) $ to
	System (\ref{Z diag system_bis}).
\end{proposition}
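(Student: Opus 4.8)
The plan is to show directly that for $T<4$ the observability inequalities from the Corollary must fail on an infinite-dimensional subspace, by exploiting the fact that when $T<4$ the sums over $k$ appearing in Lemma~\ref{diag-Control-1} involve at most the single index $k=0$, so the right-hand sides only ``see'' the solution through the single matrix $B^{\ast}e^{f_{0}(\cdot)M^{\ast}}$, whose range is at most one-dimensional pointwise (since $B^{\ast}$ maps $\mathbb{R}^{2}$ to $\mathbb{R}$). First I would split into the two regimes $2n\le T<2n+1$ and $2n+1\le T<2n+2$ with $n=0,1$, i.e. the four subcases $T\in[0,1)$, $T\in[1,2)$, $T\in[2,3)$, $T\in[3,4)$, and in each case write down explicitly which of (\ref{NCS_Observ_2n_p0})--(\ref{NCS_Observ_2n+1_q0}) are in force. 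For instance, when $2\le T<3$ (so $n=1$), the bound for $p_{0}$ is (\ref{NCS_Observ_2n_p0}) with the sum reducing to the single term $k=0$, namely $\int_{0}^{1}|p_{0}|^{2}\le C_{T}\int_{0}^{1}|B^{\ast}e^{f_{0}(2-x)M^{\ast}}p_{0}(x)|^{2}dx$; but the integrand is $|\langle v(x),p_{0}(x)\rangle|^{2}$ where $v(x)=(e^{f_{0}(2-x)M^{\ast}})^{\ast}B$ is a single $\mathbb{R}^{2}$-valued function, so any $p_{0}(x)$ pointwise orthogonal to $v(x)$ gives zero on the right and a nonzero left side.

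The key step is therefore: the subspace $\mathcal V=\{p_{0}\in L^{2}(0,1)^{2}: p_{0}(x)\perp v(x)\ \text{a.e.}\}$ is infinite-dimensional. This is because at each point $x$ the orthogonal complement of the line $\mathbb{R}v(x)$ (or all of $\mathbb{R}^{2}$ if $v(x)=0$) is at least one-dimensional, and one can measurably select a unit vector $w(x)$ spanning it (here $v$ is continuous, even real-analytic in $x$, so an explicit choice such as $w(x)=(-v_{2}(x),v_{1}(x))$ works and is continuous); then the map $g\mapsto g(x)w(x)$ embeds $L^{2}(0,1)$ into $\mathcal V$, which is hence infinite-dimensional. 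I would then observe that for these $p_{0}$ (completing $(p_{0},q_{0})$ to an element of $H$, e.g. with $q_{0}$ chosen so that $\int_{0}^{1}(p_{0}-q_{0})=0$, which costs only a finite-codimensional restriction and can be arranged within an infinite-dimensional subspace) the relevant observability inequality is violated, so exact observability of (\ref{Z diag system_bis}) fails on an infinite-dimensional set. The symmetric argument handles the regimes where it is the $q_{0}$-inequality (\ref{NCS_Observ_2n_q0}) or (\ref{NCS_Observ_2n+1_q0}) that degenerates: for $T<4$ every sum over $k$ there collapses to $k=0$ as well, giving again a scalar constraint.

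The main obstacle, and the point requiring care, is bookkeeping rather than depth: one must check in each of the four $T$-subcases that \emph{all} of the inequalities that the Corollary declares necessary degenerate simultaneously — in particular for $3\le T<4$ (the case $n=1$, $2n+1\le T<2n+2$) one has to verify that both (\ref{NCS_Observ_2n+1_p0}), whose sum $\sum_{k=0}^{n-1}$ is empty and leaves only $\int_{2n+2-T}^{1}|B^{\ast}e^{f_{1}(4-x)M^{\ast}}p_{0}|^{2}$, and (\ref{NCS_Observ_2n+1_q0}), whose sum is $\sum_{k=0}^{1}$, i.e. two scalar constraints on $q_{0}$ — are still insufficient to control an infinite-dimensional space. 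Two pointwise scalar functionals $x\mapsto\langle v_{0}(x),q_{0}(x)\rangle$ and $x\mapsto\langle v_{1}(x),q_{0}(x)\rangle$ determine $q_{0}(x)$ only where $v_{0}(x),v_{1}(x)$ are linearly independent; but $v_{0}(x)=(e^{f_{0}(x)M^{\ast}})^{\ast}B$ and $v_{1}(x)=(e^{f_{1}(x+2)M^{\ast}})^{\ast}B=(e^{(\phi(x+2)+\phi(x))M^{\ast}})^{\ast}B$ are both of the form (scalar $\times$ matrix exponential of $M^{\ast}$)$^{\ast}B$ evaluated along the same direction $B$ — and since these exponentials commute, $v_{1}(x)=(e^{\phi(x+2)M^{\ast}})^{\ast}v_{0}(x)$ need not be parallel to $v_{0}(x)$, so I would instead note that it suffices to restrict $q_{0}$ to its component perpendicular to the (at most two-dimensional, hence proper on a co-null set) span, which still leaves an infinite-dimensional subspace by the same measurable-selection argument applied to the kernel of the $2\times 2$ Gram-type construction; alternatively, since we only need \emph{one} of the four cases' hypotheses to be necessary, one may simply pick, in the $3\le T<4$ range, to violate the simpler $p_{0}$-inequality. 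I would present the argument organized around that simplest violation in each range, and relegate the elementary fact ``$\{f\in L^2(0,1)^2: f(x)\perp v(x)\text{ a.e.}\}$ is infinite-dimensional for continuous $v$'' to a one-line justification.
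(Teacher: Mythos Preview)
Your approach is essentially the paper's: split $T<4$ into the four unit ranges, observe that the terms in Lemma~\ref{diag-Control-1} collapse, and kill them via pointwise orthogonality to the single vector $e^{f_0(\cdot)M}B$. Two points need fixing, both easy.

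First, a bookkeeping slip in the case $3\le T<4$ (so $n=1$): the sum $\sum_{k=0}^{n-1}$ in (\ref{NCS_Observ_2n+1_p0}) is $\sum_{k=0}^{0}$, hence one term, not empty. So the $p_0$-side actually carries \emph{two} contributions,
\[
\int_0^1\bigl|B^\ast e^{f_0(2-x)M^\ast}p_0(x)\bigr|^2dx+\int_{4-T}^1\bigl|B^\ast e^{f_1(4-x)M^\ast}p_0(x)\bigr|^2dx.
\]
The paper handles this by combining your orthogonality trick (to kill the first integral) with a support restriction $\mathrm{supp}(p_0)\subset(0,4-T)$ (to kill the second). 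The resulting subspace is still infinite-dimensional by your $g\mapsto g\,w$ embedding restricted to $L^2(0,4-T)$.

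Second, and more important: to obtain the conclusion as stated --- an infinite-dimensional subspace of $(p_0,q_0)\in H$ on which the observation $B^\ast p(\cdot,0)$ vanishes identically --- you must kill \emph{both} the $p_0$- and the $q_0$-contributions in the decomposition of Lemma~\ref{diag-Control-1}, not just one. Merely ``completing $(p_0,q_0)$ to an element of $H$'' by choosing some $q_0$ with $\int_0^1(p_0-q_0)=0$ leaves the $q_0$-terms nonzero in general. The clean fix (and this is exactly what the paper does) is to take $q_0=0$ and impose $\int_0^1 p_0=0$ on your orthogonal subspace $\mathcal V$ --- a codimension-$2$ constraint, so the intersection remains infinite-dimensional. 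Your discussion of the two constraints $v_0,v_1$ on $q_0$ for $3\le T<4$ is then unnecessary: once you set $q_0=0$ you never face it.

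A minor aside: $v(x)=e^{f_0(2-x)M}B$ is $C^1$ (since $\eta_i\in C^1$), which is all you need; it is not real-analytic in general.
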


\begin{proof}
	The goal is to prove (\ref{OBS}) does not hold for any $0\leq T<4$. If $%
	0\leq T<1,$ from (\ref{B*p(t,0)_norme_(0,1)}), one has: 
	\begin{equation*}
		\int_{0}^{T}\left\vert B^{\ast }p\left( t,0\right) \right\vert
		^{2}dt=\int_{0}^{T}\left\vert B^{\ast }e^{f_{0}(x)M^{\ast
		}}q_{0}(x)\right\vert ^{2}dx,
	\end{equation*}%
	and clearly the observability inequality (\ref{OBS}) does not hold for all $%
	\left( p_{0},q_{0}\right) \in H\times V_{T}$ where 
	\begin{equation*}
		V_{T}=\left\{ q_{0}\in H:q_{0}\cdot e^{f_{0}M}B=0~\mathrm{in}~\left(
		0,T\right) \right\} .
	\end{equation*}%
	If $\ 1\leq T<2$, from (\ref{B*p(t,0)_norme_(1,2)})$:$%
	\begin{equation*}
		\int_{0}^{T}\left\vert B^{\ast }p\left( t,0\right) \right\vert
		^{2}dt=\int_{0}^{T-1}\left\vert B^{\ast }e^{f_{0}(2-x)M^{\ast
		}}p_{0}(x)\right\vert ^{2}dx+\int_{0}^{1}\left\vert B^{\ast
		}e^{f_{0}(x)M^{\ast }}q_{0}(x)\right\vert ^{2}dx.
	\end{equation*}%
	The observability inequality (\ref{OBS}) does not hold for all nontrivial $%
	\left( p_{0},q_{0}\right) \in U_{T}\times H$ (for instance) where%
	\begin{equation*}
		U_{T}=\left\{ p_{0}\in H:p_{0}\cdot e^{f_{0}\left( 2-\cdot \right) M}B=0~%
		\mathrm{in}~\left( 0,T-1\right) \right\} .
	\end{equation*}%
	If $2\leq T<3,$ from \ (\ref{B*p(t,0)_norme_2n-}) follows the equality:%
	\begin{eqnarray*}
		\int_{0}^{T}\left\vert B^{\ast }p\left( t,0\right) \right\vert ^{2}dt
		&=&\int_{0}^{1}\left\vert B^{\ast }e^{f_{0}(2-x)M^{\ast
		}}p_{0}(x)\right\vert ^{2}dx \\
		&&+\int_{0}^{1}\left\vert B^{\ast }e^{f_{0}(x)M^{\ast }}q_{0}(x)\right\vert
		^{2}dx+ \\
		&&+\int_{0}^{T-2}\left\vert B^{\ast }e^{f_{1}(x+2)M^{\ast
		}}q_{0}(x)\right\vert ^{2}dx,
	\end{eqnarray*}%
	and again (\ref{OBS}) does not hold for all $\left( p_{0},q_{0}\right) \in
	U_{T=1}\times H$.
	
	Last, for $3\leq T<4,$ from (\ref{B*p(t,0)_norme_2n+1}):%
	\begin{eqnarray*}
		\int_{0}^{T}\left\vert B^{\ast }p\left( t,0\right) \right\vert ^{2}dt
		&=&\sum\limits_{k=0}^{n-1}\int_{0}^{1}\left\vert B^{\ast
		}e^{f_{0}(2-x)M^{\ast }}p_{0}(x)\right\vert ^{2}dx \\
		&&+\int_{0}^{T-3}\left\vert B^{\ast }e^{f_{1}(4-x)M^{\ast
		}}p_{0}(x)\right\vert ^{2}dx \\
		&&+\sum\limits_{k=0}^{1}\int_{0}^{1}\left\vert B^{\ast
		}e^{f_{k}(x+2k)M^{\ast }}q_{0}(x)\right\vert ^{2}dx,
	\end{eqnarray*}%
	and (\ref{OBS}) does not hold for all $\left( p_{0},q_{0}\right) \in W\times
	H$ where%
	\begin{equation*}
		W=U_{T=1}\cap \left\{ p_{0}\in H:~\text{\textrm{supp}}\left( p_{0}\right)
		\subset \left( T-3,1\right) \right\} .
	\end{equation*}
	
	All the introduced subspaces of non-observable initial data are actually
	infinite dimensional.
	
	In these spaces can be found initial data $\left( p_{0},q_{0}\right) $ for
	which approximate observability does not hold too: if $0<T<4,$ there exists $%
	\left( p_{0},q_{0}\right) \in H\times H$ such that $\left\Vert \left(
	p_{0},q_{0}\right) \right\Vert _{H\times H}=1$ and for which the associated
	solution $\left( p,q\right) $ satisfies:%
	\begin{equation*}
		B^{\ast }p\left( t,0\right) =0,\text{ }t\in \left( 0,T\right) .
	\end{equation*}
\end{proof}

\begin{remark}
	\label{Remark not control}The above proposition shows that if $T<4$ then the
	diagonal system (\ref{Z diag system_bis}) is not exactly controllable.
	Combining this and the fact that the difference of the input maps is compact
	(See Theorem \ref{theorem compactness}) entails that System (\ref{Z system})
	is not weakly controllable; more precisely, there exists an infinite
	dimensional space of unreachable target states.
\end{remark}

It has to be pointed out that the previous negative observability result
does not depend of the choice of $\eta _{1},\eta _{2}$ and $M.$

Before going one in the analysis, let us give a necessary condition for the
exact observability to hold:

\begin{lemma}
	\label{NC_Obs}Let $T>0.$ A necessary condition for the exact observability
	of System (\ref{Z diag system_bis}) is%
	\begin{equation}
		\mathrm{rank}\left[ B\mid MB\right] =2.  \label{rank_B-MB}
	\end{equation}
\end{lemma}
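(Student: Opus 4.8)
The plan is to argue by contraposition: assuming $\mathrm{rank}[B\mid MB]<2$, I would exhibit a nontrivial family of initial data in $H$ for which $B^{\ast}p(t,0)\equiv 0$ on $(0,T)$, contradicting the observability inequality \eqref{OBS}. If $\mathrm{rank}[B\mid MB]\leq 1$, then the subspace $W:=\mathrm{span}\{B,MB\}\subset\mathbb{R}^{2}$ has dimension at most one, and since $M^{\ast}$ leaves $W^{\perp}$ invariant precisely when... — more carefully, the relevant object is the smallest $M^{\ast}$-invariant subspace containing $B$; failure of the rank condition means this subspace is one-dimensional, i.e.\ $B$ is an eigenvector of $M^{\ast}$, say $M^{\ast}B=\rho B$ for some $\rho\in\mathbb{R}$. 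Then $B^{\ast}e^{cM^{\ast}}=e^{c\rho}B^{\ast}$ for every scalar $c$, so by the explicit formula \eqref{B*p(t,0)} the observed quantity reduces to
\begin{equation*}
	B^{\ast}p(t,0;s,Z_{s})=\pm e^{\rho f_{n}(t,s)}\,B^{\ast}\big(q_{s}\ \text{or}\ p_{s}\big)(\,\cdot\,),
\end{equation*}
evaluated at the appropriate argument.

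Consequently $B^{\ast}p(t,0)$ only ever sees the scalar combinations $B^{\ast}p_{0}$ and $B^{\ast}q_{0}$ of the data. Choosing $(p_{0},q_{0})\in H$ with $B^{\ast}p_{0}\equiv 0$ and $B^{\ast}q_{0}\equiv 0$ on $(0,1)$ — that is, $p_{0}(x),q_{0}(x)\in B^{\perp}$ a.e., with the single scalar constraint $\int_{0}^{1}(p_{0}-q_{0})=0$ from the definition \eqref{H} of $H$ automatically compatible since $B^{\perp}$ is one-dimensional — gives an infinite-dimensional space of data with $B^{\ast}p(t,0)\equiv 0$ on all of $(0,T)$. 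Hence \eqref{OBS} fails, so the rank condition \eqref{rank_B-MB} is necessary. I would also note that the argument is insensitive to whether $M^{\ast}$ is diagonalizable: what matters is only that $B$ spans an $M^{\ast}$-invariant line, which is exactly the negation of $\mathrm{rank}[B\mid MB]=2$ in dimension two.

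The one point requiring a little care is the equivalence "$\mathrm{rank}[B\mid MB]<2 \iff B$ is an eigenvector of $M^{\ast}$ (equivalently of $M$)". Since $B\in\mathbb{R}^2$ is assumed nonzero (otherwise the control is absent and the statement is trivial), $[B\mid MB]$ has rank $<2$ iff $MB$ is a scalar multiple of $B$, i.e.\ $MB=\rho B$; and a short linear-algebra remark shows $M$ and $M^{\ast}$ have a common real eigenvalue structure here, or one simply works directly with $M^{\ast}$ throughout (note \eqref{B*p(t,0)} already features $M^{\ast}$, so it is cleanest to phrase the hypothesis as "$B$ is not an eigenvector of $M^{\ast}$" and observe this is equivalent to $\mathrm{rank}[B\mid MB]=2$ because $M$ and $M^{\ast}$ share the same characteristic polynomial and, in the $2\times2$ case, $B$ being an eigenvector of $M^{\ast}$ forces $M^{\ast}B\parallel B$, whence $\det[B\mid M^{\ast}B]=0$, and one relates this to $[B\mid MB]$). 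This is the only genuinely delicate step; the rest is a direct substitution into the already-derived formula \eqref{B*p(t,0)} together with the dimension count for the space of invisible data.
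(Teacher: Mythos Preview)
Your approach is essentially the same as the paper's: argue by contraposition, observe that failure of the rank condition means $B$ is an eigenvector, deduce that $B^{\ast}e^{cM^{\ast}}$ is a scalar multiple of $B^{\ast}$, and then kill the observation by taking $p_{0},q_{0}$ pointwise in $B^{\perp}$.

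There is, however, one genuine slip. From $\mathrm{rank}[B\mid MB]<2$ you get $MB=\rho B$ (i.e.\ $B$ is an eigenvector of $M$), \emph{not} $M^{\ast}B=\rho B$. These are not equivalent for nonsymmetric $2\times 2$ matrices (take $M=\begin{pmatrix}0&1\\0&0\end{pmatrix}$: the eigenvectors of $M$ and $M^{\ast}$ span different lines). Moreover, the implication you use, ``$M^{\ast}B=\rho B\Rightarrow B^{\ast}e^{cM^{\ast}}=e^{c\rho}B^{\ast}$'', is false as written: from $M^{\ast}B=\rho B$ one gets $e^{cM^{\ast}}B=e^{c\rho}B$, and transposing yields $B^{\ast}e^{cM}=e^{c\rho}B^{\ast}$, which is the wrong exponential. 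The correct chain (which the paper uses) is
\[
MB=\rho B\ \Longrightarrow\ e^{cM}B=e^{c\rho}B\ \Longrightarrow\ B^{\ast}e^{cM^{\ast}}=(e^{cM}B)^{\ast}=e^{c\rho}B^{\ast}.
\]
You seem to sense this in your final paragraph but never state it cleanly; once you replace ``$M^{\ast}B=\rho B$'' by ``$MB=\rho B$'' and transpose, the rest of your argument goes through exactly as in the paper.
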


\begin{proof}
	If (\ref{rank_B-MB}) does not hold, there exists $\lambda \in \mathbb{R}$
	such that $MB=\lambda B$ (in other worlds, $B$ is an eigenvector to $M$). It
	follows that for any $r\in \mathbb{R},$ $e^{rM}B=e^{r\lambda }B~\left(
	\Leftrightarrow B^{\ast }e^{rM^{\ast }}=e^{r\lambda }B^{\ast }\right) .$
	Thus, in this case, (\ref{B*p(t,0)_norme_2n-}) writes:%
	\begin{eqnarray*}
		\int_{0}^{T}\left\vert B^{\ast }p\left( t,0\right) \right\vert ^{2}dt
		&=&\int_{0}^{1}\left( \sum\limits_{k=0}^{n-1}e^{2f_{k}(2k+2-x)\lambda
		}\right) \left\vert B^{\ast }p_{0}(x)\right\vert ^{2}dx \\
		&&+\int_{0}^{1}\left( \sum\limits_{k=0}^{n-1}e^{2f_{k}(x+2k)\lambda }\right)
		\left\vert B^{\ast }q_{0}(x)\right\vert ^{2}dx \\
		&&+\int_{0}^{T-2n}e^{2f_{n}(x+2n)\lambda }\left\vert B^{\ast
		}q_{0}(x)\right\vert ^{2}dx.
	\end{eqnarray*}%
	Clearly, the exact observability property will not hold for initial data of
	the form $p_{0}=\alpha B^{\perp }$ and $q_{0}=\beta B^{\perp }$ where $%
	\alpha ,\beta \in L^{2}\left( 0,1\right) $ and $B^{\perp }$ denotes any
	orthogonal vector to $B.$ The same conclusion is achieved starting from (\ref%
	{B*p(t,0)_norme_2n+1}).
\end{proof}

The purpose in the sequel is to give answers for the exact controllability
when $T\geq 4$. We begin by the limit case $T=4.$\ As a first step, we have
the following necessary and sufficient condition for observability:

\begin{proposition}
	\label{Prop_Obs_1}For $T=4$, System (\ref{Z diag system_bis}) is exactly
	observable if, and only if:%
	\begin{equation}
		\inf_{t\in \left[ 2,4\right] }\left\vert \det \left[ B^{\ast }\text{ }|\text{
		}B^{\ast }e^{\phi \left( t\right) M^{\ast }}\right] \right\vert >0~.
		\label{CNS_T=4}
	\end{equation}
\end{proposition}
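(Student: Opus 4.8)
The plan is to use the explicit formula for $\int_0^T|B^*p(t,0)|^2\,dt$ when $T=4$ derived from Lemma~\ref{diag-Control-1} and reduce the observability inequality to a statement about a pair of multiplication operators, which are then handled by Propositions~\ref{MO_sxs} and~\ref{MO_sxn}. Concretely, $T=4$ falls in the range $2n+1\le T<2n+2$ with $n=1$ (indeed $3\le 4<4$ is the boundary case; one treats $T=4$ either as the limit of $2\cdot1+1\le T<2\cdot1+2$ or directly). Using formula (\ref{B*p(t,0)_norme_2n+1}) with $n=1$, one gets
\begin{equation*}
	\int_{0}^{4}\left\vert B^{\ast }p(t,0)\right\vert ^{2}dt=\int_{0}^{1}\left\vert B^{\ast }e^{f_{0}(x)M^{\ast }}q_{0}(x)\right\vert ^{2}dx+\int_{0}^{1}\left\vert B^{\ast }e^{f_{1}(x+2)M^{\ast }}q_{0}(x)\right\vert ^{2}dx+\int_{0}^{1}\left\vert B^{\ast }e^{f_{0}(2-x)M^{\ast }}p_{0}(x)\right\vert ^{2}dx,
\end{equation*}
where I have used that for $T=4$ the last integral $\int_{2n+2-T}^{1}=\int_{0}^{1}$ and $f_0=\phi$. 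So the energy splits into a $p_0$-part and a $q_0$-part, and since $\|(p_0,q_0)\|_H^2=\|p_0\|_{L^2}^2+\|q_0\|_{L^2}^2$, exact observability (\ref{OBS}) is equivalent to the conjunction of the two inequalities: there exists $C>0$ with $\|p_0\|_{L^2(0,1)^2}^2\le C\int_0^1|B^*e^{\phi(2-x)M^*}p_0(x)|^2\,dx$ for all $p_0$, and $\|q_0\|_{L^2(0,1)^2}^2\le C\int_0^1\big(|B^*e^{f_0(x)M^*}q_0(x)|^2+|B^*e^{f_1(x+2)M^*}q_0(x)|^2\big)\,dx$ for all $q_0$.

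Next I would recognize each of these as a coercivity (left-invertibility) statement for a multiplication operator with continuous matrix symbol, to which the technical Propositions~\ref{MO_sxs}--\ref{MO_sxn} apply. For the $p_0$-inequality, the symbol is the $1\times 2$ row vector $x\mapsto B^*e^{\phi(2-x)M^*}$ mapping $L^2(0,1)^2\to L^2(0,1)$; but a single row can never be left-invertible on a $2$-dimensional fiber, so this inequality alone is false. This tells me the correct reading is that the $p_0$-part must be combined differently — in fact at $T=4$ one should re-derive the energy identity keeping $\int_{2n}^{T}$ contributions so that \emph{both} $p_0$-reflections at times in $[2,3]$ and $[3,4]$ appear; equivalently, one uses that for $t\in[2,4]$ the component $B^*p(t,0)$ equals $B^*e^{\phi(t)M^*}q_0(t-2)$ on $[2,3]$ and $B^*e^{\phi(t)M^*}p_0(4-t)$ on $[3,4]$, while on $[0,2]$ it involves $q_0$ alone, and one also has the reflected relation from Lemma~\ref{Lemma p,q diag} that identifies the ``second visit'' of each characteristic. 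The upshot is that for each $x\in[0,1]$ the pair $(p_0(x),q_0(x))$ is probed by the two $1\times 2$ rows $B^*$ and $B^*e^{\phi(t)M^*}$ for an appropriate $t=t(x)\in[2,4]$, so the relevant $2\times 2$ symbol matrix is
\begin{equation*}
	\Phi(x)=\left[\begin{array}{c}B^{\ast }\\ B^{\ast }e^{\phi(t(x))M^{\ast }}\end{array}\right],
\end{equation*}
and Proposition~\ref{MO_sxs} says the associated multiplication operator on $L^2(0,1)^2$ is invertible iff $\inf_x|\det\Phi(x)|>0$, which after the change of variables parametrizing $t\in[2,4]$ is precisely (\ref{CNS_T=4}).

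So the proof skeleton is: (i) write down the energy identity at $T=4$ from Lemma~\ref{diag-Control-1}, being careful at the boundary value, and regroup it so that each characteristic emanating from $\{t=0\}$ contributes exactly its two boundary visits, producing for a.e.\ $x$ a $2\times 2$ linear probe of $(p_0(x),q_0(x))$ or of $(q_0(x))$ together with a reflected copy; (ii) perform the affine changes of variables ($x\mapsto 2-x$, $x\mapsto x+2$, etc.) to express everything through $\phi(t)$ with $t$ ranging over $[2,4]$, using $f_0=\phi$ and $f_1(s)=\phi(s)+\phi(s-2)$; (iii) observe that $\int_0^4|B^*p(t,0)|^2\,dt$ is, up to constants, the squared $L^2$-norm of $\mathbb{M}(p_0,q_0)$ where $\mathbb{M}$ is the multiplication operator with continuous symbol $\Phi$; (iv) invoke Proposition~\ref{MO_sxs} to conclude that the observability inequality (\ref{OBS}), i.e.\ coercivity of $\mathbb{M}$, holds iff $\inf_{x}|\det\Phi(x)|>0$, i.e.\ (\ref{CNS_T=4}). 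The continuity of $\Phi$ in $x$ is immediate since $\phi$ is continuous (it is built from integrals of the $C^1$ functions $\eta_1,\eta_2$) and $M^*\mapsto e^{\phi(t)M^*}$ is smooth. The main obstacle I anticipate is precisely step (i)--(ii): getting the bookkeeping of characteristic reflections right at the borderline time $T=4$ so that the energy really assembles into a single square-matrix multiplication operator (rather than an over- or under-determined rectangular one), and checking that the domain of integration in $t$ for the relevant pair of rows is exactly $[2,4]$ so that the infimum in (\ref{CNS_T=4}) is over the correct interval; once that identity is in hand, the equivalence with (\ref{CNS_T=4}) is a direct application of Proposition~\ref{MO_sxs}.
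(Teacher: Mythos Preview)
Your overall strategy---energy identity from Lemma~\ref{diag-Control-1}, reduction to a multiplication operator, then Proposition~\ref{MO_sxs}---is exactly the paper's approach. But there is a concrete bookkeeping error that derails the argument halfway through, and the ``recovery'' paragraph does not actually repair it.

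The error is in the first displayed identity. First, $T=4$ lies in the interval $[2n,2n+1)$ with $n=2$ (i.e.\ $4\le 4<5$), not in $[2n+1,2n+2)$ with $n=1$; your own parenthetical ``$3\le 4<4$'' is false. More importantly, whichever formula you use at the boundary value $T=4$, there are \emph{four} integrals, not three: you have dropped the term
\[
\int_{0}^{1}\left\vert B^{\ast }e^{f_{1}(4-x)M^{\ast }}p_{0}(x)\right\vert ^{2}dx,
\]
which is exactly the second ``visit'' of the $p_0$-characteristic. With this term restored, the $p_0$-part is governed by the $2\times 2$ matrix
\[
P_{4}(x)=\left[\begin{array}{c}B^{\ast }e^{f_{0}(2-x)M^{\ast }}\\ B^{\ast }e^{f_{1}(4-x)M^{\ast }}\end{array}\right],
\]
and the $q_0$-part by the analogous $Q_4(x)$; both are square, and Proposition~\ref{MO_sxs} applies directly to each, with no need to ``combine differently'' or to appeal to Proposition~\ref{MO_sxn}. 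Your puzzlement that ``a single row can never be left-invertible on a $2$-dimensional fiber'' is entirely an artifact of the missing term.

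Once you have the correct $P_4$ and $Q_4$, the clean way to reach (\ref{CNS_T=4}) is the paper's: right-multiply $P_4(x)$ by the invertible matrix $e^{-f_0(2-x)M^{\ast}}$ and $Q_4(x)$ by $e^{-f_0(x)M^{\ast}}$; using $f_1(s)-f_0(s-2)=\phi(s)$ you get
\[
P_4(x)e^{-f_0(2-x)M^{\ast}}=\left[\begin{array}{c}B^{\ast}\\ B^{\ast}e^{\phi(4-x)M^{\ast}}\end{array}\right],\qquad
Q_4(x)e^{-f_0(x)M^{\ast}}=\left[\begin{array}{c}B^{\ast}\\ B^{\ast}e^{\phi(x+2)M^{\ast}}\end{array}\right].
\]
Since $4-x$ ranges over $[3,4]$ and $x+2$ over $[2,3]$ as $x\in[0,1]$, the two conditions $\inf_x|\det P_4(x)|>0$ and $\inf_x|\det Q_4(x)|>0$ together are exactly $\inf_{t\in[2,4]}|\det[B^{\ast}\,|\,B^{\ast}e^{\phi(t)M^{\ast}}]|>0$. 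Your step (iii)--(iv) sketch is correct in spirit, but the argument only becomes a proof after you fix the dropped term and carry out this reduction explicitly.
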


\begin{proof}
	Let $\left( p_{0},q_{0}\right) \in H$ and $\left( p,q\right) $ the
	associated solution to System (\ref{Z diag system_bis}). For $n=2,$ the
	matrices $P_{4}$ and $Q_{4}$ defined in (\ref{P_2n-Q_2n}) write:%
	\begin{equation*}
		P_{4}\left( x\right) =\left[ 
		\begin{array}{c}
			B^{\ast }e^{f_{0}(2-x)M^{\ast }} \\ 
			B^{\ast }e^{f_{1}(4-x)M^{\ast }}%
		\end{array}%
		\right] ,~Q_{4}\left( x\right) =\left[ 
		\begin{array}{c}
			B^{\ast }e^{f_{0}(x)M^{\ast }} \\ 
			B^{\ast }e^{f_{1}(x+2)M^{\ast }}%
		\end{array}%
		\right] .
	\end{equation*}%
	From Remark \ref{NCS_Obs_New}, System (\ref{Z diag system_bis}) is exactly
	observable if, and only if, the conditions (\ref{OMP_2n})-(\ref{OMQ_2n}) are
	satisfied with $n=2$ and $T=4.$ From Proposition \ref{MO_sxs}, (\ref{OMP_2n}%
	)-(\ref{OMQ_2n}) are equivalent to 
	\begin{equation*}
		\inf_{x\in \left[ 0,1\right] }\left\vert \det P_{4}\left( x\right)
		\right\vert >0~\mathrm{and}~\inf_{x\in \left[ 0,1\right] }\left\vert \det
		Q_{4}\left( x\right) \right\vert >0.
	\end{equation*}%
	But since the multiplication operator on $L^{2}\left( 0,1\right) ^{2}$ whose
	matrix is $e^{f_{0}(2-x)M^{\ast }}~~$(resp. $e^{f_{0}(x)M^{\ast }}$) $\left(
	x\in \left( 0,1\right) \right) $ is invertible, the two last conditions are
	equivalent to the following:%
	\begin{equation*}
		\begin{array}{c}
			\inf_{x\in \left[ 0,1\right] }\left\vert \det \left( P_{4}\left( x\right)
			e^{-f_{0}(2-x)M^{\ast }}\right) \right\vert >0 \\ 
			\mathrm{and} \\ 
			\inf_{x\in \left[ 0,1\right] }\left\vert \det \left( Q_{4}\left( x\right)
			e^{-f_{0}(x)M^{\ast }}\right) \right\vert >0.%
		\end{array}%
	\end{equation*}%
	Now:%
	\begin{eqnarray*}
		P_{4}\left( x\right) e^{-f_{0}(2-x)M^{\ast }} &=&\left[ B^{\ast }\text{ }|%
		\text{ }B^{\ast }e^{\left( f_{1}(4-x)-f_{0}(2-x)\right) M^{\ast }}\right] \\
		&=&\left[ B^{\ast }\text{ }|\text{ }B^{\ast }e^{\phi \left( 4-x\right)
			M^{\ast }}\right] ,
	\end{eqnarray*}%
	and%
	\begin{eqnarray*}
		Q_{4}\left( x\right) e^{-f_{0}(x)M^{\ast }} &=&\left[ B^{\ast }\text{ }|%
		\text{ }B^{\ast }e^{\left( f_{1}(x+2\right) -f_{0}(x))M^{\ast }}\right] \\
		&=&\left[ B^{\ast }\text{ }|\text{ }B^{\ast }e^{\phi \left( x+2\right)
			M^{\ast }}\right] .
	\end{eqnarray*}%
	This leads to the desired inequalities (\ref{CNS_T=4}) after noting that $%
	3\leq 4-x\leq 4$ and $2\leq x+2\leq 3$ for $0\leq x\leq 1.$
\end{proof}

Denote by $\lambda _{1},\lambda _{2}$ the eigenvalues of $M^{\ast }$ if it
is diagonalizable and by $\mu $ the multiple eigenvalue if it is not. The
next lemma will provide an equivalent condition to (\ref{CNS_T=4}).

\begin{lemma}
	\label{Lem_det}Let $r\in \mathbb{R}.$ Then $\det \left[ B^{\ast }\text{ }|%
	\text{ }B^{\ast }e^{rM^{\ast }}\right] \neq 0$ if, and only if:%
	\begin{equation}
		\mathrm{rank}\left[ B\text{ }|\text{ }MB\right] =2\text{ }\mathrm{and}\text{ 
		}\left\{ 
		\begin{array}{lll}
			r\neq 0, & \text{\textrm{if}} & \lambda _{1},\lambda _{2}\in 
			\mathbb{R}
			\text{ \textrm{or} }\sigma (M^{\ast })=\{\mu \}, \\ 
			&  &  \\ 
			r\notin \frac{\pi }{\mathfrak{\Im }(\lambda _{1})}%
			\mathbb{Z}
			, & \text{\textrm{if}} & \lambda _{1},\lambda _{2}\in 
			\mathbb{C}
			\backslash 
			\mathbb{R}
			.%
		\end{array}%
		\right.  \label{diag}
	\end{equation}
\end{lemma}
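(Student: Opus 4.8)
The plan is to reduce the $2\times2$ determinant condition to a statement about the invertibility of $I + e^{rM^\ast}$ composed with a rank-one projection, and then diagonalize (or put in Jordan form) $M^\ast$ to compute everything explicitly. First I would observe that $\det\left[B^\ast \mid B^\ast e^{rM^\ast}\right]$ is, up to the fixed nonzero scalar coming from how we pair the two row vectors $B^\ast$ and $B^\ast e^{rM^\ast}$ into a $2\times2$ matrix, equal to $B^\ast e^{rM^\ast} \wedge B^\ast$, i.e.\ a multiple of $\det\left[\begin{smallmatrix} B^\ast \\ B^\ast e^{rM^\ast}\end{smallmatrix}\right]$. Writing $B=(b_1,b_2)^t$ and $e^{rM^\ast}=(c_{ij})$, this determinant is $b_1(c_{11}b_1+c_{12}b_2) \cdot(\text{wrong})$— more cleanly, it is $\langle B, J e^{rM^\ast} B\rangle$ with $J=\left(\begin{smallmatrix}0&-1\\1&0\end{smallmatrix}\right)$ the symplectic form, so the condition becomes: $B$ and $e^{rM^\ast}B$ are linearly independent in $\mathbb{R}^2$.

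Next I would case-split on the Jordan structure of $M^\ast$. \emph{Case 1: $M^\ast$ diagonalizable with real eigenvalues $\lambda_1,\lambda_2$.} If $\lambda_1=\lambda_2$ then $M^\ast$ is scalar, $e^{rM^\ast}B=e^{r\lambda_1}B$ is always parallel to $B$, the determinant vanishes, and also $\mathrm{rank}[B\mid MB]=1$, so both sides of the equivalence fail — consistent. If $\lambda_1\neq\lambda_2$, write $B=\beta_1 v_1+\beta_2 v_2$ in an eigenbasis; then $e^{rM^\ast}B=\beta_1 e^{r\lambda_1}v_1+\beta_2 e^{r\lambda_2}v_2$ is parallel to $B$ iff $\beta_1\beta_2=0$ (i.e.\ $B$ is itself an eigenvector, equivalently $\mathrm{rank}[B\mid MB]=1$) or $e^{r\lambda_1}=e^{r\lambda_2}$, which for real distinct $\lambda_i$ forces $r=0$. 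So for $r\neq0$ and $\mathrm{rank}[B\mid MB]=2$ the vectors are independent, matching \eqref{diag}. \emph{Case 2: $M^\ast$ non-diagonalizable, single eigenvalue $\mu$.} Then in a Jordan basis $e^{rM^\ast}=e^{r\mu}(I+rN)$ with $N$ the nilpotent Jordan block; $e^{rM^\ast}B$ is parallel to $B$ iff $rNB$ is parallel to $B$, i.e.\ $r=0$ or $NB=0$; but $NB=0$ means $B\in\ker N$, which is the eigenspace, again forcing $\mathrm{rank}[B\mid MB]=1$. \emph{Case 3: complex conjugate eigenvalues $\lambda_1=\bar\lambda_2=\sigma+i\omega$, $\omega\neq0$.} Here $M^\ast$ is $\mathbb{R}$-linearly conjugate to $\left(\begin{smallmatrix}\sigma&-\omega\\\omega&\sigma\end{smallmatrix}\right)$ and $\mathrm{rank}[B\mid MB]=2$ automatically (no real eigenvectors), so the rank condition is vacuous. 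In suitable real coordinates $e^{rM^\ast}=e^{r\sigma}R(r\omega)$ with $R(\theta)$ the rotation by $\theta$; then $e^{rM^\ast}B=e^{r\sigma}R(r\omega)B$ is parallel to $B$ iff $R(r\omega)B=\pm B$, i.e.\ $r\omega\in\pi\mathbb{Z}$, i.e.\ $r\in\frac{\pi}{\omega}\mathbb{Z}=\frac{\pi}{\mathfrak{Im}(\lambda_1)}\mathbb{Z}$. This is exactly the second line of \eqref{diag}.

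Finally I would assemble the three cases into the single biconditional. The only mild subtlety — and the step I'd expect to require the most care — is bookkeeping the equivalence ``$B$ an eigenvector of $M^\ast$'' $\iff$ ``$\mathrm{rank}[B\mid MB]=1$'' $\iff$ ``$\mathrm{rank}[B\mid M^\ast B]<2$'': one must check that $B$ being an eigenvector of $M$ (as in Lemma \ref{NC_Obs}) is equivalent to $B$ being an eigenvector of $M^\ast$ for the purposes of the claim; in fact what we directly need is just that $e^{rM^\ast}B\parallel B$ for \emph{some} $r\neq0$ not of the exceptional form forces $B$ to be an eigenvector of $M^\ast$, hence $\det[B^\ast\mid MB]=\det[B\mid M^\ast B]$ argument — and conversely, so the rank-$2$ hypotheses match up. (Since $\mathrm{rank}[B\mid MB]=\mathrm{rank}[B\mid M^\ast B]$ is \emph{not} true for a general matrix, one should phrase the rank condition consistently; here the paper's convention with $M$ versus $M^\ast$ is harmless because the statement \eqref{diag} is used with $\sigma(M^\ast)$ throughout, and $\mathrm{rank}[B\mid MB]=2 \iff B$ is not an eigenvector of $M$, while what the computation needs is that $B$ is not an eigenvector of $M^\ast$; these agree precisely when, e.g., $M$ is symmetric, and otherwise one simply reads \eqref{diag} as written with the spectral side referring to $M^\ast$.) Modulo this notational alignment, the proof is a direct computation in each of the three normal forms.
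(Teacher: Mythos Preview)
Your approach is essentially the paper's: reduce to a change-of-basis / Jordan-form computation, and case-split on (i) diagonalizable with real eigenvalues, (ii) non-diagonalizable single eigenvalue, (iii) complex conjugate eigenvalues. The casework you carry out matches the paper's almost line for line.

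The one real issue is the $M$ versus $M^\ast$ bookkeeping that you yourself flag in the last paragraph, and it is not a subtlety but a transposition slip earlier on. The matrix $\left[B^\ast \mid B^\ast e^{rM^\ast}\right]$ has \emph{rows} $B^\ast$ and $B^\ast e^{rM^\ast}$; its transpose has \emph{columns} $B$ and $\bigl(B^\ast e^{rM^\ast}\bigr)^\ast = e^{rM}B$. Hence
\[
\det\left[B^\ast \mid B^\ast e^{rM^\ast}\right] \;=\; \det\left[B \mid e^{rM}B\right],
\]
and the question is whether $B$ and $e^{rM}B$ (not $e^{rM^\ast}B$) are linearly independent. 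In your symplectic-form expression this means $\langle B, J e^{rM}B\rangle$, not $\langle B, J e^{rM^\ast}B\rangle$. With this correction, the eigenvector condition that appears in every case is ``$B$ is not an eigenvector of $M$'', which is \emph{exactly} $\mathrm{rank}[B\mid MB]=2$. The paper does precisely this (it works with $[B\mid e^{rM}B]$ from the start), and there is no mismatch to worry about: your entire final paragraph can be deleted once the transpose is taken correctly.
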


\begin{proof}
	Let $P$ a $2\times 2$ invertible matrix and set $\widetilde{B}=P^{-1}B.$ Then%
	\begin{eqnarray*}
		\left[ B\mid e^{rM}B\right] &=&\left[ P\widetilde{B}\mid e^{rM}P\widetilde{B}%
		\right] \\
		&=&P\left[ \widetilde{B}\mid P^{-1}e^{rM}P\widetilde{B}\right] \\
		&=&P\left[ \widetilde{B}\mid e^{rP^{-1}MP}\widetilde{B}\right] .
	\end{eqnarray*}%
	But%
	\begin{equation*}
		\det \left[ B^{\ast }\text{ }|\text{ }B^{\ast }e^{rM^{\ast }}\right] \neq
		0\Leftrightarrow \det P\times \det \left[ \widetilde{B}\mid e^{rP^{-1}MP}%
		\widetilde{B}\right] \neq 0.
	\end{equation*}%
	If $M$ is diagonalizable in $\mathbb{R}$ then it admits a basis $\left\{
	V_{1},V_{2}\right\} $ of real eigenvectors associated with the real
	eigenvalues $\left\{ \lambda _{1},\lambda _{2}\right\} $. If $P=\left[
	V_{1}\mid V_{2}\right] $ is the eigenvectors matrix, we get 
	\begin{equation*}
		e^{rP^{-1}MP}\widetilde{B}=\left( 
		\begin{array}{cc}
			e^{\lambda _{1}r} & 0 \\ 
			0 & e^{\lambda _{2}r}%
		\end{array}%
		\right) \widetilde{B}.
	\end{equation*}%
	So that, if $\widetilde{B}=\left( 
	\begin{array}{c}
		\beta _{1} \\ 
		\beta _{2}%
	\end{array}%
	\right) $ (so that $B=\beta _{1}V_{1}+\beta _{2}V_{2}$)$,$ then:%
	\begin{equation*}
		\det \left[ \widetilde{B}\mid e^{rP^{-1}MP}\widetilde{B}\right] =\beta
		_{1}\beta _{2}\left( e^{\lambda _{1}r}-e^{\lambda _{2}r}\right) .
	\end{equation*}%
	Thus, in this case:%
	\begin{equation*}
		\det \left[ B^{\ast }\text{ }|\text{ }B^{\ast }e^{rM^{\ast }}\right] \neq
		0\Leftrightarrow \beta _{1}\beta _{2}\left( e^{\lambda _{1}r}-e^{\lambda
			_{2}r}\right) \neq 0\Leftrightarrow \left\{ 
		\begin{array}{c}
			\beta _{1}\beta _{2}\neq 0, \\ 
			\mathrm{and} \\ 
			e^{\lambda _{1}r}-e^{\lambda _{2}r}\neq 0.%
		\end{array}%
		\right.
	\end{equation*}%
	The condition $\beta _{1}\beta _{2}\neq 0$ expresses that $B$ is not an
	eigenvector for $M$ and this is equivalent to $\mathrm{rank}\left[ B\mid MB%
	\right] =2.$ For the second condition, one has:%
	\begin{equation*}
		e^{\lambda _{1}r}-e^{\lambda _{2}r}\neq 0\Leftrightarrow \left\{ 
		\begin{array}{lll}
			r\neq 0, & \mathrm{if} & \lambda _{1},\lambda _{2}\in 
			\mathbb{R}
			, \\ 
			&  &  \\ 
			r\notin \frac{\pi }{\mathfrak{\Im }(\lambda _{1})}%
			\mathbb{Z}
			, & \mathrm{if} & \lambda _{1},\lambda _{2}\in 
			\mathbb{C}
			\backslash 
			\mathbb{R}
			.%
		\end{array}%
		\right. \text{ }.
	\end{equation*}%
	If $M$ is not diagonalizable, then there exists a $2\times 2$ invertible
	matrix $P$ such that 
	\begin{equation*}
		M=P\left( 
		\begin{array}{cc}
			\mu & 1 \\ 
			0 & \mu%
		\end{array}%
		\right) P^{-1},
	\end{equation*}%
	then 
	\begin{equation*}
		e^{rP^{-1}MP}\widetilde{B}=\left( 
		\begin{array}{cc}
			e^{\mu r} & re^{\mu r} \\ 
			0 & e^{\mu r}%
		\end{array}%
		\right) \widetilde{B}
	\end{equation*}%
	and in this case:%
	\begin{equation*}
		\det \left[ \widetilde{B}\mid e^{rP^{-1}MP}\widetilde{B}\right] =-\beta
		_{2}^{2}re^{\mu r}.
	\end{equation*}%
	The proof follows immediately. ($\beta _{2}\neq 0$ says that $B$ is not an
	eigenvector to $M$ ). \ 
\end{proof}

An immediate consequence of Proposition \ref{Prop_Obs_1} and Lemma \ref%
{Lem_det} is the following

\begin{corollary}
	\label{CNS_T=4_bis}For $T=4,$ System (\ref{Z diag system_bis}) is exactly
	observable if, and only if, for any $t\in \left[ 2,4\right] $:%
	\begin{equation*}
		\mathrm{rank}\left[ B\text{ }|\text{ }MB\right] =2\text{ }\mathrm{and}\text{ 
		}\left\{ 
		\begin{array}{lll}
			\phi \left( t\right) \neq 0, & \text{\textrm{if}} & \lambda _{1},\lambda
			_{2}\in 
			\mathbb{R}
			\text{ \textrm{or} }\sigma (M^{\ast })=\{\mu \}, \\ 
			&  &  \\ 
			\phi \left( t\right) \notin \frac{\pi }{\mathfrak{\Im }(\lambda _{1})}%
			\mathbb{Z}
			, & \text{\textrm{if}} & \lambda _{1},\lambda _{2}\in 
			\mathbb{C}
			\backslash 
			\mathbb{R}
			.%
		\end{array}%
		\right.
	\end{equation*}
\end{corollary}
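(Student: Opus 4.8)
The plan is to combine the two preceding results directly. By Proposition \ref{Prop_Obs_1}, exact observability of System (\ref{Z diag system_bis}) for $T=4$ is equivalent to
\begin{equation*}
	\inf_{t\in \left[ 2,4\right] }\left\vert \det \left[ B^{\ast }\text{ }|\text{ }B^{\ast }e^{\phi \left( t\right) M^{\ast }}\right] \right\vert >0,
\end{equation*}
so it suffices to show that this uniform lower bound is equivalent to the pointwise nonvanishing condition stated in the corollary. The ``only if'' direction is immediate: a positive infimum forces $\det \left[ B^{\ast }\mid B^{\ast }e^{\phi(t)M^{\ast }}\right] \neq 0$ for every $t\in[2,4]$, and then Lemma \ref{Lem_det} (applied with $r=\phi(t)$) translates this determinant condition into exactly the rank condition on $[B\mid MB]$ together with the case-by-case condition on $\phi(t)$.

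For the ``if'' direction, I would argue by a compactness/continuity argument. Assume the rank condition holds and that $\phi(t)$ avoids the relevant bad set (the point $0$, or the discrete set $\frac{\pi}{\Im(\lambda_1)}\mathbb{Z}$) for all $t\in[2,4]$. By Lemma \ref{Lem_det}, $\det \left[ B^{\ast }\mid B^{\ast }e^{\phi(t)M^{\ast }}\right] \neq 0$ for each $t$. Now observe that $\phi$ is continuous on $[2,4]$ — this follows from its defining formula (\ref{phi}) together with $\eta_i\in C^1(\overline{Q_T},\mathbb{R})$, since for $t\geq 2$ the function reduces to $\phi(t)=\int_{t-2}^{t-1}\eta_1(\tau,\tau-(t-2))d\tau+\int_{t-1}^{t}\eta_2(\tau,t-\tau)d\tau$, an integral with continuously varying limits and integrand. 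Hence $t\mapsto \det \left[ B^{\ast }\mid B^{\ast }e^{\phi(t)M^{\ast }}\right]$ is a continuous function on the compact set $[2,4]$ that never vanishes, so its modulus attains a strictly positive minimum. This is precisely (\ref{CNS_T=4}), and Proposition \ref{Prop_Obs_1} then yields exact observability.

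The only mild subtlety — and the step I would be most careful about — is the passage between ``$\det \neq 0$ pointwise'' and ``$\inf |\det| > 0$'', since in general pointwise nonvanishing on a noncompact or non-closed set need not give a positive infimum; here it works precisely because $[2,4]$ is compact and $\phi$ is continuous up to the endpoints (note $\phi(2)$ is well-defined and equals $\int_0^1\eta_1(\tau,\tau)d\tau+\int_1^2\eta_2(\tau,2-\tau)d\tau$ by continuity). One should also note that the conditions in Lemma \ref{Lem_det} are stated uniformly in the matrix-dependent parameters (the eigenvalue structure of $M^{\ast}$ does not vary with $t$), so the dichotomy in the statement of the corollary is the same at every $t$, and no uniformity issue arises from that side. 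With continuity of $\phi$ and compactness of $[2,4]$ in hand, the equivalence is complete.
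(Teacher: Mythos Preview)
Your proof is correct and follows exactly the approach the paper intends: the corollary is stated there as an immediate consequence of Proposition~\ref{Prop_Obs_1} and Lemma~\ref{Lem_det}, and you have simply made explicit the continuity/compactness step that bridges the pointwise nonvanishing of the determinant with the positive infimum in~(\ref{CNS_T=4}). There is nothing to add.
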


Now, we deal with the case $T\geq 4:$

\begin{proposition}
	\label{CNS_2n<T<2n+1}Let $n\geq 2$ be an integer and $2n\leq T<2n+1$. Then
	System (\ref{Z diag system_bis}) is exactly observable if, and only if \ the
	following three conditions are satisfied:
	
	\begin{enumerate}
		\item $\mathrm{rank}\left[ B\mid MB\right] =2.$
		
		\item For any $x\in \left[ 0,1\right] ,$ there exists $2\leq k\leq n$ such
		that:%
		\begin{equation*}
			\left\{ 
			\begin{array}{lll}
				\phi \left( 2k-x\right) \neq 0, & \mathrm{if} & \lambda _{1},\lambda _{2}\in 
				\mathbb{R}
				\text{ \textrm{or} }\sigma (M)=\{\mu \}, \\ 
				&  &  \\ 
				\phi \left( 2k-x\right) \notin \text{ }\frac{\pi }{\mathfrak{\Im }(\lambda
					_{1})}%
				\mathbb{Z}
				, & \mathrm{if} & \lambda _{1},\lambda _{2}\in 
				\mathbb{C}
				\backslash 
				\mathbb{R}
				.%
			\end{array}%
			\right.
		\end{equation*}
		
		\item For any $x\in \lbrack 0,T-2n)$ and $x^{\ast }\in \lbrack T-2n,1)$,
		there exist $1\leq k\leq n$ and $1\leq k^{\ast }\leq n-1$ such that:$\qquad $%
		\begin{equation*}
			\left\{ 
			\begin{array}{lll}
				\phi \left( x+2k\right) \neq 0,\phi \left( x^{\ast }+2k^{\ast }\right) \neq
				0, & \mathrm{if} & \lambda _{1},\lambda _{2}\in 
				\mathbb{R}
				\text{ \textrm{or} }\sigma (M)=\{\mu \}, \\ 
				&  &  \\ 
				\phi \left( x+2k\right) ,\phi \left( x^{\ast }+2k^{\ast }\right) \notin 
				\text{ }\frac{\pi }{\mathfrak{\Im }(\lambda _{1})}%
				\mathbb{Z}
				, & \mathrm{if} & \lambda _{1},\lambda _{2}\in 
				\mathbb{C}
				\backslash 
				\mathbb{R}
				.%
			\end{array}%
			\right.
		\end{equation*}
	\end{enumerate}
\end{proposition}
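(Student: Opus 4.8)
The plan is to start from the corollary following Lemma~\ref{diag-Control-1}: since $2n\le T<2n+1$ with $n\ge 2$, exact observability of System~(\ref{Z diag system_bis}) is equivalent to the conjunction of (\ref{NCS_Observ_2n_p0}) and (\ref{NCS_Observ_2n_q0}), which by Remark~\ref{NCS_Obs_New} is exactly the pair of coercivity estimates (\ref{OMP_2n})--(\ref{OMQ_2n}) for the multiplication operators $\mathbb{P}_{2n}$ and $\mathbb{Q}_{2n}$ associated with the matrices $P_{2n}(\cdot,T)$ and $Q_{2n}(\cdot,T)$ of (\ref{P_2n-Q_2n}). So the whole task reduces to deciding when these two estimates hold. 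For $\mathbb{P}_{2n}:L^2(0,1)^2\to L^2(0,1)^n$ one applies Proposition~\ref{MO_sxs} when $n=2$ (the square case) and Proposition~\ref{MO_sxn} when $n\ge 3$; for $\mathbb{Q}_{2n}:L^2(0,1)^2\to L^2(0,1)^{n+1}$ Proposition~\ref{MO_sxn} always applies. In every case the estimate holds if and only if, for each $x\in[0,1]$, some $2\times 2$ minor of the corresponding matrix at $x$ is nonzero (the passage from pointwise nonvanishing to a uniform lower bound being built into those propositions via continuity and compactness).

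Next I would compute those minors. A minor of $P_{2n}(x,T)$ is obtained by selecting two rows $j<k$ in $\{0,\dots,n-1\}$, giving $\det\!\big[\,B^{\ast}e^{f_{j}(2j+2-x)M^{\ast}}\,;\,B^{\ast}e^{f_{k}(2k+2-x)M^{\ast}}\,\big]$; factoring out on the right the invertible matrix $e^{f_{j}(2j+2-x)M^{\ast}}$, whose determinant is $e^{f_{j}(2j+2-x)\,\mathrm{tr}\,M^{\ast}}\neq 0$, this reduces to $\det[\,B^{\ast}\,;\,B^{\ast}e^{rM^{\ast}}\,]$ with $r=f_{k}(2k+2-x)-f_{j}(2j+2-x)$. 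Using (\ref{f_n}) and the reindexing $m=k-\ell$ one gets $f_{k}(2k+2-x)=\sum_{m=0}^{k}\phi(2m+2-x)$, hence $r=\sum_{m=j+1}^{k}\phi(2m+2-x)$; for consecutive rows $j=k-1$ this is exactly $\phi(2k+2-x)$, so as $k$ runs over $1,\dots,n-1$ one obtains precisely the quantities $\phi(2\ell-x)$, $\ell=2,\dots,n$. Lemma~\ref{Lem_det} then tells us that $\det[B^{\ast};B^{\ast}e^{rM^{\ast}}]\neq 0$ iff $\mathrm{rank}[B\mid MB]=2$ and $r$ avoids $\{0\}$ (resp. $\frac{\pi}{\mathfrak{\Im}(\lambda_{1})}\mathbb{Z}$) according to whether the eigenvalues of $M$ are real/defective or non-real. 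Running the same computation for $Q_{2n}$, whose rows are $B^{\ast}e^{f_{k}(x+2k)M^{\ast}}$ for $k=0,\dots,n-1$ together with $\mathbbm{1}_{(0,T-2n)}(x)\,B^{\ast}e^{f_{n}(x+2n)M^{\ast}}$, the elementary consecutive differences come out as $\phi(x+2k)$ with $k=1,\dots,n$ on $(0,T-2n)$ and with $k=1,\dots,n-1$ on $[T-2n,1)$.

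Finally I would assemble the equivalence. For the "if'' direction, granting conditions 1--3, for each $x$ one picks the index realizing a good value of $\phi$ and uses the associated pair of consecutive rows; by Lemma~\ref{Lem_det} (the rank hypothesis being condition~1) that minor is nonzero, so the criterion of Proposition~\ref{MO_sxn} (or \ref{MO_sxs}) is met for both $\mathbb{P}_{2n}$ and $\mathbb{Q}_{2n}$, whence exact observability. For the "only if'' direction, exact observability forces $\mathrm{rank}[B\mid MB]=2$ (by Lemma~\ref{NC_Obs}, or directly since otherwise $B^{\ast}$ is a left eigenvector of $M^{\ast}$, every row of $P_{2n}$ and $Q_{2n}$ is proportional to $B^{\ast}$, and all minors vanish); and if, say, condition~2 failed at some $x_{0}$, then $\phi(2\ell-x_{0})$ would be bad for every $\ell\in\{2,\dots,n\}$, so --- because $\{0\}$ and $\frac{\pi}{\mathfrak{\Im}(\lambda_{1})}\mathbb{Z}$ are additive subgroups of $\mathbb{R}$ --- every minor exponent $r=\sum_{m=j+1}^{k}\phi(2m+2-x_{0})$ is also bad, hence every $2\times 2$ minor of $P_{2n}(x_{0},T)$ vanishes, contradicting the criterion; condition~3 is handled identically with $Q_{2n}$. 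Combining the characterizations of (\ref{OMP_2n}) and (\ref{OMQ_2n}) then yields conditions 1, 2, 3.

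The step I expect to require the most care is this last one: the observation that the consecutive-row differences already generate, additively, the exponents of all $2\times 2$ minors, so that no constraint beyond conditions 1--3 can appear. Everything else is bookkeeping of the shifted arguments of $\phi$ through (\ref{f_n}) and the (harmless) endpoint conventions in the truncated last row of $Q_{2n}$.
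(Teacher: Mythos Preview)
Your proposal is correct and follows essentially the same route as the paper: reduce to the coercivity estimates (\ref{OMP_2n})--(\ref{OMQ_2n}) via Remark~\ref{NCS_Obs_New}, apply the rank criterion of Proposition~\ref{MO_sxn}, compute the $2\times2$ minors by factoring out an invertible exponential, and invoke Lemma~\ref{Lem_det} together with the telescoping identity $f_{k}(2k+2-x)-f_{k-1}(2k-x)=\phi(2k+2-x)$. The only organizational difference is that the paper introduces an auxiliary matrix $\widetilde{P}_{2n}$ (with rows $B^{\ast}e^{\phi(2\ell-x)M^{\ast}}$) and argues it is rank-equivalent to $P_{2n}$, whereas you compute a general minor of $P_{2n}$ directly, obtain the exponent $r=\sum_{m=j+1}^{k}\phi(2m+2-x)$, and then use the clean observation that both exceptional sets $\{0\}$ and $\tfrac{\pi}{\mathfrak{\Im}(\lambda_{1})}\mathbb{Z}$ are additive subgroups of $\mathbb{R}$, so that badness of all consecutive differences forces badness of every such sum. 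This subgroup remark makes transparent why non-consecutive minors impose no additional constraint, a point the paper handles instead by its inductive reduction through $\widetilde{P}_{2n}$; the content is the same.
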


\begin{proof}
	From Remark \ref{NCS_Obs_New}, System (\ref{Z diag system_bis}) is exactly
	observable if, and only if the conditions (\ref{OMP_2n})-(\ref{OMQ_2n}) are
	satisfied where we recall that for $x\in \left[ 0,1\right] :$%
	\begin{equation*}
		P_{2n}\left( x,T\right) =\left[ 
		\begin{array}{c}
			B^{\ast }e^{f_{0}(2-x)M^{\ast }} \\ 
			B^{\ast }e^{f_{1}(4-x)M^{\ast }} \\ 
			\vdots \\ 
			B^{\ast }e^{f_{n-1}(2n-x)M^{\ast }}%
		\end{array}%
		\right] ;~Q_{2n}\left( x,T\right) =\left[ 
		\begin{array}{c}
			B^{\ast }e^{f_{0}(x)M^{\ast }} \\ 
			\vdots \\ 
			B^{\ast }e^{f_{n-1}(x+2\left( n-1\right) )M^{\ast }} \\ 
			\mathbbm{1} _{\left( 0,T-2n\right) }(x)B^{\ast }e^{f_{n}(x+2n)M^{\ast }}%
		\end{array}%
		\right] .
	\end{equation*}%
	From Proposition \ref{MO_sxn} with $s=2$ and $n$ given in the lemma, (\ref%
	{OMP_2n})-(\ref{OMQ_2n}) amount to say that for any $x\in \left[ 0,1\right] $%
	, there exist $2\times 2$ matrices $P_{2n}^{\mathrm{ext}}$ and $Q_{2n}^{%
		\mathrm{ext}}$ respectively extracted from $P_{2n}$ and $Q_{2n}$ such that%
	\begin{equation*}
		\det P_{2n}^{\mathrm{ext}}\left( x,T\right) \neq 0~\mathrm{and}~\det Q_{2n}^{%
			\mathrm{ext}}\left( x,T\right) \neq 0.
	\end{equation*}%
	Fix $x\in \left[ 0,1\right] $ and let us first deal with $P_{2n}\left(
	x,T\right) .$ We are going to prove that $P_{2n}$ satisfies the required
	property if, and only if, the following matrix satisfies it too:%
	\begin{equation*}
		\widetilde{P}_{2n}\left( x,T\right) =\left[ 
		\begin{array}{c}
			B^{\ast } \\ 
			B^{\ast }e^{\phi (4-x)M^{\ast }} \\ 
			\vdots \\ 
			B^{\ast }e^{\phi (2n-x)M^{\ast }}%
		\end{array}%
		\right]
	\end{equation*}%
	The proof of this last point is based on the identity:%
	\begin{equation}
		f_{k}\left( 2\left( k+1\right) -x\right) -f_{k-1}\left( 2k-x\right) =\phi
		\left( 2\left( k+1\right) -x\right) ,~k\geq 1,\text{ }x\in \left[ 0,1\right]
		,  \label{rel_f_k}
	\end{equation}%
	which is easily derived from the definitions of the function $\phi $ and the
	sequence $\left( f_{n}\right) $ in (\ref{phi}) and (\ref{f_n}).
	
	Assume first that there exists $x_{0}\in \left[ 0,1\right] $ such that for
	any $2\times 2$ matrices $\widetilde{P}_{2n}^{\mathrm{ext}}$ \ extracted
	from $\widetilde{P}_{2n}$ one has:%
	\begin{equation*}
		\det \widetilde{P}_{2n}^{\mathrm{ext}}\left( x_{0},T\right) =0.
	\end{equation*}%
	It follows that%
	\begin{equation*}
		\det \left[ B^{\ast }\text{ }|\text{ }B^{\ast }e^{\phi (4-x_{0})M^{\ast }}%
		\right] =0\Leftrightarrow \left\{ 
		\begin{array}{lll}
			\phi (4-x_{0})=0, & \mathrm{if} & \lambda _{1},\lambda _{2}\in 
			\mathbb{R}
			\text{ \textrm{or} }\sigma (M)=\{\mu \}, \\ 
			&  &  \\ 
			\phi (4-x_{0})\in \text{ }\frac{\pi }{\mathfrak{\Im }(\lambda _{1})}%
			\mathbb{Z}
			, & \mathrm{if} & \lambda _{1},\lambda _{2}\in 
			\mathbb{C}
			\backslash 
			\mathbb{R}
			.%
		\end{array}%
		\right. ,
	\end{equation*}%
	by the equivalence given by Lemma \ref{Lem_det}. By induction, we get that
	if for $k\geq 1,$ $\phi \left( 2k-x_{0}\right) =0$ then%
	\begin{equation*}
		\det \left[ B^{\ast }e^{\phi \left( 2k-x_{0}\right) M^{\ast }}\text{ }|\text{
		}B^{\ast }e^{\phi \left( 2\left( k+1\right) -x_{0}\right) M^{\ast }}\right]
		=\det \left[ B^{\ast }\text{ }|\text{ }B^{\ast }e^{\phi \left( 2\left(
			k+1\right) -x_{0}\right) M^{\ast }}\right] =0,
	\end{equation*}%
	\begin{equation*}
		\Updownarrow
	\end{equation*}%
	\begin{equation*}
		\left\{ 
		\begin{array}{lll}
			\phi \left( 2\left( k+1\right) -x_{0}\right) =0, & \mathrm{if} & \lambda
			_{1},\lambda _{2}\in 
			\mathbb{R}
			\text{ \textrm{or} }\sigma (M)=\{\mu \}, \\ 
			&  &  \\ 
			\phi \left( 2\left( k+1\right) -x_{0}\right) \in \text{ }\frac{\pi }{%
				\mathfrak{\Im }(\lambda _{1})}%
			\mathbb{Z}
			, & \mathrm{if} & \lambda _{1},\lambda _{2}\in 
			\mathbb{C}
			\backslash 
			\mathbb{R}
			.%
		\end{array}%
		\right.
	\end{equation*}%
	In view of (\ref{rel_f_k}), it readily follows that in this case:%
	\begin{equation*}
		P_{2n}\left( x_{0},T\right) =\left[ 
		\begin{array}{c}
			B^{\ast } \\ 
			\vdots \\ 
			B^{\ast }%
		\end{array}%
		\right] e^{\phi \left( 2-x_{0}\right) }
	\end{equation*}%
	and thus for this $x_{0},$ any $2\times 2$ matrices $P_{2n}^{\mathrm{ext}}$
	extracted from $P_{2n}\ $satisfies: $\det P_{2n}^{\mathrm{ext}}\left(
	x_{0},T\right) =0.$
	
	Conversely, if we assume there exists $x_{0}\in \left[ 0,1\right] $ such
	that for any $2\times 2$ matrices $P_{2n}^{\mathrm{ext}}$ \ extracted from $%
	P_{2n}$ one has:%
	\begin{equation*}
		\det P_{2n}^{\mathrm{ext}}\left( x_{0},T\right) =0,
	\end{equation*}%
	then again using (\ref{rel_f_k}), it is easily deduced that for any $1\leq
	k\leq n:$%
	\begin{equation*}
		\left\{ 
		\begin{array}{lll}
			\phi \left( 2k-x_{0}\right) =0, & \mathrm{if} & \lambda _{1},\lambda _{2}\in 
			\mathbb{R}
			\text{ \textrm{or} }\sigma (M)=\{\mu \}, \\ 
			&  &  \\ 
			\phi \left( 2k-x_{0}\right) \in \text{ }\frac{\pi }{\mathfrak{\Im }(\lambda
				_{1})}%
			\mathbb{Z}
			, & \mathrm{if} & \lambda _{1},\lambda _{2}\in 
			\mathbb{C}
			\backslash 
			\mathbb{R}
			.%
		\end{array}%
		\right.
	\end{equation*}%
	The same considerations hold for $Q_{2n}$ and this proves the proposition.
\end{proof}

\begin{proposition}
	\label{CNS_2n+1<T<2n+2}Let $n\geq 2$ be an integer and $2n+1\leq T<2n+2$.
	System (\ref{Z diag system_bis}) is exactly observable if, and only if \ the
	following three conditions are satisfied :
	
	\begin{enumerate}
		\item $\mathrm{rank}\left[ B\mid MB\right] =2.$
		
		\item For any $x\in \lbrack 2n+2-T,1)$ and $x^{\ast }\in \lbrack 0,2n+2-T)$,
		there exist $2\leq k\leq n+1$ and\ $2\leq k^{\ast }\leq n$ respectively such
		that:%
		\begin{equation*}
			\left\{ 
			\begin{array}{lll}
				\phi \left( 2k-x\right) ,\phi \left( 2k^{\ast }-x^{\ast }\right) \neq 0, & 
				\mathrm{if} & \lambda _{1},\lambda _{2}\in 
				\mathbb{R}
				\text{ \textrm{or} }\sigma (M)=\{\mu \}, \\ 
				&  &  \\ 
				\phi \left( 2k-x\right) ,\phi \left( 2k^{\ast }-x^{\ast }\right) \notin 
				\text{ }\frac{\pi }{\mathfrak{\Im }(\lambda _{1})}%
				\mathbb{Z}
				, & \mathrm{if} & {\small \lambda }_{1}{\small ,\lambda }_{2}{\small \in 
					\mathbb{C}
					\backslash 
					\mathbb{R}
					.}%
			\end{array}%
			\right.
		\end{equation*}
		
		\item For any $x\in \lbrack 0,1],$ there exists $1\leq k\leq n$ such that:%
		\begin{equation*}
			\left\{ 
			\begin{array}{lll}
				\phi \left( x+2k\right) \neq 0, & \mathrm{if} & \lambda _{1},\lambda _{2}\in 
				\mathbb{R}
				\text{ \textrm{or} }\sigma (M)=\{\mu \}, \\ 
				&  &  \\ 
				\phi \left( x+2k\right) \notin \text{ }\frac{\pi }{\mathfrak{\Im }(\lambda
					_{1})}%
				\mathbb{Z}
				, & \mathrm{if} & \lambda _{1},\lambda _{2}\in 
				\mathbb{C}
				\backslash 
				\mathbb{R}
				.%
			\end{array}%
			\right.
		\end{equation*}
	\end{enumerate}
\end{proposition}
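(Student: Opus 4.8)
**The plan is to mimic the proof of Proposition \ref{CNS_2n<T<2n+1}, adapting the bookkeeping to the case $2n+1\leq T<2n+2$.** By Remark \ref{NCS_Obs_New}, exact observability of System (\ref{Z diag system_bis}) is equivalent to the two multiplier inequalities (\ref{OMP_2n+1})--(\ref{OMQ_2n+1}), with the matrices $P_{2n+1}(x,T)$ and $Q_{2n+1}(x,T)$ defined in (\ref{P_2n+1-Q_2n+1}). Since these are $(n+1)\times 2$ matrices with $n+1>2$ (because $n\geq 2$), Proposition \ref{MO_sxn} applies with $s=2$: each inequality holds if and only if for every $x\in[0,1]$ some $2\times 2$ minor of the corresponding matrix has nonzero determinant at $x$. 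So the task reduces to characterizing, pointwise in $x$, when $P_{2n+1}$ (resp.\ $Q_{2n+1}$) admits a nonsingular $2\times 2$ extraction.

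\textbf{First I would treat $Q_{2n+1}$, since it is the cleaner of the two.} Its rows are $B^\ast e^{f_k(x+2k)M^\ast}$ for $k=0,\dots,n$; right-multiplying the whole matrix by the invertible multiplication operator with matrix $e^{-f_0(x)M^\ast}$ (which does not affect the existence of a nonsingular minor), and using the telescoping identity analogous to (\ref{rel_f_k}) --- namely $f_k(x+2k)-f_{k-1}(x+2(k-1))=\phi(x+2k)$ for $k\geq 1$ --- each row $k\geq 1$ becomes $B^\ast e^{\phi(x+2k)M^\ast}\cdots$ telescoped so that the $k$-th row is $B^\ast e^{(\phi(x+2)+\cdots+\phi(x+2k))M^\ast}$. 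By the induction argument already used in the proof of Proposition \ref{CNS_2n<T<2n+1} (if consecutive exponents vanish mod the relevant set, all rows collapse to scalar multiples of $B^\ast$), the matrix $Q_{2n+1}$ has a nonsingular $2\times2$ minor at $x$ if and only if $\mathrm{rank}[B\mid MB]=2$ and there exists $1\leq k\leq n$ with $\phi(x+2k)\neq 0$ (resp.\ $\notin \frac{\pi}{\Im(\lambda_1)}\mathbb{Z}$), invoking Lemma \ref{Lem_det} for the two-row determinants. This is exactly condition~3.

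\textbf{Then I would handle $P_{2n+1}$, which carries the indicator function $\mathbbm{1}_{(1,2n+2-T)}(x)$ in its last row and hence requires splitting $x$ into the two ranges.} For $x\in[2n+2-T,1)$ the indicator vanishes, so $P_{2n+1}(x,T)$ effectively has rows $B^\ast e^{f_k(2k+2-x)M^\ast}$ for $k=0,\dots,n-1$; the identity (\ref{rel_f_k}) (which reads $f_k(2(k+1)-x)-f_{k-1}(2k-x)=\phi(2(k+1)-x)$) lets me telescope exactly as in the even case, and the same induction plus Lemma \ref{Lem_det} shows a nonsingular minor exists iff $\mathrm{rank}[B\mid MB]=2$ and some $\phi(2k-x)$, $2\leq k\leq n+1$ (here the index runs up to $n+1$ because $k=0,\dots,n-1$ indexes the exponents $\phi(2k+2-x)$, i.e.\ $\phi(4-x),\dots,\phi(2n+2-x)$, but note $x<1$ so $2(n+1)-x>2n+1$ is meaningful), is nonzero (resp.\ not in the lattice). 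For $x^\ast\in[0,2n+2-T)$ the indicator is $1$, so one extra row $B^\ast e^{f_n(2n+2-x^\ast)M^\ast}$ is present, extending the available range of $k$ by one more value; after the same reduction the condition becomes: some $\phi(2k^\ast-x^\ast)$ with $2\leq k^\ast\leq n$ is nonzero (resp.\ not in the lattice) --- wait, one must track carefully that adding the row \emph{shrinks} the index ceiling in the stated condition, which is the slightly delicate point. Combining the two sub-cases gives condition~2, and the rank condition common to both is condition~1. The main obstacle, as in the even case, is purely the index bookkeeping: getting the exact ranges of $k$ and $k^\ast$ and the correct breakpoints $2n+2-T$ right, and verifying that the telescoping identity and the collapse-to-$B^\ast$ induction go through verbatim with the indicator row appended. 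I would write this out by direct reference to the proof of Proposition \ref{CNS_2n<T<2n+1}, changing only the index ranges and the role of the indicator function, and omitting the repeated induction details.
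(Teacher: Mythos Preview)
Your approach is precisely that of the paper: it says only ``it suffices to develop the same arguments for the matrices $P_{2n+1}(x,T)$ and $Q_{2n+1}(x,T)$,'' and you correctly invoke Remark \ref{NCS_Obs_New}, Proposition \ref{MO_sxn}, the telescoping identity (\ref{rel_f_k}) and Lemma \ref{Lem_det}. Your treatment of $Q_{2n+1}$ is clean and correct.

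However, your handling of $P_{2n+1}$ is tangled, and the confusion is not merely ``delicate bookkeeping'' --- you have the two sub-cases reversed. The indicator in (\ref{P_2n+1-Q_2n+1}) is mistyped in the paper; comparing with (\ref{B*p(t,0)_norme_2n+1}), where the extra integral runs over $(2n+2-T,1)$, and with the paper's own restatement of $P_{2n+1}$ in the proof of the proposition, the last row should carry $\mathbbm{1}_{(2n+2-T,1)}(x)$. With this correction the picture is straightforward: for $x\in(2n+2-T,1)$ the indicator equals $1$, all $n+1$ rows are present, and the telescoping produces the differences $\phi(2k-x)$ for $k=2,\dots,n+1$; for $x^\ast\in[0,2n+2-T)$ the indicator vanishes, only the first $n$ rows remain, and the available differences are $\phi(2k^\ast-x^\ast)$ for $k^\ast=2,\dots,n$. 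This matches condition~2 exactly and dissolves the apparent paradox you flagged (``adding the row shrinks the index ceiling''): it does not --- you had the two regions swapped. Note also a small arithmetic slip: with rows $k=0,\dots,n-1$ the exponents $f_k(2k+2-x)$ yield, after telescoping, the $\phi$-values $\phi(4-x),\dots,\phi(2n-x)$, not up to $\phi(2n+2-x)$.
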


\begin{proof}
	Exactly as previously, it suffices to develop the same arguments for the
	matrices%
	\begin{equation*}
		P_{2n+1}\left( x,T\right) =\left[ 
		\begin{array}{c}
			B^{\ast }e^{f_{0}(2-x)M^{\ast }} \\ 
			\vdots \\ 
			B^{\ast }e^{f_{n-1}(2n-x)M^{\ast }} \\ 
			\mathbbm{1} _{\left( 2n+2-T,1\right) }(x)B^{\ast }e^{f_{n}(2n+2-x)M^{\ast }}%
		\end{array}%
		\right] ;\text{ }Q_{2n+1}\left( x,T\right) =\left[ 
		\begin{array}{c}
			B^{\ast }e^{f_{0}(x)M^{\ast }} \\ 
			\vdots \\ 
			B^{\ast }e^{f_{n-1}(x+2\left( n-1\right) )M^{\ast }} \\ 
			B^{\ast }e^{f_{n}(x+2n)M^{\ast }}%
		\end{array}%
		\right] .~
	\end{equation*}
\end{proof}

\begin{remark}
	\label{Remark explanation}Up to now, we have dealt only with the block
	diagonal system (\ref{Z diag s}), and by Propositions \ref{CNS_2n<T<2n+1}\
	and \ref{CNS_2n+1<T<2n+2}, we have obtained a necessary and sufficient
	conditions that guarantee the exact controllability of System (\ref{Z diag s}%
	). In the next subsection, we will prove a compactness result which will
	allow to extend the obtained results to the complete system (\ref{Z system})
	up to finite dimensional space of target states. Indeed, we have proved that
	there exists a $C_{T}>0$ such that%
	\begin{equation*}
		\left\Vert (p_{0},q_{0})\right\Vert _{H}^{2}\leq C_{T}\int_{0}^{T}\left\vert
		B^{\ast }p_{d}\left( t,0\right) \right\vert ^{2}dt,
	\end{equation*}%
	where $\left( p_{d},q_{d}\right) $ is the solution of the diagonal system (%
	\ref{Z diag s}). Now, by using the triangular inequality we obtain%
	\begin{equation*}
		\left\Vert (p_{0},q_{0})\right\Vert _{H}^{2}\leq C_{T}\int_{0}^{T}\left\vert
		B^{\ast }p\left( t,0\right) \right\vert ^{2}dt+C_{T}\int_{0}^{T}\left\vert
		B^{\ast }\left( p-p_{d}\right) \left( t,0\right) \right\vert ^{2}dt,
	\end{equation*}%
	where $\left( p,q\right) $ is the solution of the complete system (\ref{Z
		system}). So, if we can prove that the map $(p_{0},q_{0})^{t}\mapsto B^{\ast
	}\left( p-p_{d}\right) _{\mid x=0}$ is compact, we can then conclude that
	the complete system (\ref{Z system}) is exactly controllable for any target
	state in the orthogonal in $H$ of the operator $(p_{0},q_{0})^{t}\mapsto
	B^{\ast }p(t,0)$ (see for instance \cite[Lemma 3]{Peetre}). Notice that if
	its kernel is reduced to zero then exact controllability holds for the
	complete system (\ref{Z system}).
\end{remark}

\subsection{Compactness \label{Subsectioncompactness}}

In section \ref{Section weak observability}, we have, after the change of
variable $t\rightleftharpoons T-t,$ considered the System (\ref{Z diag
	system_bis}) that we recall here:%
\begin{equation}
	\left\{ 
	\begin{array}{lll}
		p_{t}+p_{x}-M^{\ast }\eta _{1}p=0, & \mathrm{in} & Q_{T},\text{ \ \ } \\ 
		q_{t}-q_{x}-M^{\ast }\eta _{2}q=0,\text{\ } & \mathrm{in} & Q_{T},\text{ \ \ 
		} \\ 
		(p+q)_{\mid x=0,1}=0_{%
			\mathbb{R}
			^{2}}, & \mathrm{in} & (0,T),\text{ } \\ 
		\left( p,q\right) _{|t=0}=(p_{0},q_{0}),\text{\ \ \ \ } & \mathrm{in} & 
		(0,1).\text{ \ }%
	\end{array}%
	\right.  \label{Z diag}
\end{equation}%
\ The associated whole system obtained by the same change of variable from (%
\ref{Z system}) is: 
\begin{equation}
	\left\{ 
	\begin{array}{lll}
		p_{t}+p_{x}-M^{\ast }\eta _{1}p-M^{\ast }\eta _{2}q=0, & \mathrm{in} & Q_{T},%
		\text{ \ \ } \\ 
		q_{t}-q_{x}-M^{\ast }\eta _{1}p-M^{\ast }\eta _{2}q=0,\text{\ } & \mathrm{in}
		& Q_{T},\text{ \ \ } \\ 
		(p+q)_{\mid x=0,1}=0_{%
			\mathbb{R}
			^{2}}, & \mathrm{in} & (0,T),\text{ } \\ 
		\left( p,q\right) _{|t=0}=(p_{0},q_{0}),\text{\ \ \ \ } & \mathrm{in} & 
		(0,1).\text{\ }%
	\end{array}%
	\right.  \label{Z system complete}
\end{equation}%
In the sequel, we denote by $Z=Z\left( t,\cdot ,s;p_{0},q_{0}\right) =\left(
p,q\right) \left( t,\cdot ,s;p_{0},q_{0}\right) $ the solution to (\ref{Z
	diag}) and by $Z_{d}=Z_{d}\left( t,\cdot ;s,p_{0},q_{0}\right) =\left(
p_{d},q_{d}\right) \left( t,\cdot ,s;p_{0},q_{0}\right) $ the solution to
the diagonal system (\ref{Z system complete}).

This section is devoted to the proof of the compactness of the following
operator:%
\begin{equation*}
	\begin{array}{cccc}
		D_{T}: & H & \rightarrow & L^{2}\left( 0,T\right) \\ 
		& (p_{0},q_{0})^{t} & \mapsto & B^{\ast }\left( p-p_{d}\right) _{\mid x=0}.%
	\end{array}%
\end{equation*}%
In fact, we have:

\begin{theorem}
	\label{theorem compactness}Let $T>0$. Then the operator $D_{T}$ is compact.
\end{theorem}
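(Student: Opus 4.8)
The plan is to realize $D_T$ as (up to bounded operators) an integral operator with a kernel in $L^2$, or more robustly as a composition $R\circ S$ where $S\colon H\to X$ is bounded into some space $X$ with $X\hookrightarrow L^2(0,1)^2\times L^2(0,1)^2$ (or $C([0,T],H)$) \emph{compactly}, and $R$ is bounded. The natural choice of $S$ is the solution map itself: Duhamel's formula expresses the solution $Z$ of the full system \eqref{Z system complete} in terms of the diagonal evolution family $U_{\mathrm{diag}}(t,s)$ constructed in Remark \ref{remark evolution family} and the perturbation $\mathcal P$ from \eqref{P}:
\begin{equation*}
	Z(t,\cdot)=U_{\mathrm{diag}}(t,0)Z_0+\int_0^t U_{\mathrm{diag}}(t,s)\,\mathcal P Z(s,\cdot)\,ds,
\end{equation*}
so that $Z-Z_d=\int_0^t U_{\mathrm{diag}}(t,s)\,\mathcal P Z(s,\cdot)\,ds$. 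First I would establish this identity rigorously (first for smooth data, using the $H^1$ well-posedness statements of Section \ref{Section preliminaries}, then by density), and record that $Z\in C([0,T],H)$ depends continuously on $Z_0$. The point is that the ``extra'' term that differentiates $Z$ from $Z_d$ is built entirely out of $\mathcal P Z$, which involves $p$ and $q$ paired with the \emph{other} component through the $C^1$ coefficients $\eta_1,\eta_2$.

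Next I would compute the boundary trace $B^*(p-p_d)(t,0)$ explicitly using the trace formulas \eqref{p(t,0;s) 2n,2n+1}--\eqref{p(t,0;s) 2n+1,2n+2} of Lemma \ref{Lemma p,q diag} applied to $U_{\mathrm{diag}}(t,s)$ acting on the source $\mathcal P Z(s,\cdot)$. The key structural fact is that $B^*p_d(t,0;s,Z_s)$ is, for each fixed $t-s$ in a dyadic band $[2n,2n+1)$ or $[2n+1,2n+2)$, a pointwise multiplier ($e^{f_n(t,s)M^*}$) composed with evaluation of a component of $Z_s$ at a single point determined affinely by $t,s$. Plugging in $Z_s=\mathcal P Z(s,\cdot)$, whose two blocks are $\alpha_2 q$ and $\alpha_1 p$, one gets that $D_T Z_0$ is, after unfolding the sum over dyadic bands, a finite sum of terms of the form $(t,x)\mapsto \int_{\Omega} K(t,x)\, (p\text{ or }q)(s(t),\sigma(t,x))\,ds$ — i.e. an operator that takes the $L^2$ function $(p,q)$ (on $Q_T$) and produces a function on $(0,T)$ via integration against a bounded, explicitly given kernel. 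Since $Z_0\mapsto (p,q)\in C([0,T],H)\subset L^2(Q_T)^4$ is bounded, it suffices to show this ``kernel'' operator from $L^2(Q_T)$ to $L^2(0,T)$ is compact.

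For the compactness of that final operator, the cleanest route is to exploit a gain of regularity: the source term $\mathcal P Z(s,\cdot)$ is no smoother than $L^2$ in general, so one cannot hope for an $H^1$ gain directly. Instead I would argue as in \cite{Neves} by approximation: truncate the $C^1$ coefficients $\eta_i$ and the data, for which the corresponding operator maps into an $H^1$-type space (hence compactly into $L^2(0,T)$ by Rellich), and show the approximation converges in operator norm to $D_T$, using the continuous dependence of solutions on coefficients and data (the constants $C(T,\eta_1,\eta_2)$ in the well-posedness estimates are locally uniform). A limit of compact operators in operator norm is compact. Alternatively — and this is the slicker version if it works out — one observes directly that the kernel $K(t,x)$ above is continuous in $t$ (because $f_n(t,s)$ is $C^1$ in $t$ and the characteristic feet move continuously), so averaging in the $s$-variable against a continuous kernel maps $L^2(Q_T)$ into $C([0,T])$, hence compactly into $L^2(0,T)$; the only delicate points are the finitely many dyadic break-times $t-s\in\mathbb Z$ where the formula switches branches, and there $B^*(p-p_d)(t,0)$ is still continuous because the two one-sided formulas agree there (the boundary condition $(p+q)_{|x=0}=0$ forces matching, exactly as in the $n=0$ computation inside the proof of Lemma \ref{Lemma p,q diag}).

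The main obstacle I anticipate is the bookkeeping at the interfaces between dyadic bands: one must verify that the piecewise-defined trace $t\mapsto B^*(p-p_d)(t,0)$ has no jumps across $t=1,2,\dots$ and that the resulting kernel is genuinely bounded (and ideally continuous) on all of $(0,T)\times(0,1)$, uniformly. This is where the compatibility coming from the boundary condition and from identity \eqref{rel_f_k} must be used carefully; once the trace is shown to be given by integration of $(p,q)\in L^2(Q_T)$ against such a kernel, compactness follows from either the Rellich/Arzelà--Ascoli argument or the norm-limit-of-finite-rank argument, whichever is cleaner to write.
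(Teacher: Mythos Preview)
Your Duhamel setup is correct, but the factorization you propose fails at the crucial step. After plugging $\mathcal P Z(s,\cdot)$ into the trace formulas of Lemma~\ref{Lemma p,q diag}, what you obtain is a sum of terms of the type
\[
t\;\longmapsto\;\int_0^t K(t,s)\,\bigl(\mathcal P Z\bigr)(s,\sigma(t,s))\,ds,
\]
where $\sigma(t,s)$ is the characteristic foot (e.g.\ $\sigma(t,s)=t-s$ on the first band). This is a one-dimensional integral along a characteristic line in $Q_T$, not integration of $(p,q)$ against a bounded kernel on the two-dimensional domain. Viewed as a map from $L^2(Q_T)$ to $L^2(0,T)$ it is \emph{not} compact: take $f(s,x)=g(s+x)$ with $g\in L^2$; then $\int_0^t f(s,t-s)\,ds=t\,g(t)$, a mere multiplication operator on $g$. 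So the sentence ``it suffices to show this `kernel' operator from $L^2(Q_T)$ to $L^2(0,T)$ is compact'' reduces the problem to a false statement. The same counterexample kills your ``slicker'' alternative: $t\,g(t)$ is not continuous for generic $g\in L^2$, so the map does not land in $C([0,T])$. Your approximation route is also unpromising as written: smoothing $\eta_i$ does not regularize the data, and smoothing the data does not produce an operator-norm approximation of $D_T$.

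What the paper does is insert one further decomposition you omitted: write $Z=Z_d+(Z-Z_d)$ inside the Duhamel source, giving $D_T Z_0=\mathcal C\Psi_1+\mathcal C\Psi_2$ with $\Psi_2=\int_0^t S_d^-(t,s)\mathcal P_T(s)U_d(s,0)Z_0\,ds$ and $\Psi_1$ the remainder. For $\Psi_2$ the diagonal solution $U_d(s,0)Z_0$ is explicit, and because $\mathcal P$ swaps the $p$- and $q$-blocks, the inner and outer characteristics have \emph{opposite} slopes; the composition evaluates $(p_0,q_0)$ at arguments of the form $2s-t+\mathrm{const}$, so integrating in $s$ becomes a genuine integral of the initial data against a bounded kernel, and \cite[Lemma~4]{Neves} gives compactness from $H$ (not $L^2(Q_T)$) to $L^2(0,T)$. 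For $\Psi_1$ one cannot compute, but an admissibility estimate (Lemma~\ref{Lemma admissible}) shows $\mathcal C\int_0^t S_d^-(t,s)f(s)\,ds$ is bounded on $L^2(0,T;H)$, and then the compactness of $U(\cdot,0)-U_d(\cdot,0)$ from \cite{Neves,Ammar khodja-Bader} finishes. The split $\Psi_1+\Psi_2$ is the missing idea in your plan.
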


The proof of Theorem \ref{theorem compactness} will need some preliminaries.
Recall that the solution to System (\ref{Z diag}) can be expressed in terms
of the evolution family $\left( U_{d}(t,s)\right) _{s\leq t}$ as 
\begin{equation*}
	U_{d}(t,s)Z_{0}=(p_{d},q_{d})(t,\cdot ;s,Z_{0}),~0\leq s\leq t,~Z_{0}\in H.
\end{equation*}%
Therefore, there exist two operators $\left( S_{d}^{\pm }(t,s)\right)
_{s\leq t}\in H\rightarrow L^{2}(0,1)^{2}$ such that%
\begin{equation*}
	p_{d}(t,\cdot ;s,Z_{0})=S_{d}^{-}(t,s)Z_{0}(\cdot ),\text{ \ }q_{d}(t,\cdot
	;s,Z_{0})=S_{d}^{+}(t,s)Z_{0}(\cdot ),\text{\ }0\leq s\leq t,~Z_{0}\in H.
\end{equation*}%
Since System (\ref{Z system complete}) is a bounded perturbation of System (%
\ref{Z diag}), then by \cite[Chapter 5, Theorem 2.3]{Pazy}, there exists a
unique evolution family associated with System (\ref{Z system complete})
defined by 
\begin{equation*}
	U(t,s)Z_{0}=(p,q)(t,\cdot ;s,Z_{0}),~0\leq s\leq t,~Z_{0}\in H.
\end{equation*}%
Similarly, there exist two operators $\left( S^{\pm }(t,s)\right) _{s\leq
	t}\in H\rightarrow L^{2}(0,1)^{2}$ such that%
\begin{equation*}
	p(t,\cdot ;s,Z_{0})=S^{-}(t,s)Z_{0},\text{\ \ }q(t,\cdot
	;s,Z_{0})=S^{+}(t,s)Z_{0},\text{\ }0\leq s\leq t,~Z_{0}\in H.
\end{equation*}%
With these new notations, the operator $D_{T}$ takes the form%
\begin{equation*}
	D_{T}Z_{0}=\mathcal{C}\left( S^{-}(t,s)-S_{d}^{-}(t,s)\right) Z_{0},\text{ }%
	0\leq s\leq t,~Z_{0}\in H,
\end{equation*}%
where $\mathcal{C}$ is the operator%
\begin{equation*}
	\begin{array}{cccc}
		\mathcal{C}: & C(0,T;L^{2}(0,1)^{2}) & \rightarrow & L^{2}\left( 0,T\right)
		\\ 
		& v & \mapsto & B^{\ast }v_{\mid x=0}.%
	\end{array}%
\end{equation*}

Since $\left( S(t,s)\right) _{s\leq t}$ \ is a perturbation of $\left(
S_{d}(t,s)\right) _{s\leq t}$ \ by a multiplication operators with
multiplier $\mathcal{P}_{T}(s)=\mathcal{P}_{T}(T-s),$ it is clear that the
two evolutions families are linked by the Duhamel formula: 
\begin{equation*}
	U(t,0)Z_{0}=U_{d}(t,0)Z_{0}+\int_{0}^{t}U_{d}(t,s)\mathcal{P}%
	_{T}(s)U(s,0)Z_{0}ds,~Z_{0}\in H.
\end{equation*}%
Therefore, 
\begin{eqnarray*}
	\left( U(t,0)-U_{d}(t,0)\right) Z_{0} &=&\int_{0}^{t}U_{d}(t,s)\mathcal{P}%
	_{T}(s)\left( U(s,0)-U_{d}(s,0)\right) Z_{0}ds \\
	&&+\int_{0}^{t}U_{d}(t,s)\mathcal{P}_{T}(s)U_{d}(s,0)Z_{0}ds.
\end{eqnarray*}%
Consequently,%
\begin{eqnarray*}
	\left( S^{-}(t,s)-S_{d}^{-}(t,s)\right) Z_{0} &=&\int_{0}^{t}S_{d}^{-}(t,s)%
	\mathcal{P}_{T}(s)\left( U(s,0)-U_{d}(s,0)\right) Z_{0}ds \\
	&&+\int_{0}^{t}S_{d}^{-}(t,s)\mathcal{P}_{T}(s)U_{d}(s,0)Z_{0}ds \\
	&=&\Psi _{1}(t,\cdot ,Z_{0})+\Psi _{2}(t,\cdot ,Z_{0}).
\end{eqnarray*}%
Thus,%
\begin{equation*}
	D_{T}(p_{0},q_{0})^{t}=\mathcal{C}\Psi _{1}(t,\cdot ,Z_{0})+\mathcal{C}\Psi
	_{2}(t,\cdot ,Z_{0}),~Z_{0}\in H.
\end{equation*}%
So, proving that $D_{T}$ is compact amounts to prove the compactness of the
operators $\mathcal{C}\Psi _{i}(t,\cdot ,Z_{0}),$ $i=1,2.$

Since $\left( U_{d}(t,0)\right) _{0\leq t\leq T}$ is completely known, the
compactness will be just a consequence of the explicit formula of the
operator $\mathcal{C}\Psi _{2}(t,\cdot ,Z_{0})$. To deal with $\mathcal{C}%
\Psi _{1}(t,\cdot ,Z_{0})$ we use the following lemma inspired from (\cite%
{Duprez}) which has been used also in (\cite{Olive}) in the same context to
deal with more general autonomous hyperbolic systems.

\begin{lemma}
	\label{Lemma admissible}For any $f\in C([0,T],H),$ there exists $C_{T}>0$
	such that 
	\begin{equation}
		\int_{0}^{T}\left\vert \mathcal{C}\int_{0}^{t}S_{d}^{-}(t,s)f(s)ds\right%
		\vert ^{2}dt\leq C_{T}\left\Vert f\right\Vert _{L^{2}(0,T;H)}^{2}.
		\label{admissible}
	\end{equation}
\end{lemma}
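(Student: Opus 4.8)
The plan is to reduce the left-hand side of (\ref{admissible}) to an explicit expression via the characteristics formulas of the diagonal evolution family and then close the estimate with Minkowski's integral inequality. The starting point is that, by the characteristics representation in (\ref{p(t,x)}), (\ref{q(t,x)}), (\ref{p(t,0;s) 2n,2n+1}), (\ref{p(t,0;s) 2n+1,2n+2}) and Lemma~\ref{Lemma p,q diag}, the value at $x=0$ of $S_{d}^{-}(t,s)g$ is exactly $p(t,0;s,g)$, which for each fixed $s$ is an a.e.-defined function of $t$ given by (\ref{B*p(t,0)}). Letting $\mathcal{C}$ act through the $x=0$ trace as in the text preceding the statement and using the Duhamel formula, the quantity under study is
\begin{equation*}
	\mathcal{C}\int_{0}^{t}S_{d}^{-}(t,s)f(s)\,ds=\int_{0}^{t}B^{\ast}p(t,0;s,f(s))\,ds,\qquad\text{a.e. }t\in(0,T).
\end{equation*}

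The first real step is a \emph{uniform direct estimate}: there exists $C_T>0$, depending only on $T$, $M$ and $\eta_1,\eta_2$, such that
\begin{equation*}
	\int_{\sigma}^{T}\left\vert B^{\ast}p(t,0;\sigma,g)\right\vert^{2}\,dt\leq C_T\left\Vert g\right\Vert_{H}^{2},\qquad\forall\,\sigma\in[0,T),\ \forall\,g\in H.
\end{equation*}
This is obtained exactly as in the proof of Lemma~\ref{diag-Control-1}, which treats the case $\sigma=0$: splitting $(\sigma,T)$ along the intervals $[\sigma+2k,\sigma+2k+1)$ and $[\sigma+2k+1,\sigma+2k+2)$ and substituting (\ref{B*p(t,0)}) writes the left-hand side as a finite sum of terms $\int_{I}\vert B^{\ast}e^{f_k(t,\sigma)M^{\ast}}v\vert^{2}\,dt$, where $I\subset(\sigma,T)$, $k$ is a nonnegative integer, and $v$ is one of the two $L^{2}(0,1)^{2}$-components of $g$ evaluated at a reflected or translated argument. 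The number of such terms is bounded in terms of $T$ alone; the exponents $f_k(t,\sigma)$ are finite sums of line integrals of $\eta_1,\eta_2\in C^{1}(\overline{Q_{T}})$ over bounded segments, hence uniformly bounded on $\{0\leq\sigma\leq t\leq T\}$, so $\left\Vert e^{f_k(t,\sigma)M^{\ast}}\right\Vert$ is bounded by a constant depending only on $T$ and $M$; and an affine change of variable turns each term into a constant multiple of $\int_{0}^{1}\vert v\vert^{2}\leq\left\Vert g\right\Vert_{H}^{2}$. Summing yields the claimed bound, with $C_T$ independent of $\sigma$.

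The conclusion is then immediate. Extending $t\mapsto B^{\ast}p(t,0;s,f(s))$ by $0$ for $t<s$ and applying Minkowski's integral inequality in $L^{2}(0,T)$, then the Cauchy--Schwarz inequality in $s$,
\begin{align*}
	\left(\int_{0}^{T}\left\vert\int_{0}^{t}B^{\ast}p(t,0;s,f(s))\,ds\right\vert^{2}dt\right)^{1/2}
	&\leq\int_{0}^{T}\left(\int_{s}^{T}\left\vert B^{\ast}p(t,0;s,f(s))\right\vert^{2}dt\right)^{1/2}ds\\
	&\leq\sqrt{C_T}\int_{0}^{T}\left\Vert f(s)\right\Vert_{H}\,ds\leq\sqrt{C_T\,T}\,\left\Vert f\right\Vert_{L^{2}(0,T;H)},
\end{align*}
where the middle inequality is the uniform direct estimate of the previous step; squaring gives (\ref{admissible}) with constant $C_T\,T$. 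The only points requiring care --- and thus the mild obstacles --- are the uniformity in the initial time $\sigma$ of the direct estimate, which as explained is immediate because all the relevant quantities are controlled by $T$, and the legitimacy of letting $\mathcal{C}$ act through the $x=0$ trace inside the $ds$-integral; the latter is handled as in the analysis preceding the lemma by keeping the time integration outermost (equivalently, by approximating $f$ by smooth data, since the bound involves only the $L^{2}(0,T;H)$ norm of $f$).
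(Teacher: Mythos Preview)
Your proof is correct but takes a genuinely different route from the paper's. The paper proceeds by explicitly computing $\bigl(\int_0^t S_d^-(t,s)f(s)\,ds\bigr)(x)$ via the characteristics formula (\ref{S^--}), breaking the $s$-integral according to the position of $t-x-s$ among the intervals $[2n,2n+1)$ and $[2n+1,2n+2)$, then evaluating at $x=0$ and applying Cauchy--Schwarz directly on the resulting one-dimensional integrals; separate (but entirely parallel) computations are carried out for $T\le 1$, $T\in[2n-1,2n)$ and $T\in[2n,2n+1)$, and the bound is closed by a density argument. Your argument instead abstracts the mechanism: you first prove the \emph{uniform-in-$\sigma$ admissibility estimate} $\int_\sigma^T |B^\ast p(t,0;\sigma,g)|^2\,dt\le C_T\|g\|_H^2$ (which is indeed immediate from (\ref{B*p(t,0)}) exactly as Lemma~\ref{diag-Control-1} handles $\sigma=0$, the constants being uniform because the exponents $f_k(t,\sigma)$ and the number of reflections are controlled by $T$ alone), and then conclude by Minkowski's integral inequality followed by Cauchy--Schwarz in $s$. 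This is the standard argument that an admissible observation operator for an evolution family yields a bounded extended input--output map, specialised to the present setting. Your approach is shorter and more conceptual, avoids the case analysis in $T$, and makes transparent why the estimate holds; the paper's explicit computation, on the other hand, produces the integral kernels that are reused verbatim in the proof of Proposition~\ref{Proposition admissible 2}, so its extra work is not wasted. The one point you flag --- commuting $\mathcal{C}$ with the $ds$-integral --- is handled exactly as you say (and as the paper does) by working first with smooth data and passing to the limit, since your bound depends only on $\|f\|_{L^2(0,T;H)}$.
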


\begin{proof}
	For the time being, let $f=\left( f_{1},f_{2}\right) \in C([0,T]\times D(%
	\mathcal{A}^{\ast }))$ ($D(\mathcal{A}^{\ast })$ is defined in (\ref{D(A)}))
	which is a dense subspace of $C([0,T],H).$ By using the characteristics
	method, we have by (\ref{p(t,0;s) 2n,2n+1}) and (\ref{p(t,0;s) 2n+1,2n+2})
	for any $n\geq 0:$%
	\begin{equation}
		\left( S_{d}^{-}(t,s)f(s)\right) \left( x\right) =\left\{ 
		\begin{array}{cc}
			-R_{n}(t,x;s)f_{2}(s,t-x-s-2n),\text{ \ \ \ \ \ \ } & \mathrm{if}~~t-x-s\in
			\lbrack 2n,2n+1),\text{ \ \ \ \ } \\ 
			&  \\ 
			R_{n}(t,x;s)f_{1}(s,2n+2-t+x+s), & \mathrm{if}~~t-x-s\in \lbrack 2n+1,2n+2),%
		\end{array}%
		\right.   \label{S^--}
	\end{equation}%
	where 
	\begin{equation*}
		R_{n}(t,x;s)=e^{f_{n}(t-x,s)M^{\ast }+\int_{t-x}^{t}\eta _{1}(\tau ,\tau
			-t+x)d\tau M^{\ast }},\text{ }n\geq 0.
	\end{equation*}%
	In particular, if $t-x\in (0,1):$ 
	\begin{equation*}
		\left( S_{d}^{-}(t,s)f(s)\right) \left( x\right) =\left\{ 
		\begin{array}{cc}
			-e^{\int_{t-x}^{t}\eta _{1}(\tau ,\tau -t+x)d\tau +\int_{s}^{t-x}\eta
				_{2}(\tau ,t-x-\tau )d\tau }f_{2}(s,t-x-s),\text{ \ } & \mathrm{if}~~s\in
			\lbrack 0,t-x), \\ 
			&  \\ 
			e^{\int_{s}^{t}\eta _{1}(\tau ,\tau -t+x)d\tau }f_{1}(s,s-t+x),\text{ \ \ \
				\ \ \ \ \ \ \ \ \ \ \ \ \ \ \ \ \ \ \ \ \ } & \mathrm{if}~~s\in \lbrack
			t-x,t).%
		\end{array}%
		\right. 
	\end{equation*}%
	Consider first the case $t-x\in \lbrack 0,1).$ Since $f$ is a continuous
	function, the trace operator makes sense and thus we obtain 
	\begin{eqnarray*}
		&&\mathcal{C}\int_{0}^{t}S_{d}^{-}(t,s)f(s)ds \\
		&=&B^{\ast }\left( \int_{0}^{t}S_{d}^{-}(t,s)f(s)ds\right) \left( 0\right) 
		\\
		&=&-B^{\ast }e^{\int_{s}^{t}\eta _{2}(\tau ,t-\tau )d\tau }f_{2}(s,t-s)ds.
	\end{eqnarray*}%
	Therefore%
	\begin{equation*}
		\left\vert B^{\ast }\left( \int_{0}^{t}S_{d}^{-}(t,s)f(s)ds\right) \left(
		0\right) \right\vert ^{2}\leq C\int_{0}^{t}\left\vert
		f_{2}(t-s,s)\right\vert ^{2}ds.
	\end{equation*}%
	By integrating over $(0,T)$ the above inequality we obtain for any $T\leq 1:$%
	\begin{eqnarray}
		\int_{0}^{T}\left\vert \mathcal{C}\int_{0}^{t}S_{d}^{-}(t,s)f(s)ds\right%
		\vert ^{2}dt &\leq &C\int_{0}^{T}\int_{0}^{t}\left\vert
		f_{2}(t-s,s)\right\vert ^{2}dsdt  \label{C} \\
		&\leq &C\int_{0}^{T}\int_{0}^{t}\left\vert f_{2}(t-s,s)\right\vert
		^{2}+\left\vert f_{1}(t-s,s)\right\vert ^{2}dsdt  \notag \\
		&\leq &C\int_{0}^{T}\int_{0}^{1}\left\vert f_{2}(t,s)\right\vert
		^{2}+\left\vert f_{1}(t,s)\right\vert ^{2}dsdt  \notag \\
		&=&C\left\Vert f\right\Vert _{L^{2}(0,T;H)}^{2}.  \notag
	\end{eqnarray}%
	Now, we deal with the case $T\geq 1.$ Let $t-x\in \lbrack 2n-1,2n),$ $n\geq
	1.$ We write 
	\begin{eqnarray*}
		&&\left( \int_{0}^{t}S_{d}^{-}(t,s)f(s)ds\right) \left( x\right) \text{ } \\
		&=&\left(
		\int_{0}^{t-x-2n+1}+\sum_{k=1}^{n-1}\int_{t-x-2k}^{t-x-2k+1}+%
		\sum_{k=0}^{n-1}\int_{t-x-2k-1}^{t-x-2k}+\int_{t-x}^{t}\right) \left(
		S_{d}^{-}(t,s)f(s)ds\right) \left( x\right)  \\
		&=&\int_{t-x}^{t}e^{\int_{s}^{t}\eta _{1}(\tau ,t-x-\tau )d\tau M^{\ast
		}}f_{1}(s,s-t+x)ds \\
		&&+\sum_{k=1}^{n-1}\int_{t-x-2k}^{t-x-2k+1}R_{k-1}(t,x;s)f_{1}(s,2k-t+x+s)ds
		\\
		&&-\sum_{k=0}^{n-1}\int_{t-x-2k-1}^{t-x-2k}R_{k}(t,x;s)f_{2}(s,t-x-s-2k)ds \\
		&&+\int_{0}^{t-x-2n+1}R_{n-1}(t,x;s)f_{1}(s,2n-t+x+s)ds.
	\end{eqnarray*}%
	A simple variable substitution yields%
	\begin{eqnarray*}
		&&\left( \int_{0}^{t}S_{d}^{-}(t,s)f(s)ds\right) \left( x\right)  \\
		&=&\int_{0}^{x}e^{\int_{s+t-x}^{t}\eta _{1}(\tau ,t-x-\tau )d\tau M^{\ast
		}}f_{1}(s+t-x,s)ds \\
		&&+\sum_{k=1}^{n-1}\int_{0}^{1}R_{k-1}(t,x;s+t-x-2k)f_{1}(s+t-x-2k,sds \\
		&&-\sum_{k=0}^{n-1}\int_{0}^{1}R_{k}(t,x;t-x-s-2k)f_{2}(t-x-s-2k,s)ds \\
		&&+\int_{2n-t+x}^{1}R_{n-1}(t,x;s+t-x-2n)f_{1}(s+t-x-2n,s)ds.
	\end{eqnarray*}%
	Again, since $f$ is a continuous function, the trace operator makes sense
	and we obtain: 
	\begin{eqnarray*}
		&&\mathcal{C}\left( \int_{0}^{t}S_{d}^{-}(t,s)f(s)ds\right)  \\
		&=&B^{\ast }\left( \int_{0}^{t}S_{d}^{-}(t,s)f(s)ds\right) \left( 0\right) 
		\\
		&=&\sum_{k=1}^{n-1}\int_{0}^{1}B^{\ast }R_{k-1}(t,0;s+t-2k)f_{1}(s+t-2k,s)ds
		\\
		&&-\sum_{k=0}^{n-1}\int_{0}^{1}B^{\ast }R_{k}(t,0;t-s-2k)f_{2}(t-s-2k,s)ds \\
		&&-\int_{2n-t}^{1}B^{\ast }R_{n-1}(t,0;s+t-2n)f_{1}(s+t-2n,s)ds.
	\end{eqnarray*}%
	Since $\eta _{1},\eta _{2}$ are bounded, we get, using Cauchy-Shwarz
	inequality: 
	\begin{eqnarray}
		\left\vert \mathcal{C}\int_{0}^{t}S_{d}^{-}(t,s)f(s)ds\right\vert ^{2} &\leq
		&C_{n,t}\sum_{k=1}^{n-1}\int_{0}^{1}\left\vert f_{1}(s+t-2k,s)\right\vert
		^{2}ds  \label{AA} \\
		&&+C_{n,t}\sum_{k=0}^{n-1}\int_{0}^{1}\left\vert f_{2}(t-s-2k,s)\right\vert
		^{2}ds  \notag \\
		&&+C_{n,t}\int_{2n-t}^{1}\left\vert f_{1}(s+t-2n,s)\right\vert ^{2}ds, 
		\notag
	\end{eqnarray}%
	where $C_{n,t}$ is a positive constant depending on $t$ and $n.$ Taking the
	integral of (\ref{AA}) over $(0,T)$ for $T\in \lbrack 2n-1,2n),$ $n\geq 1,$
	and using the fact%
	\begin{eqnarray*}
		\int_{0}^{T}\int_{0}^{1}\left\vert f_{1}(s+t-2k,s)\right\vert ^{2}dsdt &\leq
		&\int_{0}^{T}\int_{0}^{1}\left\vert f_{1}(t,s)\right\vert ^{2}dsdt,\text{ }%
		\forall k\in \{1,...,n-1\}, \\
		\int_{0}^{T}\int_{0}^{1}\left\vert f_{2}(t-s-2k,s)\right\vert ^{2}dsdt &\leq
		&\int_{0}^{T}\int_{0}^{1}\left\vert f_{2}(t,s)\right\vert ^{2}dsdt,\text{ }%
		\forall k\in \{0,...,n-1\},
	\end{eqnarray*}%
	yields%
	\begin{equation}
		\int_{0}^{T}\left\vert \int_{0}^{t}S_{d}^{-}(t,s)f(s)ds\right\vert
		^{2}dt\leq n\left\Vert f\right\Vert _{L^{2}(0,T;H)}^{2}.  \label{A}
	\end{equation}%
	Similarly, we obtain the following estimate for $T\in (2n,2n+1),$ $n\geq 1$ 
	\begin{equation}
		\int_{0}^{T}\left\vert \int_{0}^{t}S_{d}^{-}(t,s)f(s)ds\right\vert
		^{2}dt\leq \left( n+1\right) \left\Vert f\right\Vert _{L^{2}(0,T;H)}^{2},
		\label{B}
	\end{equation}%
	which ends the proof. The estimates (\ref{C}), (\ref{A}) and (\ref{B}) can
	be extended for any $f\in C(0,T;H)$ by using a standard density argument.
\end{proof}

\begin{proposition}
	\label{Proposition admissible}The operator $Z_{0}\mapsto \mathcal{C}\Psi
	_{1}(t,\cdot ,Z_{0})$ acting from $H$ to $L^{2}(0,T)$ is compact.
\end{proposition}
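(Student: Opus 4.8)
The plan is to combine the Duhamel formula for $U-U_{d}$ with the explicit formula for $\mathcal{C}\int_{0}^{t}S_{d}^{-}(t,s)f(s)\,ds$ established in the proof of Lemma \ref{Lemma admissible}, so as to exhibit $Z_{0}\mapsto \mathcal{C}\Psi _{1}(t,\cdot ,Z_{0})$ as a finite sum of Hilbert--Schmidt operators. First I would write $\Psi _{1}(t,\cdot ,Z_{0})=\int _{0}^{t}S_{d}^{-}(t,s)\mathcal{P}_{T}(s)v(s)\,ds$ with $v(s):=\bigl(U(s,0)-U_{d}(s,0)\bigr)Z_{0}$, and use the variation of parameters identity $v(s)=\int _{0}^{s}U_{d}(s,\sigma )\mathcal{P}_{T}(\sigma )U(\sigma ,0)Z_{0}\,d\sigma$. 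Substituting this and expressing $U_{d}$ and $S_{d}^{-}\big|_{x=0}$ through the characteristics formulas \eqref{p(t,x)}--\eqref{q(t,x)}, Lemma \ref{Lemma p,q diag}, and (the proof of) Lemma \ref{Lemma admissible}, everything except $U(\cdot ,0)Z_{0}$ becomes fully explicit: $\mathcal{C}\Psi _{1}Z_{0}(t)$ is a finite sum -- the number of terms depending, exactly as in Lemma \ref{Lemma admissible}, on how many reflections the characteristics undergo on $\{x=0\}$ and $\{x=1\}$ before time $t$ -- of integrals of the form
\[
\int _{0}^{1}\!\!\int _{0}^{a(t,s)}\kappa (t,s,\sigma )\,\bigl[U(\cdot ,0)Z_{0}\bigr]^{i}\!\bigl(\sigma ,\beta (t,s,\sigma )\bigr)\,d\sigma \,ds,\qquad i\in \{1,2\},
\]
where $\kappa$ is a product of matrix exponentials $e^{(\int \eta _{j})M^{\ast }}$ and of the $C^{1}$ coefficients $\alpha _{1},\alpha _{2}$ (hence bounded on the bounded domain of integration) and $\beta$ is piecewise affine.

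The decisive structural point is that the perturbation $\mathcal{P}_{T}$ \emph{exchanges} the two components, i.e.\ swaps the two families of characteristics; consequently the inner factor $v^{i}$ carries a characteristic direction opposite to the one produced by $S_{d}^{-}$, so that the two affine contributions to $\beta$ add instead of cancelling, and a direct inspection of the formulas gives $\partial _{s}\beta (t,s,\sigma )=\pm 2\neq 0$ for every term. Performing the change of variables $(s,\sigma )\mapsto (\tau ,\xi )=(\sigma ,\beta (t,s,\sigma ))$, whose Jacobian is $\pm 2$, and integrating out the remaining variable, each term is rewritten as
\[
\int _{0}^{T}\!\!\int _{0}^{1}K(t;\tau ,\xi )\,\bigl[U(\cdot ,0)Z_{0}\bigr]^{i}(\tau ,\xi )\,d\xi \,d\tau ,
\]
with $K$ bounded and compactly supported in $(0,T)\times (0,T)\times (0,1)$, hence $K\in L^{2}$.

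To conclude, an integral operator between the relevant $L^{2}$ spaces with an $L^{2}$ kernel is Hilbert--Schmidt, in particular compact; and $Z_{0}\mapsto U(\cdot ,0)Z_{0}$ is bounded from $H$ into $C([0,T],H)\hookrightarrow L^{2}(0,T;H)$ by well-posedness of \eqref{Z system complete} (\cite[Chapter 5, Theorem 2.3]{Pazy}), so $Z_{0}\mapsto \bigl[U(\cdot ,0)Z_{0}\bigr]^{i}$ is bounded into $L^{2}((0,T)\times (0,1))$ componentwise. Thus each term of $\mathcal{C}\Psi _{1}$ is the composition of a bounded operator with a compact one, and a finite sum of compact operators is compact; hence $\mathcal{C}\Psi _{1}$ is compact. (Equivalently, one may keep $v$ abstract, write $\mathcal{C}\Psi _{1}Z_{0}=\mathcal{C}\int _{0}^{t}S_{d}^{-}(t,s)\mathcal{P}_{T}(s)v(s)\,ds$, bound this by Lemma \ref{Lemma admissible}, then expand $U-U_{d}=\sum _{m\geq 1}U^{(m)}$ in its Duhamel/Neumann series and apply the same Hilbert--Schmidt argument to each term while summing the operator norms; the swap structure of $\mathcal{P}_{T}$ is precisely what rules out Dirac-type ``restriction to a characteristic'' contributions that would fail to be compact.)

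I expect the only genuine difficulty to be combinatorial: listing, when $T$ is large, all the terms produced by the successive reflections of the characteristics and checking \emph{uniformly} over them both the boundedness and compact support of the kernels $K$ and the non-degeneracy $\partial _{s}\beta \neq 0$. This is a routine but lengthy bookkeeping that parallels the computation already carried out in the proof of Lemma \ref{Lemma admissible}; no idea beyond the change of variables above is needed.
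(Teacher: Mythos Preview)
Your approach is correct but takes a genuinely different route from the paper. The paper's proof is a two-line argument: it sets $f(s)=\mathcal{P}_{T}(s)\bigl(U(s,0)-U_{d}(s,0)\bigr)Z_{0}$ in Lemma~\ref{Lemma admissible} to obtain
\[
\left\Vert \mathcal{C}\Psi _{1}(\cdot ,\cdot ,Z_{0})\right\Vert _{L^{2}(0,T)}\leq C_{T}\left\Vert \bigl(U(\cdot ,0)-U_{d}(\cdot ,0)\bigr)Z_{0}\right\Vert _{L^{2}(0,T;H)},
\]
and then simply \emph{cites} the result of Neves et al.\ \cite{Neves} (extended in \cite{Ammar khodja-Bader} to equal speeds) that $Z_{0}\mapsto \bigl(U(\cdot ,0)-U_{d}(\cdot ,0)\bigr)Z_{0}$ is compact. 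So for the paper, compactness of $\mathcal{C}\Psi _{1}$ is bounded-times-compact with the compact factor coming as a black box.

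You instead unpack that black box: you substitute the Duhamel expression for $U-U_{d}$, trace the characteristics explicitly, and exhibit $\mathcal{C}\Psi _{1}$ as a finite sum of Hilbert--Schmidt operators applied to the bounded map $Z_{0}\mapsto U(\cdot ,0)Z_{0}$. Your key observation---that the swap in $\mathcal{P}_{T}$ forces the two successive characteristic segments to be transverse, yielding $\partial _{s}\beta =\pm 2$ and hence an $L^{2}$ (rather than Dirac-type) kernel---is exactly right, and it is the heart of the matter; indeed this is precisely the mechanism the paper exploits for $\Psi _{2}$ in Proposition~\ref{Proposition admissible 2}. What your approach buys is a self-contained proof that does not rely on \cite{Neves,Ammar khodja-Bader}; what the paper's approach buys is brevity, at the cost of importing a nontrivial external result. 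Your remark that the bookkeeping of reflections is ``routine but lengthy'' is accurate: all the regimes arising from \eqref{S^--}--\eqref{S^++} give $\partial _{s}\beta =\pm 2$, including the pre-reflection regime $s-x_{1}-\sigma <0$ where $\beta =x_{1}-s+\sigma$.
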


\begin{proof}
	The proof of Proposition \ref{Proposition admissible} is a direct
	consequence of the result proved in \cite{Neves} (and an extension of this
	result in \cite{Ammar khodja-Bader} to the case where some wave speeds are
	equal) which asserts that the difference of the evolution operators defined
	by systems (\ref{Z system complete}) and (\ref{Z diag}) are compact) and
	Lemma \ref{Lemma admissible}. More precisely, by letting%
	\begin{equation*}
		f(s)=\mathcal{P}_{T}(s)\left( U(s,0)-U_{d}(s,0)\right) Z_{0},\text{ }%
		Z_{0}\in H,\text{ }s\leq t,
	\end{equation*}%
	in (\ref{admissible}), we obtain 
	\begin{eqnarray*}
		\left\Vert \mathcal{C}\Psi _{1}(t,\cdot ,Z_{0})\right\Vert _{L^{2}(0,T)}
		&=&\int_{0}^{T}\mathcal{C}\left\vert \int_{0}^{t}S_{d}^{-}(t,s)\mathcal{P}%
		_{T}(s)\left( U(s,0)-U_{d}(s,0)\right) Z_{0}ds\right\vert ^{2}dt \\
		&\leq &C_{T}\left\Vert \left( U(\cdot ,0)-U_{d}(\cdot ,0)\right)
		Z_{0}\right\Vert _{L^{2}(0,T;H),}^{2}
	\end{eqnarray*}%
	which is a compact operator by the result in \cite{Ammar khodja-Bader}. It
	remains to deal with the operator $\mathcal{C}\Psi _{2}(t,\cdot ,Z_{0}).$
\end{proof}

\begin{proposition}
	\label{Proposition admissible 2}The operator $Z_{0}\mapsto \mathcal{C}\Psi
	_{2}(t,\cdot ,Z_{0})$ acting from $H$ to $L^{2}\left( 0,T\right) $ is
	compact.
\end{proposition}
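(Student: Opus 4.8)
The plan is to make $\mathcal{C}\Psi_2(t,\cdot,Z_0)$ completely explicit and to recognise it as an integral operator, in $Z_0$, with a square-integrable kernel; such an operator is Hilbert--Schmidt, hence compact. Note that, contrary to Proposition \ref{Proposition admissible}, one cannot invoke here a ``difference of evolution families is compact'' argument, because $Z_0\mapsto U_d(\cdot,0)Z_0$ is \emph{not} compact from $H$ into $L^2(0,T;H)$; the compactness has to be extracted from the smoothing effect of the trace $\mathcal{C}$ together with the explicit characteristic form of the diagonal flow.

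First I would write, for $0\le t\le T$,
\[
\mathcal{C}\Psi_2(t,\cdot,Z_0)=\int_0^t B^{\ast}\bigl[S_d^{-}(t,s)\,\mathcal{P}_T(s)\,U_d(s,0)Z_0\bigr](0)\,ds .
\]
By the characteristics formulas (\ref{p(t,x)})--(\ref{q(t,x)}) and Lemma \ref{Lemma p,q diag}, both components of $U_d(s,0)Z_0=(p_d(s,\cdot),q_d(s,\cdot))$ equal, for fixed $s$ and a.e.\ $y\in(0,1)$, a uniformly bounded $2\times2$ matrix (depending on $(s,y)$) applied to $\pi(\omega(s,y))$, where $\pi$ is one of $p_0,q_0$ and $\omega$ is piecewise affine in $(s,y)$ of slope $\pm1$ in $y$ (the characteristics of (\ref{Z diag}) being the lines $x\pm t=\mathrm{const}$, each reflection is an affine change of unit slope). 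The matrix prefactors are exponentials of $M^{\ast}$ times integrals of $\eta_1,\eta_2$ over sets of total length $\le T$; since $\eta_1,\eta_2\in C^1(\overline{Q_T})$ and the relevant indices $n$ satisfy $0\le n\le T/2$ (whence $|f_n|\le(\lfloor T/2\rfloor+1)\sup_{\overline{Q_T}}|\phi|$), these prefactors are uniformly bounded. Multiplication by $\mathcal{P}_T(s)$ only inserts the further bounded $C^1$ factors $\eta_i(s,\cdot)$ and the constant matrix $M^{\ast}$, while applying $B^{\ast}\bigl[S_d^{-}(t,s)\,\cdot\,\bigr](0)$ amounts, by (\ref{p(t,0;s) 2n,2n+1})--(\ref{p(t,0;s) 2n+1,2n+2}), to evaluating the resulting function at a point of the form $t-s-2n$ or $2n+2-t+s$ and multiplying by the bounded matrix $B^{\ast}e^{f_n(t,s)M^{\ast}}$. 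Composing these steps, $\mathcal{C}\Psi_2(t,\cdot,Z_0)$ is a finite sum --- over the finitely many pieces indexed by $n\ge0$ with $2n\le T$ --- of terms $\int \Theta_n(t,s)\,\pi_n\bigl(\omega_n(t,s)\bigr)\,ds$ taken over subintervals of $(0,t)$, with $\pi_n\in\{p_0,q_0\}$, $\Theta_n\in L^{\infty}$, and $s\mapsto\omega_n(t,s)$ affine of slope $\pm1$.

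Then, in each such term I would change variables $y=\omega_n(t,s)$ (admissible since $|\partial_s\omega_n|=1$), turning it into $\int_0^1 K_n(t,y)\,\pi_n(y)\,dy$ with $K_n(t,\cdot)$ supported on a subinterval of $(0,1)$ and $K_n$ uniformly bounded on $(0,T)\times(0,1)$. Hence
\[
\mathcal{C}\Psi_2(t,\cdot,Z_0)=\sum_n\int_0^1 K_n(t,y)\,\pi_n(y)\,dy,\qquad K_n\in L^{\infty}\bigl((0,T)\times(0,1)\bigr)\subset L^2\bigl((0,T)\times(0,1)\bigr),
\]
so each operator $Z_0\mapsto\bigl(t\mapsto\int_0^1 K_n(t,y)\pi_n(y)\,dy\bigr)$ is Hilbert--Schmidt from $L^2(0,1)^2\times L^2(0,1)^2$, hence from $H$, into $L^2(0,T)$, therefore compact. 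A finite sum of compact operators being compact, $Z_0\mapsto\mathcal{C}\Psi_2(t,\cdot,Z_0)$ is compact. Together with Proposition \ref{Proposition admissible} and the decomposition $D_T=\mathcal{C}\Psi_1+\mathcal{C}\Psi_2$, this completes the proof of Theorem \ref{theorem compactness}.

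The only real work lies in the bookkeeping of the penultimate paragraph: keeping track of the nested case distinctions (the value of $n$; at each boundary reflection, whether the datum picked up is a value of $p_0$ or of $q_0$; which subinterval of $(0,1)$ survives the change of variables) and checking at each stage that the accumulated matrix factors stay uniformly bounded. This last point is precisely where one uses that the number of reflections is finite because $T<\infty$ and that $\eta_1,\eta_2$ are continuous on the compact set $\overline{Q_T}$; beyond this there is no analytic subtlety, the compactness being automatic once the square-integrable kernel has been exhibited.
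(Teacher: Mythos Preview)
Your approach is essentially the paper's: both compute $\mathcal{C}\Psi_2$ explicitly through the characteristics formulas for the diagonal flow, recognise the result as a finite sum of integral operators in $Z_0$ with bounded kernels, and conclude compactness (you via the Hilbert--Schmidt criterion, the paper by invoking \cite[Lemma~4]{Neves}, which is the same mechanism).

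One point needs correcting. The composed map $s\mapsto\omega_n(t,s)$ is affine of slope $\pm 2$, not $\pm 1$. Indeed, $U_d(s,0)Z_0$ at a point $y$ returns $\pi(\omega(s,y))$ with $\omega$ of slope $\pm 1$ in \emph{both} $s$ and $y$ separately; composing with $y=t-s-2n$ or $y=2n+2-t+s$ (itself of slope $\mp 1$ in $s$) gives $\partial_s\omega_n\in\{-2,0,+2\}$. The slope-$0$ case would destroy your change of variables, and it is exactly the off-diagonal structure of $\mathcal{P}_T$ (swapping the $p$- and $q$-components before feeding them back into $S_d^{-}$) that rules it out: in each regime one ends up composing two characteristics of the \emph{same} sign, never opposite. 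The paper's explicit formula confirms this --- the arguments of $p_0,q_0$ there are all of the form $\pm 2\tau+c(t)$. With the slope corrected to $\pm 2$ (Jacobian $1/2$), your Hilbert--Schmidt conclusion goes through unchanged; but you should record that the antidiagonal form of $\mathcal{P}$ is being used, since this is where the argument would fail for a general bounded perturbation.
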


\begin{proof}
	The proof is purely constructive. First, we find the solution $q(t,\cdot
	;s,Z_{0})=S^{+}(t,s)Z_{0}.$
	
	($D(\mathcal{A}^{\ast })$ is defined in (\ref{D(A)}))
	
	Let $(p_{0},q_{0})\in D(\mathcal{A}^{\ast })$ (see (\ref{D(A)})). By using the
	characteristics method, we compute (\ref{S^--}) for $x=1$ by (\ref{q(t,1) 1}
	and (\ref{q(t,1)2}) we obtain:%
	\begin{equation}
		\left( S_{d}^{+}(t,s)Z_{0}\right) (x)=\left\{ 
		\begin{array}{cc}
			N_{n}(t,x;s)f_{2}(s,x+t-s-2n),\text{ \ \ } & \mathrm{if}~~x+t-s\in \lbrack
			2n,2n+1),\text{ \ \ \ \ } \\ 
			&  \\ 
			-N_{n}(t,x;s)f_{1}(s,2n+2-x-t+s), & \mathrm{if}~~x+t-s\in \lbrack 2n+1,2n+2),%
		\end{array}%
		\right.  \label{S^++}
	\end{equation}%
	where%
	\begin{equation*}
		N_{n}(t,x;s)=R_{n-1}(x+t-1,1;s)+e^{\int_{x+t-1}^{t}\eta _{2}(\tau ,-\tau
			+t+x)d\tau },\text{ }n\geq 0.
	\end{equation*}%
	The aim now is to compute $\Psi _{2}(t,\cdot ,Z_{0})$ explicitly. We recall
	that 
	\begin{equation*}
		\Psi _{2}(t,\cdot ,Z_{0})=\int_{0}^{t}S_{d}^{-}(t,s)\mathcal{P}%
		_{T}(s)U_{d}(s,0)Z_{0}ds,\text{ }t\leq T,
	\end{equation*}%
	where $U_{d}(s,0)Z_{0}=\left( S_{d}^{-}(s,0)Z_{0},S_{d}^{+}(s,0)Z_{0}\right)
	^{t}$, $0\leq s\leq t,$ and $\left( S_{d}^{\pm }(t,s)\right) _{s\leq t\leq
		T} $ are given in (\ref{S^--}) and (\ref{S^++}).
	
	Applying $\mathcal{P}_{T}(\cdot )$ yields%
	\begin{equation*}
		\mathcal{P}_{T}(s)U_{d}(s,0)=\left( \eta _{2}(s)S_{d}^{+}(s,0)Z_{0},\eta
		_{1}(s)S_{d}^{-}(s,0)Z_{0}\right) ^{t},\text{ }s\leq t.
	\end{equation*}%
	Therefore, 
	\begin{equation}
		\mathcal{C}\Psi _{2}(t,\cdot ,Z_{0})=\mathcal{C}\int_{0}^{t}S_{d}^{-}(t,s)%
		\left( \eta _{2}(s)S_{d}^{+}(s,0),\eta _{1}(s)S_{d}^{-}(s,0)\right)
		^{t}Z_{0}ds,\text{ }t\leq T.  \label{integrand}
	\end{equation}%
	Let us start by computing the integrand in (\ref{integrand}). We have for
	any $n\geq 0$ 
	\begin{eqnarray*}
		&&S_{d}^{-}(t,s)\left( \eta _{2}(s)S_{d}^{+}(s,0)Z_{0},\eta
		_{1}(s)S_{d}^{-}(s,0)Z_{0}\right) ^{t}\left( x\right) \\
		&=&\left\{ 
		\begin{array}{cc}
			M_{n}^{1}(t,x;s)\left( S_{d}^{-}(s,0)Z_{0}\right) \left( t-x-s-2n\right) ,%
			\text{\ \ \ \ \ } & \mathrm{if}~~t-x-s\in \lbrack 2n,2n+1),\text{ \ \ \ \ }
			\\ 
			&  \\ 
			M_{n}^{2}(t,x;s)\left( S_{d}^{+}(s,0)Z_{0}\right) (2n+2-t+x+s), & \mathrm{if}%
			~~t-x-s\in \lbrack 2n+1,2n+2),%
		\end{array}%
		\right.
	\end{eqnarray*}%
	where 
	\begin{eqnarray*}
		M_{n}^{1}(t,x;s) &=&-R_{n}(t,x;s)\eta _{1}(s,t-x-s-2n),\text{ \ \ \ }%
		t-x-s\in \lbrack 2n,2n+1),\text{ }n\geq 0, \\
		M_{n}^{2}(t,x;s) &=&R_{n}(t,x;s)\eta _{2}(s,2n+2-t+x+s),\text{ }t-x-s\in
		\lbrack 2n+1,2n+2),\text{ }n\geq 0.
	\end{eqnarray*}%
	Now, by using (\ref{S^--}) and (\ref{S^++}) for $s=0$ we get for any $%
	n,k\geq 0$%
	\begin{eqnarray}
		&&\left( S_{d}^{-}(\tau ,0)Z_{0}\right) \left( t-x-\tau -2n\right)
		\label{SS^-} \\
		&=&\left\{ 
		\begin{array}{cc}
			\begin{array}{c}
				-R_{k}(\tau ,t-x-\tau -2n;0)\times \\ 
				q_{0}(2\tau -t+x+2(n-k)),\text{ \ \ \ \ \ \ \ }%
			\end{array}
			& \mathrm{if}~\tau \in (\frac{2(k-n)+t-x}{2},\frac{2(k-n)+t-x+1}{2})\cap
			(0,t),\text{ \ } \\ 
			&  \\ 
			\begin{array}{c}
				R_{k}(\tau ,t-x-\tau -2n;0)\times \\ 
				p_{0}(2(k-n+1)-2\tau +t-x),%
			\end{array}
			& \mathrm{if}\text{ }\tau \in (\frac{2(k-n)+t-x+1}{2},\frac{2(k-n)+t-x+2}{2}%
			)\cap (0,t),%
		\end{array}%
		\right.  \notag
	\end{eqnarray}%
	and%
	\begin{eqnarray}
		&&S^{+}(t,0)Z_{0}(2n+2-t+x+\tau )  \label{SS+} \\
		&=&\left\{ 
		\begin{array}{cc}
			\begin{array}{c}
				N_{k}(\tau ,2n+2-t+x+\tau ;0)\times \\ 
				q_{0}(2(n-k+1)-t+x+2\tau ),%
			\end{array}
			& \mathrm{if}~\tau \in ~(\frac{2(k-n-1)-t-x}{2},\frac{2(k-n)-t-x-1}{2})\cap
			(0,t), \\ 
			&  \\ 
			\begin{array}{c}
				-N_{k}(\tau ,2n+2-t+x+\tau ;0)\times \\ 
				p_{0}(2(k-n)+t-x-2\tau ),\text{ \ \ \ \ \ }%
			\end{array}
			& \mathrm{if}\text{ }\tau \in ~(\frac{2(k-n)-t-x-1}{2},\frac{2(k-n)-t-x}{2}%
			)\cap (0,t),\text{ \ }%
		\end{array}%
		\right.  \notag
	\end{eqnarray}%
	Therefore, we obtain for any $k,n\geq 0$%
	\begin{eqnarray*}
		\Psi _{2}(t,\cdot ,Z_{0}) &=&-\sum_{k,n\geq 0}\int_{\frac{2(k-n)+t-x}{2}}^{%
			\frac{2(k-n)+t-x+1}{2}}\ \mathbbm{1} (\tau )_{(0,t)}P_{k,n}^{1}(t,x;\tau
		)q_{0}(2\tau -t+x+2(n-k))d\tau \\
		&&+\sum_{k,n\geq 0}\int_{\frac{2(k-n)+t-x+1}{2}}^{\frac{2(k-n)+t-x+2}{2}}%
		\mathbbm{1} (\tau )_{(0,t)}P_{k,n}^{1}(t,x;\tau )p_{0}(2(k-n+1)-2\tau
		+t-x)d\tau \\
		&&+\sum_{k,n\geq 0}\int_{\frac{2(k-n-1)-t-x}{2}}^{\frac{2(k-n)-t-x-1}{2}}%
		\mathbbm{1} (\tau )_{(0,t)}P_{k,n}^{2}(t,x;\tau )q_{0}(2(n-k+1)-t+x+2\tau
		)d\tau \\
		&&-\sum_{k,n\geq 0}\int_{\frac{2(k-n-1)-t-x}{2}}^{\frac{2(k-n)-t-x-1}{2}}%
		\mathbbm{1} (\tau )_{(0,t)}P_{k,n}^{2}(t,x;\tau )p_{0}(2(k-n)+t-x-2\tau
		)d\tau ,
	\end{eqnarray*}%
	where%
	\begin{eqnarray*}
		P_{k,n}^{1}(t,x;\tau ) &=&M_{n}^{1}(t,x;\tau )R_{k}(s,t-x-s-2n;0),\text{ } \\
		P_{k,n}^{1}(t,x;\tau ) &=&M_{n}^{2}(t,x;\tau )N_{k}(s,2n+2-t+x+\tau ;0).
	\end{eqnarray*}%
	Consequently%
	\begin{eqnarray*}
		&&\mathcal{C}\Psi _{2}(t,\cdot ,Z_{0}) \\
		&=&-B^{\ast }\sum_{k,n\geq 0}\int_{\frac{2(k-n)+t}{2}}^{\frac{2(k-n)+t+1}{2}%
		}\ \mathbbm{1} (\tau )_{(0,t)}P_{k,n}^{1}(t,0;\tau )q_{0}(2\tau
		-t+2(n-k))d\tau \\
		&&+B^{\ast }\sum_{k,n\geq 0}\int_{\frac{2(k-n)+t+1}{2}}^{\frac{2(k-n)+t+2}{2}%
		}\mathbbm{1} (\tau )_{(0,t)}P_{k,n}^{1}(t,0;\tau )p_{0}(2(k-n+1)-2\tau
		+t)d\tau \\
		&&+B^{\ast }\sum_{k,n\geq 0}\int_{\frac{2(k-n-1)-t}{2}}^{\frac{2(k-n)-t-1}{2}%
		}\mathbbm{1} (\tau )_{(0,t)}P_{k,n}^{2}(t,0;\tau )q_{0}(2(n-k+1)-t+2\tau
		)d\tau \\
		&&-B^{\ast }\sum_{k,n\geq 0}\int_{\frac{2(k-n-1)-t}{2}}^{\frac{2(k-n)-t-1}{2}%
		}\mathbbm{1} (\tau )_{(0,t)}P_{k,n}^{2}(t,0;\tau )p_{0}(2(k-n)+t-2\tau
		)d\tau .
	\end{eqnarray*}%
	The proof follows by \cite[Lemma 4]{Neves} which allows to conclude that $%
	\mathcal{C}\Psi _{2}(t,\cdot ,Z_{0})$ is a compact operator from $H$ to $%
	L^{2}(0,T)$ since it is a finite sum of such operators.
\end{proof}

\section{Unique continuation \label{Section unique continuation}}

In this section, we deal with the \emph{unique continuation} property for
System (\ref{Wave 1}). We give a necessary and sufficient condition for the
constant case, and a semi-explicit condition in the autonomous case. When
the coefficients depend on time, we give also a necessary and sufficient
condition for the cascade case with providing some non-trivial examples at
the end.

\subsection{The constant case}

Here, we assume that $a,b\in 
\mathbb{R}
$. Recall that the adjoint system of System (\ref{Wave 1}) is given by%
\begin{equation}
	\left\{ 
	\begin{array}{lll}
		\varphi _{tt}=\varphi _{xx}-M^{\ast }(a\varphi _{t}+b\varphi _{x}), & 
		\mathrm{in} & (0,T)\times (0,1), \\ 
		\varphi _{\mid x=0,1}=0,\text{ \ \ \ \ \ \ \ \ \ } & \mathrm{in} & (0,T),%
		\text{ \ \ \ \ \ \ \ \ \ \ \ \ \ \ \ \ \ \ \ \ } \\ 
		\left( \varphi ,\varphi _{t}\right) _{\mid t=T}=\left( \varphi^T
		_{0},\varphi^T _{1}\right) ,\text{ \ \ } & \mathrm{in} & (0,1).\text{ \ \ \
			\ \ \ \ \ \ \ \ \ \ \ \ \ \ \ \ \ \ }%
	\end{array}%
	\right.  \label{adjoint2}
\end{equation}%
The correspond \emph{unique continuation property }reads:%
\begin{equation}
	B^{\ast }\varphi _{x}(t,0)=0,\text{ }\forall t\in (0,T)\Rightarrow \left(
	\varphi _{0},\varphi _{1}\right) =\left( 0,0\right) ,\text{ }\forall \left(
	\varphi^T _{0},\varphi^T _{1}\right) \in H_{0}^{1}(0,1)^{2}\times
	L^{2}(0,1)^{2}.  \label{uniqqq}
\end{equation}

For the sake of simplicity, we set $e^{\frac{bx}{2}M^{\ast }}\widetilde{%
	\varphi }(x)=\varphi (x).$ Then $\widetilde{\varphi }$ is the solution of
the following system 
\begin{equation*}
	\left\{ 
	\begin{array}{lll}
		\widetilde{\varphi }_{tt}=\widetilde{\varphi }_{xx}-\frac{1}{4}b^{2}\left(
		M^{\ast }\right) ^{2}\widetilde{\varphi }-aM^{\ast }\widetilde{\varphi }_{t},
		& \mathrm{in} & (0,T)\times (0,1), \\ 
		\widetilde{\varphi }_{\mid x=0,1}=0,\text{ \ \ \ \ \ \ \ \ \ } & \mathrm{in}
		& (0,T),\text{ \ \ \ \ \ \ \ \ \ \ \ \ \ \ \ \ \ \ \ \ } \\ 
		\left( \widetilde{\varphi },\widetilde{\varphi }_{t}\right) _{\mid
			t=T}=\left( \widetilde{\varphi }^T_{0},\widetilde{\varphi}^T_{1}\right) ,%
		\text{ \ \ } & \mathrm{in} & (0,1).\text{ \ \ \ \ \ \ \ \ \ \ \ \ \ \ \ \ \
			\ \ \ \ }%
	\end{array}%
	\right.
\end{equation*}

By using a standard spectral decomposition of the solution to System (\ref%
{adjoint2}) (See for instance \cite{Avdonin 1,Bennour} in the hyperbolic
context or \cite{Duprez 2,Ammar Khodja 1} in the parabolic one), we can see
that proving that (\ref{uniqqq}) holds in a time $T\geq 4$ amounts to
proving that all the eigenvalues of the corresponding spectral problem%
\begin{equation}
	\left\{ 
	\begin{array}{l}
		\lambda ^{2}\widetilde{\varphi }(x)=\widetilde{\varphi }^{\prime \prime
		}(x)-\left( \frac{1}{4}b^{2}\left( M^{\ast }\right) ^{2}+aM^{\ast }\lambda
		\right) \widetilde{\varphi }(x),\text{ }x\in (0,1), \\ 
		\widetilde{\varphi }(0)=\widetilde{\varphi }(1)=0,\text{ }\widetilde{\varphi 
		}=\left( \widetilde{\varphi }_{1},\widetilde{\varphi }_{2}\right) ,%
	\end{array}%
	\right.  \label{Sturm}
\end{equation}%
are simple. First, let us assume first that $M^{\ast }$ is diagonalizable.

\begin{proposition}
	Assume that $M^{\ast }$ has $2$ distinct eigenvalues $\mu _{1},\mu _{2}.$
	Then, all the eigenvalues of the Sturm-Liouville problem (\ref{Sturm}) are
	simple if, and only if 
	\begin{equation}
		a^{2}\left( \mu _{1}-\mu _{2}\right) \left( \mu _{2}\xi _{n_{1}}^{2}-\mu
		_{1}\xi _{n_{2}}^{2}\right) \neq \left( \xi _{n_{1}}^{2}-\xi
		_{n_{2}}^{2}\right) ^{2},\text{ }\forall n_{1},n_{2}\in 
		\mathbb{Z}
		,  \label{distinct}
	\end{equation}%
	where $\xi _{n_{i}}=\frac{1}{4}b^{2}\mu _{i}^{2}+\left( n_{i}\pi \right)
	^{2},$ $i=1,2.$
\end{proposition}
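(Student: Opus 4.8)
The plan is to diagonalize $M^{\ast }$, which turns (\ref{Sturm}) into two uncoupled scalar Sturm--Liouville problems, then to read off exactly when an eigenvalue fails to be geometrically simple, and finally to make that condition explicit by an elimination.

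First I would use $\mu_{1}\neq\mu_{2}$ to write $M^{\ast }=PDP^{-1}$ with $D=\mathrm{diag}(\mu_{1},\mu_{2})$, and set $\psi=P^{-1}\widetilde{\varphi}=(\psi_{1},\psi_{2})^{t}$. Conjugating (\ref{Sturm}) by $P^{-1}$ and using $P^{-1}M^{\ast }P=D$ diagonalizes the zeroth- and first-order coefficients, so the system becomes, componentwise,
\begin{equation*}
\psi_{i}^{\prime\prime}(x)=\left(\lambda^{2}+a\mu_{i}\lambda+\tfrac{1}{4}b^{2}\mu_{i}^{2}\right)\psi_{i}(x),\qquad \psi_{i}(0)=\psi_{i}(1)=0,\qquad i=1,2.
\end{equation*}
For the scalar Dirichlet problem on $(0,1)$ there is a nontrivial solution $\psi_{i}$ if and only if $\lambda^{2}+a\mu_{i}\lambda+\tfrac14 b^{2}\mu_{i}^{2}=-(n_{i}\pi)^{2}$ for some positive integer $n_{i}$, that is, if and only if $\lambda$ is a root of the monic quadratic $\lambda\mapsto\lambda^{2}+a\mu_{i}\lambda+\xi_{n_{i}}$, and in that case $\psi_{i}$ is a scalar multiple of $\sin(n_{i}\pi x)$; moreover $(n_{i}\pi)^{2}$, hence the eigenfunction, is uniquely determined by $\lambda$. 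Therefore the eigenspace of (\ref{Sturm}) at a value $\lambda$ is the direct sum of the one-dimensional spaces $\mathbb{C}\,Pe_{i}\,\sin(n_{i}\pi x)$ over those $i\in\{1,2\}$ for which $\lambda$ is a root of $\lambda^{2}+a\mu_{i}\lambda+\xi_{n_{i}}$ for some $n_{i}$; since $Pe_{1},Pe_{2}$ are independent, this eigenspace is two-dimensional exactly when $\lambda$ is simultaneously a root of $\lambda^{2}+a\mu_{1}\lambda+\xi_{n_{1}}$ and of $\lambda^{2}+a\mu_{2}\lambda+\xi_{n_{2}}$ for suitable $n_{1},n_{2}$, and one-dimensional otherwise. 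Hence all eigenvalues of (\ref{Sturm}) are simple if and only if, for every pair $(n_{1},n_{2})$, these two monic quadratics have no common root.

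It then remains only to make the last condition explicit. Two monic quadratics $\lambda^{2}+p_{1}\lambda+q_{1}$ and $\lambda^{2}+p_{2}\lambda+q_{2}$ have a common root if and only if their resultant $(q_{1}-q_{2})^{2}+(p_{1}-p_{2})(p_{1}q_{2}-p_{2}q_{1})$ vanishes: subtracting the two equations, when $p_{1}\neq p_{2}$ (here $a\neq 0$, as $\mu_{1}\neq\mu_{2}$) the common root is $-(q_{1}-q_{2})/(p_{1}-p_{2})$ and one substitutes it back, while when $a=0$ one has $p_{1}=p_{2}$ and a common root forces $q_{1}=q_{2}$, which is again the vanishing of that expression. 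Substituting $p_{i}=a\mu_{i}$ and $q_{i}=\xi_{n_{i}}$ and simplifying, the non-vanishing of the resultant becomes precisely inequality (\ref{distinct}); letting $n_{1},n_{2}$ range over all integers (which adds nothing, $(n\pi)^{2}$ depending only on $|n|$) then gives the statement.

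I expect no genuine obstacle: the substance is the resultant identity, which is routine. The one step that needs care is the description of the eigenspace, namely verifying that within each scalar problem an admissible $\lambda$ contributes exactly a one-dimensional space with a uniquely determined index $n_{i}$ (so that a value of $\lambda$ belonging to only one of the two branches is genuinely a simple eigenvalue), together with a correct treatment of the degenerate subcases ($a=0$ and coincident indices) so that the single algebraic inequality (\ref{distinct}) faithfully encodes ``no common root'' in every case.
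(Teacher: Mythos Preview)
Your proposal is correct and follows essentially the same route as the paper: diagonalize $M^{\ast}$, reduce (\ref{Sturm}) to two uncoupled scalar Dirichlet problems, and then observe that simplicity fails exactly when the two quadratics $\lambda^{2}+a\mu_{i}\lambda+\xi_{n_{i}}$ share a root. The only cosmetic difference is that the paper invokes the Sylvester matrix of the two quadratics and asserts invertibility, whereas you compute the resultant $(q_{1}-q_{2})^{2}+(p_{1}-p_{2})(p_{1}q_{2}-p_{2}q_{1})$ by hand via subtraction; these are the same determinant, and your treatment is arguably a bit more explicit about why the eigenspace is at most two-dimensional and about the degenerate case $a=0$.
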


\begin{proof}
	Since $M^{\ast }$ is diagonalizable, there exists a diagonal matrix $%
	D=diag(\mu _{1},\mu _{2})$ and $2\times 2$ invertible matrix $P$ such that $%
	M^{\ast }=PDP^{-1}.$ Letting $z=P^{-1}\widetilde{\varphi }$ with $z=\left(
	z_{1},z_{2}\right) $ in (\ref{Sturm}) yields the following Sturm-Liouville
	problem%
	\begin{equation*}
		\left\{ 
		\begin{array}{l}
			z_{i}^{\prime \prime }(x)=\left( \lambda ^{2}+a\lambda \mu _{i}+\frac{1}{4}%
			b^{2}\mu _{i}^{2}\right) z_{i}(x),\text{ }x\in (0,1),\text{ } \\ 
			z_{i}(0)=z_{i}(1)=0,\text{ \ }i=1,2.%
		\end{array}%
		\right.
	\end{equation*}
	In order to prove that the eigenvalues of the above problem are simple we
	have to check that the following polynomial equations%
	\begin{equation*}
		\lambda ^{2}+\lambda a\mu _{i}+\frac{1}{4}b^{2}\mu _{i}^{2}+\left( n_{i}\pi
		\right) ^{2}=0,\text{ }n_{i}\in 
		\mathbb{Z}
		,\text{ }i=1,2,
	\end{equation*}%
	don't have a common roots which is equivalent to check that the following
	Sylvester matrix is invertible 
	\begin{equation*}
		S_{k,n}=\left( 
		\begin{array}{cccc}
			1 & a\mu _{1} & \frac{1}{4}b^{2}\mu _{1}^{2}+\left( n_{1}\pi \right) ^{2} & 0
			\\ 
			0 & 1 & a\mu _{1} & \frac{1}{4}b^{2}\mu _{1}^{2}+\left( n_{1}\pi \right) ^{2}
			\\ 
			1 & a\mu _{2} & \frac{1}{4}b^{2}\mu _{2}^{2}+\left( n_{2}\pi \right) ^{2} & 0
			\\ 
			0 & 1 & a\mu _{2} & \frac{1}{4}b^{2}\mu _{2}^{2}+\left( n_{2}\pi \right) ^{2}%
		\end{array}%
		\right) ,\text{ }n_{1},n_{2}\in 
		\mathbb{Z}
		,
	\end{equation*}%
	which is the case if, and only if (\ref{distinct}) is satisfied.
\end{proof}

\begin{remark}
	In particular, if $\mu _{1}=-\mu _{2}=\mu ,$ assumption (\ref{distinct})
	becomes%
	\begin{equation*}
		-2a^{2}\mu ^{2}\left( \xi _{n_{1}}^{2}+\xi _{n_{2}}^{2}\right) \neq \left(
		\xi _{n_{1}}^{2}-\xi _{n_{2}}^{2}\right) ^{2},\text{ }\forall n_{1},n_{2}\in 
		\mathbb{Z}
		,
	\end{equation*}%
	which might occur only if, and only if $\mu \in i%
	\mathbb{R}
	.$
\end{remark}

Now, we consider the case where $M^{\ast }$ is not diagonalizable$.$

\begin{proposition}
	Assume that $M^{\ast }$ is not diagonalizable and let $\mu $ be its
	eigenvalue. Then, all the eigenvalues of the Sturm-Liouville problem (\ref%
	{Sturm}) are simple if, and only if 
	\begin{equation}
		\frac{1}{2}b^{2}\mu +a\neq 0.  \label{jordan 11}
	\end{equation}
\end{proposition}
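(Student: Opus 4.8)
The plan is to mimic the proof of the previous proposition, the only difference being that the Jordan reduction produces a \emph{triangular} rather than a diagonal system of scalar Sturm--Liouville equations. Write $M^{\ast }=PJP^{-1}$ with $J=\left(\begin{smallmatrix}\mu & 1\\ 0 & \mu\end{smallmatrix}\right)$ and set $z=P^{-1}\widetilde{\varphi }$, $z=(z_{1},z_{2})$. Since
$\lambda ^{2}I+a\lambda J+\tfrac14 b^{2}J^{2}=\left(\begin{smallmatrix}P(\lambda ) & c(\lambda )\\ 0 & P(\lambda )\end{smallmatrix}\right)$
with $P(\lambda )=\lambda ^{2}+a\lambda \mu +\tfrac14 b^{2}\mu ^{2}$ and $c(\lambda )=a\lambda +\tfrac12 b^{2}\mu $, the system (\ref{Sturm}) is equivalent to the Dirichlet problem $z_{2}^{\prime \prime }=P(\lambda )z_{2}$, $z_{1}^{\prime \prime }=P(\lambda )z_{1}+c(\lambda )z_{2}$ on $(0,1)$. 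This is exactly the step where the structure differs from the diagonalizable case: both scalar equations now carry the \emph{same} potential $P(\lambda )$, so a coincidence is unavoidable and only the coupling $c(\lambda )$ can prevent a drop in simplicity.

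Next I would read off the eigenvalues and eigenfunctions. The decoupled component $z_{2}$ is a scalar Dirichlet eigenfunction, so a nontrivial $z_{2}$ forces $P(\lambda )=-k^{2}\pi ^{2}$ for some integer $k\geq 1$, with $z_{2}=\sin (k\pi x)$ up to a constant; equivalently $\lambda $ is a root of $\lambda ^{2}+a\lambda \mu +\tfrac14 b^{2}\mu ^{2}+k^{2}\pi ^{2}=0$, and these are precisely the eigenvalues of (\ref{Sturm}). Fix such an eigenvalue $\lambda _{0}$. If $z_{2}=0$, then $z_{1}^{\prime \prime }+k^{2}\pi ^{2}z_{1}=0$ with Dirichlet data gives $z_{1}=\sin (k\pi x)$, which always contributes a one–dimensional piece of the eigenspace. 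If $z_{2}\neq 0$, then $z_{1}$ must solve $z_{1}^{\prime \prime }+k^{2}\pi ^{2}z_{1}=c(\lambda _{0})\sin (k\pi x)$ with Dirichlet data; the forcing is in resonance with the kernel $\sin (k\pi x)$, so by the Fredholm alternative (multiply by $\sin (k\pi x)$ and integrate, using $\int_{0}^{1}\sin ^{2}(k\pi x)\,dx\neq 0$) this is solvable if and only if $c(\lambda _{0})=0$. Hence the eigenspace at $\lambda _{0}$ is two–dimensional exactly when $c(\lambda _{0})=0$, and one–dimensional otherwise, so \emph{all eigenvalues of }(\ref{Sturm})\emph{ are simple if and only if} $c(\lambda )=a\lambda +\tfrac12 b^{2}\mu \neq 0$ \emph{at every eigenvalue} $\lambda $.

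Finally I would turn this into (\ref{jordan 11}) by eliminating $\lambda $ between $\lambda ^{2}+a\lambda \mu +\tfrac14 b^{2}\mu ^{2}+k^{2}\pi ^{2}=0$ and $a\lambda +\tfrac12 b^{2}\mu =0$, exactly in the Sylvester–resultant spirit used in the preceding proposition, splitting into the degenerate case $a=0$ (where $c$ reduces to the constant $\tfrac12 b^{2}\mu $ and the equivalence is immediate) and the case $a\neq 0$ (where $\lambda =-\tfrac{b^{2}\mu }{2a}$ is the only candidate). I expect \textbf{this last, purely algebraic identification to be the main obstacle}: one has to verify that a zero of $c$ is compatible with one of the eigenvalue equations $P(\lambda )+k^{2}\pi ^{2}=0$ precisely when $\tfrac12 b^{2}\mu +a=0$, and to be careful with the resonant solvability computation on which the whole dichotomy rests.
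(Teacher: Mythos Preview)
Your strategy is exactly the paper's: Jordan reduction $M^{\ast}=PJP^{-1}$, passage to $z=P^{-1}\widetilde{\varphi}$, and a resonance/Fredholm argument on the resulting triangular scalar system. There is, however, a genuine discrepancy at the one point you flag.

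You compute the off--diagonal entry of $\lambda^{2}I+a\lambda J+\tfrac14 b^{2}J^{2}$ as $c(\lambda)=a\lambda+\tfrac12 b^{2}\mu$, which is correct (the $(1,2)$ entry of $J^{2}$ is $2\mu$). The paper, by contrast, records the coupling as the $\lambda$--independent constant $\tfrac12 b^{2}\mu+a$; with \emph{that} coupling, condition~(\ref{jordan 11}) follows at once, since a nonzero constant forcing in resonance is never solvable and the eigenspace stays one--dimensional for every $k$. This is precisely why the paper's proof ends after one line.

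Your suspicion about the final identification is therefore well founded: with the correct $c(\lambda)=a\lambda+\tfrac12 b^{2}\mu$, the statement ``$c(\lambda_{0})\neq 0$ at every eigenvalue $\lambda_{0}$'' is \emph{not} equivalent to $\tfrac12 b^{2}\mu+a\neq 0$. Indeed, for $a\neq 0$ the unique zero of $c$ is $\lambda_{0}=-\tfrac{b^{2}\mu}{2a}$, and substituting into $P(\lambda_{0})+k^{2}\pi^{2}=0$ yields
\[
\frac{b^{2}\mu^{2}(a^{2}-b^{2})}{4a^{2}}=k^{2}\pi^{2},
\]
a Diophantine--type condition on $(a,b,\mu)$ that has nothing to do with $\tfrac12 b^{2}\mu+a=0$. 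So the ``purely algebraic identification'' you anticipate cannot be carried out as stated; the obstacle is not in your argument but in the paper's coupling term (and hence in the form of condition~(\ref{jordan 11}) itself). Your reduction to ``all eigenvalues are simple $\Leftrightarrow$ $a\lambda+\tfrac12 b^{2}\mu\neq 0$ at every root of $\lambda^{2}+a\mu\lambda+\tfrac14 b^{2}\mu^{2}+k^{2}\pi^{2}=0$, $k\geq 1$'' is the correct endpoint.
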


\begin{proof}
	In this case, we write $M^{\ast }$ in the Jordan form: there exists a matrix 
	$J$ and a $2\times 2$ invertible matrix $P$ such that $M^{\ast }=PJP^{-1},$
	where%
	\begin{equation*}
		J=\left( 
		\begin{array}{ll}
			\mu & \text{ \ }1 \\ 
			0 & \text{ \ }\mu%
		\end{array}%
		\right) .
	\end{equation*}%
	Letting $z=P^{-1}\widetilde{\varphi }$ in (\ref{Sturm}) yields the following
	coupled Sturm-Liouville problem%
	\begin{equation*}
		\left\{ 
		\begin{array}{l}
			z_{1}^{\prime \prime }(x)=\left( \lambda ^{2}+a\mu \lambda +\frac{1}{4}%
			b^{2}\mu ^{2}\right) z_{1}(x)+\left( \frac{1}{2}b^{2}\mu +a\right) z_{2}, \\ 
			z_{2}^{\prime \prime }(x)=\left( \lambda ^{2}+a\mu \lambda +\frac{1}{4}%
			b^{2}\mu ^{2}\right) z_{2}(x), \\ 
			y(0)=y(1)=0, \\ 
			z(0)=z(1)=0.%
		\end{array}%
		\right.
	\end{equation*}%
	By the same reasoning as in \cite[Proposition 2.1]{Ammar Khodja 1,Duprez 2},
	it can be seen that the above system has non-trivial solution if, and only
	if $\frac{1}{2}b^{2}\mu +a\neq 0$ with $\lambda $ fulfills the following
	second order polynomial equation for some $n\in 
	\mathbb{Z}
	:$ 
	\begin{equation*}
		\lambda ^{2}+a\mu \lambda +\frac{1}{4}b^{2}\mu ^{2}+\left( n\pi \right)
		^{2}=0.
	\end{equation*}%
	It is clear that the above equation has simple roots for any $n\in 
	\mathbb{Z}
	.$ This finishes the proof.
\end{proof}

\begin{remark}
	As it is shown, if we let $a=0$, the coupling parameter $b$ affects
	controllability in low frequency. Actually, conditions (\ref{distinct}) and (%
	\ref{jordan 11}) become respectively 
	\begin{equation*}
		b^{2}\neq 2\pi ^{2}\frac{n_{2}^{2}-n_{1}^{2}}{\mu _{1}^{2}-\mu _{2}^{2}},%
		\text{ }\forall n_{1},n_{2}\in 
		\mathbb{Z}
		\text{ \ \textrm{and} }b\neq 0.
	\end{equation*}%
	However, $b$ doesn't affect controllability in high frequency unless it is
	time dependent. (See Remark \ref{Remark b independent of time}).
\end{remark}

\subsection{The authonomous case}

When the coefficients don't depend on time, we can give a characterization
to cover the invisible target states by applying the Fattorini criterion on
system (\ref{Z system}) which is possible since the difference between the
control maps is compact by Theroem \ref{theorem compactness} (See \cite[%
Remarks 2.4 and 1.5]{Duprez}). Note that the Fattorini criterion with the
fact that the two control maps is compact implies exact controllability of
the complete system (\ref{Z system}) in the same minimal time $T=4$. So, in
order to cover the invisible target states we have to ensure that 
\begin{equation}
	\ker (sI-\mathcal{A}^{\ast })\cap \ker (C^{\ast })=\{0\},\text{ }\forall
	s\in 
	\mathbb{C}
	,  \label{FH}
\end{equation}%
where $C^{\ast }:\left( p,q\right) \mapsto (B^{\ast }p_{|x=0},0)$ and $%
\mathcal{A}^{\ast }$ is the operator 
\begin{equation*}
	\mathcal{A}^{\ast }\left( 
	\begin{array}{c}
		p \\ 
		q%
	\end{array}%
	\right) =\left( 
	\begin{array}{c}
		-p_{x}-M^{\ast }\left( \eta _{1}p+\eta _{2}q\right) \\ 
		q_{x}-M^{\ast }\left( \eta _{1}p+\eta _{2}q\right)%
	\end{array}%
	\right) ,
\end{equation*}%
with domain%
\begin{equation}
	D(\mathcal{A}^{\ast })=\{\left( p,q\right) \in H^{1}(0,1)^{2}\times
	H^{1}(0,1)^{2},\text{ }\left( p+q\right) _{|x=0,1}=0,\text{ }%
	\int_{0}^{1}\left( p-q\right) =0\}.  \label{D(A)}
\end{equation}%
Introduce the matrices $Q_{i},i=1,2$ defined by%
\begin{equation*}
	Q_{0}=\left( 
	\begin{array}{cc}
		I_{2\times 2} & I_{2\times 2} \\ 
		0 & 0%
	\end{array}%
	\right) ,\text{ \ }Q_{1}=\left( 
	\begin{array}{cc}
		0 & 0 \\ 
		I_{2\times 2} & I_{2\times 2}%
	\end{array}%
	\right) ,
\end{equation*}%
and let $R_{s}(\cdot ,\cdot )$ the fundamental matrix of the finite
dimensional system 
\begin{equation}
	\left( 
	\begin{array}{c}
		p \\ 
		q%
	\end{array}%
	\right) _{x}=\left( 
	\begin{array}{cc}
		-sI_{2\times 2}-\eta _{1}M^{\ast } & -\eta _{2}M^{\ast } \\ 
		\eta _{1}M^{\ast } & \eta _{2}M^{\ast }+sI_{2\times 2}%
	\end{array}%
	\right) \left( 
	\begin{array}{c}
		p \\ 
		q%
	\end{array}%
	\right) .  \label{System diff}
\end{equation}%
Then we have:

\begin{proposition}
	The Fattorini criterion is satsified if, and only if 
	\begin{equation}
		\mathrm{rank}\text{ }\left( 
		\begin{array}{c}
			Q_{0}+Q_{1}\mathcal{R}_{s}(1,0) \\ 
			\left( B^{\ast },0,0\right)%
		\end{array}%
		\right) =4,\text{ for all }s\in 
		\mathbb{C}
		.\text{ }  \label{criterion}
	\end{equation}
\end{proposition}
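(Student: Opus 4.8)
The plan is to translate the Fattorini (unique continuation) criterion \eqref{FH} into a finite-dimensional rank condition by solving the spectral equation $(sI-\mathcal A^\ast)(p,q)=0$ explicitly. First I would observe that the PDE eigenvalue problem $\mathcal A^\ast(p,q)^t=s(p,q)^t$ is in fact an ordinary differential system in $x$: writing out the definition of $\mathcal A^\ast$, one gets precisely \eqref{System diff}, namely $(p,q)_x = \mathcal M_s(x)(p,q)$ with the indicated $4\times 4$ matrix (here $\eta_1,\eta_2$ are constant by the autonomy hypothesis, but the fundamental matrix $\mathcal R_s(\cdot,\cdot)$ is introduced to cover the general formula). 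Hence every solution is determined by its value at $x=0$: $(p,q)(x) = \mathcal R_s(x,0)\,(p(0),q(0))^t$, and the whole problem reduces to finding the vectors $v_0 := (p(0),q(0))^t \in \mathbb R^4$ (or $\mathbb C^4$) that are compatible with \emph{all} the constraints.

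Next I would enumerate the constraints that such a $v_0$ must satisfy. There are three families: (i) the boundary conditions $(p+q)_{|x=0}=0$ and $(p+q)_{|x=1}=0$ coming from $D(\mathcal A^\ast)$; (ii) the mean-zero condition $\int_0^1(p-q)=0$; and (iii) the observation condition $C^\ast(p,q)=0$, i.e. $B^\ast p(0)=0$. The condition $(p+q)_{|x=0}=0$ reads $Q_0 v_0 = 0$ where $Q_0 = \begin{pmatrix} I_2 & I_2\\ 0 & 0\end{pmatrix}$, and the condition $(p+q)_{|x=1}=0$ reads $Q_1\,\mathcal R_s(1,0)\,v_0 = 0$ with $Q_1 = \begin{pmatrix} 0 & 0\\ I_2 & I_2\end{pmatrix}$; adding these two (they involve complementary blocks) is equivalent to imposing both, so the pair of boundary conditions is exactly $\bigl(Q_0 + Q_1\mathcal R_s(1,0)\bigr) v_0 = 0$, which explains the first block row in \eqref{criterion}. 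The observation condition $B^\ast p(0)=0$ is $(B^\ast,0,0,0)\,v_0=0$ — written $(B^\ast,0,0)$ in the statement with the obvious block convention — giving the last row. The point then is: the unique continuation property \eqref{FH} holds for every $s$ if and only if, for every $s$, the only $v_0$ killed by all these linear functionals is $v_0=0$; equivalently the stacked matrix in \eqref{criterion} has trivial kernel in $\mathbb C^4$, i.e. has rank $4$.

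The one subtlety I would need to address carefully is the mean-zero constraint (ii): a priori \eqref{criterion} does not visibly contain it, so I must argue it is automatically implied. The reason is that for a solution of \eqref{System diff} one computes $\frac{d}{dx}\int_0^x (p-q) = (p-q)$ has the property that $(p-q)_x = -s(p+q) - 2\eta_1 M^\ast p + \cdots$; more cleanly, differentiating $\int_0^1(p-q)$ is not needed — instead one checks that the subspace cut out by $Q_0 v_0 = 0$ and $Q_1\mathcal R_s(1,0)v_0=0$ already forces $\int_0^1(p-q)=0$ for the corresponding solution, because $p+q$ vanishes at both endpoints and the first PDE equation in \eqref{Z diag} type form gives $(p-q)_t$-type identities; in the stationary setting one gets $\bigl(\int_0^1(p-q)\bigr)$ expressed through boundary values of $p+q$, which vanish. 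I expect \textbf{this verification that the mean-zero condition is redundant} — i.e. that membership in $D(\mathcal A^\ast)$ is fully captured by the two endpoint conditions once we are on a solution of the ODE — to be the main (though minor) obstacle; everything else is the routine bookkeeping of rewriting a two-point boundary value problem via its fundamental matrix. Once that is in place, the equivalence ``\eqref{FH} for all $s$'' $\iff$ ``rank $=4$ for all $s$'' is immediate from basic linear algebra: a homogeneous linear system $A v_0 = 0$ in $4$ unknowns has only the trivial solution iff $\mathrm{rank}\,A = 4$.
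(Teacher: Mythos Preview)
Your proposal is correct and follows essentially the same route as the paper's proof: rewrite $\mathcal A^\ast(p,q)=s(p,q)$ as the first-order ODE \eqref{System diff}, express the solution via the fundamental matrix $\mathcal R_s$, encode the two endpoint conditions as $(Q_0+Q_1\mathcal R_s(1,0))v_0=0$, append the observation row $(B^\ast,0,0)$, and invoke linear algebra.

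You are in fact more careful than the paper on one point: the paper's proof does not explicitly discuss the mean-zero constraint $\int_0^1(p-q)=0$ from $D(\mathcal A^\ast)$, whereas you correctly flag it as the only subtlety. Your intuition that it is redundant is right, and the clean way to see it is to add the two equations in \eqref{System diff}: one gets $(p+q)_x=-s(p-q)$, so for $s\neq 0$ the boundary conditions $(p+q)_{|x=0,1}=0$ force $\int_0^1(p-q)=-s^{-1}[(p+q)(1)-(p+q)(0)]=0$. (At $s=0$ the relation $(p+q)_x=0$ together with $(p+q)(0)=0$ gives $q\equiv -p$, and one checks directly that the remaining scalar ODE and the constraint $B^\ast p(0)=0$ combined with rank $4$ still force $p\equiv 0$; alternatively one can note that $0$ is typically excluded from the relevant spectral set.) This fills the gap you anticipated.
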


\begin{proof}
	Let $(p,q)\in D(\mathcal{A}^{\ast }).$ So $(p,q)\in \ker (sI-\mathcal{A}%
	^{\ast })$ if and only if%
	\begin{equation}
		\left( 
		\begin{array}{c}
			p \\ 
			q%
		\end{array}%
		\right) _{x}=\left( 
		\begin{array}{cc}
			-sI_{2\times 2}-\eta _{1}M^{\ast } & -\eta _{2}M^{\ast } \\ 
			\eta _{1}M^{\ast } & \eta _{2}M^{\ast }+sI_{2\times 2}%
		\end{array}%
		\right) \left( 
		\begin{array}{c}
			p \\ 
			q%
		\end{array}%
		\right) ,  \label{SS1}
	\end{equation}%
	whith boundary conditions $\left( p+q\right) _{|x=0,1}=0$ which can be
	rewritten as 
	\begin{equation}
		\left( 
		\begin{array}{c}
			0_{2} \\ 
			0_{2}%
		\end{array}%
		\right) =Q_{0}\left( 
		\begin{array}{c}
			p(0) \\ 
			q(0)%
		\end{array}%
		\right) +Q_{1}\left( 
		\begin{array}{c}
			p(1) \\ 
			q(1)%
		\end{array}%
		\right) .  \label{SS2}
	\end{equation}%
	If $\mathcal{R}_{s}(\cdot ,\cdot )$ denotes the fundamental matrix solution
	to System (\ref{SS1}), then we have%
	\begin{equation*}
		\left( 
		\begin{array}{c}
			p(x) \\ 
			q(x)%
		\end{array}%
		\right) =\mathcal{R}_{s}(x,0)\left( 
		\begin{array}{c}
			p(0) \\ 
			q(0)%
		\end{array}%
		\right) .
	\end{equation*}%
	By using the boundary conditions (\ref{SS2}) we arrive at%
	\begin{equation*}
		0_{4}=\left( Q_{0}+Q_{1}\mathcal{R}_{s}(1,0)\right) \left( 
		\begin{array}{c}
			p(0) \\ 
			q(0)%
		\end{array}%
		\right) .
	\end{equation*}%
	On the other hand, $(p,q)\in \ker C^{\ast }$ if and only if $B^{\ast
	}p(0)=0_{2}.$ So the property (\ref{FH}) is satsified if and only if (\ref%
	{criterion}) holds which implies that $p(0)=q(0)=0$ which gives $p=q=0.$
\end{proof}

\begin{remark}
	Generally, knowing $\mathcal{R}_{s}(\cdot ,\cdot )$ is not possible.
	However, the above characterization could work for some particular classes
	of systems. For instance, for $M$ of the form%
	\begin{equation*}
		M=\left( 
		\begin{array}{cc}
			0\text{ } & \text{ }1 \\ 
			0\text{ } & \text{ }0%
		\end{array}%
		\right) .
	\end{equation*}
\end{remark}

\subsection{Cascade coupling}

In this subsection we prove the \emph{unique continuation} property for a
particular class of System (\ref{Z diag system_bis}). We assume in the
sequel that the matrix $M$ and the vector $B$ have the following form:\qquad 
\begin{equation*}
	M=\left( 
	\begin{array}{cc}
		0\text{ } & \text{ }1 \\ 
		0\text{ } & \text{ }0%
	\end{array}%
	\right) ,\text{ }B=\left( 
	\begin{array}{c}
		0 \\ 
		1%
	\end{array}%
	\right) .
\end{equation*}%
With this in mind, and by decomposing the system by writing $%
p=(p^{-},p^{+}), $ $q=(q^{-},q^{+}),$ the unique continuation problem of
System (\ref{Z system}) reads%
\begin{equation}
	\left\{ 
	\begin{array}{lll}
		p_{t}^{-}+p_{x}^{-}=0, & \mathrm{in} & Q_{T}, \\ 
		q_{t}^{-}-q_{x}^{-}=0, & \mathrm{in} & Q_{T}, \\ 
		p_{t}^{+}+p_{x}^{+}-\eta _{1}p^{-}-\eta _{2}q^{-}=0, & \mathrm{in} & Q_{T},
		\\ 
		q_{t}^{+}-q_{x}^{+}-\eta _{1}p^{-}-\eta _{2}q^{-}=0, & \mathrm{in} & Q_{T},
		\\ 
		\left( p^{-}+q^{-}\right) _{|x=0,1}=\left( p^{+}+q^{+}\right) _{|x=0,1}=0, & 
		\mathrm{in} & (0,T), \\ 
		p_{x=0}^{+}=0, & \mathrm{in} & (0,T), \\ 
		\left( p^{-},p^{+},q^{-},q^{+}\right) _{|t=0}=\left(
		p_{0}^{-},p_{0}^{+},q_{0}^{-},q_{0}^{+}\right) , & \mathrm{in} & (0,1),%
	\end{array}%
	\right. \Longrightarrow \left(
	p_{0}^{-},p_{0}^{+},q_{0}^{-},q_{0}^{+}\right) =0_{H},  \label{system unique}
\end{equation}%
for any $\left( p_{0}^{-},p_{0}^{+},q_{0}^{-},q_{0}^{+}\right) \in $ $D(%
\mathcal{A}^{\ast })$ where $D(\mathcal{A}^{\ast })$ is defined in (\ref%
{D(A)}).

Observe that the first and the third equations of the above system are free.
The idea is to solve these equations explicitly and then considering their
solution as a second member for the second and the fourth equations. Let us
start by solving the homogeneous part of System (\ref{system unique}). i.e.%
\begin{equation}
	\left\{ 
	\begin{array}{lll}
		p_{t}^{-}+p_{x}^{-}=0, & \mathrm{in} & Q_{T}, \\ 
		q_{t}^{-}-q_{x}^{-}=0, & \mathrm{in} & Q_{T}, \\ 
		\left( p^{+}+q^{+}\right) _{|x=0,1}=0, & \mathrm{in} & (0,T), \\ 
		\left( p^{-},q^{-}\right) _{|t=0}=\left( p_{0}^{-},q_{0}^{-}\right) , & 
		\mathrm{in} & (0,1).%
	\end{array}%
	\right.  \label{Homo}
\end{equation}

\begin{lemma}
	The solution $(p^{-},q^{-})$ to System (\ref{Homo}) are given by%
	\begin{equation}
		p^{-}(t,x)=\left\{ 
		\begin{array}{ccc}
			p_{0}^{-}(x-t+2n), & \mathrm{if} & 2n-1\leq t-x\leq 2n, \\ 
			-q_{0}^{-}(t-x-2n),\text{ \ } & \mathrm{if} & 2n\leq t-x\leq 2n+1,%
		\end{array}%
		\right. n\geq 0,  \label{p^-}
	\end{equation}%
	\begin{equation}
		q^{-}(t,x)=\left\{ 
		\begin{array}{ccc}
			q_{0}^{-}(x+t-2n),\text{ \ \ } & \mathrm{if} & 2n\leq x+t\leq 2n+1,\text{ \
				\ \ \ } \\ 
			-p_{0}^{-}(2n+2-x-t), & \mathrm{if} & 2n+1\leq x+t\leq 2n+2,%
		\end{array}%
		\right. n\geq 0.  \label{q^-}
	\end{equation}
\end{lemma}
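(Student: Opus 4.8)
The plan is to establish both formulas purely by the method of characteristics. The first equation of (\ref{Homo}) makes $p^{-}$ constant along the lines $x-t=\mathrm{const}$, which travel to the right, the second makes $q^{-}$ constant along the lines $x+t=\mathrm{const}$, which travel to the left, and the lateral boundary condition satisfied by $(p^{-},q^{-})$, namely $\left(p^{-}+q^{-}\right)_{\mid x=0,1}=0$, reads $p^{-}(\cdot,0)=-q^{-}(\cdot,0)$ and $p^{-}(\cdot,1)=-q^{-}(\cdot,1)$. I would treat $p^{-}$; the formula for $q^{-}$ follows by the symmetric argument, exchanging the two families of characteristics and the two lateral boundaries.

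First I would set up the geometry. Fix $(t,x)\in Q_{T}$ and follow backwards in time the broken characteristic issued from $(t,x)$: it runs along a $p^{-}$--characteristic moving left; upon meeting $\{x=0\}$ it is reflected, acquiring a factor $-1$ and becoming a right-moving $q^{-}$--characteristic; upon meeting $\{x=1\}$ it is reflected again, with another factor $-1$, back to a left-moving $p^{-}$--characteristic; and so on until it hits the initial line $\{t=0\}$. Counting intersections with the lateral boundary, the broken line reaches $\{t=0\}$ after exactly $2n$ reflections when $2n-1\leq t-x<2n$ — then landing on a $p^{-}$--segment, which yields the value $p_{0}^{-}(x-t+2n)$ — and after exactly $2n+1$ reflections when $2n\leq t-x<2n+1$ — landing on a $q^{-}$--segment, which yields $-q_{0}^{-}(t-x-2n)$; the overall sign is $+$ after an even and $-$ after an odd number of reflections. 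Since $(t,x)\in Q_{T}$ forces $-1< t-x< T$, the two families of intervals $[2n-1,2n]$ and $[2n,2n+1]$, $n\geq0$, cover all admissible values of $t-x$, and one checks from the defining inequalities that the abscissa of the landing point stays in $[0,1]$, so the evaluation of the initial data is legitimate.

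To make this rigorous I would induct on $n$. For $n=0$: if $-1\leq t-x\leq0$, the $p^{-}$--characteristic through $(t,x)$ meets $\{t=0\}$ at $(0,x-t)$, so $p^{-}(t,x)=p_{0}^{-}(x-t)$; if $0\leq t-x\leq1$, it meets $\{x=0\}$ at time $t-x\in[0,1]$, and since $q^{-}(s,0)=q_{0}^{-}(s)$ for $s\in[0,1]$ by direct transport of the initial datum, the boundary relation gives $p^{-}(t,x)=-q^{-}(t-x,0)=-q_{0}^{-}(t-x)$. Assuming (\ref{p^-}) at level $n$, take $(t,x)$ with $2n+1\leq t-x\leq 2n+2$: one reflection on $\{x=0\}$ (at time $t-x$) and one on $\{x=1\}$ (at time $t-x-1\geq0$) give $p^{-}(t,x)=-q^{-}(t-x,0)=p^{-}(t-x-1,1)$, and since for the point $(t-x-1,1)$ one has $(t-x-1)-1=t-x-2\in[2n-1,2n]$, the first line of the hypothesis yields $p^{-}(t-x-1,1)=p_{0}^{-}\big(1-(t-x-1)+2n\big)=p_{0}^{-}\big(x-t+2(n+1)\big)$, which is (\ref{p^-}) at level $n+1$ in the first case; the case $2n+2\leq t-x\leq2n+3$ is identical, the same two reflections reducing it to $p^{-}(t-x-1,1)$ with $t-x-2\in[2n,2n+1]$, to which the second line of the hypothesis applies and produces $-q_{0}^{-}\big(t-x-2(n+1)\big)$. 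For (\ref{q^-}) one runs the same scheme with $p^{-}$ and $q^{-}$ interchanged, seeding the induction with the transport relation $p^{-}(s,1)=p_{0}^{-}(1-s)$, $s\in[0,1]$.

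The difficulty here is not analytic but combinatorial: one must keep careful track of which line — $\{x=0\}$, $\{x=1\}$, or $\{t=0\}$ — each successive characteristic segment actually meets, verify that the landing abscissa lies in $[0,1]$ in every case, and ensure that the parity of the number of reflections, hence the overall sign, matches the alternation $p_{0}^{-}/{-}q_{0}^{-}$ together with the shift $2n$. The few degenerate configurations in which a reflection point coincides with a corner $(0,0)$ or $(0,1)$ are harmless, since the data in $D(\mathcal{A}^{\ast })$ satisfy $\left(p_{0}^{-}+q_{0}^{-}\right)(0)=\left(p_{0}^{-}+q_{0}^{-}\right)(1)=0$.
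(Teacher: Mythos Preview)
Your proof is correct and follows exactly the approach the paper intends: the paper's proof consists of the single sentence ``The proof follows immediately by using the characteristics method,'' and you have simply written out in full the backward-tracing and induction that this sentence hides. Your parity count of reflections, the two-step reduction $p^{-}(t,x)=p^{-}(t-x-1,1)$, and the remark on corner compatibility via $D(\mathcal{A}^{\ast})$ are all accurate.
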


\begin{proof}
	The proof follows immediately by using the characteristics method.
\end{proof}

Now, we focus on the nonhomogeneous part of System (\ref{system unique})
where boundary condition $q_{|x=0}^{+}=0$ has been removed, i.e.%
\begin{equation}
	\left\{ 
	\begin{array}{lll}
		p_{t}^{+}+p_{x}^{+}=f,\text{ } & \mathrm{in} & Q_{T}, \\ 
		q_{t}^{+}-q_{x}^{+}=f, & \mathrm{in} & Q_{T}, \\ 
		\left( p^{+}+q^{+}\right) _{|x=1}=p_{|x=0}^{+}=0, & \mathrm{in} & (0,T), \\ 
		\left( p^{+},q^{+}\right) _{|t=0}=\left( p_{0}^{+},q_{0}^{+}\right) . & 
		\mathrm{in} & (0,1).%
	\end{array}%
	\right.  \label{nonhom}
\end{equation}

Observe that the function $f=\eta _{1}p^{-}+\eta _{2}q^{-}$ plays the role
of a second member. The explicit solution to (\ref{nonhom}) is given in the
following lemma:

\begin{lemma}
	The solution $(p^{+},q^{+})$ to System (\ref{nonhom}) are given by%
	\begin{equation*}
		p^{+}\left( t,x\right) =\left\{ 
		\begin{array}{lll}
			p_{0}^{+}\left( x-t\right) +\int_{0}^{t}f\left( \tau ,\tau +x-t\right) d\tau
			, & \mathrm{if} & 0\leq x-t\leq 1 \\ 
			&  &  \\ 
			\int_{t-x}^{t}f(\tau ,\tau -t+x)d\tau , & \mathrm{if} & t-x\geq 1,\text{ }%
		\end{array}%
		\right.
	\end{equation*}%
	and%
	\begin{equation}
		q^{+}\left( t,x\right) =\left\{ 
		\begin{array}{lll}
			q_{0}^{+}\left( x+t\right) +\int_{0}^{t}f\left( \tau ,x+t-\tau \right) d\tau
			, & \mathrm{if} & 0\leq x+t\leq 1, \\ 
			&  &  \\ 
			\begin{array}{l}
				\int_{x+t-1}^{t}f(\tau ,x+t-\tau )d\tau -p_{0}^{+}\left( 2-x-t\right) \\ 
				-\int_{0}^{x+t-1}f\left( \tau ,\tau +2-x-t\right) d\tau%
			\end{array}
			& \mathrm{if} & 1\leq x+t\leq 2, \\ 
			&  &  \\ 
			\begin{array}{l}
				\int_{x+t-1}^{t}f(\tau ,-\tau +x+t)d\tau \\ 
				-\int_{x+t-2}^{x+t-1}f(\tau ,\tau +2-t-x)d\tau ,%
			\end{array}
			& \mathrm{if} & x+t\geq 2,%
		\end{array}%
		\right.  \label{q++}
	\end{equation}
\end{lemma}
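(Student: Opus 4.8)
The plan is to integrate each of the two transport equations along its characteristics and to fix the integration constants using the initial data at $t=0$ together with the boundary conditions $p^{+}_{\mid x=0}=0$ and $\left(p^{+}+q^{+}\right)_{\mid x=1}=0$, exactly as in the proof of the preceding lemma. I will treat $p^{+}$ first, since its values on $\{x=1\}$ feed into the formula for $q^{+}$.

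For $p^{+}$, the equation $p^{+}_{t}+p^{+}_{x}=f$ says that $\tau\mapsto p^{+}(\tau,\tau+x-t)$ has derivative $f(\tau,\tau+x-t)$ along the line $\xi-\tau=x-t$. Given $(t,x)\in Q_{T}$, the backward characteristic from $(t,x)$ meets $\{t=0\}$ at $(0,x-t)$ when $x\ge t$ and meets $\{x=0\}$ at $(t-x,0)$ when $x<t$; using $p^{+}(0,\cdot)=p_{0}^{+}$ in the first case and $p^{+}_{\mid x=0}=0$ in the second, and integrating $f$ along the corresponding segment, one obtains the two displayed expressions for $p^{+}$ (the second being valid for every $t-x>0$, in particular for $t-x\ge1$). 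For $q^{+}$, the equation $q^{+}_{t}-q^{+}_{x}=f$ gives $\frac{d}{d\tau}q^{+}(\tau,x+t-\tau)=f(\tau,x+t-\tau)$ along $\xi+\tau=x+t$. The backward characteristic from $(t,x)$ meets $\{t=0\}$ at $(0,x+t)$ when $x+t\le1$, which yields the first case via $q^{+}(0,\cdot)=q_{0}^{+}$. When $x+t>1$ it instead meets $\{x=1\}$ at $(x+t-1,1)$; there one cannot read off $q^{+}$ directly, so I substitute $q^{+}(x+t-1,1)=-p^{+}(x+t-1,1)$ using $\left(p^{+}+q^{+}\right)_{\mid x=1}=0$ and then insert the formula for $p^{+}$ already obtained, evaluated at $(x+t-1,1)$. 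This last value equals $p_{0}^{+}(2-x-t)+\int_{0}^{x+t-1}f(\tau,\tau+2-x-t)\,d\tau$ when $x+t\le2$ (the $p^{+}$‑characteristic reaching $\{t=0\}$) and $\int_{x+t-2}^{x+t-1}f(\tau,\tau+2-x-t)\,d\tau$ when $x+t\ge2$ (it reaching $\{x=0\}$, where $p^{+}=0$). Adding the contribution $\int_{x+t-1}^{t}f(\tau,x+t-\tau)\,d\tau$ from integrating $f$ along the $q^{+}$‑characteristic produces the second and third displayed cases, after noting $-\tau+x+t=x+t-\tau$.

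The computation is routine; the only point that will need care is the bookkeeping of the nested reflection for $q^{+}$ — rewriting the trace at $x=1$ through the coupling condition as $-p^{+}$, which itself bifurcates according to the two regimes $x+t\lessgtr2$ — and keeping the three integration limits straight under the successive substitutions $\tau\mapsto\tau$ along each segment. Finally, since these formulas are established first for smooth data (for which the trace operators clearly make sense) and depend continuously on the data, they extend by density to give the solution of (\ref{nonhom}) for all data in $D(\mathcal{A}^{\ast})$.
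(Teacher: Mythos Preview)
Your proposal is correct and follows essentially the same approach as the paper: integrate $p^{+}$ along forward characteristics using the initial data or the homogeneous boundary condition at $x=0$, then obtain $q^{+}$ by integrating along its characteristics and, when the characteristic reaches $x=1$, substituting $q^{+}(\cdot,1)=-p^{+}(\cdot,1)$ via the previously computed $p^{+}$ formula, splitting according to whether $x+t\lessgtr 2$. Your write-up is in fact a bit more streamlined than the paper's, and your closing remark on density is a sensible addition.
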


\begin{proof}
	By using the characteristics method, it follows 
	\begin{equation*}
		p^{+}\left( t,x\right) =p_{0}^{+}\left( x-t\right) +\int_{0}^{t}f\left( \tau
		,\tau +x-t\right) d\tau ,\text{ }\mathrm{if}\text{ }0\leq x-t\leq 1,
	\end{equation*}%
	and%
	\begin{equation*}
		q^{+}\left( t,x\right) =q_{0}^{+}\left( x+t\right) +\int_{0}^{t}f\left( \tau
		,x+t-\tau \right) d\tau ,\text{ }\mathrm{if}\text{ }0\leq x+t\leq 1.
	\end{equation*}%
	Now, at $x=1$ we obtain%
	\begin{equation*}
		p^{+}\left( t,1\right) =p_{0}^{+}\left( 1-t\right) +\int_{0}^{t}f\left( \tau
		,\tau +1-t\right) d\tau ,\text{ }\mathrm{if}\text{ }0\leq 1-t\leq 1.
	\end{equation*}%
	Using the boundary condition, $q^{+}\left( s,1\right) =-p^{+}\left(
	s,1\right) ,$ $s\geq 0,$ entails%
	\begin{equation*}
		q^{+}\left( s,1\right) =-p_{0}^{+}\left( 1-s\right) -\int_{0}^{s}f\left(
		\tau ,\tau +1-s\right) d\tau ,\text{ }\mathrm{if}\text{ }0\leq 1-s\leq 1.
	\end{equation*}%
	Solving the second equation of System (\ref{nonhom}) along the
	characteristic $x(t)=-t+s+1$ we get%
	\begin{eqnarray*}
		q^{+}(t,-t+s+1) &=&q^{+}(s,1)+\int_{s}^{t}f(\tau ,s+1-\tau )d\tau \\
		&=&-p_{0}^{+}\left( 1-s\right) -\int_{0}^{s}f\left( \tau ,\tau +1-s\right)
		d\tau +\int_{s}^{t}f(\tau ,s+1-\tau )d\tau .
	\end{eqnarray*}%
	Letting $x=-t+s+1$ yields for any $0\leq 2-x-t\leq 1:$%
	\begin{equation*}
		q^{+}(t,x)=-p_{0}^{+}\left( 2-x-t\right) -\int_{0}^{x+t-1}f\left( \tau ,\tau
		+2-x-t\right) d\tau +\int_{x+t-1}^{t}f(\tau ,x+t-\tau )d\tau .
	\end{equation*}%
	Now, since $p(s,0)=0,$ $s\geq 0,$ we have for any $t-x\geq 1$%
	\begin{equation}
		p^{+}(t,x)=\int_{t-x}^{t}f(\tau ,\tau -t+x)d\tau .  \label{122}
	\end{equation}%
	By taking $x=1$ in (\ref{122}), the using the boundary conditions $%
	q^{+}\left( s,1\right) =-p^{+}\left( s,1\right) ,$ $s\geq 0,$ we get%
	\begin{equation*}
		q^{+}(t,x)=\int_{x+t-1}^{t}f(\tau ,x+t-\tau )d\tau .
	\end{equation*}%
	Similarly, we obtain for any $x+t\geq 2$%
	\begin{equation*}
		q^{+}(t,x)=-\int_{x+t-2}^{x+t-1}f(\tau ,\tau +2-t-x)d\tau
		+\int_{x+t-1}^{t}f(\tau ,-\tau +x+t)d\tau ,
	\end{equation*}%
	which ends the proof.
\end{proof}

To satisfy the remained boundary condition $q_{|x=0}^{+}=0$, it suffices to
replace $x$ by $\emph{zero}$ in (\ref{q++}). This gives the following system
of equations%
\begin{equation}
	0=q_{0}^{+}\left( t\right) +\int_{0}^{t}f\left( \tau ,t-\tau \right) d\tau ,%
	\text{ }\mathrm{if}\text{ }0\leq t\leq 1,  \label{system 1}
\end{equation}%
\begin{equation}
	0=-p_{0}^{+}\left( 2-t\right) -\int_{0}^{t-1}f\left( \tau ,\tau +2-t\right)
	d\tau +\int_{t-1}^{t}f(\tau ,t-\tau )d\tau =0,\text{ }\mathrm{if}\text{ }%
	1\leq t\leq 2,\text{ \ }  \label{system 2}
\end{equation}%
\begin{equation}
	0=-\int_{t-2}^{t-1}f(\tau ,\tau +2-t)d\tau +\int_{t-1}^{t}f(\tau ,-\tau
	+t)d\tau =0,\text{ }\mathrm{if}\text{ }t\geq 2.\text{ \ }  \label{system 3}
\end{equation}

As a consequence, we have:

\begin{proposition}
	The unique continuation property (\ref{system unique}) holds true if, and
	only if the solution $\left( p_{0},q_{0}\right) $ to System (\ref{system 1}%
	)-(\ref{system 3}) is the null one.
\end{proposition}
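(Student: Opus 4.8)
The plan is to notice that the two lemmas just proved describe the \emph{entire} solution operator of the cascade form of System (\ref{Z system complete}): once the data $\left(p_{0},q_{0}\right)=\left(p_{0}^{-},p_{0}^{+},q_{0}^{-},q_{0}^{+}\right)\in D(\mathcal{A}^{\ast})$ are fixed, the first lemma gives $p^{-},q^{-}$ through (\ref{p^-})--(\ref{q^-}), and then, with $f=\eta_{1}p^{-}+\eta_{2}q^{-}$ acting as a source, the second lemma gives $p^{+},q^{+}$ through (\ref{q++}) and its analogue for $p^{+}$. The boundary conditions $p^{+}_{\mid x=0}=0$ and $(p^{+}+q^{+})_{\mid x=1}=0$ have already been used in deriving those formulas, and the formulas (\ref{p^-})--(\ref{q^-}) themselves enforce $(p^{-}+q^{-})_{\mid x=0,1}=0$; hence the \emph{only} requirement of System (\ref{system unique}) not yet built in is $q^{+}_{\mid x=0}=0$, equivalently (since $p^{+}_{\mid x=0}=0$) the condition $(p^{+}+q^{+})_{\mid x=0}=0$. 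Substituting $x=0$ into the formula for $q^{+}$ turns this last condition into precisely the three relations (\ref{system 1}), (\ref{system 2}), (\ref{system 3}). So the whole proof amounts to reading off this bijection between the data producing a solution of the over-determined System (\ref{system unique}) and the solutions $\left(p_{0},q_{0}\right)$ of the integral system (\ref{system 1})--(\ref{system 3}).

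For the implication ``(\ref{system 1})--(\ref{system 3}) has only the trivial solution $\Rightarrow$ unique continuation'' I would take any $\left(p_{0}^{-},p_{0}^{+},q_{0}^{-},q_{0}^{+}\right)\in D(\mathcal{A}^{\ast})$ whose associated solution satisfies the left-hand side of (\ref{system unique}); the two lemmas express $p^{\pm},q^{\pm}$ by the explicit formulas, and the remaining condition $q^{+}_{\mid x=0}=0$ then says exactly that $\left(p_{0},q_{0}\right)$ solves (\ref{system 1})--(\ref{system 3}), hence $\left(p_{0},q_{0}\right)=0$ by hypothesis, which is the unique continuation property. For the converse I would argue by contraposition: starting from a non-zero $\left(p_{0},q_{0}\right)\in D(\mathcal{A}^{\ast})$ solving (\ref{system 1})--(\ref{system 3}), I would \emph{define} $p^{-},q^{-}$ by (\ref{p^-})--(\ref{q^-}), put $f=\eta_{1}p^{-}+\eta_{2}q^{-}$, \emph{define} $p^{+},q^{+}$ by the formulas of the second lemma, and check that $\left(p^{-},p^{+},q^{-},q^{+}\right)$ solves every equation and every boundary condition on the left of (\ref{system unique}) --- all of them being built into the formulas except $q^{+}_{\mid x=0}=0$, which holds precisely because $\left(p_{0},q_{0}\right)$ satisfies (\ref{system 1})--(\ref{system 3}). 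This yields a non-trivial element in the kernel, contradicting unique continuation.

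The step I expect to be the only genuine obstacle is the regularity bookkeeping in the converse: one must check that the piecewise-defined functions produced by the explicit formulas are in fact the (weak) solution in $C([0,T],H)$ of the cascade system with data $\left(p_{0},q_{0}\right)$, i.e.\ that they match continuously across the characteristic lines $t\pm x\in\mathbb{Z}$. The right way to handle this is to note that continuity across the reflections at $x=0$ and $x=1$ is exactly guaranteed by the conditions $(p_{0}+q_{0})_{\mid x=0,1}=0$ contained in the definition (\ref{D(A)}) of $D(\mathcal{A}^{\ast})$, so that the formulas really do reproduce the solution operators of the transport equations in (\ref{Homo}) and (\ref{nonhom}), which preserve membership in $D(\mathcal{A}^{\ast})$; hence this is a matter of invoking the lemmas rather than re-deriving anything. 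Finally, I would close with the observation that relation (\ref{system 3}) involves only $f$, hence only $\left(p_{0}^{-},q_{0}^{-}\right)$: once (\ref{system 3}) forces $p_{0}^{-}=q_{0}^{-}=0$ one has $f\equiv0$, and then (\ref{system 1}) and (\ref{system 2}) give $q_{0}^{+}=0$ and $p_{0}^{+}=0$ at once, so the proposition may equivalently be stated as ``unique continuation holds iff $\left(p_{0}^{-},q_{0}^{-}\right)=0$ is the only solution of (\ref{system 3})''.
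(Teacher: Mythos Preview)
Your proposal is correct and follows exactly the paper's approach. In fact, the paper does not even supply a formal proof of this proposition: it states it directly ``As a consequence'' of the two lemmas and the derivation of (\ref{system 1})--(\ref{system 3}) obtained by setting $x=0$ in (\ref{q++}); your write-up spells out in more detail the bijection that the paper leaves implicit, including the contrapositive for the converse and the regularity check across the characteristic lines, and your closing remark that (\ref{system 3}) alone determines $(p_{0}^{-},q_{0}^{-})$ is precisely the ``strategy'' paragraph the paper places immediately after the proposition.
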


The strategy is the following: We solve Equation (\ref{system 3}) which
depends only on the initial states $p_{0}^{-},q_{0}^{-}$ since $f$ does.
Then, we prove that $p_{0}^{-}\equiv q_{0}^{-}\equiv 0$ which entails that $%
f $ $\equiv 0$ since it depends linearly on $p_{0}^{-}$ and $q_{0}^{-}$.
This leads to $p_{0}^{+}\equiv q_{0}^{+}\equiv 0$ by (\ref{system 1}) and (%
\ref{system 2}).

Let $t\geq 2.$ Recall that $f=\eta _{1}p^{-}+\eta _{2}q^{-}.$ Equation (\ref%
{system 3}) becomes%
\begin{eqnarray*}
	0 &=&-\int_{t-2}^{t-1}(\eta _{1}p^{-})(\tau ,\tau +2-t)d\tau
	+\int_{t-1}^{t}(\eta _{2}q^{-})(\tau ,-\tau +t)d\tau \\
	&&-\int_{t-2}^{t-1}(\eta _{2}q^{-})(\tau ,\tau +2-t)d\tau
	+\int_{t-1}^{t}(\eta _{1}p^{-})(\tau ,-\tau +t)d\tau \\
	&:&=I_{1}(t)+I_{2}(t)+I_{3}(t)+I_{4}(t).
\end{eqnarray*}%
Since $p^{-}$ (resp. $q^{-}$) is defined on the characteristics of slope $1$
(resp. $-1$), $I_{1}(\cdot )$ (resp. $I_{2}(\cdot )$) can be easily
computed. Indeed, by using the expressions of $p^{-}$ and $q^{-}$ given in (%
\ref{p^-}) and (\ref{q^-}) respectively, we obtain%
\begin{eqnarray}
	I_{1}(t) &=&-\int_{t-2}^{t-1}(\eta _{1}p^{-})(\tau ,\tau +2-t)d\tau
	\label{I_1} \\
	&=&\left\{ 
	\begin{array}{ccc}
		-\left( \int_{t-2}^{t-1}\eta _{1}(\tau ,\tau +2-t)d\tau \right)
		p_{0}^{-}(4+2n-t), & \mathrm{if} & 2n+3\leq t\leq 2n+4, \\ 
		\text{ \ }\left( \int_{t-2}^{t-1}\eta _{1}(\tau ,\tau +2-t)d\tau \right)
		q_{0}^{-}(t-2-2n),\text{\ \ } & \mathrm{if} & 2n+2\leq t\leq 2n+3,%
	\end{array}%
	\right. n\geq 0,  \notag
\end{eqnarray}%
and 
\begin{eqnarray}
	I_{2}(t) &=&\int_{t-1}^{t}(\eta _{1}q^{-})(\tau ,-\tau +t)d\tau  \label{I_2}
	\\
	&=&\left\{ 
	\begin{array}{ccc}
		\left( \int_{t-1}^{t}\eta _{1}(\tau ,-\tau +t)d\tau \right) q_{0}^{-}(t-2n),%
		\text{ \ \ } & \mathrm{if} & 2n\leq t\leq 2n+1,\text{ \ \ \ \ } \\ 
		-\left( \int_{t-1}^{t}\eta _{1}(\tau ,-\tau +t)d\tau \right)
		p_{0}^{-}(2n+2-t), & \mathrm{if} & 2n+1\leq t\leq 2n+2,%
	\end{array}%
	\right. n\geq 1.  \notag
\end{eqnarray}%
It can be seen that $I_{1}(\cdot )$ and $I_{2}(\cdot )$ are the high
frequency part of the solution. Now, we deal with the compact terms $%
I_{3}(\cdot )$ and $I_{4}(\cdot ).$

By using the expression of $q^{-}$ given in (\ref{q^-}), we obtain for any $%
n\geq 0$%
\begin{eqnarray}
	I_{3}(t) &=&-\int_{t-2}^{t-1}(\eta _{2}q^{-})(\tau ,\tau +2-t)d\tau
	\label{I-3} \\
	&&\left\{ 
	\begin{array}{ccc}
		-\int_{t-2}^{t-1}\eta _{2}(\tau ,\tau +2-t)q_{0}^{-}(2\tau +2-t-2n)d\tau , & 
		\mathrm{if} & \tau \in \left( \frac{2n-2+t}{2},\frac{2n-1+t}{2}\right) \cap
		(0,t), \\ 
		\begin{array}{c}
			\\ 
			\int_{t-2}^{t-1}\eta _{2}(\tau ,\tau +2-t)p_{0}^{-}(2n-2\tau +t)d\tau ,\text{
				\ }%
		\end{array}
		& 
		\begin{array}{c}
			\\ 
			\mathrm{if}%
		\end{array}
		& 
		\begin{array}{c}
			\\ 
			\tau \in \left( \frac{2n-1+t}{2},\frac{2n+t}{2}\right) \cap (0,t).\text{ \ }%
		\end{array}%
	\end{array}%
	\right.  \notag
\end{eqnarray}

\begin{itemize}
	\item If $t\in \lbrack 2n-1,2n),$ $n\geq 2:$ In this case, the interval $%
	(t-2,t-1)$ can be decomposed as 
	\begin{equation*}
		(t-2,t-1)=[t-2,\frac{2n-4+t}{2})\cup (\frac{2n-4+t}{2},\frac{2n-3+t}{2})\cup
		(\frac{2n-3+t}{2},t-1).
	\end{equation*}%
	Therefore, by using (\ref{I-3}) we get 
	\begin{eqnarray*}
		I_{3}(t) &=&\int_{t-2}^{\frac{2n-4+t}{2}}\eta _{2}(\tau ,\tau
		+2-t)p_{0}^{-}(2n-4+t-2\tau )d\tau \\
		&&-\int_{\frac{2n-4+t}{2}}^{\frac{2n-3+t}{2}}\eta _{2}(\tau ,\tau
		+2-t)q_{0}^{-}(2\tau +4-t-2n)d\tau \\
		&&+\int_{\frac{2n-3+t}{2}}^{t-1}\eta _{2}(\tau ,\tau
		+2-t)p_{0}^{-}(2n-2+t-2\tau )d\tau .
	\end{eqnarray*}%
	And after a change of variables, we obtain%
	\begin{eqnarray}
		I_{3}(t) &=&\frac{1}{2}\int_{0}^{2n-t}\eta _{2}(\frac{2n-4+t-s}{2},\frac{%
			2n-t-s}{2})p_{0}^{-}(s)ds  \label{I_3 (2n-1,2n)} \\
		&&-\frac{1}{2}\int_{0}^{1}\eta _{2}(\frac{s+t-4+2n}{2},\frac{s-t+2n}{2}%
		)q_{0}^{-}(s)ds  \notag \\
		&&+\frac{1}{2}\int_{2n-t}^{1}\eta _{2}(\frac{2n-2+t-s}{2},\frac{2n+2-t-s}{2}%
		)p_{0}^{-}(s)ds.  \notag
	\end{eqnarray}
	
	\item If $t\in \lbrack 2n,2n+1),$ $n\geq 1:$
	
	The interval $[t-2,t-1)$ can be written as%
	\begin{equation*}
		(t-2,t-1)=(t-2,\frac{2n-3+t}{2})\cup (\frac{2n-3+t}{2},\frac{2n-2+t}{2})\cup
		(\frac{2n-2+t}{2},t-1).
	\end{equation*}%
	Similarly, we obtain%
	\begin{eqnarray*}
		I_{3}(t) &=&-\int_{t-2}^{\frac{2n-3+t}{2}}\eta _{2}(\tau ,\tau
		+2-t)q_{0}^{-}(2\tau +4-t-2n)d\tau \\
		&&+\int_{\frac{2n-3+t}{2}}^{\frac{2n-2+t}{2}}\eta _{2}(\tau ,\tau
		+2-t)p_{0}^{-}(2n-2+t-2\tau )d\tau \\
		&&-\int_{\frac{2n-2+t}{2}}^{t-1}\eta _{2}(\tau ,\tau +2-t)q_{0}^{-}(2\tau
		+2-t-2n)d\tau .
	\end{eqnarray*}%
	And after a change of variables we get%
	\begin{eqnarray}
		I_{3}(t) &=&-\frac{1}{2}\int_{t-2n}^{1}\eta _{2}(\frac{s+2n+t-4}{2},\frac{%
			s+2n-t}{2})q_{0}^{-}(s)ds  \label{I_3 (2n,2n+1)} \\
		&&+\frac{1}{2}\int_{0}^{1}\eta _{2}(\frac{2n+t-2-s}{2},\frac{2n-t+2-s}{2}%
		)p_{0}^{-}(s)ds  \notag \\
		&&-\frac{1}{2}\int_{0}^{t-2n}\eta _{2}(\frac{2n+t-2+s}{2},\frac{2n-t+2+s}{2}%
		)q_{0}^{-}(s)ds.  \notag
	\end{eqnarray}
\end{itemize}

Now, we deal with $I_{4}(\cdot ).$ In the same way, we have for any $n\geq
0: $%
\begin{eqnarray}
	I_{4}(t) &=&\int_{t-1}^{t}(\eta _{1}p^{-})(\tau ,-\tau +t)d\tau  \label{I-4}
	\\
	&&\left\{ 
	\begin{array}{ccc}
		\int_{t-1}^{t}\eta _{1}(\tau ,-\tau +t)p_{0}^{-}(-2\tau +t+2n)d\tau , & 
		\mathrm{if} & \tau \in \left( \frac{2n-1+t}{2},\frac{2n+t}{2}\right) \cap
		(0,t), \\ 
		\begin{array}{c}
			\\ 
			-\int_{t-1}^{t}\eta _{1}(\tau ,-\tau +t)q_{0}^{-}(2\tau -t-2n)d\tau ,\text{
				\ }%
		\end{array}%
		\text{ \ } & 
		\begin{array}{c}
			\\ 
			\mathrm{if}%
		\end{array}
		& 
		\begin{array}{c}
			\\ 
			\tau \in \left( \frac{2n+t}{2},\frac{2n+t+1}{2}\right) \cap (0,t),%
		\end{array}%
	\end{array}%
	\right.  \notag
\end{eqnarray}

Consider the first case:

\begin{itemize}
	\item If $t\in \lbrack 2n,2n+1),$ $n\geq 1:$ In this case, we write%
	\begin{equation*}
		(t-1,t)=(t-1,\frac{t+2n-1}{2})\cup (\frac{t+2n-1}{2},\frac{t+2n}{2})\cup (%
		\frac{t+2n}{2},t),
	\end{equation*}%
	\begin{eqnarray*}
		I_{4}(t) &=&-\int_{t-1}^{\frac{t+2n-1}{2}}\eta _{1}(\tau ,-\tau
		+t)q_{0}^{-}(2\tau -t-2n+2)d\tau \\
		&&+\int_{\frac{t+2n-1}{2}}^{\frac{t+2n}{2}}\eta _{1}(\tau ,-\tau
		+t)p_{0}^{-}(-2\tau +t+2n)d\tau \\
		&&-\int_{\frac{t+2n}{2}}^{t}\eta _{1}(\tau ,-\tau +t)q_{0}^{-}(2\tau
		-t-2n)d\tau ,
	\end{eqnarray*}%
	which yields after a change of variables 
	\begin{eqnarray}
		I_{4}(t) &=&-\frac{1}{2}\int_{t-2n}^{1}\eta _{1}(\frac{s+t+2n-2}{2},\frac{%
			t-2n-s+2}{2})q_{0}^{-}(s)ds  \label{I_4 (2n,2n+1)} \\
		&&+\frac{1}{2}\int_{0}^{1}\eta _{1}(\frac{2n+t-s}{2},\frac{t+s-2n}{2}%
		)p_{0}^{-}(s)ds  \notag \\
		&&-\frac{1}{2}\int_{0}^{t-2n}\eta _{1}(\frac{2n+t+s}{2},\frac{t-2n-s}{2}%
		)q_{0}^{-}(s)ds.  \notag
	\end{eqnarray}
	
	\item If $t\in \lbrack 2n+1,2n+2),$ $n\geq 1:$ In the same way, 
	\begin{equation*}
		(t-1,t)=(t-1,\frac{t+2n}{2})\cup (\frac{t+2n}{2},\frac{t+2n+1}{2})\cup (%
		\frac{t+2n+1}{2},t),
	\end{equation*}%
	so, we obtain%
	\begin{eqnarray*}
		I_{4}(t) &=&\int_{t-1}^{\frac{t+2n}{2}}\eta _{1}(\tau ,-\tau
		+t)p_{0}^{-}(-2\tau +t+2n)d\tau \\
		&&-\int_{\frac{t+2n}{2}}^{\frac{t+2n+1}{2}}\eta _{1}(\tau ,-\tau
		+t)q_{0}^{-}(2\tau -t-2n)d\tau \\
		&&+\int_{\frac{t+2n+1}{2}}^{t}\eta _{1}(\tau ,-\tau +t)p_{0}^{-}(-2\tau
		+t+2n+2)d\tau ,
	\end{eqnarray*}%
	and after a change of variables we obtain%
	\begin{eqnarray}
		I_{4}(t) &=&\frac{1}{2}\int_{0}^{2n+2-t}\eta _{1}(\frac{2n+t-s}{2},\frac{%
			-2n+t+s}{2})p_{0}^{-}(s)ds  \label{I_4 (2n+1,2n+2)} \\
		&&-\frac{1}{2}\int_{0}^{1}\eta _{1}(\frac{2n+t+s}{2},\frac{t-2n-s}{2}%
		)q_{0}^{-}(s)ds  \notag \\
		&&+\frac{1}{2}\int_{2n+2-t}^{1}\eta _{1}(\frac{2n+2+t-s}{2},\frac{t+s-2n-2}{2%
		})p_{0}^{-}(s)ds.  \notag
	\end{eqnarray}
\end{itemize}

To summarize, the initial states $p_{0}^{-},q_{0}^{-}$ are solutions of the
following two equations:

\begin{itemize}
	\item If $t\in \lbrack 2n,2n+1),$ $n\geq 1:$
	
	By gathering (\ref{I_1}), (\ref{I_2}), (\ref{I_3 (2n,2n+1)}), and (\ref{I_4
		(2n,2n+1)}) we get%
	\begin{eqnarray*}
		0 &=&\left( \int_{t-2}^{t-1}\eta _{1}(\tau ,\tau +2-t)d\tau
		+\int_{t-1}^{t}\eta _{1}(\tau ,-\tau +t)d\tau \right) q_{0}^{-}(t-2n) \\
		&&-\text{\ }\frac{1}{2}\int_{t-2n}^{1}\left[ \eta _{1}(\frac{s+t+2n-2}{2},%
		\frac{t-2n-s+2}{2})+\eta _{2}(\frac{s+2n+t-4}{2},\frac{s+2n-t}{2})\right]
		q_{0}^{-}(s)ds \\
		&&+\frac{1}{2}\int_{0}^{1}\left[ \eta _{1}(\frac{2n+t-s}{2},\frac{t+s-2n}{2}%
		)+\eta _{2}(\frac{2n+t-2-s}{2},\frac{2n-t+2-s}{2})\right] p_{0}^{-}(s)ds \\
		&&-\frac{1}{2}\int_{0}^{t-2n}\left[ \eta _{1}(\frac{2n+t+s}{2},\frac{t-2n-s}{%
			2})+\eta _{2}(\frac{2n+t-2+s}{2},\frac{2n-t+2+s}{2})\right] q_{0}^{-}(s)ds.
	\end{eqnarray*}%
	Letting $x=t-2n$ yields%
	\begin{equation}
		\phi (x+2n)q_{0}^{-}(x)-\int_{0}^{1}\left( K_{n}^{21}(s,x)p_{0}^{-}\left(
		s\right) +K_{n}^{22}(s,x)q_{0}^{-}\left( s\right) \right) ds=0,\text{ }x\in
		(0,1),  \label{integral 1}
	\end{equation}%
	where $\phi $ is defined in (\ref{phi}) and the kernels $K_{n}^{21}\left(
	\cdot ,\cdot \right) ,$ $K_{n}^{22}\left( \cdot ,\cdot \right) ,$ are given
	by%
	\begin{equation}
		K_{n}^{21}(s,x)=-\frac{1}{2}\eta _{1}(\frac{4n+x-s}{2},\frac{x+s}{2})-\frac{1%
		}{2}\eta _{2}(\frac{4n+x-2-s}{2},\frac{2-s-x}{2}),\text{ }\left( s,x\right)
		\in (0,1)^{2},  \label{K21}
	\end{equation}%
	\begin{equation}
		K_{n}^{22}(s,x)=\frac{1}{2}\left\{ 
		\begin{array}{lll}
			\eta _{1}(\frac{4n+x+s}{2},\frac{x-s}{2})+\eta _{2}(\frac{4n-2+x+s}{2},\frac{%
				2+s-x}{2}), & \mathrm{if} & 0\leq s\leq x, \\ 
			&  &  \\ 
			\eta _{1}(\frac{4n-2+s+x}{2},\frac{x-s+2}{2})+\eta _{2}(\frac{4n-4+s+x}{2},%
			\frac{s-x}{2}), & \mathrm{if} & x\leq s\leq 1.%
		\end{array}%
		\right.  \label{K22}
	\end{equation}
	
	Observe that when $t$ varies in $[2n,2n+1)$ the $x$ varies in $[0,T-2n).$
	
	\item If $t\in \lbrack 2n+1,2n+2),$ $n\geq 1:$
	
	Gathering (\ref{I_1}), (\ref{I_2}), \ref{I_3 (2n-1,2n)}, and (\ref{I_4
		(2n+1,2n+2)}) yields 
	\begin{eqnarray*}
		0 &=&-\left( \int_{t-2}^{t-1}\eta _{1}(\tau ,\tau +2-t)d\tau
		+\int_{t-1}^{t}\eta _{1}(\tau ,-\tau +t)d\tau \right) p_{0}^{-}(2n+2-t) \\
		&&+\frac{1}{2}\int_{0}^{2n+2-t}\left[ \eta _{1}(\frac{2n+t-s}{2},\frac{%
			-2n+t+s}{2})+\eta _{2}(\frac{2n-2+t-s}{2},\frac{2n+2-t-s}{2})\right]
		p_{0}^{-}(s)ds \\
		&&+\frac{1}{2}\int_{2n+2-t}^{1}\left[ \eta _{1}(\frac{2n+2+t-s}{2},\frac{%
			t+s-2n-2}{2})+\eta _{2}(\frac{2n+t-s}{2},\frac{2n+4-t-s}{2})\right]
		p_{0}^{-}(s)ds \\
		&&-\frac{1}{2}\int_{0}^{1}\left[ \eta _{1}(\frac{2n+t+s}{2},\frac{t-2n-s}{2}%
		)+\eta _{2}(\frac{s+t-2+2n}{2},\frac{s-t+2n+2}{2})\right] q_{0}^{-}(s)ds.
	\end{eqnarray*}%
	Letting $2n+2-t=x$ yields%
	\begin{equation}
		\phi (2n+2-x)p_{0}^{-}(x)-\int_{0}^{1}\left[
		K_{n}^{11}(s,x)p_{0}^{-}(s)+K_{n}^{12}(s,x)q_{0}^{-}(s)\right] ds=0,\text{ }%
		x\in (0,1),\text{ }n\geq 1,  \label{integral 2}
	\end{equation}%
	where $\phi $ is defined in (\ref{phi}) and the kernels $K_{n}^{11}\left(
	\cdot ,\cdot \right) ,$ $K_{n}^{12}\left( \cdot ,\cdot \right) ,$ are given
	by 
	\begin{equation}
		K_{n}^{11}(s,x)=\frac{1}{2}\left\{ 
		\begin{array}{lll}
			\eta _{1}(\frac{4n+2-x-s}{2},\frac{2-x+s}{2})+\eta _{2}(\frac{4n-x-s}{2},%
			\frac{x-s}{2}), & \mathrm{if} & 0\leq s\leq x, \\ 
			&  &  \\ 
			\eta _{1}(\frac{4n+4-x-s}{2},\frac{s-x}{2})+\eta _{2}(\frac{4n+2-x-s}{2},%
			\frac{2+x-s}{2}) & \mathrm{if} & x\leq s\leq 1,%
		\end{array}%
		\right.  \label{K11}
	\end{equation}%
	\begin{equation}
		K_{n}^{12}(s,x)=-\frac{1}{2}\eta _{1}(\frac{4n+2-x+s}{2},\frac{2-x-s}{2})-%
		\frac{1}{2}\eta _{2}(\frac{s+4n-x}{2},\frac{s+x}{2}),\text{ }\left(
		s,x\right) \in (0,1)^{2}.  \label{K12}
	\end{equation}%
	Similarly, when $t$ varies $[2n+1,2n+2)$ then $x$ varies $(2n+2-T,1].$
\end{itemize}

Observe that equations (\ref{integral 1}) and (\ref{integral 2}) form a
system of Fredholm Integral equations of third kind. Indeed, if $t\in
\lbrack 2n,2n+1),$ the interval $[2,t)$ can be written as%
\begin{equation*}
	\lbrack 2,t)=\left( \cup _{l=1}^{n-1}[2l,2l+1)\right) \cup \left( \cup
	_{k=1}^{n-1}[2k+1,2k+2])\right) \cup \lbrack 2n,t),
\end{equation*}

In this case, we have to solve Equation (\ref{integral 1}) (resp. Equation (%
\ref{integral 2})) in intervals of the form $[2l,2l+1)$ (resp. $[2k+1,2l+2)$%
) for some $k,l\geq 1$. In the same way, for $t\in \lbrack 2n+1,2n+2),$ the
interval $[2,t)$ can be written as%
\begin{equation*}
	\lbrack 2,t)=\left( \cup _{l=1}^{n}[2l,2l+1)\right) \cup \left( \cup
	_{k=1}^{n-1}[2k+1,2k+2])\right) \cup \lbrack 2n+1,T).
\end{equation*}%
Similarly, we have to solve Equation (\ref{integral 1}) (resp. Equation (\ref%
{integral 2})) in the intervals of the form $[2l,2l+1)$ (resp. $[2k+1,2l+2)$%
) for some $k,l\geq 1$. More precisely, introduce the kernel $\mathbb{K}%
_{n,k,l}(\cdot ,\cdot ),$ $1\leq i\leq 2,$ $n\geq 2,$ $k,l\geq 1,$ by

\begin{itemize}
	\item If $T\in \lbrack 2n,2n+1)$%
	\begin{equation*}
		\mathbb{K}_{n,k,l}(s,x)=\left\{ 
		\begin{array}{llll}
			\mathbf{K}_{k,l}(s,x), & 1\leq k\leq n-1,\text{ }1\leq l\leq n, & \mathrm{if}
			& x\in \lbrack 0,T-2n), \\ 
			&  &  &  \\ 
			\mathbf{K}_{k,l}(s,x), & 1\leq k\leq n-1,\text{ }1\leq l\leq n-1, & \mathrm{%
				if} & x\in \lbrack T-2n,1],%
		\end{array}%
		\right.
	\end{equation*}
	
	\item If $T\in \lbrack 2n+1,2n+2)$%
	\begin{equation*}
		\mathbb{K}_{n,k,l}(s,x)=\left\{ 
		\begin{array}{llll}
			\mathbf{K}_{k,l}(s,x), & 1\leq k\leq n,\text{ }1\leq l\leq n, & \mathrm{if}
			& x\in \lbrack 0,2n+2-T), \\ 
			&  &  &  \\ 
			\mathbf{K}_{k,l}(s,x), & 1\leq k\leq n+1,\text{ }1\leq l\leq n, & \mathrm{if}
			& x\in \lbrack 2n+2-T,1],%
		\end{array}%
		\right.
	\end{equation*}
	where
\end{itemize}

\begin{eqnarray*}
	A_{k,l}(x) &=&\left( 
	\begin{array}{cc}
		\phi (2k+2-x) & 0 \\ 
		0 & \phi (2l+x)%
	\end{array}%
	\right) ,\text{ }x\in \left[ 0,1\right] ,\text{ }k,l\geq 1, \\
	\text{ }\mathbf{K}_{k,l}(s,x) &=&\left( 
	\begin{array}{cc}
		K_{k}^{11}(s,x) & K_{k}^{12}(s,x) \\ 
		K_{l}^{21}(s,x) & K_{l}^{22}(s,x)%
	\end{array}%
	\right) ,\text{ }\left( s,x\right) \in \left[ 0,1\right] ^{2},\text{ }%
	k,l\geq 1,
\end{eqnarray*}%
associated with the third kind Fredholm integral equations%
\begin{equation}
	A_{k,l}(x)\left( 
	\begin{array}{c}
		p_{0}^{-}(x) \\ 
		q_{0}^{-}(x)%
	\end{array}%
	\right) =\int_{0}^{1}\mathbb{K}_{n,k,l}(s,x)\left( 
	\begin{array}{c}
		p_{0}^{-}(s) \\ 
		q_{0}^{-}(s)%
	\end{array}%
	\right) ds,  \label{Fredholm}
\end{equation}

Now, we come to the main theorem of this section:

\begin{theorem}
	\label{Theroem main approx}Let $n\geq 2$ be an integer. The\emph{\ unique
		continuation property }for (\ref{system unique}) holds true at time $T$ if,
	and only if, there exist $k,l\geq 1$ such that the unique solution $\left(
	p_{0}^{-},q_{0}^{-}\right) $ to Equation (\ref{Fredholm}) is the null one.
\end{theorem}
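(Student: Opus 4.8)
The plan is to deduce the theorem from the explicit reduction already carried out in this subsection, running it in both directions: the formulas (\ref{p^-})--(\ref{q^-}), (\ref{q++}), the relations (\ref{system 1})--(\ref{system 3}) and the kernels $A_{k,l}$, $\mathbf K_{k,l}$, $\mathbb K_{n,k,l}$ do all the analytic work, so what remains is organisation. First I would make the forward pipeline precise. Given $(p_0^{-},p_0^{+},q_0^{-},q_0^{+})\in D(\mathcal A^{\ast})$ satisfying the left-hand side of (\ref{system unique}): the two uncoupled scalar equations for $p^{-},q^{-}$, together with the boundary conditions on $p^{-}+q^{-}$, force $(p^{-},q^{-})$ to be the functions (\ref{p^-})--(\ref{q^-}), which depend on $(p_0^{-},q_0^{-})$ only, so the source $f=\eta_1 p^{-}+\eta_2 q^{-}$ is an explicit linear function of $(p_0^{-},q_0^{-})$. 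Feeding $f$ into the last two equations and using $p^{+}_{\mid x=0}=0$ and $(p^{+}+q^{+})_{\mid x=1}=0$ produces the formula (\ref{q++}) for $q^{+}$ and its companion for $p^{+}$, and the only constraint of (\ref{system unique}) not yet used --- namely $q^{+}_{\mid x=0}=0$, forced by $p^{+}_{\mid x=0}=0$ and $(p^{+}+q^{+})_{\mid x=0}=0$ --- becomes, after setting $x=0$ in (\ref{q++}), the three relations (\ref{system 1})--(\ref{system 3}) according to $t\in[0,1]$, $t\in[1,2]$, $t\in[2,T)$. Since (\ref{system 1}) and (\ref{system 2}) express $q_0^{+}$ and $p_0^{+}$ on $(0,1)$ as explicit linear functionals of $f$, hence of $(p_0^{-},q_0^{-})$, one sees that $(p_0^{-},q_0^{-})=0$ forces $f\equiv0$ and then $p_0^{+}\equiv q_0^{+}\equiv0$.

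Next I would turn (\ref{system 3}), imposed for all $t\in[2,T)$, into the Fredholm system. Decomposing $[2,T)$ into the blocks $[2l,2l+1)$ and $[2k+1,2k+2)$ as in the text and substituting (\ref{p^-})--(\ref{q^-}) into the terms $I_1,\dots,I_4$, the restriction of (\ref{system 3}) to $[2l,2l+1)$ becomes, after the change of variable $x=t-2l$, the scalar equation (\ref{integral 1}) of index $l$, and its restriction to $[2k+1,2k+2)$ becomes (via $x=2k+2-t$) the scalar equation (\ref{integral 2}) of index $k$; stacking the index-$k$ equation (the $p_0^{-}$ row) with the index-$l$ equation (the $q_0^{-}$ row) is precisely the $2\times2$ third-kind Fredholm system (\ref{Fredholm}) with matrix $A_{k,l}$ and kernel $\mathbb K_{n,k,l}$, the admissible ranges of $k,l$ being those listed just before (\ref{Fredholm}) and nonempty because $n\ge2$. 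Combining this with the first paragraph, the unique continuation property for (\ref{system unique}) is equivalent to the triviality of the solution $(p_0^{-},q_0^{-})$ of (\ref{Fredholm}), which is the assertion of the theorem.

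Finally I would record the two implications. The forward implication is immediate: if (\ref{Fredholm}) admits only the null solution, then any data as in the hypothesis of (\ref{system unique}) has $(p_0^{-},q_0^{-})=0$, hence $f\equiv0$ and, by (\ref{system 1})--(\ref{system 2}), $p_0^{+}\equiv q_0^{+}\equiv0$, so (\ref{system unique}) holds. For the converse one starts from a solution $(p_0^{-},q_0^{-})$ of (\ref{Fredholm}), defines $p_0^{+},q_0^{+}$ through (\ref{system 1})--(\ref{system 2}), retraces the pipeline backwards to check that the reconstructed quadruple satisfies the left-hand side of (\ref{system unique}), and then invokes unique continuation to conclude it vanishes, whence $(p_0^{-},q_0^{-})=0$. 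The step I expect to be the main obstacle is exactly this backward verification: one has to confirm that a solution of (\ref{Fredholm}) genuinely lifts to an element of $D(\mathcal A^{\ast})$ --- that the piecewise $H^1$-regularity along the characteristics, the matching of one-sided traces at the corners of $Q_T$, and the zero-mean conditions $\int_0^1(p_0^{-}-q_0^{-})=0$ and $\int_0^1(p_0^{+}-q_0^{+})=0$ encoded in $D(\mathcal A^{\ast})$ are inherited automatically; together with keeping the pairing of indices in $\mathbb K_{n,k,l}$ and the interval bookkeeping in $I_1,\dots,I_4$ straight, this is where the argument is genuinely delicate rather than routine.
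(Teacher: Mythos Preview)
Your proposal is correct and follows essentially the same route as the paper: the theorem there is not given a standalone proof but is stated as the summary of the preceding derivation (the reduction of (\ref{system unique}) to (\ref{system 1})--(\ref{system 3}) via (\ref{p^-})--(\ref{q++}), then the computation of $I_1,\dots,I_4$ turning (\ref{system 3}) into (\ref{integral 1})--(\ref{integral 2}) and hence (\ref{Fredholm})), and your outline recapitulates exactly that pipeline in both directions. Your flagging of the converse lifting step as the delicate point is appropriate; the paper is equally informal on that direction.
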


Now, let us assume that the weak observability holds, i.e

\begin{itemize}
	\item If $2n\leq T<2n+1:$
	
	\begin{enumerate}
		\item There exist $1\leq k\leq n-1,$ $1\leq l\leq n,$ such that 
		\begin{equation}
			\phi (2k+2-x)\neq 0,~\phi (2l+x)\neq 0,\text{ }\forall x\in \lbrack 0,T-2n).
			\label{AAA1}
		\end{equation}
		
		\item There exist $1\leq k\leq n-1,$ $1\leq l\leq n-1,$ such that 
		\begin{equation}
			\phi (2k+2-x)\neq 0,~\phi (2l+x)\neq 0,\text{ }\forall x\in \lbrack T-2n,1].
			\label{AA2}
		\end{equation}
	\end{enumerate}
	
	\item If $2n+1\leq T<2n+2:$
	
	\begin{enumerate}
		\item There exist $1\leq k\leq n-1,$ $1\leq l\leq n,$ such that 
		\begin{equation}
			\phi (2k+2-x)\neq 0,~\phi (2l+x)\neq 0,\text{ }\forall x\in \lbrack
			2n+2-T,0).  \label{AA3}
		\end{equation}
		
		\item There exist $1\leq k\leq n,$ $1\leq l\leq n,$ such that 
		\begin{equation}
			\phi (2k+2-x)\neq 0,~\phi (2l+x)\neq 0,\text{ }\forall x\in \lbrack
			2n+2-T,1].  \label{AA4}
		\end{equation}
	\end{enumerate}
\end{itemize}

Under assumptions (\ref{AAA1}) and (\ref{AA2}) (resp. (\ref{AA3}) and (\ref%
{AA4})), Equations (\ref{Fredholm}) writes 
\begin{equation}
	\left( 
	\begin{array}{c}
		p_{0}^{-}(x) \\ 
		q_{0}^{-}(x)%
	\end{array}%
	\right) =\int_{0}^{1}A_{k,l}^{-1}(x)\mathbb{K}_{n,k,l}(s,x)\left( 
	\begin{array}{c}
		p_{0}^{-}(s) \\ 
		q_{0}^{-}(s)%
	\end{array}%
	\right) ds:=\mathcal{K}_{k,l}\left( 
	\begin{array}{c}
		p_{0}^{-} \\ 
		q_{0}^{-}%
	\end{array}%
	\right) (x),\text{ }i=1,2,  \label{Fredholm 11}
\end{equation}%
which are a second kind Fredholm integral equations. The following corollary
is a straightforward consequence of the above theorem:

\begin{corollary}
	\label{Corollary approximat_Control}Let $n\geq 2$ be an integer. Assume that
	(\ref{AAA1}) and (\ref{AA4}) hold for some $k,l\geq 1$. The \emph{unique
		continuation} property for (\ref{system unique}) holds at time $T$ if, and
	only if $1\notin \sigma \left( \mathcal{K}_{k,l}\right) .$
\end{corollary}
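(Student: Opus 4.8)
The plan is to read off Corollary~\ref{Corollary approximat_Control} from Theorem~\ref{Theroem main approx}: under the weak observability hypotheses the third-kind Fredholm system (\ref{Fredholm}) becomes equivalent to the second-kind system (\ref{Fredholm 11}), so that the Fredholm alternative for compact operators applies, and the condition ``the only solution of (\ref{Fredholm}) is the null one'' translates exactly into ``$1\notin\sigma(\mathcal{K}_{k,l})$''.

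First I would note that, since $\eta _{1},\eta _{2}\in C^{1}(\overline{Q_{T}})$, the function $\phi$ of (\ref{phi}) is continuous on $\mathbb{R}_{+}$, being an integral of continuous functions over intervals whose endpoints depend continuously on $t$. Hence the diagonal matrix $A_{k,l}(x)=\mathrm{diag}\big(\phi(2k+2-x),\phi(2l+x)\big)$ has continuous entries on the compact interval $[0,1]$, and the hypotheses (\ref{AAA1}) and (\ref{AA4}) of the corollary state precisely that, for the chosen pair $(k,l)$, neither entry vanishes there. Therefore $\inf_{x\in[0,1]}|\det A_{k,l}(x)|>0$, so by Proposition~\ref{MO_sxs} the multiplication operator associated with $A_{k,l}$ is an isomorphism of $L^{2}(0,1)^{2}$ whose inverse is multiplication by $A_{k,l}(x)^{-1}=\mathrm{diag}\big(\phi(2k+2-x)^{-1},\phi(2l+x)^{-1}\big)$, a bounded function of $x$. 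Multiplying (\ref{Fredholm}) on the left by $A_{k,l}(x)^{-1}$ then shows that $(p_{0}^{-},q_{0}^{-})$ solves (\ref{Fredholm}) if and only if it solves (\ref{Fredholm 11}), i.e. $(I-\mathcal{K}_{k,l})(p_{0}^{-},q_{0}^{-})^{t}=0$.

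Next I would check that $\mathcal{K}_{k,l}$ is compact on $L^{2}(0,1)^{2}$. Its matrix kernel $A_{k,l}(x)^{-1}\mathbb{K}_{n,k,l}(s,x)$ is the product of the bounded multiplier $A_{k,l}(x)^{-1}$ with a matrix assembled from the functions $K_{k}^{11},K_{k}^{12},K_{l}^{21},K_{l}^{22}$ of (\ref{K21}), (\ref{K22}), (\ref{K11}) and (\ref{K12}); these are continuous, hence bounded, on $[0,1]^{2}$ because they are built from $\eta _{1},\eta _{2}$, so the kernel is bounded on $[0,1]^{2}$ and $\mathcal{K}_{k,l}$ is Hilbert--Schmidt, in particular compact. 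By the Fredholm alternative (equivalently, the Riesz--Schauder spectral theory of compact operators) the homogeneous equation $(I-\mathcal{K}_{k,l})(p_{0}^{-},q_{0}^{-})^{t}=0$ has only the trivial solution if and only if $1$ is not an eigenvalue of $\mathcal{K}_{k,l}$, and since the nonzero spectrum of a compact operator consists solely of eigenvalues this is the same as $1\notin\sigma(\mathcal{K}_{k,l})$. Combining the equivalence (\ref{Fredholm})$\Leftrightarrow$(\ref{Fredholm 11}) with this and with Theorem~\ref{Theroem main approx} (for which the chosen $(k,l)$, lying in the index ranges coming from the decomposition of $[2,T)$ into the intervals $[2l,2l+1)$ and $[2k+1,2k+2)$, is an admissible witness) yields that the unique continuation property for (\ref{system unique}) holds at time $T$ if and only if $1\notin\sigma(\mathcal{K}_{k,l})$. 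The argument is short and I do not foresee a genuine obstacle; the only point needing care is the bookkeeping — checking that the ranges of $x$ occurring in (\ref{AAA1}) and (\ref{AA4}) together cover $[0,1]$ for the relevant position of $T$, so that $A_{k,l}$ is invertible on exactly the interval where the reduction to (\ref{Fredholm 11}) requires it.
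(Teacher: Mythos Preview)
Your proposal is correct and follows essentially the same approach as the paper: reduce (\ref{Fredholm}) to the second-kind equation (\ref{Fredholm 11}) via the invertibility of $A_{k,l}$ granted by the hypotheses, and then invoke the Fredholm alternative for the compact operator $\mathcal{K}_{k,l}$. The paper's proof is only two sentences long and leaves implicit the continuity/compactness verifications and the link back to Theorem~\ref{Theroem main approx} that you have carefully spelled out; your caveat about the bookkeeping of the $x$-ranges in (\ref{AAA1})--(\ref{AA4}) is well taken, since the paper's statement is slightly loose on this point.
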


\begin{proof}
	By assumptions (\ref{AAA1})-(\ref{AA4}), the Equation (\ref{Fredholm}) is
	Fredholm integral equation of second kind. By the Fredholm alternative, it
	possesses a unique solution if, and only if $1\notin \sigma \left( \mathcal{K%
	}_{k,l}\right) ,$ for some $l,k\geq 1.$
\end{proof}

\begin{remark}
	Since the component of the kernel $A_{k,l}^{-1}(\cdot )\mathbb{K}%
	_{n,k,l}(\cdot ,\cdot ),$ $n\geq k,l\geq 1,$ are completely known, we can
	always prove that $1\notin \sigma \left( \mathcal{K}_{k,l}\right) ,$ by
	assuming that $\left\Vert \mathcal{K}_{k,l}\right\Vert _{\mathcal{L}\left(
		L^{2}(0,1;M_{2\times 2}(%
		\mathbb{R}
		))\right) }$ $<1.$ Notice that this is not necessarily a smallness
	assumption on the coupling coefficients since the kernel involves the matrix 
	$A_{k,l}^{-1}(\cdot )$.
\end{remark}

\begin{remark}
	It is not difficult to see that if $\eta _{1}$ and $\eta _{2}$ are time
	independent, the compact operator $\mathcal{K}_{k,l}=\mathcal{K},$ is
	symmetric. Therefore, its spectrum consists of real eigenvalues. However,
	giving a characterization of the spectrum needs more care.
\end{remark}

\begin{remark}
	In \cite{Bennour}, it has been shown that the \emph{unique continuation}
	property for $a(t,x)=a(x)$ and $b=0$ holds at time $T>0$ for System (\ref%
	{system unique}) if, and only if $T\geq 4,$ and 
	\begin{equation}
		\int_{0}^{1}a(s)\sin ^{2}(\pi ns)ds\neq 0,\text{ }\forall n\geq 1.
		\label{Moment}
	\end{equation}%
	It is natural to expect that condition (\ref{Moment}) is equivalent to the
	uniqueness of the solution to Equation (\ref{Fredholm}) for $\mathbb{K}%
	_{n,k,l}=\mathbb{K}$.
\end{remark}

\begin{remark}
	With a simple change of variable (see \cite[Remark 15]{Alabau 4}), we can
	deduce that all the results proved for cascade system with velocity coupling
	($a\neq 0$) hold true for zero order coupling (replace $a\varphi _{t}$ by $%
	a\varphi $ in System (\ref{Wave_Adjoint})) in the space 
	\begin{equation*}
		H_{0}^{1}(0,1)\times L^{2}(0,1)\times L^{2}(0,1)\times H^{-1}(0,1).
	\end{equation*}
\end{remark}

\subsection{Examples}

\label{Examples unique}

Here, we will provide some illustrations of Theorem \ref{Theroem main approx}%
. For the sake of simplicity, we will use both coupling functions $a$ and $%
b, $ also, we will choose the time of control to be $T=2n$ for some $n\geq
2. $

\subsubsection{The case $\protect\eta _{1}(t,x)=\protect\alpha (t-x),$ $%
	\protect\eta _{2}(t,x)=\protect\beta (t+x)$}

Set $\eta _{1}(t,x)=\alpha (t-x)$ and $\eta _{1}(t,x)=\beta (t+x)$ for some
smooth functions $\alpha $ and $\beta $ in $Q_{T}.$ After performing a
simple computations, the components $K_{n}^{i,j}\left( \cdot ,\cdot \right)
, $ $1\leq i,j\leq 2,$ $n\geq 1,$ given in (\ref{K21}), (\ref{K22}), (\ref%
{K11}) and (\ref{K12}) and the function $\phi $ defined in (\ref{phi}) take
the form%
\begin{equation*}
	2K_{k}^{12}(s,x)=-\alpha (2k+s)-\beta (2k+s),\text{ }s\in (0,1).
\end{equation*}%
\begin{equation*}
	2K_{l}^{21}(s,x)=-\alpha (2l-s)-\beta (2l-s),\text{\ }s\in (0,1).
\end{equation*}%
\begin{equation*}
	2K_{k}^{11}(s,x)=\left\{ 
	\begin{array}{lll}
		\alpha (2k-s)+\beta (2k-s), & \mathrm{if} & 0\leq s\leq x, \\ 
		\alpha (2k+2-s)+\beta (2k+2-s), & \mathrm{if} & x\leq s\leq 1,%
	\end{array}%
	\right.
\end{equation*}%
\begin{equation*}
	2K_{l}^{22}(s,x)=\left\{ 
	\begin{array}{lll}
		\alpha (2l+s)+\beta (2l+s), & \mathrm{if} & 0\leq s\leq x, \\ 
		\alpha (2l-2+s)+\beta (2l-2+s), & \mathrm{if} & x\leq s\leq 1.%
	\end{array}%
	\right.
\end{equation*}%
\begin{eqnarray*}
	\phi \left( t\right) &=&\int_{t-2}^{t-1}\alpha (t-2)d\tau
	+\int_{t-1}^{t}\beta (t)d\tau \\
	&=&\alpha (t-2)+\beta (t),\text{ }t\geq 2.
\end{eqnarray*}%
Next, we define the functions $S_{j}^{i},$ $i=1,2$ on $[0,1]$ for any $j\geq
1$ by 
\begin{equation*}
	S_{j}^{1}(s)=\left( 2\phi (2j+2-s)\right) ^{-1}\left( 
	\begin{array}{c}
		-2\phi ^{\prime }(2j+2-s)-\alpha (2j+2-s) \\ 
		+\beta (2j-s)-\beta (2j+2-s)+\alpha (2j-s)%
	\end{array}%
	\right) ,
\end{equation*}%
and%
\begin{equation*}
	S_{j}^{2}(s)=\left( 2\phi (2j+s)\right) ^{-1}\left( 
	\begin{array}{c}
		-\phi ^{\prime }(2j+s)+\alpha (2j+s)+\beta (2j+s) \\ 
		-\alpha (2j-2+s)-\beta (2j-2+s)%
	\end{array}%
	\right) .
\end{equation*}

We have the following unique continuation result:

\begin{proposition}
	Let $n\geq 2.$ Assume that $A_{k,l}^{-1}(\cdot )$ exists on $[0,1]$ for some 
	$1\leq k,l\leq n-1$. Then the \emph{unique continuation} property (\ref%
	{system unique}) holds true in time $T=2n$ if%
	\begin{equation*}
		\int_{0}^{1}S_{k}^{1}(s)ds\neq \int_{0}^{1}S_{l}^{2}(s)ds,
	\end{equation*}
\end{proposition}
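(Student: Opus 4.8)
The plan is to combine Theorem~\ref{Theroem main approx} with an explicit reduction of the Fredholm system~(\ref{Fredholm}), for the pair $k,l$ at hand, to a pair of \emph{decoupled} scalar first-order ODEs. By Theorem~\ref{Theroem main approx} it suffices to show that, for one admissible pair $k,l$ --- here the one for which $A_{k,l}^{-1}$ exists on $[0,1]$ --- every solution $\left(p_{0}^{-},q_{0}^{-}\right)$ of~(\ref{Fredholm}) which is, in addition, compatible with the boundary relations $\left(p_{0}^{-}+q_{0}^{-}\right)(0)=\left(p_{0}^{-}+q_{0}^{-}\right)(1)=0$ carried by $D(\mathcal{A}^{\ast})$ (see~(\ref{D(A)})) vanishes; the subsequent step $f=\eta_{1}p^{-}+\eta_{2}q^{-}$ together with~(\ref{system 1})--(\ref{system 2}) then propagates this to $p_{0}^{+}\equiv q_{0}^{+}\equiv0$, i.e.\ to the unique continuation property~(\ref{system unique}). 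First I would specialize the four kernels $K_{k}^{11},K_{k}^{12},K_{l}^{21},K_{l}^{22}$ of~(\ref{K21})--(\ref{K12}) and the function $\phi$ of~(\ref{phi}) to $\eta_{1}(t,x)=\alpha(t-x)$, $\eta_{2}(t,x)=\beta(t+x)$; the computations already displayed before the statement show that $K_{k}^{12}$ and $K_{l}^{21}$ become independent of $x$, that $K_{k}^{11}$ and $K_{l}^{22}$ depend on $x$ only through the sign of $x-s$, and that $\phi(t)=\alpha(t-2)+\beta(t)$. Since $A_{k,l}^{-1}$ exists --- that is, $\phi(2k+2-\cdot)$ and $\phi(2l+\cdot)$ are bounded away from $0$ on $[0,1]$ --- I may divide~(\ref{Fredholm}) by $A_{k,l}$, and the two scalar equations it carries become
\begin{equation*}
\phi(2k+2-x)\,p_{0}^{-}(x)=c_{1}+\tfrac{1}{2}\int_{0}^{x}(\alpha+\beta)(2k-s)\,p_{0}^{-}(s)\,ds+\tfrac{1}{2}\int_{x}^{1}(\alpha+\beta)(2k+2-s)\,p_{0}^{-}(s)\,ds,
\end{equation*}
\begin{equation*}
\phi(2l+x)\,q_{0}^{-}(x)=c_{2}+\tfrac{1}{2}\int_{0}^{x}(\alpha+\beta)(2l+s)\,q_{0}^{-}(s)\,ds+\tfrac{1}{2}\int_{x}^{1}(\alpha+\beta)(2l-2+s)\,q_{0}^{-}(s)\,ds,
\end{equation*}
where $c_{1}=-\tfrac{1}{2}\int_{0}^{1}(\alpha+\beta)(2k+s)\,q_{0}^{-}(s)\,ds$ and $c_{2}=-\tfrac{1}{2}\int_{0}^{1}(\alpha+\beta)(2l-s)\,p_{0}^{-}(s)\,ds$ are constants in $x$; in particular each equation involves only one of the two unknown functions.

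The decisive step is then to differentiate both equations in $x$. Their right-hand sides are absolutely continuous in $x$ and $\phi\in C^{1}$ is nonvanishing on the arguments at hand, so $p_{0}^{-}$ and $q_{0}^{-}$ are absolutely continuous; upon differentiating, the constants $c_{1},c_{2}$ disappear and the endpoint contributions of the two Volterra integrals combine, after the rewriting $\phi(t)=\alpha(t-2)+\beta(t)$, into the homogeneous linear equations
\begin{equation*}
(p_{0}^{-})'(x)=S_{k}^{1}(x)\,p_{0}^{-}(x),\qquad (q_{0}^{-})'(x)=S_{l}^{2}(x)\,q_{0}^{-}(x),\qquad x\in(0,1),
\end{equation*}
with $S_{k}^{1},S_{l}^{2}$ precisely the functions introduced just before the statement. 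Therefore $p_{0}^{-}(x)=p_{0}^{-}(0)\,e^{\int_{0}^{x}S_{k}^{1}}$ and $q_{0}^{-}(x)=q_{0}^{-}(0)\,e^{\int_{0}^{x}S_{l}^{2}}$. Finally I would use the boundary relations $\left(p_{0}^{-}+q_{0}^{-}\right)(0)=\left(p_{0}^{-}+q_{0}^{-}\right)(1)=0$: evaluating these formulas at $x=0$ and $x=1$ gives the linear system
\begin{equation*}
p_{0}^{-}(0)+q_{0}^{-}(0)=0,\qquad e^{\int_{0}^{1}S_{k}^{1}}\,p_{0}^{-}(0)+e^{\int_{0}^{1}S_{l}^{2}}\,q_{0}^{-}(0)=0,
\end{equation*}
whose determinant equals $e^{\int_{0}^{1}S_{l}^{2}}-e^{\int_{0}^{1}S_{k}^{1}}$. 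Under the hypothesis $\int_{0}^{1}S_{k}^{1}\neq\int_{0}^{1}S_{l}^{2}$ this determinant is nonzero, so $p_{0}^{-}(0)=q_{0}^{-}(0)=0$ and hence $p_{0}^{-}\equiv q_{0}^{-}\equiv0$; then $f\equiv0$ by~(\ref{p^-})--(\ref{q^-}), and~(\ref{system 1}) and~(\ref{system 2}) force $q_{0}^{+}\equiv0$ and $p_{0}^{+}\equiv0$ on $[0,1]$, which is exactly~(\ref{system unique}).

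The step I expect to be the main obstacle is the explicit specialization of the four kernels and, above all, the verification that after differentiation the coefficients of the two ODEs coincide \emph{exactly} with $S_{k}^{1}$ and $S_{l}^{2}$; this forces careful bookkeeping of the signs produced by $\frac{d}{dx}\phi(2k+2-x)=-\phi'(2k+2-x)$ and by the two endpoint terms of each Volterra integral, together with the rewriting $\phi(t)=\alpha(t-2)+\beta(t)$. A smaller but genuine point is to justify that $p_{0}^{-},q_{0}^{-}$ may be differentiated, which is immediate from the displayed integral equations since their right-hand sides are absolutely continuous while $\phi$ is $C^{1}$ and bounded away from $0$. Everything else --- the decoupling, the elementary integration of the two ODEs, and the use of the two boundary conditions --- is routine once that computation is carried out.
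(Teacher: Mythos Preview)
Your proposal is correct and follows essentially the same route as the paper: specialize the kernels, differentiate the two integral equations to obtain decoupled first-order linear ODEs for $p_{0}^{-}$ and $q_{0}^{-}$, integrate, and then use the boundary relations $(p_{0}^{-}+q_{0}^{-})(0)=(p_{0}^{-}+q_{0}^{-})(1)=0$ to reduce to a $2\times 2$ linear system with determinant $e^{\int_{0}^{1}S_{l}^{2}}-e^{\int_{0}^{1}S_{k}^{1}}$. Your write-up is in fact slightly more careful than the paper's in justifying differentiability and in spelling out the final propagation to $p_{0}^{+},q_{0}^{+}$ via~(\ref{system 1})--(\ref{system 2}).
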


\begin{proof}
	For $T=2n$, the system of integral equations (\ref{Fredholm}) writes:%
	\begin{eqnarray}
		&&2\phi (2k+2-x)p_{0}^{-}(x)=\int_{0}^{x}\left( \alpha (2k-s)+\beta
		(2k-s)\right) p_{0}^{-}(s)ds  \label{EQ11} \\
		&&+\int_{x}^{1}\left( \alpha (2k+2-s)+\beta (2k+2-s)\right) p_{0}^{-}(s)ds 
		\notag \\
		&&-\int_{0}^{1}\left( \alpha (2k+s)+\beta (2k+s)\right) q_{0}^{-}(s)ds, 
		\notag
	\end{eqnarray}%
	\begin{eqnarray}
		&&2\phi (2l+x)q_{0}^{-}(x)=\int_{0}^{x}\left( \alpha (2l+s)+\beta
		(2l+s)\right) q_{0}^{-}(s)ds  \label{EQ12} \\
		&&+\int_{x}^{1}\left( \alpha (2l-2+s)+\beta (2l-2+s)\right) q_{0}^{-}(s)ds 
		\notag \\
		&&-\int_{0}^{1}\left( \alpha (2l-s)+\beta (2l-s)\right) p_{0}^{-}(s)ds. 
		\notag
	\end{eqnarray}%
	Taking the derivative of (\ref{EQ11}) and (\ref{EQ12}) yields%
	\begin{equation}
		2\phi (2k+2-x)\left( p_{0}^{-}(x)\right) ^{\prime }=\left( 
		\begin{array}{c}
			-2\phi ^{\prime }(k+2-x)-\alpha (2k+2-x) \\ 
			+\beta (2k-x)-\beta (2k+2-x)+\alpha (2k-x)%
		\end{array}%
		\right) p_{0}^{-}(x),  \label{EQ3}
	\end{equation}%
	\begin{equation}
		2\phi (2l+x)\left( q_{0}^{-}(x)\right) ^{\prime }=\left( 
		\begin{array}{c}
			-\phi ^{\prime }(2l+x)+\alpha (2l+x)+\beta (2l+x) \\ 
			-\alpha (2l-2+x)-\beta (2l-2+x)%
		\end{array}%
		\right) q_{0}^{-}(x).  \label{EQ4}
	\end{equation}%
	Now, we devide by $2\phi (2k+2-x)$ and $2\phi (2l+x)$ the equations (\ref%
	{EQ3}) and (\ref{EQ4}) and integrating the later two systems to get%
	\begin{equation*}
		p_{0}^{-}(x)=\exp \left( \int_{0}^{x}S_{l}^{1}(s)ds\right) p_{0}^{-}(0),%
		\text{ \ }q_{0}^{-}(x)=\exp \left( \int_{0}^{x}S_{k}^{2}(s)ds\right)
		q_{0}^{-}(0),\ 
	\end{equation*}%
	where%
	\begin{equation*}
		S_{k}^{1}(s)=\left( 2\phi (2k+2-s)\right) ^{-1}\left( 
		\begin{array}{c}
			-2\phi ^{\prime }(k+2-s)-\alpha (2k+2-s) \\ 
			+\beta (2k-s)-\beta (2k+2-s)-\alpha (2k-s)%
		\end{array}%
		\right)
	\end{equation*}%
	and%
	\begin{equation*}
		S_{l}^{2}(s)=\left( 2\phi (2l+s)\right) ^{-1}\left( 
		\begin{array}{c}
			-2\phi ^{\prime }(2l+s)+\alpha (2l+s)+\beta (2l+s) \\ 
			-\alpha (2l-2+s)-\beta (2l-2+s)%
		\end{array}%
		\right) .
	\end{equation*}%
	Since $p_{0}^{-}$ and $q_{0}^{-}$ lie in $D(\mathcal{A}^{\ast }),$ they are
	linked by the boundary conditions $(p_{0}^{-}+q_{0}^{-})_{|x=0,1}$, we get
	the system%
	\begin{equation*}
		\left\{ 
		\begin{array}{l}
			p_{0}^{-}(0)+q_{0}^{-}(0)=0, \\ 
			p_{0}^{-}(x)=\exp \left( \int_{0}^{1}S_{k}^{1}(s)ds\right) p_{0}^{-}(0)+\exp
			\left( \int_{0}^{1}S_{l}^{2}(s)ds\right) q_{0}^{-}(0)=0,%
		\end{array}%
		\right.
	\end{equation*}%
	which has a unique solution if, and only if 
	\begin{equation*}
		\int_{0}^{1}S_{k}^{1}(s)ds\neq \int_{0}^{1}S_{l}^{2}(s)ds.
	\end{equation*}
\end{proof}

\subsubsection{The case $\protect\eta _{1}(t,x)=\protect\alpha (t+x),$ $%
	\protect\eta _{2}(t,x)=\protect\beta (t-x)$}

This time set $\eta _{1}(t,x)=\alpha (t+x)$ and $\eta _{2}(t,x)=\beta (t-x)$
for some smooth functions $\alpha $ and $\beta $ in $Q_{T}.$ The components $%
K_{n}^{i,j}\left( \cdot ,\cdot \right) ,$ $1\leq i,j\leq 2,$ $n\geq 1,$
given in (\ref{K21}), (\ref{K22}), (\ref{K11}) and (\ref{K12}) and the
function $\phi $ defined in (\ref{phi}) take the form%
\begin{eqnarray*}
	2K_{k}^{11}(x) &=&-2K_{k}^{12}(x)=\alpha (2k+2-x)+\beta (2k-x),\text{ }x\in
	\lbrack 0,1], \\
	2K_{l}^{22}(x) &=&-2K_{l}^{21}(x)=\alpha (2l+x)+\beta (2l-2+x),\text{ }x\in
	\lbrack 0,1],
\end{eqnarray*}%
\begin{eqnarray*}
	\phi \left( t\right) &=&\int_{t-2}^{t-1}a(\tau ,\tau -\left( t-2\right)
	)d\tau +\int_{t-1}^{t}\beta (\tau ,t-\tau )d\tau \\
	&=&\int_{t-2}^{t-1}\alpha (2\tau -\left( t-2\right) )d\tau
	+\int_{t-1}^{t}\beta (2\tau -t)d\tau \\
	&=&\frac{1}{2}\int_{t-2}^{t}\left( \alpha (s)+\beta (s)\right) ds,\text{ }%
	t\geq 2.
\end{eqnarray*}%
At time $T=2n,$ the integral equation (\ref{Fredholm 11}) turns to 
\begin{equation*}
	\left( \mathcal{K}_{k,l}\Phi \right) \left( x\right) \mathcal{=}%
	A_{k,l}^{-1}(x)\mathbb{K}_{n,k,l}(x)\int_{0}^{1}\Phi \left( s\right)
	ds,~\Phi \in L^{2}\left( 0,1\right) ^{2},\text{ }k,l\geq 1,\text{ }x\in %
	\left[ 0,1\right] ,
\end{equation*}%
where%
\begin{equation*}
	A_{k,l}^{-1}(x)\mathbb{K}_{n,k,l}(x)=\left( 
	\begin{array}{cc}
		\frac{K_{k}^{11}(x)}{\phi (2k+2-x)} & -\frac{K_{k}^{11}(x)}{\phi (2k+2-x)}
		\\ 
		\begin{array}{c}
			\\ 
			-\frac{K_{l}^{22}(x)}{\phi (2l+x)}%
		\end{array}
		& 
		\begin{array}{c}
			\\ 
			\frac{K_{l}^{22}(x)}{\phi (2l+x)}%
		\end{array}%
	\end{array}%
	\right) ,\text{ }1\leq k,l\leq n-1,\text{ }x\in \lbrack 0,1].
\end{equation*}

We have the following unique continuation result:

\begin{proposition}
	\label{Propositopn Example 2}Let $n\geq 2.$ Assume that $A_{k,l}^{-1}(\cdot
	) $ exists on $[0,1]$ for some $1\leq k,l\leq n-1$. Then the \emph{unique
		continuation} property (\ref{system unique}) holds true in time $T=2n$ if,
	and only if 
	\begin{equation*}
		1\notin \sigma \left( \int_{0}^{1}A_{k,l}^{-1}(x)\mathbb{K}%
		_{n,k,l}(x)dx\right) .
	\end{equation*}
\end{proposition}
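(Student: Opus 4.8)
The plan is to reduce, via Corollary~\ref{Corollary approximat_Control}, the unique continuation property to the invertibility of a second-kind Fredholm operator $\mathcal{K}_{k,l}$, and then to exploit that in this example the kernel of $\mathcal{K}_{k,l}$ is \emph{degenerate}, so that its nonzero spectrum is that of an explicit $2\times2$ matrix. First I would observe that the hypothesis ``$A_{k,l}^{-1}(\cdot)$ exists on $[0,1]$'' means exactly that $\phi(2k+2-x)\neq0$ and $\phi(2l+x)\neq0$ for every $x\in[0,1]$. Since $M$ is not diagonalizable with single eigenvalue $\mu=0$, this is precisely the non-vanishing of $\phi$ required in the weak-observability conditions (\ref{AAA1})--(\ref{AA4}) for the pair $(k,l)$ at the time $T=2n$; hence Equation~(\ref{Fredholm}) takes its second-kind form (\ref{Fredholm 11}) and Corollary~\ref{Corollary approximat_Control} applies, giving that the unique continuation property for (\ref{system unique}) at time $T=2n$ holds if and only if $1\notin\sigma(\mathcal{K}_{k,l})$, where
\[(\mathcal{K}_{k,l}\Phi)(x)=A_{k,l}^{-1}(x)\,\mathbb{K}_{n,k,l}(x)\int_{0}^{1}\Phi(s)\,ds,\qquad \Phi\in L^{2}(0,1)^{2}.\]

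The decisive point is that, for $\eta_{1}(t,x)=\alpha(t+x)$ and $\eta_{2}(t,x)=\beta(t-x)$, the kernels $K_{k}^{ij}$ obtained from (\ref{K21})--(\ref{K12}) depend on $x$ alone, so $\mathbb{K}_{n,k,l}(s,x)$ does not depend on the integration variable $s$. Writing $G(x):=A_{k,l}^{-1}(x)\mathbb{K}_{n,k,l}(x)$, a bounded $2\times2$ matrix-valued function on $[0,1]$, one has $(\mathcal{K}_{k,l}\Phi)(x)=G(x)\,c_{\Phi}$ with $c_{\Phi}:=\int_{0}^{1}\Phi(s)\,ds\in\mathbb{R}^{2}$; in particular the range of $\mathcal{K}_{k,l}$ is at most two-dimensional, so $\mathcal{K}_{k,l}$ has finite rank and its nonzero spectrum consists of eigenvalues. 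To identify it I would argue directly: if $\Phi\neq0$ and $\mathcal{K}_{k,l}\Phi=\Phi$, then $\Phi(x)=G(x)c_{\Phi}$ and $c_{\Phi}\neq0$ (otherwise $\Phi\equiv0$), and integrating over $x\in[0,1]$ gives $\bigl(\int_{0}^{1}G(x)\,dx\bigr)c_{\Phi}=c_{\Phi}$, so $1$ is an eigenvalue of the matrix $\int_{0}^{1}A_{k,l}^{-1}(x)\mathbb{K}_{n,k,l}(x)\,dx$. Conversely, if $c\neq0$ satisfies $\bigl(\int_{0}^{1}G(x)\,dx\bigr)c=c$, then $\Phi:=G(\cdot)c$ satisfies $\int_{0}^{1}\Phi(s)\,ds=c\neq0$, hence $\Phi\neq0$, while $\mathcal{K}_{k,l}\Phi=G(\cdot)\int_{0}^{1}\Phi=G(\cdot)c=\Phi$. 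Thus $1\in\sigma(\mathcal{K}_{k,l})$ if and only if $1\in\sigma\bigl(\int_{0}^{1}A_{k,l}^{-1}(x)\mathbb{K}_{n,k,l}(x)\,dx\bigr)$, which together with the previous paragraph proves the proposition.

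No single step is hard here; the argument is the degenerate-kernel principle applied to an operator whose kernel has already been computed explicitly. The points I would be most careful about are the bookkeeping showing that at $T=2n$ the piecewise-in-$x$ definition of $\mathbb{K}_{n,k,l}$ collapses to a single branch valid on all of $[0,1]$ with $1\le k,l\le n-1$, and the verification that the standing non-vanishing hypothesis on $\phi$ is exactly what licenses both the passage from (\ref{Fredholm}) to (\ref{Fredholm 11}) and the invocation of Corollary~\ref{Corollary approximat_Control}.
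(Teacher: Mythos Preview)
Your proof is correct and follows essentially the same approach as the paper: reduce to Corollary~\ref{Corollary approximat_Control}, then use the degenerate-kernel principle to pass from $1\notin\sigma(\mathcal{K}_{k,l})$ to $1\notin\sigma\bigl(\int_{0}^{1}A_{k,l}^{-1}(x)\mathbb{K}_{n,k,l}(x)\,dx\bigr)$. The paper factors the second step out as a separate Lemma~\ref{Lemma operator J}, whose proof in Appendix~\ref{Appendix B} is the same two-line argument you give inline; if anything, you are slightly more careful in checking that the candidate eigenvectors are nonzero.
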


The proof of the above proposition is an immediate consequence of combining
Corollary \ref{Corollary approximat_Control} and the following lemma:

\begin{lemma}
	\label{Lemma operator J}Let $J:L^{2}(0,1)^{n}\rightarrow L^{2}(0,1)^{n}$ be
	the compact operator%
	\begin{equation*}
		\left( Jf\right) (x)=M(x)\int_{0}^{1}f(s)ds,\text{ }x\in (0,1),
	\end{equation*}
	
	where $M\in C\left( [0,1],M_{n\times n}\left( 
	\mathbb{R}
	\right) \right) .$ Then $1\in \sigma (J)$ if, and only if $1\in \sigma
	\left( \int_{0}^{1}M\right) .$
\end{lemma}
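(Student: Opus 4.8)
The plan is to exploit the finite-rank (hence compact) structure of $J$ so that $1\in\sigma(J)$ reduces to $1$ being an eigenvalue of $J$, and then to set up an explicit bijection between $\ker(I-J)$ and $\ker\!\big(I-\int_0^1 M\big)$ via the correspondence $f\longleftrightarrow c:=\int_0^1 f$, $f(x)=M(x)c$.

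First I would note that the range of $J$ is contained in the finite-dimensional subspace of $L^2(0,1)^n$ spanned by the columns $x\mapsto M(x)e_j$ ($1\le j\le n$); thus $J$ is finite rank, in particular compact, and since $1\neq 0$ the Riesz--Schauder theory gives $1\in\sigma(J)$ if and only if $1$ is an eigenvalue of $J$, i.e.\ there is $f\in L^2(0,1)^n$, $f\neq 0$, with $Jf=f$. Likewise, for the $n\times n$ matrix $\int_0^1 M$ one has $1\in\sigma\!\big(\int_0^1 M\big)$ if and only if $\ker\!\big(I-\int_0^1 M\big)\neq\{0\}$. So the claim amounts to showing that $\ker(I-J)\neq\{0\}$ if and only if $\ker\!\big(I-\int_0^1 M\big)\neq\{0\}$.

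For the forward direction, suppose $Jf=f$ with $f\neq 0$, and set $c:=\int_0^1 f(s)\,ds\in\mathbb{R}^n$. The identity $Jf=f$ reads $f(x)=M(x)c$ for a.e.\ $x\in(0,1)$; integrating over $(0,1)$ gives $c=\big(\int_0^1 M\big)c$. Moreover $c\neq 0$: if $c=0$ then $f(x)=M(x)c=0$ a.e., contradicting $f\neq 0$. Hence $c\in\ker\!\big(I-\int_0^1 M\big)\setminus\{0\}$. Conversely, given $c\in\mathbb{R}^n\setminus\{0\}$ with $\big(\int_0^1 M\big)c=c$, define $f(x):=M(x)c$; then $\int_0^1 f(s)\,ds=\big(\int_0^1 M\big)c=c\neq 0$, so $f$ is a nonzero element of $L^2(0,1)^n$, and $(Jf)(x)=M(x)\int_0^1 f(s)\,ds=M(x)c=f(x)$, i.e.\ $f\in\ker(I-J)\setminus\{0\}$. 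Combining the two directions yields the equivalence $1\in\sigma(J)\iff 1\in\sigma\!\big(\int_0^1 M\big)$. There is no genuine obstacle in this argument; the only point demanding a little care is verifying that the eigenvector produced in each direction is actually nonzero, which is exactly why one keeps track of the vector $c=\int_0^1 f$ throughout.
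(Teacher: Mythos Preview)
Your proof is correct and follows essentially the same route as the paper's: both directions go through the correspondence $f\leftrightarrow c=\int_0^1 f$, $f(x)=M(x)c$, and integration over $(0,1)$. Your version is in fact slightly more careful than the paper's, since you explicitly justify (via finite rank and Riesz--Schauder) why $1\in\sigma(J)$ forces $1$ to be an eigenvalue, and you verify in each direction that the candidate eigenvector is nonzero --- points the paper leaves implicit.
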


\begin{proof}
	See Appendix \ref{Appendix B}.
\end{proof}

\begin{remark}
	In the above two examples, we have used both $a$ and $b$ to simplify the
	computations. Dealing with the integral equation (\ref{Fredholm}) with only
	one of them seems to be not accessible unless they are in a very particular
	class of simple functions (for instance $a(t,x)=\kappa _{2}t+\kappa
	_{1}x+\kappa _{0},$ $\kappa _{i}\in 
	\mathbb{R}
	,$ $i=1,2,3$).
\end{remark}

\section{Appendix\label{Appendix}}

\subsection{Proof of Proposition \protect\ref{MO_sxn} \label{Appendix A}}

We start by proving that $1\Rightarrow 2.$

Suppose there exists $x_{0}\in \left[ 0,1\right] $ such for any $s\times s$
matrix $M_{\mathrm{ext}}$, extracted from $M,$ we have $\det M_{\mathrm{ext}%
}\left( x_{0}\right) =0.$ Denote by $M_{1},M_{2},...,M_{n}$ the $s-$
dimensional row vector functions of the matrix $M$ so that:%
\begin{equation*}
	M=\left[ 
	\begin{array}{c}
		M_{1} \\ 
		M_{2} \\ 
		\vdots \\ 
		M_{n}%
	\end{array}%
	\right] .
\end{equation*}%
Then from our assumption, the vector space spanned by the $M_{i}\left(
x_{0}\right) $ is at most of dimension $s-1.$ Thus there exists $V\in 
\mathbb{R}
^{s}$ such that 
\begin{equation*}
	\left\vert V\right\vert _{\mathbb{R}^{s}}=1~\mathrm{and}~M_{i}\left(
	x_{0}\right) \cdot V=0,~1\leq i\leq n.
\end{equation*}%
Let $u\in C_{0}^{\infty }\left( \mathbb{R}\right) $ with $\int_{\mathbb{R}%
}u^{2}\left( x\right) dx=1$ and (for example) \textrm{supp}$\left( u\right) =%
\left[ -1,1\right] .$ We are going to prove that the sequence:%
\begin{equation*}
	h_{j}\left( x\right) =\sqrt{j}u\left( j\left( x-x_{0}\right) \right)
	V,~j\geq j_{0},~x\in \mathbb{R},
\end{equation*}%
is a singular sequence for the multiplication operator $\mathbb{M}_{\mathrm{%
		ext}}$ on $L^{2}\left( 0,1\right) ^{s}$ whose matrix is some $s\times s$
matrix $M_{\mathrm{ext}}$, extracted from $M$. If $x_{0}\in \left(
0,1\right) $ (we leave to the reader to check that the proof works with $%
x_{0}=0$ by choosing $h_{j}\left( x\right) =\sqrt{j}u\left( jx-1\right) V$
and with $x_{0}=1$ by choosing $h_{j}\left( x\right) =\sqrt{j}u\left(
j\left( x-1\right) +1\right) V$), we see from the definition of $u$ that for 
$j_{0}$ sufficiently large%
\begin{equation*}
	\text{\textrm{supp}}\left( h_{j}\right) =\left[ x_{0}-\frac{1}{j},x_{0}+%
	\frac{1}{j}\right] \subset \left[ 0,1\right] ,~~j\geq j_{0}.
\end{equation*}%
Moreover, for all $~j\geq j_{0}.$%
\begin{eqnarray*}
	\int_{0}^{1}h_{j}^{2}\left( x\right) dx &=&j\int_{0}^{1}u^{2}\left( j\left(
	x-x_{0}\right) \right) \left\vert V\right\vert _{\mathbb{R}^{s}}^{2}dx \\
	&=&j\int_{x_{0}-\frac{1}{j}}^{x_{0}+\frac{1}{j}}u^{2}\left( j\left(
	x-x_{0}\right) \right) dx \\
	&=&\int_{-1}^{1}u^{2}\left( \xi \right) d\xi =1.
\end{eqnarray*}%
Since \textrm{supp}$\left( h_{j}\right) \rightarrow \left\{ x_{0}\right\} $
as $j\rightarrow \infty ,$ $\left( h_{j}\right) $ has no convergent
subsequence in $L^{2}\left( 0,1\right) ^{n}.$ On the other hand:%
\begin{eqnarray*}
	\left\Vert \mathbb{M}_{\mathrm{ext}}h_{j}\right\Vert _{L^{2}\left(
		0,1\right) ^{s}}^{2} &=&\int_{0}^{1}\left\vert M_{\mathrm{ext}}\left(
	x\right) h_{j}\left( x\right) \right\vert ^{2}dx \\
	&=&j\int_{x_{0}-\frac{1}{j}}^{x_{0}+\frac{1}{j}}u^{2}\left( j\left(
	x-x_{0}\right) \right) \left\vert M_{\mathrm{ext}}\left( x\right)
	V\right\vert ^{2}dx \\
	&=&\int_{-1}^{1}u^{2}\left( \xi \right) \left\vert M_{\mathrm{ext}}\left(
	x_{0}+\frac{1}{j}\xi \right) V\right\vert ^{2}dx.
\end{eqnarray*}%
Thus, from Lebesgue's dominated convergence theorem, we get:%
\begin{equation*}
	\lim_{j\rightarrow \infty }\left\Vert \mathbb{M}_{\mathrm{ext}%
	}h_{j}\right\Vert _{L^{2}\left( 0,1\right) ^{s}}^{2}=0.
\end{equation*}%
Since, in particular, the choice of $V$ \ and the continuity of $M_{\mathrm{%
		ext}}$ give$:$ 
\begin{equation*}
	\lim_{j\rightarrow \infty }M_{\mathrm{ext}}\left( x_{0}+\frac{1}{j}\xi
	\right) V=M_{\mathrm{ext}}\left( x_{0}\right) V=0.
\end{equation*}%
If $n=ds+q$ with $d\geq 1$ and $0\leq q\leq s-1,$ we form the $s\times s$
extracted matrices:%
\begin{equation*}
	M_{\mathrm{ext}}^{1}=\left[ 
	\begin{array}{c}
		M_{1} \\ 
		\vdots \\ 
		M_{s}%
	\end{array}%
	\right] ,\text{ }...,\text{ }M_{\mathrm{ext}}^{d}\left[ 
	\begin{array}{c}
		M_{\left( d-1\right) s+1} \\ 
		\vdots \\ 
		M_{ds}%
	\end{array}%
	\right] ,~M_{\mathrm{ext}}^{d+1}=\left[ 
	\begin{array}{c}
		M_{ds+1} \\ 
		\vdots \\ 
		M_{n} \\ 
		M_{1} \\ 
		\vdots \\ 
		M_{s-q}%
	\end{array}%
	\right] .
\end{equation*}%
We then have:%
\begin{eqnarray*}
	\left\Vert \mathbb{M}h_{j}\right\Vert _{L^{2}\left( 0,1\right) ^{n}}^{2}
	&=&\sum_{k=1}^{n}\left\Vert M_{k}\cdot h_{j}\right\Vert _{L^{2}\left(
		0,1\right) }^{2} \\
	&\leq &\sum_{k=1}^{d+1}\left\Vert \mathbb{M}_{\mathrm{ext}%
	}^{k}h_{j}\right\Vert _{L^{2}\left( 0,1\right) ^{s}}^{2}.
\end{eqnarray*}%
It readily follows that: 
\begin{equation*}
	\lim_{j\rightarrow \infty }\left\Vert \mathbb{M}h_{j}\right\Vert
	_{L^{2}\left( 0,1\right) ^{n}}^{2}=0.
\end{equation*}%
This proves that $1\Rightarrow 2.$

To prove that $2\Rightarrow 1,$ we assume that for all $x\in \left[ 0,1%
\right] ,$ there exists a $s\times s$ matrix $M_{\mathrm{ext}}$, extracted
from $M,~$ such that 
\begin{equation*}
	\det M_{\mathrm{ext}}\left( x\right) \neq 0.
\end{equation*}%
Each one of the functions $\left\vert \det M_{\mathrm{ext}}\left( x\right)
\right\vert $ is uniformly continuous on $\left[ 0,1\right] :$%
\begin{equation*}
	\forall \varepsilon >0,\exists \eta _{M_{\mathrm{ext}}}>0,~\left\vert
	x-y\right\vert <\eta _{M_{\mathrm{ext}}}\Rightarrow \left\vert \left\vert
	\det M_{\mathrm{ext}}\left( x\right) \right\vert -\left\vert \det M_{\mathrm{%
			ext}}\left( y\right) \right\vert \right\vert <\varepsilon ,~\forall \left(
	x,y\right) \in \left[ 0,1\right] ^{2}.
\end{equation*}%
In the sequel, we set $\eta =\min \left\{ \eta _{M_{\mathrm{ext}}},~M_{%
	\mathrm{ext}}~\mathrm{extracted~}s\times s\mathrm{~matrix}\right\} .$ Let $%
0\leq j\leq m-1$ and $\xi _{j}\in \left[ \frac{j}{m},\frac{j+1}{m}\right] .$
There exists a $s\times s$ matrix $M_{\mathrm{ext}}^{j}$%
\begin{equation*}
	\left\vert \det M_{\mathrm{ext}}^{j}\left( \xi _{j}\right) \right\vert
	:=\delta _{j}>0.
\end{equation*}%
Choosing $m$ such that $\frac{1}{m}<\eta ,$ we get with $\varepsilon <%
\underset{0\leq j\leq m-1}{\min }\delta _{j}=\delta $ 
\begin{equation*}
	\left\vert \det M_{\mathrm{ext}}^{j}\left( x\right) \right\vert >\delta
	-\varepsilon ,~\forall x\in \left[ \frac{j}{m},\frac{j+1}{m}\right] .
\end{equation*}%
From Proposition \ref{MO_sxs}, this ensures that for each $0\leq j\leq m-1,$ 
$\mathbb{M}_{\mathrm{ext}}^{j}$ is invertible on $L^{2}\left( \frac{j}{m},%
\frac{j+1}{m}\right) ^{s}.$ Now, we can write for any $h\in L^{2}\left(
0,1\right) ^{s}:$%
\begin{eqnarray*}
	\left\Vert \mathbb{M}h\right\Vert _{L^{2}\left( 0,1\right) ^{n}}^{2}
	&=&\sum_{k=1}^{n}\left\Vert M_{k}\cdot h\right\Vert _{L^{2}\left( 0,1\right)
	}^{2} \\
	&=&\sum_{j=0}^{m-1}\sum\limits_{k=1}^{n}\int_{\frac{j}{m}}^{\frac{j+1}{m}%
	}\left\vert M_{k}\left( x\right) \cdot h(x)\right\vert ^{2}dx \\
	&\geq &\sum_{j=0}^{m-1}\int_{\frac{j}{m}}^{\frac{j+1}{m}}\left\vert M_{%
		\mathrm{ext}}^{j}\left( x\right) h(x)\right\vert ^{2}dx \\
	&\geq &C\sum_{j=0}^{m-1}\int_{\frac{j}{m}}^{\frac{j+1}{m}}\left\vert
	h(x)\right\vert ^{2}dx \\
	&\geq &C\int_{0}^{1}\left\vert h(x)\right\vert ^{2}dx.
\end{eqnarray*}%
This ends the proof.

\subsection{Proof of Lemma \protect\ref{Lemma operator J} \label{Appendix B}}

Proof of $\Rightarrow $: If $1\in \sigma (J)$, there will exist a non-zero $%
f\in L^{2}(0,1)^{n}$ such that%
\begin{equation}
	f(x)=M(x)\int_{0}^{1}f(s)ds,\text{ }x\in (0,1).  \label{J1}
\end{equation}%
Integrating (\ref{J1}) over $(0,1)$ yields%
\begin{equation*}
	\int_{0}^{1}f(s)ds=\int_{0}^{1}M(s)ds\int_{0}^{1}f(s)ds.
\end{equation*}%
This shows that $\int_{0}^{1}f$ is an eigenvector of the matrix $%
\int_{0}^{1}M$ associated with the eigenvalue $1.$

Proof of $\Leftarrow $: If $1\in \sigma \left( \int_{0}^{1}M\right) ,$ then
there exists an eigenvector of $\int_{0}^{1}M$ denoted by $V\in 
\mathbb{R}
^{n}$ such that%
\begin{equation*}
	V=\left( \int_{0}^{1}M(x)dx\right) V.
\end{equation*}%
applying the matrix $M(\cdot )$ yields%
\begin{equation*}
	M(x)V=M(x)\left( \int_{0}^{1}M(x)Vdx\right) ,\text{ }x\in \lbrack 0,1].
\end{equation*}%
This shows that the vector $M(\cdot )V$ is an eigenvector of $J$ associated
with the eigenvalue $1$.

\bigskip


\begin{thebibliography}{99}
	\bibitem{Alabau 1} F. Alabau-Boussouira. A two-level energy method for
	indirect boundary observability and controllability of weakly coupled
	hyperbolic systems. \textit{SIAM J. Control Optim, 42 (2003), 871-906}.
	
	\bibitem{Alabau 2} F. Alabau-Boussouira and M. L\'{e}autaud. Indirect
	controllability of locally coupled systems under geometric conditions. 
	\textit{C. R. Acad. Sci. Paris, Ser. I , 349 (2011), 395-400}.
	
	\bibitem{Alabau 3} F. Alabau-Boussouira and M. L\'{e}autaud. Indirect
	controllability of locally coupled wave-type systems and applications. 
	\textit{J. Math. Pures Appl., 99 (2013), 544-576}.
	
	\bibitem{Alabau 4} F. Alabau-Boussouira. On the influence of the coupling on
	the dynamics of single-observed cascade systems of PDE'S. \textit{J. Math.
		Control and Related Fields 5 (2015) 1-30}.
	
	\bibitem{Alabau 5} F. Alabau-Boussouira, J.-M. Coron, and G. Olive. Internal
	controllability of first-order quasi-linear hyperbolic systems with a
	reduced number of controls. SIAM J. Control Optim., 55(1), (2017), 300-323.
	
	\bibitem{Ammar khodja-Bader} F. Ammar Khodja, A. Bader. Stabilizability of
	systems of one-dimensional wave equations by one internal or boundary
	control force. \textit{SIAM J. Control Optim. 39 (2001) 1833-1851}.
	
	\bibitem{Ammar Khodja 1} F. Ammar-Khodja, A. Benabdallah, M. Gonz\`{a}%
	lez-Burgos, L. de Teresa, New phenomena for the null controllability of
	parabolic systems: Minimal time and geometrical dependence, J. Math. Anal.
	Appl. 444 (2016), no. 2, 1071-1113.
	
	\bibitem{Avdonin 1} S. A. Avdonin, A. Choque Rivero, L. de Teresa, Exact
	boundary controllability of coupled hyperbolic equations. \textit{Int. J.
		Appl. Math. Comput. Sci., 23 (2013), 701-709}.
	
	\bibitem{Avdonin 2} S. A. Avdonin, J. Park 1, L. de Teresa, The Kalman
	condition for the boundary controllability of coupled 1-d wave equations,
	Evolution equations and control theory, 9(1), (2020), 255-273.
	
	\bibitem{Bardos-Lebeau-Rauch} C. Bardos, G. Lebeau and J. Rauch. Sharp
	sufficient conditions for the observation, control, and stabilization of
	waves from the boundary. \textit{SIAM J. Control Optim., 30 (1992), 1024-1065%
	}.
	
	\bibitem{Bennour} A. Bennour, F. Ammar Khodja, and D. Teniou. Exact and 	approximate controllability of coupled one-dimensional hyperbolic equations.  \textit{Ev. Eq. and Cont. Theo., 6 (2017), 487-516}.
	
	\bibitem{Brezis} H. Br\'{e}zis. \textit{Functional Analysis}. Springer
	(2011).
	
	\bibitem{Cui} Y. Cui, C. Laurent, Z. Wang, On the observability inequality
	of coupled wave equations: the case without boundary. ESAIM Control Optim.
	Calc. Var, 26 (14), 2020.
	
	\bibitem{Coron} Jean-Michel Coron and Hoai-Minh Nguyen, On the optimal
	controllability time for linear hyperbolic systems with time-dependent
	coefficients, arXiv:2103.02653.
	
	\bibitem{Dehman} B. Dehman, J. Le Rousseau and M. L eautaud. Controllability
	of two coupled wave equations on a compact manifold. \textit{Arch. Rat.
		Mech. Anal. 211 (2014) 113-187}.
	
	\bibitem{Duprez} M. Duprez and G. Olive, Compact perturbations of controlled
	systems, Math. Control Relat. Fields 8 (2018) 397-410.
	
	\bibitem{Duprez 2} M. Duprez, Controllability of a $2\times 2$ parabolic
	system by one force with space-dependent coupling term of order one, ESAIM
	Control Optim. Calc. Var., 23 (2017) 1473-1498.
	
	\bibitem{Furikov} A. V. Fursikov and O. Yu. Imanuvilov, \textit{%
		Controllability of Evolution Equations}, Lecture Notes Series 34, Research
	Institute of Mathematics, Seoul National University, Seoul, Korea, 1994.
	
	\bibitem{Hardt} V. Hardt and E. Wagenfuhrer. Spectral Properties of a
	Multiplication Operator. \textit{Math. Nachr. 178 (1996) 135-156.}
	
	\bibitem{Liard} T. Liard and P. Lissy, A Kalman rank condition for the
	indirect controllability of coupled systems of linear operator groups Math.
	Control Signals Syst. 29, 9, (2017).
	
	\bibitem{Neves} A. F. Neves, H. de Souza Ribeiro, and O. Lopes, On the
	spectrum of evolution operators generated by hyperbolic systems, J. Funct.
	Anal. 67, no. 3, (1986) 320-344.
	
	\bibitem{Olive} L. Hu, G. Olive, Minimal time for the exact controllability
	of one-dimensional first-order linear hyperbolic systems by one-sided
	boundary controls, Journal de Math\'{e}matiques Pures et Appliqu\'{e}es,
	148, (2021) 24-74.
	
	\bibitem{Pazy} A. Pazy. \textit{Semigroups of linear operators and
		applications to partial differential equations}, volume 44 of Applied
	Mathematical Sciences. Springer-Verlag, New York, 1983.
	
	\bibitem{Peetre} J. Peetre, Another approach to elliptic boundary problems,
	Comm. Pure Appl. Math. 14 (1961), 711-731.
	
	\bibitem{Russell} David L. Russell, Controllability and stabilizability
	theory for linear partial differential equations: recent progress and open
	questions, SIAM Rev. 20 (1978), no. 4, 639--739. MR 508380.
	
	\bibitem{Zabczyk} J. Zabczyk. \textit{Mathematical Control Theory}. An
	Introduction. Birkhauser (2008).
	
	\bibitem{Zhang} X. Zhang. Explicit observability inequalities for the wave
	equation with lower order terms by means of Carleman inequalities. \textit{%
		SIAM J. Control Optim. 39 (2000) 812-834}.
\end{thebibliography}
\end{document}